\documentclass[12pt,a4paper]{article}

\usepackage{latexsym}
\usepackage{blindtext}

\PassOptionsToPackage{force}{filehook}

\usepackage[utf8]{inputenc}
\usepackage[english]{babel}
\usepackage{arabtex}
\usepackage{utf8}
\usepackage{graphicx}
\usepackage{graphics,subfigure}
\usepackage{fancyhdr}
\usepackage{lmodern}
\usepackage{color}
\usepackage{transparent}
\usepackage{float}
\usepackage{pythontex}
\usepackage{tikz-cd}
\usepackage{listings}
\usepackage[linktocpage=true,citebordercolor=green, urlcolor=black]{hyperref}
\usepackage{tabularx}
\usepackage[title,titletoc]{appendix}
\usepackage{xcolor}
\usepackage[mathscr]{eucal}
\usepackage{calrsfs}

\usetikzlibrary{shapes.misc}

\definecolor{codegreen}{rgb}{0,0.6,0}
\definecolor{codegray}{rgb}{0.5,0.5,0.5}
\definecolor{codepurple}{rgb}{0.58,0,0.82}
\definecolor{backcolour}{rgb}{0.95,0.95,0.92}

\usepackage{pgf,tikz}
\usepackage{caption}
\usetikzlibrary{decorations.markings}
\usetikzlibrary{arrows}
\usetikzlibrary{patterns}
\usepackage{tikz-3dplot}
\usetikzlibrary{calc}
\usepackage{amsfonts}
\usepackage{amsmath,amssymb,amsthm}
\usepackage{mathtools}
\usepackage{relsize}
\usepackage{titlesec}

\usepackage{stmaryrd}
\usepackage{trimclip}

\usepackage{enumitem}

\usepackage{wasysym}

\definecolor{grgr}{RGB}{33,189,63}

\makeatletter
\DeclareRobustCommand{\shortto}{%
	\mathrel{\mathpalette\short@to\relax}%
}

\newcommand{\short@to}[2]{%
	\mkern2mu
	\clipbox{{.5\width} 0 0 0}{$\m@th#1\vphantom{+}{\shortrightarrow}$}%
}
\makeatother

\newtheorem{theorem}{Theorem}[section]

\theoremstyle{definition}
\newtheorem{definition}{Definition}[section]
\newtheorem{example}[definition]{Example}
\newtheorem{remark}[definition]{Remark}
\newtheorem{corollary1}[definition]{Corollary}
\newtheorem{lemma1}[definition]{Lemma}
\newtheorem{theorem1}[definition]{Theorem}
\newtheorem{Proposition}[definition]{Proposition}
\newtheorem*{proposition*}{Proposition}


\setcounter{secnumdepth}{5}
\setcounter{tocdepth}{5}

\titleformat{\paragraph}
{\normalfont\normalsize\bfseries}{\theparagraph}{1em}{}
\titlespacing*{\paragraph}
{0pt}{3.25ex plus 1ex minus .2ex}{1.5ex plus .2ex}

\usepackage[margin=2.5cm]{geometry}

\usepackage{mathptmx}

\lhead{}
\chead{}
\rhead{\slshape \leftmark}

\lfoot{}
\cfoot{\thepage}
\rfoot{}

\begin{document}
	
\setcounter{page}{1}
\pagenumbering{arabic}	

\title{From Babylonian lunar observations to Floquet multipliers and Conley--Zehnder Indices}

\author{Cengiz Aydin}




\maketitle

\begin{abstract}
	The lunar periods of our moon - the companion of the Earth - which date back to the Babylonians until around 600 BCE, are 29.53 days for the synodic, 27.55 days for the anomalistic and 27.21 days for the draconitic month.\ In this paper we define and compute these periods in terms of Floquet multipliers and Conley--Zehnder indices for planar periodic orbits in the spatial Hill lunar problem, which is a limit case of the spatial circular restricted three body problem.\ For very low energies, we are able to prove analytically the existence of the families of planar direct (family $g$) and retrograde periodic orbits (family $f$) and to determine their Conley--Zehnder index.\ For higher energies, by numerical approximations to the linearized flow, we also study other families of planar and spatial periodic orbits bifurcating from the families $g$ and $f$.\ Moreover, our framework provide an organized structure for the families, especially to see how they are connected to each other.\ Since the solutions we analyze are of practical interest, our work connects three topics:\ Babylonian lunar periods, symplectic geometry, and space mission design.\	
\end{abstract}

\makeatletter{\renewcommand*{\@makefnmark}{}
	\footnotetext{\textit{MSC} 2010:\ 37J05, 37J20, 37J25, 70F07, 70H12.}

\tableofcontents

\section{Introduction}

\subsection{Main results}

The moon is a complex dynamical system.\ Indeed, it is attracted not only by the earth but also with a comparable force by the sun.\ To this complexity George William Hill (1838--1914) made a good first approximation in 1877 and 1878 (see \cite{hill_det} and \cite{hill}).\ In particular, in his equation the true trajectory of the moon is close to a periodic orbit centered at the earth, which is known as ``variational orbit''.\ In other words, the true orbit is almost periodic.\ The variational orbit was found by Hill in \cite[p.\ 259]{hill} numerically.\ It is a planar symmetric direct periodic orbit
of Hill's system.\ Based on these works by Hill, Hénon \cite{henon}, \cite{henon_0} numerically explored and studied the stability of the families of planar direct and retrograde periodic orbits, which are referred to as family $g$ and $f$, respectively.\ These two families are the fundamental families of symmetric planar periodic orbits in the Hill lunar system.\

\textbf{On the geometry of planar periodic orbits in the spatial problem.}\ The Hamiltonian describing the motion of the moon
\begin{align*}
T^* \big( \mathbb{R}^3 \setminus \{ (0,0,0) \} \big) \to \mathbb{R},\quad (q,p) \mapsto \underbrace{\frac{1}{2}|p|^2 - \frac{1}{|q|} + p_1q_2 - p_2q_1 - q_1^2 + \frac{1}{2}q_2^2 + \frac{1}{2}q_3^2}_{\textstyle \frac{1}{2}\big( (p_1 + q_2)^2 + (p_2 - q_1)^2 + p_3^2 \big) - \frac{1}{|q|} - \frac{3}{2}q_1^2 + \frac{1}{2}q_3^2}
\end{align*}
is invariant under the symplectic involution
$$ \sigma \colon T^* \mathbb{R}^3 \to T^* \mathbb{R}^3,\quad (q_1,q_2,q_3,p_1,p_2,p_3) \mapsto (q_1,q_2,-q_3,p_1,p_2-p_3) $$
which arises from the reflection at the ecliptic $\{q_3=0\}$.\ Moreover, the planar problem can be viewed as the restriction of this system to the fixed point set $ \text{Fix}(\sigma) = \{ (q_1,q_2,0,p_1,p_2,0) \}$.\ For a planar periodic orbit $q=(q,p) \in \text{Fix}(\sigma)$ with $q_0 = \big(q(0),p(0)\big)$ and first return time $T_q$, we consider the time $T_q$ map of the linearized Hamiltonian flow, which is a linear symplectomorphism and is called the monodromy.\ Its restriction to the 5 dimensional energy hypersurface $\Sigma$ induces on the quotient by the line bundle ker$\omega|_{\Sigma} = \langle X_H | _{\Sigma} \rangle \subset T\Sigma$ the reduced monodromy, which is the linear symplectic map
$$ \overline{d \varphi _H^{T_q} | _{\Sigma} (q_0) } \colon T_{q_0} \Sigma / \text{ker} \omega_{q_0} \to T_{q_0} \Sigma / \text{ker} \omega_{q_0}.$$
Since the decomposition $ T_{q_0} \Sigma = T_{q_0}\text{Fix}(\sigma | _{\Sigma}) \oplus E_{-1} \big( d \sigma ( q_0 ) \big)$ into the 3-dimensional planar and 2-dimensional spatial components, the induced 4-dimensional symplectic vector space splits into two 2-dimensional symplectic vector spaces
$$ T_{q_0} \Sigma / \text{ker} \omega_{q_0} = \big( T_{q_0} \text{Fix}(\sigma|_{\Sigma}) / \text{ker} \omega_{q_0} \big) \oplus \Big( E_{-1} \big( d \sigma ( q_0 ) \big)  \Big). $$
Hence the reduced monodromy splits into
$$\overline{d \varphi _H^{T_q} | _{\Sigma} (q_0) }  = \begin{pmatrix}
\overline{A}_p & 0 \\
0 & A_s
\end{pmatrix},$$
where $\overline{A}_p$ is the reduced monodromy of $q$ viewed in the planar problem and $A_s$ is a $2 \times 2$ symplectic matrix which arises by linearization only along the spatial components.\ With respect to this symplectic splitting, we can study the linearization in the planar and spatial direction independently from each other.\ In particular, we obtain the following two important properties.\
\begin{itemize}[noitemsep]
	\item[i)] The Floquet multipliers, which are the eigenvalues of the reduced monodromy, are determined by the Floquet multipliers of $\overline{A}_p$ and $A_s$, which are real or lie on the unit circle.\ Consequently, it is not possible that the Floquet multipliers are given by four different complex numbers of the form $\lambda, 1/\lambda, \overline{\lambda}$ and $1/\overline{\lambda}$.\
	\item[ii)] The transversal Conley--Zehnder index of $q$ splits additively 
				$$ \mu_{CZ} = \mu_{CZ}^p + \mu_{CZ}^s, $$
				where $\mu_{CZ}^p$ and $\mu_{CZ}^s$ are the Conley--Zehnder indices of the path of symplectic matrices generated by the planar and spatial part of the linearized Hamiltonian flow, respectively.\
\end{itemize}
Furthermore, if the planar orbit $q$ is planar and spatial elliptic, i.e.\ the Floquet multipliers are on the unit circle, then $\overline{A}_p$ and $A_s$ are conjugate to rotations in $\mathbb{R}^2$, respectively.\ In particular, $\mu_{CZ}^p$ and $\mu_{CZ}^s$ measure the number of complete rotations of neighbouring orbit during $T_q$, respectively.\ In this paper, for elliptic cases, we will define the \textbf{synodic month} of $q$ by $T_q$ expressed in days, the \textbf{anomalistic} and \textbf{draconitic period} as the mean time (in days) for a complete rotation of planar and spatial neighbouring orbits during $T_q$, respectively.\ These periods of $q$ are explicitely determined in terms of their Floquet mutlipliers (rotation angles) and Conley--Zehnder indices.\

\textbf{For very low energies.}\ To determine the Conley--Zehnder indices for the families $g$ and $f$, we go down to very low energies.\ In particular, after regularization, for very small energies the Hill lunar problem approaches the regularized Kepler problem, and the Kepler flow is just the geodesic flow.\ For all sufficiently small energies up until an undetermined $\varepsilon_0>0$, we are able to prove analytically the following two theorems.\
\begin{theorem}[Planar problem] \label{theorem_a}
	From the circular direct and retrograde periodic orbit in the Kepler problem one family of periodic orbits bifurcates in each case, which are referred to as direct periodic orbits (family $g$) and retrograde periodic orbits (family $f$), respectively.\ These two orbits exist for all sufficiently low energies $\varepsilon \in (0,\varepsilon_0]$, and
	\begin{align} \label{conley_zehnder_index_planar}
	\mu_{CZ}^p = \begin{cases}
	3 & \text{ for family $g$}\\
	1 & \text{ for family $f$.}
	\end{cases}
	\end{align}
\end{theorem}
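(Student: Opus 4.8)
The plan is to use the perturbative structure of the Hill problem as a limit of the Kepler problem after regularization. First I would regularize the system: near the singularity $\{q=0\}$ and for low energies $\varepsilon\to 0$, after Moser/Levi-Civita regularization and a rescaling of time and space adapted to the energy level, the flow on the regularized energy hypersurface $\Sigma_\varepsilon$ converges (in $C^1$, on compact time intervals) to the geodesic flow on $S^2$ (equivalently, the regularized Kepler flow at fixed negative energy). The circular direct and retrograde Kepler orbits correspond to two great circles traversed in opposite orientations; these are nondegenerate as critical manifolds in the planar problem only after one quotients by the $S^1$-symmetry of Kepler, so the existence statement cannot come from a naive implicit function theorem at $\varepsilon=0$. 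Instead I would invoke the standard averaging/bifurcation argument (as in the work of Conley, and of Belbruno, on the Hill problem near the collision manifold, or equivalently the Poincaré continuation of circular orbits): the Hill perturbation breaks the $S^1$-symmetry, and the averaged perturbing Hamiltonian on the space of Kepler ellipses, restricted to the circle of circular orbits of fixed energy, is a Morse function whose critical points are exactly the two orbits that are aligned with the $q_1$-axis versus the $q_2$-axis — giving precisely one direct family ($g$) and one retrograde family ($f$) bifurcating from each circular Kepler orbit, persisting for $\varepsilon\in(0,\varepsilon_0]$.

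Next I would compute the planar Conley--Zehnder index $\mu_{CZ}^p$. Since $\mu_{CZ}^p$ is an integer-valued homotopy invariant of the path of symplectic matrices $t\mapsto \overline{A}_p(t)$ generated by the planar linearized flow, and since this path varies continuously with $\varepsilon$ down to the regularized limit, it suffices to compute it for the limiting Kepler orbits and then check that no eigenvalue crosses $1$ (i.e. no bifurcation in the planar direction) along the family for $\varepsilon\in(0,\varepsilon_0]$, which would be guaranteed by the nondegeneracy from the averaging step. For the circular Kepler orbit, the reduced linearized return map is well known: in suitable symplectic coordinates the linearized planar Kepler flow at the circular orbit, after reduction by the energy, is a shear composed with a rotation, and over one period the neighbouring orbits perform exactly one full revolution, which (with the standard normalization and the half-integer contribution from the regularization/rotating frame) yields $\mu_{CZ}^p = 3$ for the direct orbit. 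For the retrograde orbit the rotating-frame term enters with the opposite sign, decreasing the winding, and one obtains $\mu_{CZ}^p = 1$. Concretely I would compute these two indices by writing down the $2\times 2$ path $\overline{A}_p(t)$ explicitly in the regularized limit, reading off its rotation number, and applying the Salamon--Zehnder formula $\mu_{CZ} = \text{(winding data)}$ for paths of this (elliptic/parabolic) type.

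The main obstacle I anticipate is the first step: making the bifurcation-from-Kepler argument rigorous, because $\varepsilon=0$ is genuinely singular (the period of the regularized Kepler orbit and of the Hill orbit scale differently with $\varepsilon$, and the circular Kepler orbits are degenerate before symmetry reduction). The honest way to handle this is to work on the regularized energy hypersurface where the limit flow is smooth, identify the family $g$ and family $f$ orbits as transversally nondegenerate (after $S^1$-reduction) fixed points of a return map perturbed from the Kepler one, and apply an equivariant implicit function theorem / Poincaré's continuation method; the index computation is then comparatively routine, amounting to tracking a continuous path of symplectic $2\times2$ matrices and an elementary rotation-number count, with the only delicate point being the correct bookkeeping of the contribution coming from the rotating coordinate frame (the $p_1q_2-p_2q_1$ term), which is exactly what distinguishes the values $3$ and $1$.
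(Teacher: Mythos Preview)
Your overall strategy---regularize, pass to the Kepler/geodesic limit on $S^2$, and treat the Hill terms as a perturbation breaking the $S^1$-degeneracy---matches the paper's. But two points need correction.

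First, the identification of the bifurcating orbits is off. The perturbation function the paper computes (via the derivative of the Rabinowitz action functional $\mathscr{A}_r$ at $r=0$ restricted to the Morse--Bott critical set $C\cong\mathbb{R}P^3$) is $-2\pi L$, the angular momentum. On the quotient $\mathbb{R}P^3/S^1\cong S^2$ this is (up to a diffeomorphism) the height function, with one maximum and one minimum; these are the equatorial great circle traversed in the two orientations, i.e.\ the \emph{direct} and \emph{retrograde} circular orbits. They are distinguished by the sense of rotation, not by being ``aligned with the $q_1$-axis versus the $q_2$-axis''.

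Second, your index computation has a genuine gap. At $\varepsilon=0$ the circular Kepler orbit is \emph{not} transversally nondegenerate: it sits in a whole $S^2$ of closed geodesics, so the reduced linearized return map has $1$ as an eigenvalue and $\mu_{CZ}^p$ is not defined for the limiting orbit as a single integer. Hence ``compute the index at the limit and invoke continuity'' does not work as stated. The paper handles this via Morse--Bott perturbation theory: the transversal Conley--Zehnder index equals the Morse index of the Rabinowitz functional, and for a nondegenerate critical point bifurcating from a Morse--Bott component one has
\[
\mu_{CZ}^p \;=\; (\text{Morse--Bott index of the geodesic on }C)\;+\;(\text{Morse index of }-L\text{ on }S^2).
\]
By the Morse index theorem a simple closed geodesic on $S^2$ has Morse--Bott index $1$ (one conjugate point); the Morse index of $-L$ on $S^2$ is $2$ at the maximum (direct) and $0$ at the minimum (retrograde). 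This gives $3$ and $1$, respectively. The splitting of these two values is therefore governed by where the orbit sits on the critical manifold of $-L$, not by a rotating-frame bookkeeping correction to a well-defined limiting index.
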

\begin{theorem}[Spatial problem] \label{theorem_b}
	The planar families $g$ and $f$, and two families of spatial collision periodic orbits bifurcate from the Kepler problem.\ These four orbits exist for all sufficiently low energies $\varepsilon \in (0,\varepsilon_0]$, and
	\begin{align} \label{conley_zehnder_index_spatial}
	\mu_{CZ} = \begin{cases}
	6 & \text{ for family $g$ (planar)}\\
	4 & \text{ for the one family of collision orbits bouncing back (spatial)}\\
	4 & \text{ for the other family of collision orbits bouncing back (spatial)}\\
	2 & \text{ for family $f$ (planar).}
	\end{cases}
	\end{align}
	Moreover, in view of $\mu_{CZ} = \mu_{CZ}^p + \mu_{CZ}^s$, by (\ref{conley_zehnder_index_planar}) and (\ref{conley_zehnder_index_spatial}),
	\begin{center}
		\begin{tabular}{c|c|c}
			 & family $g$ (planar) & family $f$ (planar)\\
			\hline $\mu_{CZ}$ / $\mu_{CZ}^p$ / $\mu_{CZ}^s$ & 6 / 3 / 3 & 2 / 1 / 1
		\end{tabular}
	\end{center}
\end{theorem}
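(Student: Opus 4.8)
The plan is to reduce both theorems to a perturbation analysis of the regularized Kepler problem on a fixed negative energy level. First I would introduce the scaling of position, momentum and time that, in the low-energy regime $\varepsilon\to 0^+$, rewrites the Hill Hamiltonian on a fixed energy hypersurface as $H_\varepsilon = H_{\mathrm{Kep}} + \varepsilon\,(\text{rotation} + \text{tidal terms}) + o(\varepsilon)$, and then apply Moser regularization so that $H_{\mathrm{Kep}}$ becomes the geodesic Hamiltonian on $S^2$ (planar problem) resp.\ $S^3$ (spatial problem). Under this identification the circular direct/retrograde Kepler orbits become oriented great circles and the rectilinear Kepler orbits become great circles through the collision locus; these are precisely the $\varepsilon=0$ limits of the families $g$, $f$ and of the two spatial collision families. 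Since the Kepler flow is $2\pi$-periodic, the energy hypersurface is a Morse--Bott manifold of closed orbits, and the whole point is that the perturbation breaks this degeneracy.

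For the existence statements I would average the perturbation over the free $S^1$-action generated by the Kepler flow, obtaining an effective averaged Hamiltonian $\overline{K}$ on the orbit space, i.e.\ the space of oriented great circles, which is $S^2$ in the planar case and $S^2\times S^2$ in the spatial case. Critical points of $\overline{K}$ correspond, via a Reeb/Bott-type persistence theorem (or directly via the implicit function theorem applied to the perturbed first-return map on a transverse section), to periodic orbits of $H_\varepsilon$ for all small $\varepsilon>0$; I would identify the critical points corresponding to $g$, $f$ and the two collision families, and — crucially — check that $\overline{K}$ is Morse at each of them, which is what yields the orbit for all $\varepsilon\in(0,\varepsilon_0]$ and simultaneously pins down the signature data needed for the index. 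The tidal term $-\tfrac32 q_1^2+\tfrac12 q_3^2$ is what makes $\overline{K}$ nonconstant; after averaging it behaves like a quadratic form whose signature on the planar resp.\ spatial directions controls everything below, while the Coriolis term $p_1q_2-p_2q_1$ tilts this form differently for the two orientations, which is the source of the direct/retrograde asymmetry.

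For the Conley--Zehnder indices I would use the symplectic splitting and the additivity $\mu_{CZ}=\mu_{CZ}^p+\mu_{CZ}^s$ from part (ii) of the excerpt and compute each summand as the index of an elliptic path. As $\varepsilon\to0$ the reduced monodromy in each factor converges to the (degenerate) monodromy of a closed geodesic, whose mean rotation is an explicit multiple of $2\pi$ determined by the regularized Kepler dynamics — the geodesic contributes the Maslov-type part of the index and the regularization a fixed shift. For $\varepsilon>0$ the rotation angle in each factor is perturbed off that multiple, and the sign of the perturbation equals the sign of the corresponding block of the Hessian of $\overline{K}$; hence $\mu_{CZ}^{p},\mu_{CZ}^{s}\in\{2k\pm1\}$ with $k$ read off from the geodesic and the sign from the tidal signature. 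Carrying this out orbit-by-orbit gives $\mu_{CZ}^p=3$ for $g$ and $\mu_{CZ}^p=1$ for $f$ (Theorem~\ref{theorem_a}); the spatial block is governed by the same mechanism, with the positive restoring term $+\tfrac12 q_3^2$ forcing $\mu_{CZ}^s=\mu_{CZ}^p$ for the planar families and yielding $\mu_{CZ}=4$ for each collision family, which proves Theorem~\ref{theorem_b} and fills in the table.

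The main obstacle is the index bookkeeping across the regularization and the degenerate limit: one has to compute the contribution of the Levi-Civita/Moser regularization to $\mu_{CZ}$ exactly (a constant shift, but its value matters), determine the sign of the frequency shift produced by the tidal term in each of the four cases, and handle the collision families, whose limiting orbits actually meet the regularized collision set so that the linearized flow must be computed in regularized coordinates throughout. A secondary technical point is making the threshold $\varepsilon_0$ uniform, i.e.\ showing that the nondegeneracy of $\overline{K}$ and the ellipticity persist on a full interval $(0,\varepsilon_0]$ rather than along a sequence $\varepsilon\to0$; this follows once the first-return map is shown to depend smoothly on $\varepsilon$ down to $\varepsilon=0$ in the regularized picture.
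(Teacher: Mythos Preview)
Your overall architecture---Moser regularization, the Morse--Bott manifold of closed geodesics, an averaged perturbation on the orbit space $S^2$ resp.\ $S^2\times S^2$, bifurcation via the implicit function theorem, and Conley--Zehnder index $=$ (geodesic Morse--Bott index) $+$ (Morse index of the averaged function)---is exactly the paper's approach. But you have misidentified the leading perturbation, and this is a genuine error that would derail the computation.

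Under the correct low-energy scaling (the paper uses $c=-\tfrac{1}{2}r^{-2/3}$), the Coriolis term $p_1q_2-p_2q_1$ is of order $r$ while the tidal term $-q_1^2+\tfrac12 q_2^2+\tfrac12 q_3^2$ is of order $r^2$. Hence the derivative $\partial_r\mathscr{A}_r|_{r=0}$ restricted to the critical manifold of closed geodesics equals $-2\pi L$, the (negative) angular momentum, and the tidal term does not appear at all at leading order. The averaged Hamiltonian $\overline K$ you seek is therefore not a quadratic form coming from the tidal potential ``tilted by Coriolis''; it is simply $-L$ on the unit tangent bundle $SS^2$ resp.\ $SS^3$, descended to the orbit space. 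The paper computes this function explicitly in charts and finds that $-L$ is Morse--Bott along exactly two critical circles on $SS^2$ (max/min, Morse indices $2,0$) and four on $SS^3$ (max, two saddles, min, Morse indices $4,2,2,0$). Adding the Morse--Bott index of a simple closed geodesic on $S^n$---which is $1$ for $n=2$ and $2$ for $n=3$ by the Morse index theorem (one resp.\ two antipodal conjugate points)---gives the Conley--Zehnder indices $3,1$ in the planar problem and $6,4,4,2$ in the spatial problem.

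Two consequences of your misattribution: first, the direct/retrograde asymmetry comes entirely from the sign of $L$, not from an interaction between tidal and Coriolis terms; second, the equality $\mu_{CZ}^s=\mu_{CZ}^p$ for the planar families has nothing to do with the restoring term $+\tfrac12 q_3^2$ (which is order $r^2$ and invisible here)---it is an arithmetic coincidence: the geodesic index shifts by $+1$ from $S^2$ to $S^3$, and the Morse index of $-L$ at the max/min shifts by $+2$, so each of $\mu_{CZ}^p,\mu_{CZ}^s$ picks up exactly half of the total spatial index. Once you replace the tidal averaging by the angular-momentum computation, the rest of your plan goes through essentially as written.
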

\noindent
Geometrically, during $T_q$ the planar and spatial neighbouring orbits of the planar direct periodic orbit make a complete rotation and additionally rotate by an angle, hence the anomalistic and draconitic periods are shorter than the synodic one.\ The planar and spatial neighbouring orbits of the planar retrograde periodic orbits only rotate by their respective angles during $T_q$, thus in this case the anomalistic and draconitic periods are longer than the synodic one.\ Note that the families $g$ and $f$ are in $ \text{Fix}(\sigma) = \{ (q_1,q_2,0,p_1,p_2,0) \} $ and that the two families of spatial collision periodic orbits are in $ \text{Fix}(-\sigma) = \{ (0,0,q_3,0,0,p_3) \}$.\ Note that $-\sigma$ corresponds to a rotation around the $q_3$-axis by $\pi$.\
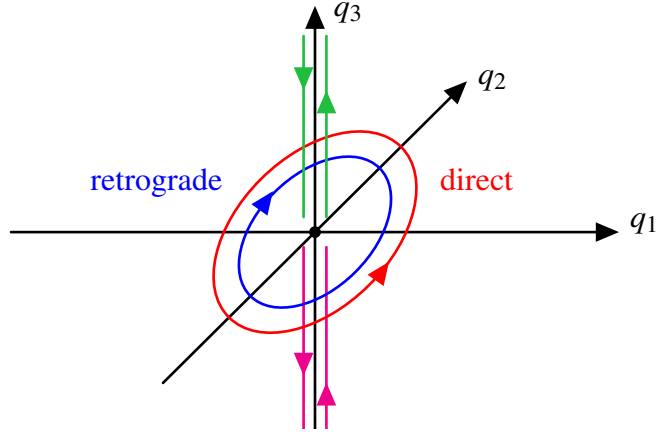
\begin{figure}[H]
	\centering
	\definecolor{grgr}{RGB}{33,189,63}
	\begin{tikzpicture}[line cap=round,line join=round,>=triangle 45,x=1cm,y=1cm]
	\clip(-5,-2.6) rectangle (5,3.2);
	

	\draw [->,line width=1pt] (-2,-2)-- (2,2);
	\draw [->,line width=1pt] (-4,0)-- (4,0);
	\draw [->,line width=1pt] (0,-2.8)-- (0,3);
	
	
	\draw [decoration={markings, mark=at position 0.5 with {\arrow{<}}},postaction={decorate},,color=magenta,line width=1pt] (-0.15,-3) -- (-0.15,-0.2);
	\draw [decoration={markings, mark=at position 0.75 with {\arrow{<}}},postaction={decorate},,color=magenta,line width=1pt] (0.15,-0.2) -- (0.15,-3);
	
	
	\draw [decoration={markings, mark=at position 0.8 with {\arrow{>}}},postaction={decorate},line width=1pt,rotate=45,color=red] (0,0) ellipse (1.6cm and 1cm);
	\draw [decoration={markings, mark=at position 0.3 with {\arrow{<}}},postaction={decorate},line width=1pt,rotate=45,color=blue] (0,0) ellipse (1.2cm and 0.75cm);
	
	
	\draw [decoration={markings, mark=at position 0.3 with {\arrow{>}}},postaction={decorate},,color=grgr,line width=1pt] (-0.15,2.6) -- (-0.15,0.2);
	\draw [decoration={markings, mark=at position 0.7 with {\arrow{>}}},postaction={decorate},,color=grgr,line width=1pt] (0.15,0.2) -- (0.15,2.6);
	
	\draw (4,0.4) node[anchor=north west] {$q_1$};
	\draw (2,2.3) node[anchor=north west] {$q_2$};
	\draw (0.1,3.2) node[anchor=north west] {$q_3$};
	\draw[color=red] (1.5,1) node[anchor=north west] {$\text{direct}$};
	\draw[color=blue] (-3.1,1) node[anchor=north west] {$\text{retrograde}$};
	\begin{scriptsize}
	\draw [fill=black] (0,0) circle (2pt);
	\end{scriptsize}
	\end{tikzpicture}
	\caption{Periodic orbits for very low energies}
	\label{figure_spatial_collision_orbits}
\end{figure}
\textbf{Symmetries.}\ All the periodic orbits we consider are symmetric with respect to an anti-symplectic involution.\ In particular, the spatial problem is endowed with linear symmetries, which are classified in this paper.\ By a symmetry we mean an anti-symplectic or symplectic involution which leaves the Hamiltonian invariant.\ The only linear symplectic symmetries are $ \pm\sigma$ and $\pm \text{id}$, and the anti-symplectic ones are:\
\begin{table}[H] \centering
	\begin{tabular}{c|c}
		notation & underlying geometry\\
		\hline $\rho_1(q,p)=(q_1,-q_2,q_3,-p_1,p_2,-p_3)$ & reflection at the $q_1q_3$-plane \\
		$\rho_2(q,p)=(-q_1,q_2,q_3,p_1,-p_2,-p_3)$ & reflection at the $q_2q_3$-plane \\
		$\overline{\rho_1}(q,p)=(q_1,-q_2,-q_3,-p_1,p_2,p_3)$ & rotation around the $q_1$-axis by $\pi$ \\
		$\overline{\rho_2}(q,p)=(-q_1,q_2,-q_3,p_1,-p_2,p_3)$ & rotation around the $q_2$-axis by $\pi$
	\end{tabular}
	\caption{The linear anti-symplectic symmetries}
	\label{linear_anti_s_sym}
\end{table}
\noindent
In view of
$$ \rho_i \circ \sigma = \sigma \circ \rho_i = \overline{\rho_i},\quad \text{for }i \in \{1,2\},$$
and
$$ \rho_i \circ \rho_j = \overline{\rho_i} \circ \overline{\rho_j} = - \sigma,\quad \rho_i \circ \overline{\rho_j} = \overline{\rho_j} \circ \rho_i = -\text{id},\quad \text{for }i,j \in \{1,2\},\quad i\neq j, $$
these eight symmetries form the group $\mathbb{Z}_2 \times \mathbb{Z}_2 \times \mathbb{Z}_2$.\ If we restrict the system to the planar case, then two linear anti-symplectic symmetries are given by the reflection at the $q_1$- and $q_2$-axis, and two symplectic ones by $\pm \text{id}$.\ These four form a Klein four-group $\mathbb{Z}_2 \times \mathbb{Z}_2$.\

\textbf{Numerical results for higher energies.}\ We follow and study numerically the families $g$ and $f$ and also other families which bifurcate from them and have been found numerically as well, in the following way.\ If one of the indices jumps, then there bifurcates a new family of planar or spatial periodic orbits, respectively.\ These bifurcations occur when the eigenvalue 1 is crossed.\ In particular, if the rotation angle is a $\tilde{k}$-th root of unity, then the $\tilde{k}$-th cover moves through the eigenvalue 1.\ To determine the index of this kind of new families, the Floquet multipliers alone do not provide enough information.\ We will show that the monodromy and reduced monodromy of symmetric periodic orbits satisfy special symmetries, which allows for a general construction to specify the Floquet multipliers, in particular the rotation angles, and thereby the index jump.\ In addition, by using the invariance of the local Floer homology before and after bifurcation, and the stability, we will determine the Conley--Zehnder index of the families in the Table \ref{families_in_this_paper}.\ Note that the Euler characteristic of the local Floer homology groups stays invariant as well.\ 
\begin{table}[H] \centering
	\begin{tabular}{c|c|c}
		from the articles by & family & remark\\
		\hline Hénon \cite{henon} (1969) & $g$, $f$ & planar\\
		 & $g'$ & planar (from $\mu_{CZ}^p$ jump from $g$) \\
		Hénon \cite{henon_1} (1970), \cite{henon_0} (2003) & $g_3$ & planar (from the 3rd cover of $f$)\\
		Batkhin--Batkhina \cite{batkhin} (2009) & $g_{2v}$ & spatial (from $\mu_{CZ}^s$ jump from $g$)\\
		& $g_{1v}^{YOZ}$ & spatial (from the 2nd cover of $g$) \\
		Michalodimitrakis \cite{michalodimitrakis} (1980) & $g1v$ & spatial (from the 2nd cover of $g$ \& $g'$)\\
		Kalantonis \cite{kalantonis} (2020) & $f_g^{(2,3)}$, $f_g^{(2cut,3)}$ & spatial (from the 3rd cover of $g$) \\
		& $f_{g'}^{(2,3)}$, $f_{g'}^{(2cut,3)}$ & spatial (from the 3rd cover of $g'$) \\
		& $f_g^{(1,4)}$, $f_g^{(1cut,4)}$ & spatial (from the 4th cover of $g$) \\
		& $f_{g'}^{(1,4)}$, $f_{g'}^{(1cut,4)}$ & spatial (from the 4th cover of $g'$)
	\end{tabular}
	\caption{The families of planar and spatial periodic orbits in this paper}
	\label{families_in_this_paper}
\end{table}
\noindent
These data and invariants provide an organized structure for the practical work in the context of space mission design, which in particular shows how these families are connected to each other.\ In order to give such overviews we illustrate the resp.\ scenarios in form of bifurcation graphs (see for instance Figure \ref{overview_conclusion}).\ Furthermore, note that in this paper we use the traditional Jacobi integral $\Gamma=-2c$, where $c$ is the energy value given by the Hamiltonian.\ Starting with the indices for very low energies, our results are:\
\begin{table}[H] \centering
	\begin{tabular}{c|c|c|c|c|c}
		energy values $\Gamma$ & planar & spatial & $\mu_{CZ}^p$ & $\mu_{CZ}^s$ & $\mu_{CZ}$\\
		\hline $(+\infty,4.49999)$ & elliptic & elliptic & 3 & 3 & 6\\
		$(4.49999,1.3829)$ & pos. hyperbolic & elliptic & 2 & 3 & 5\\
		$(1.3829,- \infty)$ & pos. hyperbolic & pos. hyperbolic & 2 & 4 & 6
	\end{tabular}
	\caption{The family $g$}
	\label{overview_g}
\end{table}
\noindent
Therefore for our moon's variational orbit by Hill, which is at $\Gamma = 6.5088$, the indices do not change and the orbit is planar as well as spatial elliptic.\ Moreover, our geometrical approach and numerical computation give for the periods
$$ T_s = 29.528396,\quad T_a = 27.553954,\quad T_d = 27.212712, $$
which is a remarkably good approximation to the experimentally measured data.\ Note that the orbits of the family $g$ are doubly-symmetric with respect to the reflection at the $q_1$- and $q_2$-axis.\ 

The orbits from the family $f$ are doubly-symmetric as well, and they are planar and spatial elliptic for all times, thus their indices do not change.\ Furthermore, in the limit case, in which the Jacobi integral goes to $- \infty$, we analytically show that these orbits converge to a degenerate planar retrograde periodic orbit, where all three months coincide with the period of the earth around the sun, namely 365.25 days.\

In Table \ref{overview_g}, at the planar transition from elliptic to positive hyperbolic, $\mu_{CZ}^p$ jumps from 3 to 2 and there bifurcates the family $g'$, whose data are collected in Table \ref{overview_g'}.\ The orbits of the family $g'$ are simply-symmetric with respect to the reflection at the $q_1$-axis, and by using the reflection at the $q_2$-axis one obtains its symmetric family, hence the family $g'$ appears twice.\
\begin{table}[H] \centering
	\begin{tabular}{c|c|c|c|c|c}
		energy values $\Gamma$ & planar & spatial & $\mu_{CZ}^p$ & $\mu_{CZ}^s$ & $\mu_{CZ}$\\
		\hline $(4.49999,4.2851)$ & elliptic & elliptic & 3 & 3 & 6\\
		$(4.2851,4.2806)$ & elliptic & neg. hyperbolic & 3 & 3 & 6\\
		$(4.2806,4.2714)$ & elliptic & elliptic & 3 & 3 & 6\\
		$(4.2714,3.3901)$ & neg. hyperbolic & elliptic & 3 & 3 & 6\\
		$(3.3901,0.4771)$ & neg. hyperbolic & pos. hyperbolic & 3 & 4 & 7\\
		$(0.4771,-0.2195)$ & neg. hyperbolic & elliptic  & 3 & 5 & 8\\
		$(-0.2195,-4.6921)$ & neg. hyperbolic & neg. hyperbolic & 3 & 5 & 8\\
		$(-4.6921,-4.7047)$ & elliptic & neg. hyperbolic & 3 & 5 & 8\\
		$(-4.7047,- \infty)$ & pos. hyperbolic & neg. hyperbolic & 4 & 5 & 9
	\end{tabular}
	\caption{The family $g'$}
	\label{overview_g'}
\end{table}
\noindent
It is easy to check that the Euler characteristics before and after the bifurcation of $g'$ coincide, namely they are in the planar problem
$$ (-1)^3 = -1,\quad \text{resp.}\quad (-1)^2 + 2\cdot(-1)^3 = -1, $$
and in the spatial problem (obtained by a shift of $\mu_{CZ}^s = 3$)
$$ (-1)^6 = 1,\quad \text{resp.}\quad (-1)^5 + 2\cdot(-1)^6 = 1. $$

At the spatial transition of the family $g$ from elliptic to positive hyperbolic, where $\mu_{CZ}^s$ jumps from 3 to 4, the family $g_{2v}$ bifurcates.\ Its orbits are doubly-symmetric with respect to $\rho_1$ and $\rho_2$, and by using $\sigma$ one obtains its symmetric family which is doubly-symmetric with respect to $\overline{\rho_1}$ and $\overline{\rho_2}$.\ They have $\mu_{CZ} = 5$.\

The other families of spatial orbits from Table \ref{families_in_this_paper} bifurcate from the respective iteration of the underlying planar periodic orbits.\ Moreover, we want to emphasize the following special bifurcation result, which is illustrated in Figure \ref{overview_conclusion}.\ We interpret this figure as a graph and call such a graph a ``\textbf{bifurcation graph}".\ It is constructed as follows:\

We draw from bottom to top in the direction of increasing energy.\ Each vertex corresponds to a degenerate periodic orbit and each edge to a family of periodic orbits with their (constant) Conley--Zehnder index.\ We distinguish two kinds of edges.\ The first kind corresponds to the underlying family of planar periodic orbits where the index jumps and the bifurcations happen.\ We draw these edges in black, vertically and shortly before and after the bifurcations.\ The second kind corresponds to a new family branching out from the index jump.\ We draw such edges coloured, and every new family gets his own colour.\ If there is a symmetric family, then these edges are drawn in the same colour, but dashed.\  The cross stands for collision and the term ``b-d" for a periodic orbit of birth-death type.\ In general, a periodic orbit of birth-death type is a degenerate orbit from which two families bifurcate with an index difference of 1 and into the same energy direction.\ Its local Floer homology and its Euler characteristic are therefore zero.\

\begin{figure}[H]
	\centering
	\definecolor{grgr}{RGB}{33,189,63}
	\begin{tikzpicture}[line cap=round,line join=round,>=triangle 45,x=1.0cm,y=1.0cm]
	\clip(-9,-3) rectangle (7,11.5);
	
	\draw [->, line width=1pt] (-3-4.5,-2.5) -- (-3-4.5,11);
	\draw (-3-4.5,11) node[anchor=south] {$\Gamma$};
	\draw (-4-4.5,11) node[anchor=south] {$-\infty$};
	\draw (-4-4.5,-2.5) node[anchor=north] {$+\infty$};
	\draw[fill] (-3-4.5,-2) circle (1.5pt);
	\draw (-3-4.5,-2) node[anchor=east] {4.347};
	\draw[fill] (-3-4.5,0.5) circle (1.5pt);
	\draw (-3-4.5,0.5) node[anchor=east] {3.876};
	\draw[fill] (-3-4.5,4) circle (1.5pt);
	\draw (-3-4.5,4) node[anchor=east] {3.274};
	\draw[fill] (-3-4.5,10) circle (1.5pt);
	\draw (-3-4.5,10) node[anchor=east] {0.755};
	
	\draw[fill] (-3-4.5,4-0.5) circle (1.5pt);
	\draw (-3-4.5,4-0.5) node[anchor=east] {3.280};
	
	\draw[fill] (-3-4.5,4-1.1) circle (1.5pt);
	\draw (-3-4.5,4-1.1) node[anchor=east] {3.362};
	
	\draw[fill] (-3-4.5,4+1.1) circle (1.5pt);
	\draw (-3-4.5,4+1.1) node[anchor=east] {3.136};
	
	\draw[fill] (-3-4.5,4+1.6) circle (1.5pt);
	\draw (-3-4.5,4+1.6) node[anchor=east] {3.101};

	\draw [line width=1pt] (-2,-2.5) -- (-2,-1.5);
	
	\draw (-2,-2.5) node[anchor=north] {$14$};
	\draw (-2,-1.5) node[anchor=south] {$16$};

	\draw [line width=1pt] (0,0) -- (0,1);
	
	\draw (0,0) node[anchor=north] {$13$};
	\draw (0,1) node[anchor=south] {$15$};

	\draw [line width=1pt] (0,9.5) -- (0,10.5);
	
	\draw (0,9.5) node[anchor=north] {$16$};
	\draw (0,10.5) node[anchor=south] {$14$};
	
	\draw [line width=1pt,color=red] (-2,-2) .. controls (0,0.6) and (1,-2.2) .. (2.5,4);
	
	\draw [dashed,line width=1pt,color=red] (-2,-2) .. controls (-3.1,1) and (-3,3) .. (-2.5,4);
	
	\draw [color=red] (1.8,-0.65) node[anchor=west] {$15$};
	\draw [color=red] (-1.8,-0.65) node[anchor=east] {$15$};
	
	\draw [color=red] (1.05,-0.95) node[anchor=west] {$15$};
	\draw [color=red] (-1.05,-0.95) node[anchor=east] {$15$};
	
	\draw [color=magenta] (2.4,-1.1) node[anchor=west] {$14$};
	\draw [color=magenta] (-2.4,-1.1) node[anchor=east] {$14$};
	
	\draw [color=magenta] (0.8,-1.45) node[anchor=west] {$14$};
	\draw [color=magenta] (-0.8,-1.45) node[anchor=east] {$14$};
	
	\draw [color=grgr] (0.7,2) node[anchor=west] {$13$};
	\draw [color=grgr] (-0.7,2) node[anchor=east] {$13$};
	
	\draw [color=grgr] (3.45,4.5) node[anchor=west] {$14$};
	\draw [color=grgr] (-3.45,4.5) node[anchor=east] {$14$};
	
	\draw [color=grgr] (4.7,4.8) node[anchor=west] {$15$};
	\draw [color=grgr] (-4.7,4.8) node[anchor=east] {$15$};
	
	\draw [color=grgr] (5.6,3.3) node[anchor=west] {$14$};
	\draw [color=grgr] (-5.6,3.3) node[anchor=east] {$14$};
	
	\draw [line width=1pt,color=blue] (0,0.5) .. controls (1,2.5) and (1.5,3.5) .. (2.5,4);
	
	\draw [dashed,line width=1pt,color=blue] (0,0.5) .. controls (-1,2.5) and (-1.5,3.5) .. (-2.5,4);
	
	\draw [color=blue] (0.5,2) node[anchor=south] {$14$};
	\draw [color=blue] (-0.5,2) node[anchor=south] {$14$};
	
	\draw [line width=1pt,color=blue] (2.5,4) .. controls (1.5,5) and (1.2,7.5) .. (0,10);
	
	\draw [dashed,line width=1pt,color=blue] (-2.5,4) .. controls (-1.5,5) and (-1.2,7.5) .. (0,10);
	
	\draw [color=blue] (1.3,4.5) node[anchor=west] {$15$};
	\draw [color=blue] (-1.3,4.5) node[anchor=east] {$15$};

	\draw [line width=1pt] (2,-2.5) -- (2,-1.5);
	
	\draw (2,-2.5) node[anchor=north] {$14$};
	\draw (2,-1.5) node[anchor=south] {$16$};
	
	\draw [dashed,line width=1pt,color=red] (2,-2) .. controls (0,0.6) and (-1,-2.2) .. (-2.5,4);
	
	\draw [line width=1pt,color=red] (2,-2) .. controls (3.1,1) and (3,3) .. (2.5,4);

	\draw [line width=1pt,color=grgr] (0,0.5) .. controls (1.5,2.25) and (2,2.5) .. (3.5,3.5);
	
	\draw [dashed,line width=1pt,color=grgr] (0,0.5) .. controls (-1.5,2.25) and (-2,2.5) .. (-3.5,3.5);

	\draw [line width=1pt,color=grgr] (3.5,3.5) .. controls (3.9,3.9) and (4.3,4.6) .. (4.5,5.1);
	
	\draw [dashed,line width=1pt,color=grgr] (-3.5,3.5) .. controls (-3.9,3.9) and (-4.3,4.6) .. (-4.5,5.1);

	\draw [line width=1pt,color=grgr] (4.5,5.1) .. controls (4.9,4.6) and (5.3,3.9) .. (5.5,2.9);
	
	\draw [dashed,line width=1pt,color=grgr] (-4.5,5.1) .. controls (-4.9,4.6) and (-5.3,3.9) .. (-5.5,2.9);

	\draw [line width=1pt,color=grgr] (5.5,2.9) .. controls (5.9,3.9) and (6.1,4.6) .. (6.5,5.6);
	
	\draw [dashed,line width=1pt,color=grgr] (-5.5,2.9) .. controls (-5.9,3.9) and (-6.1,4.6) .. (-6.5,5.6);
	
	\draw [line width=1pt,color=magenta] (2,-2) .. controls (3.5,1) and (3.5,2) .. (3.5,3.5);
	
	\draw [line width=1pt,color=magenta] (-2,-2) .. controls (3,-0.2) and (3,1.5) .. (3.5,3.5);
	
	\draw [dashed,line width=1pt,color=magenta] (-2,-2) .. controls (-3.5,1) and (-3.5,2) .. (-3.5,3.5);
	
	\draw [dashed,line width=1pt,color=magenta] (2,-2) .. controls (-3,-0.2) and (-3,1.5) .. (-3.5,3.5);
	
	\draw[fill] (5.5,2.9) circle (2pt);
	\draw (5.5,2.9) node[anchor=north] {b-d};
	\draw[fill] (-5.5,2.9) circle (2pt);
	\draw (-5.5,2.9) node[anchor=north] {b-d};
	
	\draw[fill] (4.5,5.1) circle (2pt);
	\draw (4.5,5.1) node[anchor=south] {b-d};
	\draw[fill] (-4.5,5.1) circle (2pt);
	\draw (-4.5,5.1) node[anchor=south] {b-d};
	
	\draw[fill] (3.5,3.5) circle (2pt);
	\draw[fill] (-3.5,3.5) circle (2pt);

	\node[solid, cross out, draw=black, thick] at (6.5,5.6) {};
	\node[solid, cross out, draw=black, thick] at (-6.5,5.6) {};

	\draw[fill] (2,-2) circle (2pt);
	\draw (2.1,-2) node[anchor=west] {$g'^3$};
	
	\draw[fill] (-2.5,4) circle (2pt);
	
	\draw[fill] (2.5,4) circle (2pt);
	
	\draw[fill] (0,10) circle (2pt);
	\draw (0.1,10) node[anchor=west] {$f^5$};
	
	\draw[fill] (-2,-2) circle (2pt);
	\draw (-2.2,-2) node[anchor=east] {$g'^3$};
	
	\draw[fill] (0,0.5) circle (2pt);
	\draw (0.1,0.5) node[anchor=west] {$g^3$};
	
	
	\end{tikzpicture}
	\caption{The bifurcation graph between the 3rd cover of $g$, the 3rd cover of $g'$ and the 5th cover of $f$ with the families \textcolor{blue}{$f_g^{(2,3)}$}, \textcolor{grgr}{$f_g^{(2cut,3)}$}, \textcolor{red}{$f_{g'}^{(2,3)}$} and \textcolor{magenta}{$f_{g'}^{(2cut,3)}$}}
	\label{overview_conclusion}
\end{figure}
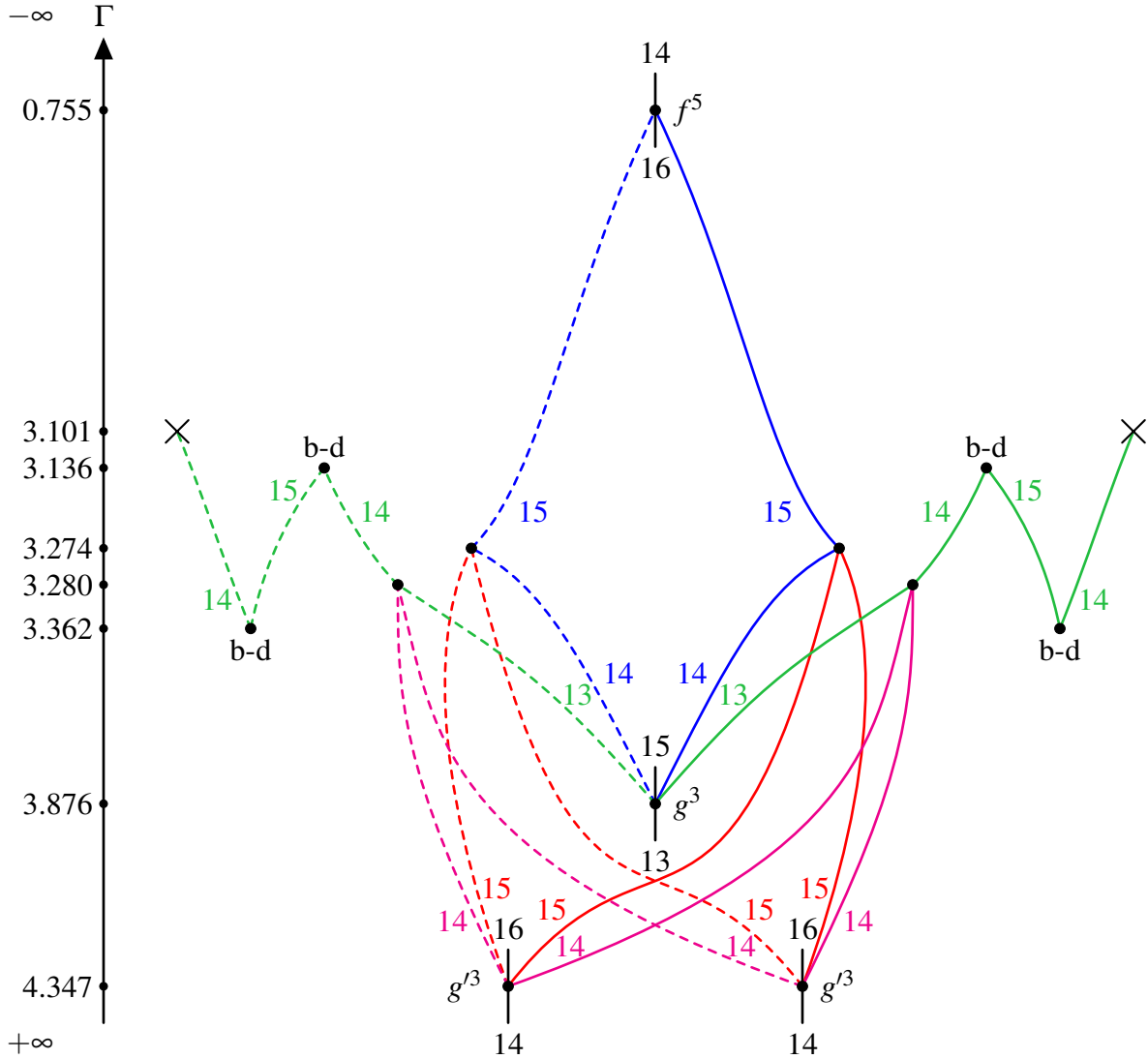
The branches (not dashed) correspond to the families \textcolor{blue}{$f_g^{(2,3)}$}, \textcolor{grgr}{$f_g^{(2cut,3)}$}, \textcolor{red}{$f_{g'}^{(2,3)}$} and \textcolor{magenta}{$f_{g'}^{(2cut,3)}$} with their resp.\ colours.\ In view of the symmetry $\sigma$, these families give rise to a second bifurcation branch (dashed).\

At $\Gamma = 3.876$ the index of the 3rd cover of the family $g$ jumps from 13 to 15.\ At this transition the two families \textcolor{blue}{$f_g^{(2,3)}$} and \textcolor{grgr}{$f_g^{(2cut,3)}$} bifurcate.\ Their indices are \textcolor{blue}{14} and \textcolor{grgr}{13}, respectively.\ The orbits of the first family are doubly-symmetric with respect to $\overline{\rho_1}$ and $\overline{\rho_2}$, hence they are invariant under $-\sigma$.\ They end at the value $\Gamma = 0.755$ at the 5th cover of $f$, and inbetween there is an index jump from \textcolor{blue}{14} to \textcolor{blue}{15}.\ In the second family, the orbits are doubly-symmetric with respect to $\rho_1$ and $\rho_2$, thus they are invariant under $-\sigma$ as well.\ At the value $\Gamma = 3.280$ the index jumps from \textcolor{grgr}{13} to \textcolor{grgr}{14}, and the orbits eventually undergo collision.\ Until then, this family consists of branches that bifurcate from a periodic orbit of birth-death type.\ The dashed branches are obtained by using $\sigma$.\ Note that the orbits of the symmetric family (dashed) of \textcolor{blue}{$f_g^{(2,3)}$} are doubly-symmetric with respect to $\rho_1$ and $\rho_2$, and the orbits of the symmetric family (dashed) of \textcolor{grgr}{$f_g^{(2cut,3)}$} are doubly-symmetric with respect to $\overline{\rho_1}$ and $\overline{\rho_2}$.\

Consider the family $g'$, where at $\Gamma = 4.347$ the index of the 3rd cover jumps from 14 to 16.\ At this value of $\Gamma$ the two families \textcolor{red}{$f_{g'}^{(2,3)}$} and \textcolor{magenta}{$f_{g'}^{(2cut,3)}$} bifurcate, with indices \textcolor{red}{15} resp.\ \textcolor{magenta}{14}.\ According to Kalantonis \cite[p.\ 11]{kalantonis}, the two families \textcolor{red}{$f_{g'}^{(2,3)}$} and \textcolor{magenta}{$f_{g'}^{(2cut,3)}$} terminate at the 3rd cover of the resp.\ planar orbit of $g'$ which is symmetric to the planar orbit of $g'$, from which they have bifurcated.\ These are the two not-dashed branches, respectively.\ 

We have a deeper insight:\
\begin{itemize}[noitemsep]
	\item The orbits of the family \textcolor{red}{$f_{g'}^{(2,3)}$} are simply-symmetric with respect to $\overline{\rho_1}$, and its two not-dashed branches are symmetric by $\overline{\rho_2}$.\ Recall that the orbits of \textcolor{blue}{$f_g^{(2,3)}$} are doubly-symmetric with respect to $\overline{\rho_1}$ and $\overline{\rho_2}$.\ Therefore, very close to the value $\Gamma = 3.274$, by comparing the initial data, and by using these symmetries and especially the indices, we conclude that the family \textcolor{red}{$f_{g'}^{(2,3)}$} ends at the first index jump of the family \textcolor{blue}{$f_g^{(2,3)}$}.\ This explains why they come together at the value $\Gamma = 3.274$.\
	\item The same happens for the family \textcolor{magenta}{$f_{g'}^{(2cut,3)}$}, namely its orbits are simply-symmetric with respect to $\rho_1$ and its two not-dashed branches are symmetric by $\rho_2$.\ Recall that the orbits of \textcolor{grgr}{$f_g^{(2cut,3)}$} are doubly-symmetric with respect to $\rho_1$ and $\rho_2$.\ Hence very close to the value $\Gamma = 3.280$, by comparing the initial data, and by using these symmetries and especially the indices, we obtain that \textcolor{magenta}{$f_{g'}^{(2cut,3)}$} ends at the first index jump of the family \textcolor{grgr}{$f_g^{(2cut,3)}$} (at the value $\Gamma = 3.280$).\
	\item Each family yields the bifurcation of a second branch (dashed) by using $\sigma$.\ Furthermore, note that the symmetry properties of the orbits of the spatial families \textcolor{red}{$f_{g'}^{(2,3)}$} compared to \textcolor{blue}{$f_g^{(2,3)}$}, and of \textcolor{magenta}{$f_{g'}^{(2cut,3)}$} compared to \textcolor{grgr}{$f_g^{(2cut,3)}$}, are similar to the symmetry properties of the orbits of the planar family $g'$ compared to $g$.\
\end{itemize}
\noindent
In particular, the bifurcation graph organizes the local bifurcations and thereby helps to check the Euler characteristic of the local Floer homology groups.\ For instance, at the value $\Gamma = 3.876$ at $g^3$ the Euler characteristics before and after the bifurcation are
$$ (-1)^{13} = -1,\quad\text{resp.}\quad 2\cdot(-1)^{13} + 2\cdot(-1)^{14} + (-1)^{15} = -1.  $$
Note that at the value $\Gamma = 0.755$ at $f^5$ the Euler characteristics before and after the bifurcation are
$$ 2\cdot(-1)^{15} + (-1)^{16} = -1,\quad\text{resp.}\quad (-1)^{14} = 1.  $$
Therefore, at this transition there are still undiscovered families branching out from $f^5$.\

\subsection{Organization of the paper}

Section \ref{sec:periodic_orbits} is written for readers not familiar with this kind of flow language and the reduced monodromy in terms of symplectic geometry.\ Furthermore, in the case of $\text{Sp}(1) = \text{SL}(2,\mathbb{R})$, we discuss the stability, the Floquet multipliers and the transversal Conley--Zehnder index of periodic orbits.\ This section is based on the books of Hofer--Zehnder \cite{hofer} and Frauenfelder--van Koert \cite{frauenfelder}, and on the articles by Hofer--Wysocki--Zehnder \cite[Appendix]{hofer_w_z_1}, \cite[Section 3]{hofer_w_z}.\

In order to discuss the symplectic decomposition in a more general and more conceptual way, we introduce in Section \ref{sec:hamiltonian} the concept of Hamiltonian manifold, which generalizes energy hypersurfaces, based on \cite{frauenfelder} and fruitful discussions with Urs Frauenfelder.\ The general statement on the symplectic splitting is formulated in the Symplectic Splitting Theorem \ref{theorem_splitting}.\

In Section \ref{sec:4} we introduce symmetries of Hamiltonian systems and show that the monodromy and reduced monodromy of symmetric periodic orbits satisfies special symmetries.\ The content of this section gives a theoretical framework to specify the Floquet multipliers, in particular the rotation angles, and thereby the index jump.\ This section is based on \cite{frauenfelder_moreno} and helpful discussions with Urs Frauenfelder.\

The subject of Section \ref{sec:spatial_Hills_lunar} is to discuss the spatial Hill lunar problem.\ Firstly, we give a short astronomical lunar overview and a description of Hill's concept.\ Then we derive its Hamiltonian and equation of motion as a limit case of the circular restricted three body problem, following \cite{frauenfelder} where the planar case is studied.\ In addition, we determine the group of linear symmetries and discuss planar as well as spatial symmetric periodic orbits.\ By applying the theory developed in Section \ref{sec:4}, we show in the Subsections \ref{sec:6.5.1} and \ref{sec:6.5.2} how to calculate its reduced monodromy, and for planar ones how its three periods are explicitely defined.\

The goal of Section \ref{sec:6} is to prove Theorems \ref{theorem_a} and \ref{theorem_b}.\ Frauenfelder and van Koert showed in \cite[Chapter 8]{frauenfelder} the bifurcation scenario for the planar problem.\ We use and extend their technique to the spatial problem and prove the existence of two additional (spatial collision) periodic orbits.\ Moreover, by an explicit calculation of Morse--Bott indices, we determine their Conley--Zehnder indices, and the anomalistic and draconitic periods for the families $g$ and $f$ for very small energies.\

In Section \ref{sec:local_rabinowitz} we briefly sketch the local equivariant Rabinowitz-Floer homology and its Euler characteristic associated to good and bad orbits.\

The data for our numerical results for all the families from Table \ref{families_in_this_paper} are presented in Section \ref{sec:8}.\ They consist of all relevant data such as the inital conditions, the Floquet multipliers, the indices and the periods.\ We also plot these periodic orbits in the configuration space and give overviews in form of bifurcation graphs such as in the Figure \ref{overview_conclusion}.\

For our numerical approximation of the linearized flow and thereby for the computation of relevant data, we have written several Python codes which are collected in the Appendix.\

\textbf{Outlook.}\ In this paper, we interpret symmetric periodic orbits as periodic orbits.\ In view of the fixed point set of an anti-symplectic involution, which is a Lagrangian submanifold, they can also be interpreted as Lagrangian intersection points.\ Instead of the Conley--Zehnder index, we can therefore assign to them the Lagrangian Maslov index $\mu_L$ and hence consider the local Lagrangian Floer homology.\ However, the difference of $\mu_{CZ}$ and $\mu_L$ is the Hörmander index.\ By using the formula from Theorem 2.3 in \cite{frauenfelder_van}, these indices for all families in this paper will be computed as well.\ Furthermore, the local equivariant Rabinowitz Floer homology in Section \ref{sec:8} will be worked out in much more detail.\ Moreover, our general construction can be applied to every Hamiltonian system with the relevant symmetries.\

\textbf{Acknowledgement.}\ The author would like to thank Urs Frauenfelder, who was the supervisor at the Universität Augsburg of the author's Master's thesis, on which this research of his PhD thesis is based, for valuable discussions and support.\ He is deeply grateful to its supervisor Felix Schlenk for reading carefully the text, giving improvements and also helpful inputs.\ This work is supported by the SNF under grant No.\ 200021-181980/2.\ Moreover, he is grateful to Vassilis S.\ Kalantonis and Alexander Batkhin for providing the initial data for the orbits they have found.\ Furthermore, he is also thankful to his family for the support during the difficult time by the sudden death of his father Ibrahim Aydin on 8 December 2019.\ This paper is in memory of him.\ May God be merciful to him.\

\section{Periodic orbits of Hamiltonian systems} \label{sec:periodic_orbits}

\subsection{Periodic orbits, monodromy and reduced monodromy}
\label{sec:2.1}

\noindent
Let $(M,\omega)$ be a $2n$ dimensional symplectic manifold, i.e.\ $\omega \in \Omega^2(M)$ is a 2-form, called symplectic form on $M$, which is closed, i.e.\ $d \omega =0$, and non-degenerate.\

\begin{example}
The archetypical example is the cotangent bundle $T^*Q$ of a $n$ dimensional smooth manifold $Q$.\ In physics $T^*Q$ corresponds to phase space and $Q$ to configuration space.\ In canonical coordinates $(q,p)=(q_1,...,q_n,p_1,...,p_n) \in T^*Q$, where $q$ is a point in configuration space and $p \in T^*_qQ$ its momentum in the fiber, the cotangent bundle is endowed with the canonical symplectic form $\omega = \sum\limits_{i=1}^{n}dq_i \wedge dp_i$.\
\end{example}

\begin{remark}
The first analytical condition of a symplectic form forces that all symplectic manifolds locally look like the Euclidean space $\mathbb{R}^{2n}$ equipped with the standard symplectic form (Darboux's theorem, see for instance \cite[pp. 10--11]{hofer} for details).\\
For the second algebraic condition:\ Since $\omega \in \Omega^2(M)$, we get for each point $p \in M$ an alternating multilinear map
\begin{align} \label{omega_1}
\omega_p \colon T_p M \times T_p M \to \mathbb{R}.
\end{align}
Non-degeneracy means that for all $p \in M$ and for all $0 \neq v \in T_pM$ there exists $0 \neq w \in T_pM$ such that $\omega_p(v,w) \neq 0$.\ Equivalently, if $\omega_p(v,w)=0$ for all $v \in T_pM$, then $w=0$.\ Moreover, this condition implies that the top exterior power $ \omega^{\wedge n} = \omega \wedge ... \wedge \omega \neq 0 $ is a volume form, hence symplectic manifolds are necessarily even-dimensional and orientable.\ Furthermore, (\ref{omega_1}) defines a map
\begin{align} \label{omega_2}
T_p M \to T_p^* M,\quad v \mapsto \omega_p(v,\cdot).
\end{align}
Being non-degenerate is equivalent to (\ref{omega_2}) being an isomorphism.\
\end{remark}

Let $H \in C^{\infty}(M,\mathbb{R})$ be an autonomous Hamiltonian function and $X_H$ the Hamiltonian vector field, which is uniquely defined by
\begin{align} \label{ham_vector_field}
dH (\cdot) = \omega(X_H,\cdot).
\end{align}
\begin{definition}
	A \textbf{periodic orbit} $x \in C^{\infty}(\mathbb{R},M)$ of $H$ is a solution to the first order ODE
	\begin{align} \label{periodic_orbit_hamiltonian}
	\dot{x}(t) = X_H\big(x(t)\big),\quad t \in \mathbb{R}
	\end{align}
	such that there exists a period $T > 0$ with $x(t+T) = x(t)$ for all $t \in \mathbb{R}$.\
	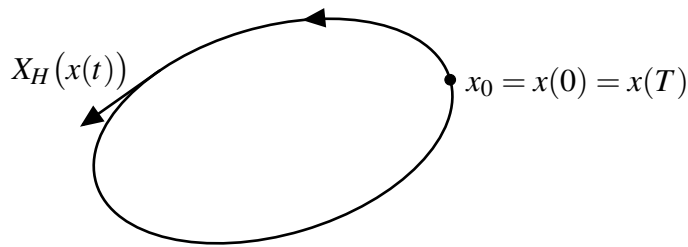
\begin{figure}[H]\centering
		\begin{tikzpicture}[line cap=round,line join=round,>=triangle 45,x=1.0cm,y=1.0cm]
		\clip(-6,-1.7) rectangle (6,1.9);
		
		\draw [rotate around={-163.3773418816643:(0.040509448327610796,0.04313429971099568)}] (0.040509448327610796,0.04313429971099568) [decoration={markings, mark=at position 0.685 with {\arrow{>}}}, postaction={decorate},line width=1pt] ellipse (2.428036135165941cm and 1.374491119408365cm);
		\draw (2.44,1.0400000000000003) node[anchor=north west] {$x_0 = x(0)=x(T)$};
		\draw (-3.54,1.2400000000000002) node[anchor=north west] {$X_H\big(x(t)\big)$};
		\draw [->,line width=1pt] (-1.4500880042828483,0.8455453250780841) -- (-2.5,0.09999999999999998);
		\begin{scriptsize}
		\draw [fill=black] (2.3712772886663096,0.7231285200436788) circle (2pt);
		\end{scriptsize}
		\end{tikzpicture}
		\caption{Periodic orbit}
		\label{figure3}
	\end{figure}
\end{definition}
\noindent
A special case of periodic orbits are critical points of $H$.\ For example the Lagrange points of the circular restricted three body problem (see Section \ref{subsec:discussion}) are such trivial periodic orbits.\ For a non-trivial periodic orbit $x$ we define the first return time by
$$T_x := \min \{ T \geq 0 \mid x(T) = x(0) \},$$
hence for every period $T$ we obtain $T = n \cdot T_x$ for some $n \in \mathbb{Z}$.\ Since $H$ is autonomous, there are no self-intersections, i.e.\ for $\tau \in (0,T_x)$ it holds that $x(0) \neq x(\tau).$\

The solutions of (\ref{periodic_orbit_hamiltonian}) generate the family of Hamiltonian flows $\varphi_H^t \colon M \to M$ of $X_H$ via
$$\varphi_H^0 = \text{id}_M,\quad \frac{d}{dt}\varphi_H^t(z) = X_H \big(\varphi_H^t(z)\big),$$
which are symplectomorphisms, meaning that the symplectic form $\omega$ is preserved under $\varphi_H^t$, i.e.\ $(\varphi_H^t)^* \omega = \omega$.\ Since the pullback commutes with the wedge product, the flow is volume preserving as well, meaning that $ (\varphi_H^t)^* \omega^{\wedge n} = \omega^{\wedge n}. $
\begin{remark}
	Take $Q=\mathbb{R}^n$ as configuration space and as phase space its trivial cotangent bundle $T^*\mathbb{R}^n = \mathbb{R}^n \times \mathbb{R}^n$ with the canonical symplectic form.\ We have
	$$dH = \sum\limits_{i=1}^{n} \bigg(\frac{\partial H}{\partial q_i} dq_i  + \frac{\partial H}{\partial p_i} dp_i\bigg),\quad X_H = \sum\limits_{i=1}^n \bigg( \frac{\partial H}{\partial p_i} \frac{\partial }{\partial q_i} - \frac{\partial H}{\partial q_i} \frac{\partial }{\partial p_i} \bigg),$$
	since the equation (\ref{ham_vector_field}) is satisfied by $X_H$,
	\begin{align*}
	\sum\limits_{i=1}^n dq_i \wedge dp_i (X_H,\cdot) &= \sum\limits_{i=1}^n \Big( dq_i(X_H)dp_i(\cdot) - dq_i(\cdot)dp_i(X_H) \Big)\\
	&= \sum\limits_{i=1}^n \Bigg( \frac{\partial H}{\partial p_i}dp_i(\cdot) - dq_i(\cdot)\bigg(-\frac{\partial H}{\partial q_i}\bigg) \Bigg)\\
	&= dH(\cdot).
	\end{align*}
	Therefore (\ref{periodic_orbit_hamiltonian}) is equivalent to the Hamiltonian equation of motion
	$$ \frac{dq_i}{dt} = \frac{\partial H}{\partial p_i},\quad \frac{d p_i}{dt} = - \frac{\partial H}{\partial q_i}. $$
\end{remark}
Now we consider the linearised flow along a non-trivial periodic orbit $x$ with $x(0)=x_0 \in M$.\ Since $(\varphi_H^t)^*\omega=\omega,$ we obtain the linear symplectomorphism
\begin{align} \label{symplectic_autom_1}
d\varphi_H^T(x_0) \colon (T_{x_0} M, \omega_{x_0}) \to (T_{x_0}M,\omega_{x_0})
\end{align}
which is called the \textbf{monodromy}.\ We compute
\begin{align} \label{ker_invariant}
X_H(x_0) = \frac{d}{dt}\varphi_H^t(x_0) = \frac{d}{dt}\varphi_H^{t+T}(x_0) = \frac{d}{dt} \varphi_H^T\big(\varphi_H^t(x_0)\big) = d \varphi_H^T(x_0)X_H(x_0).
\end{align}
Since the periodic orbit $x$ is not trivial, $X_H(x_0)$ does not vanish.\ Hence $X_H(x_0)$ is an eigenvector of the monodromy (\ref{symplectic_autom_1}) with eigenvalue 1.\

The time independence of the Hamiltonian $H$ implies that the energy is constant along its flow.\ Thus we can assign the energy value $c = H(x)$ to the periodic orbit.\ For a regular value $c$ of $H$ we know that $dH(x) \neq 0$ which by (\ref{ham_vector_field}) is equivalent to $X_H(x) \neq 0$, for all $x \in H^{-1}(c)$.\ Therefore the level set
\begin{align} \label{energy_hypersurface}
\Sigma := \Sigma_c := H^{-1}(c) \subset M
\end{align}
is a codimension one energy hypersurface that is invariant under the Hamiltonian flow $\varphi_H^t$.\ Its tangent space at a point $x \in \Sigma \subset M$ is given by
$$T_x\Sigma = \{ \xi \in T_xM: dH(x)\xi=0 \} = \text{ker}\big( dH(x_0) \big)$$
and we obtain the following decomposition of the $2n$ dimensional symplectic vector space
\begin{align} \label{orientable}
T_xM =  T_x \Sigma \oplus \langle \nabla H(x) \rangle.
\end{align}
Recall that a symplectic manifold is orientable via the volume form $\omega^{\wedge n}$.\ Since the line bundle $ \langle \nabla H(x) \rangle $ is also orientable, by (\ref{orientable}) the energy hypersurface $\Sigma$ is orientable.\
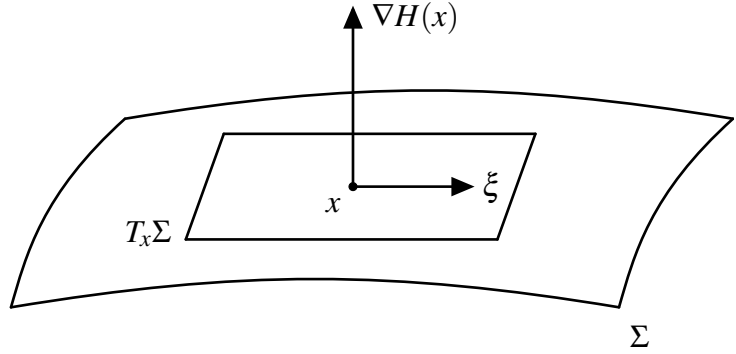
\begin{figure}[H]
	\centering
	\begin{tikzpicture}[line cap=round,line join=round,>=triangle 45,x=1.0cm,y=1.0cm]
	\clip(-5,-1.6) rectangle (5.5,3.1);
	
	\draw [line width=1pt] (-4,-1) .. controls (-1,-0.5) and (1,-0.5) .. (4,-1);
	
	\draw [line width=1pt] (-2.5,1.5) .. controls (0.5,2) and (2.5,2) .. (5.5,1.5);
	
	\draw [line width=1pt] (-4,-1) .. controls (-3.8,-0.3) and (-3.6,0.5) .. (-2.5,1.5);
	
	\draw [line width=1pt] (4,-1) .. controls (4.2,-0.3) and (4.4,0.5) .. (5.5,1.5);
	
	\draw (4,-1.1) node[anchor=north west] {$\Sigma$};
	
	\draw[fill] (0.5,0.6) circle (1.5pt);
	\draw (0,0.6) node[anchor=north west] {$x$};
	
	\draw [line width=1pt] (-1.7,-0.1) -- (2.4,-0.1);
	\draw [line width=1pt] (-1.2,1.3) -- (2.9,1.3);
	\draw [line width=1pt] (-1.7,-0.1) -- (-1.2,1.3);
	\draw [line width=1pt] (2.4,-0.1) -- (2.9,1.3);
	\draw (-2.65,0.3) node[anchor=north west] {$T_x \Sigma$};
	
	\draw [->,line width=1pt] (0.5,0.6) -- (0.5,3);
	\draw (0.6,3.2) node[anchor=north west] {$\nabla H(x)$};
	
	\draw [->, line width=1pt] (0.5,0.6) -- (2.1,0.6);
	\draw (2.05,0.95) node[anchor=north west] {$\xi$};
	
	\end{tikzpicture}
	\caption{The tangent space of the energy hypersurface $\Sigma$ and the gradient of $H$}
	\label{figure4}
\end{figure}
\noindent
By Definition (\ref{ham_vector_field}) of $X_H$ and by the anti-symmetry of $\omega$, we have
$$dH(x)X_H(x) = \omega \big(X_H(x), X_H(x)\big) = 0,\quad x \in \Sigma.$$
Hence the Hamiltonian vector field $X_H$ is tangent to the level sets (\ref{energy_hypersurface}) of $H$.\ In other words, $X_H$ defines a non-vanishing vector field on $\Sigma$, i.e.\
$$X_H(x) \in T_x \Sigma \setminus \{0\}.$$
For $x \in \Sigma$ we consider the subspace
$$\text{ker}\omega_x = \{ v \in T_x\Sigma: \omega_x(v,w) = 0, \forall w \in T_x\Sigma \} \subset T_x\Sigma$$
and compute for $\xi \in T_x \Sigma$ that
$$\omega_x \big(X_H(x),\xi\big) = dH(x)\xi = 0,$$
meaning that
$$X_H(x) \in \text{ker}\omega_x.$$
By the non-degeneracy of $\omega$,
\begin{align} \label{line_bundle}
\text{ker} \omega_x = \langle X_{H} (x) \rangle \subset T_x\Sigma.
\end{align}
Thus $\text{ker}\omega \vert _{\Sigma} \subset T\Sigma$ is a one-dimensional distribution, i.e.\ ($\text{ker}\omega \vert _{\Sigma},\pi_{\text{ker}\omega \vert _{\Sigma}},\Sigma$) is a line subbundle of the tangent bundle ($T\Sigma,\pi,\Sigma$).\ So the line bundle $\text{ker}\omega_{|\Sigma}$ is spanned by $X_H \vert _{\Sigma}$, which is a non-vanishing section of the line bundle $\text{ker}\omega \vert _{\Sigma}$, i.e.\ a smooth map $X_{H} \vert _{\Sigma}: \Sigma \to \text{ker}\omega \vert _{\Sigma}$ such that $\pi_{\text{ker}\omega \vert _{\Sigma}} \circ X_{H} \vert _{ \Sigma} = \text{id}_\Sigma$.\ The pair $(\Sigma,\omega)$ has the further good property that the restriction of the symplectic form $\omega$ to $\Sigma$ is closed, i.e.\ $d\omega \vert _{\Sigma} = 0.$\ These two properties motivate the general formulation in terms of a Hamiltonian manifold in Section \ref{sec:involutive}.\

Furthermore we have a foliation on $\Sigma$ (see Figure \ref{foliation}) where a leaf $L \subset \Sigma$ of the foliation is a one-dimensional submanifold such that $T_xL = \text{ker}\omega_x,$ for all $x \in L$.\ Indeed, a leaf trough $x$ corresponds to a trajectory of the Hamiltonian flow $\varphi_H^t$, i.e.\ $L_x = \{ \varphi_H^t(x): t \in \mathbb{R} \}$.\ Compact leaves are periodic orbits, which are diffeomorphic to $S^1$, and non-compact leaves are diffeomorphic to $\mathbb{R}$, hence non-periodic orbits.\ Describing the leaves instead of the flow is somewhat easier, since one does not care about time.\ Therefore, to understand the foliation on $\Sigma$ means to understand the dynamics of $X_{H} | _{\Sigma}$ up to time-parametrization.\
\begin{figure}[H]
	\centering
	\begin{tikzpicture}[line cap=round,line join=round,>=triangle 45,x=1.0cm,y=1.0cm]
	\clip(-5,-1.5) rectangle (5.5,2);
	
	\draw [line width=1pt] (-4,-1) .. controls (-1,-0.5) and (1,-0.5) .. (4,-1);
	
	\draw [line width=1pt] (-2.5,1.5) .. controls (0.5,2) and (2.5,2) .. (5.5,1.5);
	
	\draw [line width=1pt] (-4,-1) .. controls (-3.8,-0.3) and (-3.6,0.5) .. (-2.5,1.5);
	
	\draw [line width=1pt] (4,-1) .. controls (4.2,-0.3) and (4.4,0.5) .. (5.5,1.5);
	
	\draw (4,-1.1) node[anchor=north west] {$\Sigma$};
	
	\draw[fill] (0.5,0.6) circle (1.5pt);
	\draw (0,0.6) node[anchor=north west] {$x$};
	
	\draw [line width=1pt] (-1.7,-0.1) -- (2.4,-0.1);
	\draw [line width=1pt] (-1.2,1.3) -- (2.9,1.3);
	\draw [line width=1pt] (-1.7,-0.1) -- (-1.2,1.3);
	\draw [line width=1pt] (2.4,-0.1) -- (2.9,1.3);
	\draw (-2.65,0.3) node[anchor=north west] {$T_x \Sigma$};
	
	\draw [line width=1pt] (-0.6,0.6) -- (1.8,0.6);
	\draw (0.7,1.25) node[anchor=north west] {$\text{ker}\omega_x$};
	
	\end{tikzpicture}
	\caption{The characteristic foliation on $\Sigma$}
	\label{foliation}
\end{figure}
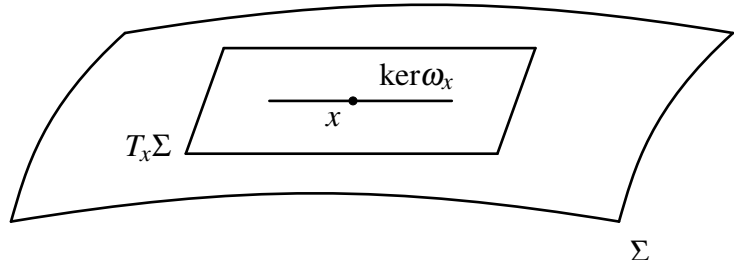
\noindent
Moreover, the symplectic form $\omega$ induces a symplectic form on the quotient space
$$T_x\Sigma / ( \text{ker}\omega_x \vert _{T_x\Sigma} )$$
which is in particular orientable.\ We obtain the quotient bundle $T\Sigma / ( \text{ker}\omega \vert _{ \Sigma} )$ over $\Sigma$, which is a symplectic vector bundle of rank $2n-2$.\ Recall that the tangent bundle $T \Sigma$ is orientable.\ The line bundle $\text{ker}\omega|_{\Sigma}$ is also orientable, since it has the non-vanishing section $X_H | _{\Sigma}$.\

Back to the monodromy (\ref{symplectic_autom_1}).\ We restrict it to the energy hypersurface $\Sigma$, so let $x$ be a periodic orbit on $\Sigma$ with $x(0) = x_0 \in \Sigma$.\ We obtain the linear diffeomorphism
$$d \varphi_H^T | _{\Sigma}(x_0) \colon (T_{x_0} \Sigma, \omega_{x_0} \vert _{T_{x_0}\Sigma}) \to (T_{x_0} \Sigma, \omega_{x_0} \vert _{T_{x_0}\Sigma}),$$
which leaves the one-dimensional distribution $\text{ker}\omega \vert_\Sigma \subset T\Sigma$ invariant by (\ref{ker_invariant}).\ This induces a symplectic bundle map, which is characterized by the commutative diagram
$$
\begin{tikzcd}
T\Sigma \arrow[r, "d\varphi_{H}^T | _{\Sigma}"] \arrow[swap,d, "\bar{\pi}"] & T\Sigma  \arrow[d, "\bar{\pi}"]\\
T\Sigma / (\text{ker}\omega\vert_{ \Sigma}) \arrow[r] & T\Sigma / (\text{ker}\omega\vert_{ \Sigma})
\end{tikzcd}
$$
\begin{definition}
The induced map
\begin{align} \label{symplectic_autom_2}
A:= \overline{d\varphi _H ^{T_x} | _{\Sigma} (x_0)} \colon T_{x_0}\Sigma / ( \text{ker}\omega_{x_0} \vert _{T_{x_0}\Sigma} ) \to T_{x_0}\Sigma / ( \text{ker}\omega_{x_0} \vert _{T_{x_0}\Sigma} )
\end{align}
is called \textbf{reduced monodromy} which is a symplectomorphism of the $2n-2$ dimensional symplectic vector space $T_{x_0}\Sigma / ( \text{ker}\omega_{x_0} \vert _{T_{x_0}\Sigma} )$.\ Moreover, the \textbf{Floquet multipliers} are defined as the eigenvalues of (\ref{symplectic_autom_2}) and we call the periodic orbit $x$ \textbf{non-degenerate} if 1 is not an eigenvalue of (\ref{symplectic_autom_2}), which is equivalent to $\text{ker} (A - \text{id} ) = \{0\}.$\
\end{definition}
\noindent
Note that if $\lambda$ is a Floquet multiplier, then so are $1/\lambda, \overline{\lambda}$ and $ 1/ \overline{\lambda}$ (see Figure \ref{figure_7} and \cite[pp.\ 124--125]{frauenfelder} for details) and the eigenvalues of the monodromy and the reduced monodromy differ by the double eigenvalue 1 from (\ref{ker_invariant}).\
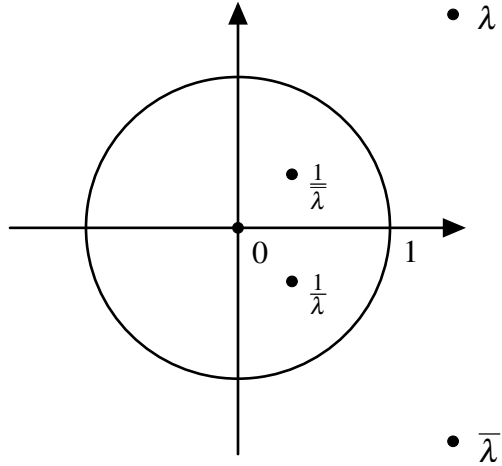
\begin{figure}[H]
	\centering
	\begin{tikzpicture}[line cap=round,line join=round,>=triangle 45,x=1.0cm,y=1.0cm]
	\clip(-3.5,-3.1) rectangle (3.5,3);
	
	\draw [line width=1pt] (0.0,-0.0) circle (2.0cm);
	\draw [->,line width=1pt] (-3.0,0.0) -- (3.0,0.0);
	\draw [->,line width=1pt] (0.0,-3.0) -- (0.0,3.0);
	\draw (3.0,3.1000000000000005) node[anchor=north west] {$\lambda$};
	\draw (3.02,-2.5600000000000005) node[anchor=north west] {$\overline{\lambda}$};
	\draw (0.04,-0.04000000000000001) node[anchor=north west] {$0$};
	\draw (2.02,-0.020000000000000004) node[anchor=north west] {$1$};
	\draw (0.76,1.0000000000000002) node[anchor=north west] {$\frac{1}{\overline{\lambda}}$};
	\draw (0.76,-0.4600000000000001) node[anchor=north west] {$\frac{1}{\lambda}$};
	\begin{scriptsize}
	\draw [fill=black] (0.0,-0.0) circle (2pt);
	\draw [fill=black] (2.83,2.83) circle (2pt);
	\draw [fill=black] (2.83,-2.83) circle (2pt);
	\draw [fill=black] (0.71,0.71) circle (2pt);
	\draw [fill=black] (0.71,-0.71) circle (2pt);
	\end{scriptsize}
	\end{tikzpicture}
	\caption{Complex eigenvalues not on the unit circle}
	\label{figure_7}
\end{figure}

\subsection{The case of $\text{Sp}(1)=\text{SL}(2,\mathbb{R})$}

We consider the case that the reduced monodromy (\ref{symplectic_autom_2}) is a symplectomorphism in
$$\text{Sp}(1)=\text{SL}(2,\mathbb{R}) = \{ \Psi \colon \mathbb{R}^2 \to \mathbb{R}^2 \text{ linear}  \mid \det \Psi = 1\},$$
where linear symplectomorphisms are exactly the linear orientation area-preserving transformations of $\mathbb{R}^2$.\

\subsubsection{Stability and Floquet multipliers}
\label{sec:stability_floquet}

The Floquet multipliers are the zeros of the polynomial $\lambda^2 - \lambda \text{tr}(A) + 1$.\ If $|\text{tr}A|<2$, then the eigenvalues are on the unit circle, so of the form $e^{\pm \text{i} \theta}$.\ If $|\text{tr}A|>2$, then they are real and of the form $\lambda, 1 / \lambda$.\ In particular, they are given respectively by
\begin{align*}
\frac{1}{2} \text{tr}(A) \pm \text{i} \frac{1}{2} \sqrt{4 - \big(\text{tr}(A)\big)^2}, \quad \frac{1}{2} \text{tr}(A) \pm \frac{1}{2} \sqrt{\big(\text{tr}(A)\big)^2 - 4}.
\end{align*}
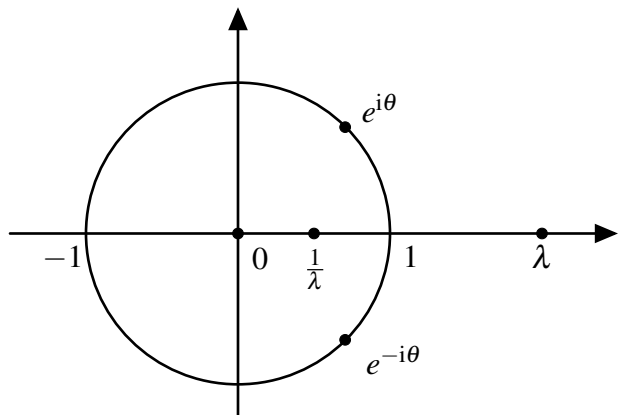
\begin{figure}[H]
	\centering
	\begin{tikzpicture}[line cap=round,line join=round,>=triangle 45,x=1.0cm,y=1.0cm]
	\clip(-5,-2.4) rectangle (5,3);
	
	\draw [line width=1pt] (0.0,0.0) circle (2.0cm);
	\draw [->,line width=1pt] (-3.0,0.0) -- (5,0.0);
	\draw [->,line width=1pt] (0.0,-3.0) -- (0.0,3.0);
	\draw (1.48,2.0000000000000004) node[anchor=north west] {$e^{\text{i} \theta}$};
	\draw (1.54,-1.3400000000000003) node[anchor=north west] {$e^{- \text{i} \theta}$};
	\draw (0.04,-0.04000000000000001) node[anchor=north west] {$0$};
	\draw (2.02,-0.020000000000000004) node[anchor=north west] {$1$};
	\draw (-2.72,-0.020000000000000004) node[anchor=north west] {$-1$};
	\draw (0.7199999999999999,-0.06000000000000001) node[anchor=north west] {$\frac{1}{\lambda}$};
	\draw (3.7200000000000015,0.0) node[anchor=north west] {$\lambda$};
	\begin{scriptsize}
	\draw [fill=black] (0.0,-0.0) circle (2pt);
	\draw [fill=black] (1.41,1.41) circle (2pt);
	\draw [fill=black] (1.41,-1.41) circle (2pt);
	\draw [fill=black] (1,0) circle (2pt);
	\draw [fill=black] (4,0) circle (2pt);
	\end{scriptsize}
	\end{tikzpicture}
	\caption{Eigenvalues of a $2 \times 2$ symplectic matrix}
	\label{1_figure_eigenvalues}
\end{figure}
\noindent
Furthermore, $x$ is called \textbf{elliptic} if $|\text{tr}(A)|<2$, \textbf{positive hyperbolic} if $\text{tr}(A)>2$ and \textbf{negative hyperbolic} if $\text{tr}(A)< - 2$.\ Geometrically, in the elliptic case the reduced monodromy is conjugate to a rotation, i.e.\
$$ A = \overline{d\varphi _H ^{T_x} | _{\Sigma} (x_0)} \sim \begin{pmatrix}
\cos \theta & - \sin \theta\\
\sin \theta & \cos \theta
\end{pmatrix}. $$
Hence elliptic periodic orbits have the property that orbits starting sufficiently close to $x$, i.e.\ neighbouring orbits of the same energy, remain near $x$ for a long time, while for hyperbolic orbits they may fly away.\ In the elliptic cases, to measure the number of complete rotations of the neighbouring orbits during $T_x$ the Conley--Zehnder index is helpful.\

\subsubsection{The Conley--Zehnder index}
\label{sec:conley_zehnder_index}

In 1984, Charles Conley and Eduard Zehnder \cite{conley_zehnder} defined an index theory, which generalizes the usual Morse index for closed geodesics on a Riemannian manifold.\ We refer the curious reader for details on Morse Theory to the books of Milnor \cite{milnor}, Banyaga--Hurtubise \cite{banyaga} and for Morse--Bott Theory, which is a generalization, to the articles of Bott \cite{bott}, Banyaga--Hurtubise \cite{banyaga} and Frauenfelder \cite[Appendix]{frauenfelder_3}.\

One can roughly describe the Conley--Zehnder index as a mean winding number for the linearized Hamiltonian flow along the orbit $x$ or the number of times that an eigenvalue crosses 1.\ It roughly measures how often neighbouring orbits of the same energy wind round the orbit $x$.\ Since we treat the reduced monodromy (\ref{symplectic_autom_2}) in Sp(1), we consider the transversal Conley--Zehnder index with standard normalization or counter-clockwise normalization for non-degenerate paths, as defined by Hofer--Wysocki--Zehnder \cite[Appendix]{hofer_w_z_1}, \cite[Section 3]{hofer_w_z} where the details can be seen.\ Explicitly, it is given in the following way.\

Let $b_1(0), b_2(0)$ and $X_H|_{\Sigma}(x_0)$ be a basis for the 3 dimensional vector space $T_{x_0}\Sigma$ such that $\omega_{x_0}\big(b_1(0),b_2(0)\big) = 1$.\ Note that $\omega_{x_0} \big( b_i(0), X_H|_{\Sigma}(x_0) \big) = 0$, for $i=1,2$.\ The first two basis vectors induce a symplectic basis for the 2 dimensional symplectic vector space $T_{x_0}\Sigma / ( \text{ker}\omega_{x_0} \vert _{T_{x_0}\Sigma} )$ and we denote by $P:= \langle b_1(0), b_2(0) \rangle_{\mathbb{R}}$ the 2-plane.\

We choose a smooth disc map $\overline{x} \in C^{\infty} ( \mathbb{D}, \Sigma )$, where $\mathbb{D} = \{ z \in \mathbb{C} \mid |z| \leq 1 \}$, such that on the boundary it satisfies $\overline{x} (e^{2\pi \text{i}t/T_x}) = x(t)$.\ Furthermore, we fix a symplectic trivialization for the pullback bundle $\tau \colon \mathbb{D} \times \mathbb{R}^2 \to \overline{x}^*T\Sigma / \text{ker}\omega|_{\Sigma} $.\ For details about such trivializations we refer to \cite[Section 2.6]{mcduff_salamon}.\ With respect to these choices, the linearized flow along $x$ generates a path $\Phi_x \colon [0,T_x] \to \text{Sp}(1)$ of symplectic matrices in $\mathbb{R}^2$ defined by
$$
\begin{tikzcd}[column sep=4.5em]
\mathbb{R}^2 \arrow[r,dashed, "\Phi_x(t)"] \arrow[swap,d, "\big(\tau(1)\big)^{-1}"] & \mathbb{R}^2 \\
T_{x_0}\Sigma / ( \text{ker}\omega_{x_0} \vert _{T_{x_0}\Sigma}) \arrow[r,"\overline{d\varphi _H ^t | _{\Sigma} (x_0)}"] & T_{x(t)}\Sigma / ( \text{ker}\omega_{x(t)} \vert _{T_{x(t)}\Sigma}) \arrow[swap,u, "\tau(e^{2 \pi \text{i}t/T_x})"]
\end{tikzcd}
$$
This path starts at $\Phi_x(0)=\text{id}$ and has a well-defined Conley--Zehnder index, and the transversal Conley--Zehnder index of $x$ is the Conley--Zehnder index of this path, which we denote by $\mu_{CZ}$.\

If $x$ is elliptic, then the 2-plane $P$ was rotated by an angle.\ Consider the rotation function $\theta(t)$ which gives the rotation angle of the 2-plane $P$ at each time $t \in [0,T_x]$.\ Note that $\theta(0)=0$ and $\theta(t)$ is continuous in $t$.\ Then
\begin{align} \label{index_1}
\mu_{CZ} = 2 \lfloor \theta(T_x)/(2\pi) \rfloor + 1
\end{align}
and the number of complete rotations of the neighbouring orbits during $T_x$ is given by
\begin{align} \label{index_2}
\text{rot}(x) := \lfloor \theta(T_x)/(2\pi) \rfloor = \frac{1}{2}\left(\mu_{CZ} - 1\right) \in \mathbb{Z}.
\end{align}
Therefore for every complete rotation the index jumps by 2 and is odd.\

If $x$ is hyperbolic, then the 2-plane $P$ was rotated by $m\pi$ for an integer $m$ and
\begin{align*}
\mu_{CZ} = m \in \begin{cases}
2 \mathbb{Z} + 1 & \text{if $x$ is negative hyperbolic}\\
2 \mathbb{Z} & \text{if $x$ is positive hyperbolic.}
\end{cases}
\end{align*}

By the implicit function theorem, non-degenerate periodic orbits always come in a smooth family of periodic orbits and hence form a smooth orbit cylinder (see Figure \ref{orbitcylinder} and \cite[p.\ 202]{meyer} for details).\ Moreover, all periodic orbits on the orbit cylinder have the same index, since they are connected by a path of non-degenerate orbits.\
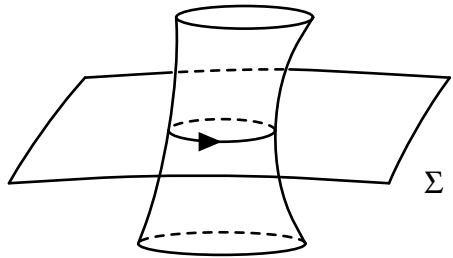
\begin{figure}[H]
	\centering
	\begin{tikzpicture}[line cap=round,line join=round,>=triangle 45,x=1.0cm,y=1.0cm]
	\clip(-4,-1.9) rectangle (3.5,1.8);
	
	\draw [dashed,line width=1pt] (0.4,-1.5) arc (0:180:1.1 and 0.15);
	
	\draw [line width=1pt] (-1.8,-1.5) arc (180:360:1.1 and 0.15);
	
	\draw [dashed,line width=1pt] (0,0) arc (0:180:0.7 and 0.15);
	
	\draw [decoration={markings, mark=at position 0.5 with {\arrow{>}}}, postaction={decorate},line width=1pt] (-1.4,0) arc (180:360:0.7 and 0.15);
	
	\draw [line width=1pt] (0.5,1.5) arc (0:360:0.9 and 0.15);
	
	\draw[rounded corners=40pt,line width=1pt](0.4,-1.5)--(-0.4,0)--(0.5,1.5);
	
	\draw[rounded corners=40pt,line width=1pt](-1.8,-1.5)--(-1.23,0)--(-1.3,1.5);
	
	
	\draw[rounded corners=40pt,line width=1pt](-3.5,-0.7)--(-1,-0.55)--(1.5,-0.7);
	
	
	\draw[rounded corners=20pt,line width=1pt] (1.5,-0.7)--(1.8,0)--(2.3,0.7);
	
	
	\draw[rounded corners=20pt,line width=1pt] (-3.5,-0.7)--(-3.1,0)--(-2.5,0.7);
	
	

	\draw[dash pattern=on 2pt off 2pt,opacity=0] (-2.5,0.7) .. controls (-0.1,0.85) ..
	coordinate[pos=0.21] (A)
	coordinate[pos=0.57] (B)
	coordinate[pos=0.58] (G)
	coordinate[pos=0.1] (C)
	coordinate[pos=0.4] (D)
	coordinate[pos=0.7] (E)
	coordinate[pos=0.19] (F) (2.3,0.7);
	
	
	\draw [line width=1pt] (-2.5,0.7) .. controls (C) .. (F);
	
	\draw [dashed,line width=1pt] (A) .. controls (D) .. (B);
	
	\draw [line width=1pt] (G) .. controls (E) .. (2.3,0.7);

	
	
	\draw (1.82,-0.4) node[anchor=north west] {$\Sigma$};
	
	\end{tikzpicture}
	\caption{Orbit cylinder}
	\label{orbitcylinder}
\end{figure}
Let $x^n$ be the $n$ times iteration of $x$ with first return time $nT_x$, $n \geq 1$.\ We call $x^n$ the $n$-th cover of $x$.\ Assume that $x^n$ is non-degenerate for all $n \geq 1$, then the index iteration (we refer to \cite[p.\ 249]{hofer_w_z_1} for details) is given by
\begin{table}[H]
	\centering
	\begin{tabular}{c|c}
		$x$ & Conley--Zehnder index of $x^n$\\
		\hline pos. / neg. hyperbolic & $n \mu_{CZ}$\\
		elliptic & $2 \lfloor n \theta(T_x)/(2\pi) \rfloor + 1$
	\end{tabular}
	\vspace{0.1cm}
	\caption{Index iteration}
	\label{table_index_iteration}
\end{table}
\begin{remark} \label{remark_2_7}
	Suppose that $x$ is elliptic.\ If $x$ becomes positive hyperbolic, i.e.\ the reduced monodromy moves through the eigenvalue 1, then $\mu_{CZ}$ jumps by $\pm 1$.\ If the rotation angle moves through 0 to positive hyperbolic, then $\mu_{CZ}$ jumps by $-1$ and if it goes through $2 \pi$ to positive hyperbolic, then $\mu_{CZ}$ jumps by $+1$.\ For the other way, the change of index is exactly backwards, see the Figure \ref{scenario_1}.\
	
	For the cases from elliptic to negative hyperbolic or from negative hyperbolic to elliptic, the Conley--Zehnder index of $x$ does not change, but the reduced monodromy of its double cover crosses the eigenvalue 1, hence its Conley--Zehnder index jumps by $\pm 1$.\ Note that this gives rise to a bad orbit (see Example \ref{example_7_2} in Subsection \ref{sec:7.1}) and furthermore, the double cover of a negative hyperbolic periodic orbit is positive hyperbolic.\
	
	In the elliptic case, if the roation angle is a $\tilde{k}$-th root of unity, for $\tilde{k} \geq 3$ the $\tilde{k}$-th cover is still elliptic and its reduced monodromy goes through the eigenvalue 1, thus  its Conley--Zehnder index jumps by $\pm 2$.\
	\begin{figure}[H]
		\centering
		\begin{tikzpicture}[line cap=round,line join=round,>=triangle 45,x=1.0cm,y=1.0cm]
		\clip(-3,-2.4) rectangle (14,3.7);
		
		\draw [decoration={markings, mark=at position 0.07 with {\arrow{<}}}, postaction={decorate},line width=1pt] [decoration={markings, mark=at position 0.94 with {\arrow{>}}}, postaction={decorate},line width=1pt] [line width=1pt] (0.0,0.0) circle (2.0cm);
		\draw [decoration={markings, mark=at position 0.8 with {\arrow{>}}}, postaction={decorate},line width=1pt] [line width=1pt] [->,line width=1pt] (-2.7,0.0) -- (4.5,0.0);
		\draw [->,line width=1pt] (0.0,-3.0) -- (0.0,3.0);
		\draw (0.04,-0.04000000000000001) node[anchor=north west] {$0$};
		\draw (2.02,-0.020000000000000004) node[anchor=north west] {$1$};
		\draw (-2.72,-0.020000000000000004) node[anchor=north west] {$-1$};
		\draw (2.2,1) node[anchor=north west] {$\mu_{CZ} - 1$};
		\draw (2.2,-0.5) node[anchor=north west] {$\mu_{CZ} + 1$};
		\draw (-2.7,3.7) node[anchor=north west] {from elliptic to positive hyperbolic};
		\begin{scriptsize}
		\draw [fill=black] (0.0,0.0) circle (2pt);
		\draw [fill=black] (2,0.0) circle (2pt);
		\draw [fill=black] (-2,0.0) circle (2pt);
		\draw [fill=black] (1.41,1.41) circle (2pt);
		\draw [fill=black] (1.41,-1.41) circle (2pt);
		\draw [fill=black] (3.5,0) circle (2pt);
		\end{scriptsize}
		
		\draw [decoration={markings, mark=at position 0.07 with {\arrow{>}}}, postaction={decorate},line width=1pt] [decoration={markings, mark=at position 0.94 with {\arrow{<}}}, postaction={decorate},line width=1pt] [line width=1pt] (9,0.0) circle (2.0cm);
		\draw [decoration={markings, mark=at position 0.8 with {\arrow{<}}}, postaction={decorate},line width=1pt] [line width=1pt] [->,line width=1pt] (-2.7+9,0.0) -- (4.5+9,0.0);
		\draw [->,line width=1pt] (9,-3.0) -- (9,3.0);
		\draw (0.04+9,-0.04000000000000001) node[anchor=north west] {$0$};
		\draw (2.02+9,-0.020000000000000004) node[anchor=north west] {$1$};
		\draw (-2.72+9,-0.020000000000000004) node[anchor=north west] {$-1$};
		\draw (2.2+9,1) node[anchor=north west] {$\mu_{CZ} + 1$};
		\draw (2.2+9,-0.5) node[anchor=north west] {$\mu_{CZ} - 1$};
		\draw (-2.7+9,3.7) node[anchor=north west] {from positive hyperbolic to elliptic};
		\begin{scriptsize}
		\draw [fill=black] (9,0.0) circle (2pt);
		\draw [fill=black] (2+9,0) circle (2pt);
		\draw [fill=black] (-2+9,0) circle (2pt);
		\draw [fill=black] (1.41+9,1.41) circle (2pt);
		\draw [fill=black] (1.41+9,-1.41) circle (2pt);
		\draw [fill=black] (3.5+9,0) circle (2pt);
		\end{scriptsize}
		
		\end{tikzpicture}
		\caption{The index jump by $\pm 1$ in the elliptic and positive hyperbolic transitions}
		\label{scenario_1}
	\end{figure}
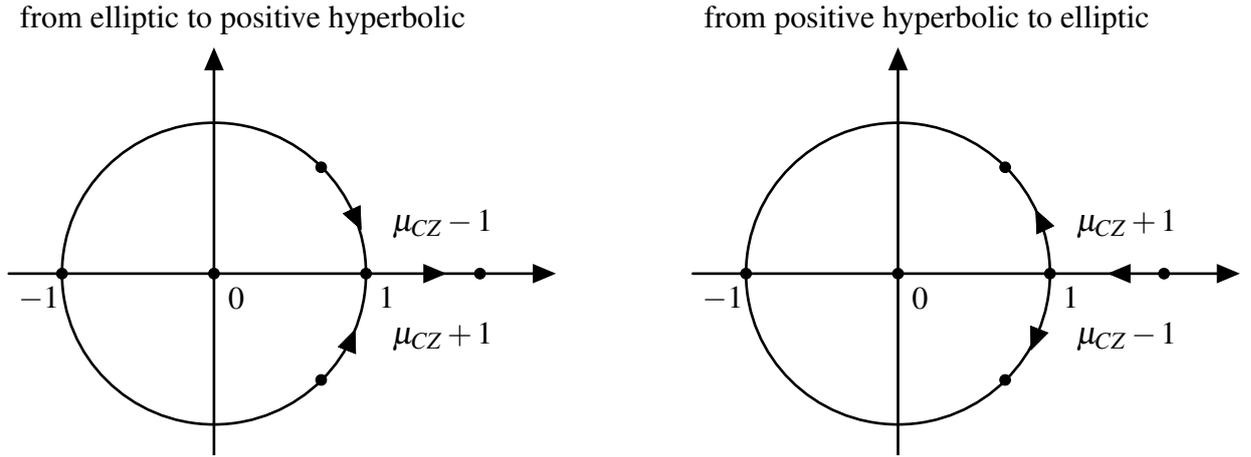
	\noindent
	The index jump gives rise to the bifurcation of new families of periodic orbits (see discussion in the Subsection \ref{sec8.2}).\
\end{remark}

\section{Hamiltonian manifolds}
\label{sec:hamiltonian}

\begin{center}
	\textit{``A Hamiltonian manifold is the odd-dimensional analog of a symplectic manifold."}
\end{center}
\begin{flushright}
	- U. Frauenfelder and O. van Koert in \cite[p.\ 20]{frauenfelder}
\end{flushright}

\subsection{Involutive Hamiltonian manifolds and Symplectic Splitting}
\label{sec:involutive}

The archetypical examples of Hamiltonian manifolds are energy hypersurfaces, see (\ref{energy_hypersurface}) above.
\begin{definition}
A \textbf{Hamiltonian manifold} is a pair $(\Sigma,\omega)$ where $\Sigma$ is a $2n-1$ dimensional manifold and $\omega \in \Omega^2(\Sigma)$ a closed $2$-form such that $\text{ker}\omega \subset T\Sigma$ is a one-dimensional distribution.\ The $2$-form $\omega$ is called a \textbf{Hamiltonian structure} on $\Sigma$.
\end{definition}
\begin{definition}
A \textbf{Hamiltonian vector field} $X$ on a Hamiltonian manifold is a non-vanishing section of the line bundle $\text{ker}\omega$. 
\end{definition}
\begin{remark}
Since the Lie derivative vanishes by Cartan's identity, $\mathcal{L}_X \omega = d \iota_X \omega + \iota_x d \omega = 0$, the Hamiltonian structure is preserved under the flow of $X$ for all times $t \in \mathbb{R}$, i.e. $(\varphi_X^t)^* \omega = \omega.$
\end{remark}
\begin{remark}
As seen before in the case of energy hypersurfaces (\ref{energy_hypersurface}), a Hamiltonian manifold comes with a foliation on $\Sigma$ (see Figure \ref{foliation}), whose leaves $L \subset \Sigma$ are the one-dimensional submanifolds such that $T_xL = \text{ker}\omega_x$, for all $x \in L$.\ Compact leaves are diffeomorphic to $S^1$ and non compact ones are diffeomorphic to $\mathbb{R}$.
\end{remark}
\begin{definition}
A \textbf{symplectic involution} $\sigma$ on a Hamiltonian manifold $(\Sigma,\omega)$ is a diffeomorphism $\sigma: \Sigma \to \Sigma$ such that $\sigma^2=\text{id}_{\Sigma}$ and $\sigma^* \omega = \omega$.
\end{definition}\
\noindent
We next study the properties of the linear isomorphism
\begin{align} \label{linear_isomorphism}
d \sigma (x) : T_x \Sigma \to T_{\sigma(x)}\Sigma,\quad x \in \Sigma.
\end{align}
\begin{lemma1} \label{lemma_3_6}
	The one-dimensional distribution is invariant under $d\sigma(x)$, i.e.\ if $\xi \in \text{ker}\omega_x$, then
	$$d\sigma(x)\xi \in \text{ker}\omega_{\sigma(x)}.$$
\end{lemma1}
\begin{proof}
	Let $\xi \in \text{ker}\omega_x$, i.e.
	$$\omega_x(\xi,\eta)=0,\quad \text{for all }\eta \in T_x \Sigma.$$
	By (\ref{linear_isomorphism}) we know that for all $\eta' \in T_{\sigma(x)}\Sigma$ there exists a unique element $\eta \in T_x\Sigma$ such that $d\sigma(x)\eta=\eta'$. Using $\sigma^*\omega = \omega$ we compute for all $\eta' \in T_{\sigma(x)}\Sigma$ that
	$$\omega_{\sigma(x)}\big(d\sigma(x)\xi,\eta' \big) = \omega_{\sigma(x)}\big(d\sigma(x)\xi, d\sigma(x)\eta \big) = \omega_x(\xi,\eta) = 0.$$
\end{proof}\
\noindent
Denote by $F$ the fixed point set of $\sigma$, i.e.\ $F=\text{Fix}(\sigma)=\{ x \in \Sigma \mid \sigma (x) = x \}$.
\begin{remark}
	Let $x \in F$.\ By choosing a Riemannian metric on $\Sigma$ which is $\sigma$-invariant, we can parametrize $F$, locally around $x$, by the restriction of the exponential map to $E_1 \big( d\sigma(x) \big)$.\ Thus $F$ is a submanifold of $\Sigma$ and
	\begin{align*}
	T_x F = E_1 \big( d\sigma(x) \big) = \text{ker}\big( d \sigma(x) - \text{id} \big).
	\end{align*}
\begin{lemma1}
For $x \in F$, the eigenvalues of $d\sigma(x)$ are $\pm 1$.
\end{lemma1}
\begin{proof}
Let $x \in F$ and $\lambda$ be an eigenvalue of $d\sigma(x)$ and $\xi \in T_x\Sigma$ an eigenvector to $\lambda$.\ The calculation
$$\xi = d\sigma^2(x)\xi = d\sigma(x)\big(d\sigma(x)\xi\big) = d\sigma(x)(\lambda\xi) = \lambda d \sigma(x)\xi = \lambda^2 \xi$$
implies that $\lambda=\pm1$.
\end{proof}\
\noindent
For $x \in F=\text{Fix}(\sigma)$ we obtain the two eigenspaces
$$ T_x F = E_1\big(d\sigma(x)\big) = \{ \xi \in T_x\Sigma: d\sigma(x)\xi=\xi \} = \text{ker}\big(d\sigma(x) - \text{id}\big) \subset T_x\Sigma$$
and
$$E_{-1}\big(d\sigma(x)\big) = \{ \eta \in T_x\Sigma: d\sigma(x)\eta=-\eta \} = \text{ker}\big(d\sigma(x) + \text{id}\big) \subset T_x\Sigma.$$
\begin{lemma1} \label{lemma_1}
	The tangent space splits into the direct sum of the eigenspaces, i.e.
	$$T_x\Sigma = E_1\big(d\sigma(x)\big) \oplus E_{-1}\big(d\sigma(x)\big),\quad x \in F.$$
\end{lemma1}
\begin{proof}
	Obviously,
	$$E_1\big(d\sigma(x)\big) \cap E_{-1}\big(d\sigma(x)\big) = \{0\}.$$
	Moreover, with the projection maps
	$$
	\begin{tikzcd}
	& T_x\Sigma \arrow[dl,"\pi_1"'] \arrow{rd}{\pi_2} & \\
	E_1\big(d\sigma(x)\big) &  & E_{-1}\big(d\sigma(x)\big)
	\end{tikzcd}$$
	$$\pi_1 = \frac{1}{2} \big( \text{id} + d\sigma(x) \big) \text{ and } \pi_2 = \frac{1}{2} \big( \text{id} - d\sigma(x) \big)$$
	we can write any $\xi \in T_x\Sigma$ as
	$$\xi = \pi_1(\xi) + \pi_2(\xi) = \frac{1}{2}\big(\xi + d\sigma(x)\xi\big) + \frac{1}{2}\big(\xi - d\sigma(x)\xi\big).$$
\end{proof}
\noindent
\begin{remark} \label{remark_linear_symplectic}
	We recall from linear symplectic geometry:\ Let $(V,\omega)$ be a symplectic vector space and $W \subset V$ a linear subspace.\ The symplectic complement $W^{\omega}$ of $W$ is defined as the subspace
	$$W^{\omega} := \{ v \in V \mid \omega(v,w)=0, \forall w \in W \}.$$
	Furthermore $W$ is called symplectic if $\omega | _{W}$ is symplectic. Equivalently,
	$$W \cap W^{\omega} = \{ 0 \},$$
	i.e. $W$ and $W^{\omega}$ are symplectically orthogonal. In addition, for any subspace $W$ it holds that
	$$\text{dim}V = \text{dim}W + \text{dim}W^{\omega},\quad (W^{\omega})^{\omega} = W.$$
	The non-degeneracy of $\omega$ implies that for a symplectic subspace $W \subset V$ we obtain
	$$V = W \oplus W^{\omega}$$
	and the symplectic complement $W^{\omega} \subset V$ is also symplectic.\ Finally, if $V_1$ and $V_2$ are sub-vector spaces of $V$ that are symplectically orthogonal, then $V_1$ and $V_2$ are symplectic.\
\end{remark}
\begin{definition}
	Let $S \subset T_x\Sigma$ be a linear subspace. The \textbf{symplectic complement} of $S$ is defined as the linear subspace $\{ \xi \in T_x\Sigma \mid \omega_x(\xi,\eta) = 0, \forall \eta \in S \}$.\ We call two subspaces $S_1, S_2 \subset T_x\Sigma$ \textbf{symplectically orthogonal} if
	$$\omega_x(\xi,\eta) = 0,\quad \text{for all } \xi \in S_1, \text{ }\eta \in S_2.$$
\end{definition}
\begin{lemma1} \label{lemma_2}
	The eigenspaces $E_1\big(d\sigma(x)\big)$ and $E_{-1}\big(d\sigma(x)\big)$ are symplectically orthogonal.
\end{lemma1}
\begin{proof}
	Since $\sigma^* \omega = \omega$ we obtain for all $\xi \in E_1 \big( d\sigma(x) \big)$ that
	$$ \omega_x(\xi,\eta) = \omega_{\sigma(x)}\big( d\sigma(x)\xi, d\sigma(x)\eta \big) = \omega_x(\xi,-\eta) = - \omega_x(\xi,\eta)$$
	for all $\eta \in E_{-1}\big(d\sigma(x)\big)$.
\end{proof}
\begin{lemma1} \label{lemma_flow_invariant}
	Let $X$ be a non-vanishing section of the line bundle ker$\omega$ such that $X$ is invariant under the symplectic involution $\sigma$.\ In other words, $\sigma$ and the flow of $X$ commute.\ Then the linearized flow map
	$$ d \varphi_X^t(x) \colon T_x \Sigma \to T_{\varphi_X^t(x)} \Sigma,\quad x \in F $$
	leaves the two eigenspaces invariant.
\end{lemma1}
\begin{proof}
	For $\xi \in E_{\pm 1} \big( d \sigma(x) \big)$
	$$ d \varphi_X^t(x) \xi = \pm d \varphi_X^t(x) \big( d \sigma(x) \xi \big) = \pm d \sigma \big( \varphi_X^t(x) \big) \big( d \varphi_X^t(x) \xi \big)  $$
	and hence
	$$ d \sigma \big( \varphi_X^t(x) \big) \big( d \varphi_X^t(x) \xi \big) = \pm d \sigma^2 \big( \varphi_X^t(x) \big) \big( d \varphi_X^t(x) \xi \big) = \pm d \varphi_X^t(x) \xi, $$
	which means that $d \varphi_X^t(x) \xi \in E_{\pm 1} \Big( d \sigma \big( \varphi_X^t(x) \big) \Big)$.
\end{proof}
Moreover, we obtain an induced $2n-2$ dimensional symplectic vector space
$$T_x\Sigma / \text{ker}\omega_x$$
and a quotient bundle $T\Sigma / \text{ker}\omega$ over $\Sigma$ which is a symplectic vector bundle of rank $2n-2$.\ Therefore, if the Hamiltonian manifold $(\Sigma,\omega)$ is orientable, then the line bundle $\text{ker}\omega$ is also orientable.\ This motivates the next definition and lemma.
\end{remark}
\begin{definition} \label{definition_inv_ham}
An \textbf{involutive Hamiltonian manifold} is a triple $(\Sigma,\omega,\sigma)$, where $(\Sigma,\omega)$ is an orientable Hamiltonian manifold and $\sigma$ a symplectic involution which preserves the orientation of the line bundle $\text{ker}\omega$.
\end{definition}
\begin{corollary1} \label{corollary_1}
Let $(\Sigma,\omega,\sigma)$ be an involutive Hamiltonian manifold and $x \in F = \text{Fix}(\sigma)$.\ Then the line bundle $\text{ker}\omega | _{F}$ is contained in the eigenspace to the eigenvalue $1$, i.e.
$$ \text{ker}\omega_x \subset E_1 \big( d\sigma(x) \big) = T_x F.$$
\end{corollary1}
\begin{proof}
Since $x$ is a fixed point of $\sigma$, the linear map (\ref{linear_isomorphism})
$$d\sigma(x): T_x\Sigma \to T_x\Sigma$$
is an automorphism.\ Together with Lemma \ref{lemma_3_6} we obtain for  $\xi \in \text{ker}\omega_x$ that $d\sigma(x)\xi = \pm \xi$, and in fact $+\xi$ since $\sigma$ is orientation preserving.
\end{proof}
\begin{theorem1}[\textbf{Symplectic Splitting}] \label{theorem_splitting} \textit{
	Let $(\Sigma,\omega,\sigma)$ be an involutive Hamiltonian manifold and $x \in F = \textnormal{Fix}(\sigma)$.\ Then:\
	\begin{itemize}
		\item[1)] The induced $2n-2$ dimensional symplectic vector space at $x$ splits into two symplectic vector spaces,
		\begin{align*}
		T_x\Sigma / \textnormal{ker} \omega_x &= \Big( E_1\big( d\sigma(x) \big) / \textnormal{ker} \omega_x \Big) \oplus \Big( E_{-1}\big( d\sigma(x) \big) \Big)\\
		&= \big( T_x\textnormal{Fix}(\sigma) / \textnormal{ker} \omega_x \big) \oplus \Big( E_{-1}\big( d\sigma(x) \big) \Big).
		\end{align*}
		\item[2)] Let $X$ be a non-vanishing section of the line bundle \textnormal{ker}$\omega$ such that $X$ is invariant under the symplectic involution $\sigma$.\ Then the induced linearized flow map
		$$ \overline{d\varphi _X ^t (x)} \colon T_{x}\Sigma / \textnormal{ker}\omega_{x} \to T_{\varphi _X ^t (x)}\Sigma / \textnormal{ker}\omega_{\varphi _X ^t (x)} $$
		is a $(2n-2) \times (2n-2)$ symplectic matrix that splits into two symplectic maps.\ More precisely,
		\begin{align} \label{decomposition}
		\overline{d\varphi _X ^t (x)} = \begin{pmatrix}
		A_1 & 0\\
		0 & A_2
		\end{pmatrix},
		\end{align}
		where
		$$ A_1 \colon T_{x} \textnormal{Fix}(\sigma) / \textnormal{ker}\omega_{x} \to T_{\varphi _X ^t (x)} \textnormal{Fix}(\sigma) / \textnormal{ker}\omega_{\varphi _X ^t (x)} $$
		$$ A_2 \colon E_{-1}\big( d\sigma(x) \big) \to E_{-1}\big( d\sigma(\varphi _X ^t (x)) \big) $$
		are symplectic matrices.\
	\end{itemize}}
\end{theorem1}
\begin{proof}
		Lemma \ref{lemma_1} gives the splitting
		$$T_x\Sigma = E_1\big(d\sigma(x)\big) \oplus E_{-1}\big(d\sigma(x)\big),$$
		where the two eigenspaces are symplectically orthogonal by Lemma \ref{lemma_2}.\ By Corollary \ref{corollary_1} we know that $ \text{ker}\omega_x \subset E_1 \big( d\sigma(x) \big) = T_x\text{Fix}(\sigma)$, which implies the first statement.\ The matrix form of the induced linearized flow consists of four block matrices,
		$$\overline{d\varphi _X ^t (x)} = \begin{pmatrix}
		A_1 & B_1\\
		B_2 & A_2
		\end{pmatrix},$$
		where
		$$ A_1 \colon T_{x} \text{Fix}(\sigma) / \text{ker}\omega_{x} \to T_{\varphi _X ^t (x)} \text{Fix}(\sigma) / \text{ker}\omega_{\varphi _X ^t (x)} $$
		$$ A_2 \colon E_{-1}\big( d\sigma(x) \big) \to E_{-1}\big( d\sigma(\varphi _X ^t (x)) \big) $$
		and where
		$$ B_1 \colon E_{-1}\big( d\sigma(x) \big) \to T_{\varphi _X ^t (x)} \text{Fix}(\sigma) / \text{ker}\omega_{\varphi _X ^t (x)} $$
		$$ B_2 \colon T_{x} \text{Fix}(\sigma) / \text{ker}\omega_{x} \to E_{-1}\big( d\sigma(\varphi _X ^t (x)) \big) $$
		are zero maps since, by Lemma \ref{lemma_flow_invariant}, the linearized flow leaves the two eigenspaces invariant.\ It now follows that $A_1$ and $A_2$ are symplectic.
\end{proof}
\begin{remark}
The dimensions of $\big( T_x\text{Fix}(\sigma) / \text{ker} \omega_x \big)$ and $\Big( E_{-1}\big( d\sigma(x) \big) \Big)$ are even, since they are symplectic vector spaces.\ Moreover, they are determined by the fixed point set $F= \text{Fix}(\sigma)$, which is odd-dimensional.\ In particular,
\begin{center}
	\begin{tabular}{c|c}
		 & dimension\\
		 \hline $T_x\text{Fix}(\sigma) / \text{ker} \omega_x$ & $\text{dim}F - 1$\\
		 $E_{-1}\big( d\sigma(x) \big)$ & $2n-1-\text{dim}F$
	\end{tabular}
\end{center}
\end{remark}
\begin{remark}
	An important property arises for the cases if $A_1$ and $A_2$ are both symplectic matrices in $\text{Sp}(1)=\text{SL}(2,\mathbb{R})$.\ Assume that the Floquet multipliers of $\overline{d\varphi _X ^t (x)}$ are neither real nor on the unit circle, then they are four different complex numbers of the form $\lambda, 1/\lambda, \overline{\lambda}$ and $ 1/ \overline{\lambda}$ (see Figure \ref{figure_7}).\ The decomposition (\ref{decomposition}) shows that this kind of eigenvalues are not possible.\ This is a phenomenon for the reduced monodromy of planar periodic orbits in the spatial Hill lunar problem as well as in the spatial CR3BP.\
\end{remark}

\subsection{Every contact manifold is a Hamiltonian manifold}
\label{subsec:contact}

Let $(\Sigma,\lambda)$ be a contact manifold, i.e. $\Sigma$ is a $2n-1$ dimensional manifold and $\lambda$ a $1$-form on $\Sigma$, called contact form on $\Sigma$, such that $\lambda \wedge (d\lambda)^{\wedge(n-1)}$ is a volume form on $\Sigma$.\ The Reeb vector field $R$ on $\Sigma$ is defined by the conditions $\lambda(R)=1$ and $\iota_R d \lambda = 0$.\ Then $\omega:=d\lambda$ is a Hamiltonian structure on $\Sigma$, i.e. the tuple $(\Sigma, \omega= d \lambda)$ is a Hamiltonian manifold.\ Namely, for $x \in \Sigma$ the Reeb vector field is a non-vanishing section of the line bundle ker$d \lambda$ = ker$\omega$ $\subset T\Sigma$ and
\begin{align} \label{ker_reeb}
\text{ker}\omega_x = \langle R(x) \rangle.
\end{align}
The hyperplane distribution is defined as $ \xi := \text{ker}\lambda \subset T\Sigma $, it is also called contact structure on $\Sigma$.\ This leads to the decomposition
\begin{align}\label{decomposition_1}
T \Sigma = \xi \oplus \langle R \rangle.
\end{align}
The restriction of $\omega = d \lambda$ to $\xi$ makes $\xi$ a symplectic vector bundle of rank $2n-2$ over $\Sigma$, i.e.\ for $x \in \Sigma$ the space $( \xi_x , \omega |_{\xi_x} = d\lambda | _{\xi_x} )$ is a symplectic vector space.\ The contact structure $\xi = \text{ker}\lambda$ is determined by the contact form $\lambda$, but the converse is not true:\ For every positive smooth function $f$, the 1-form $f\lambda$ is also a contact form that gives the same contact structure, i.e.
$$ \xi = \text{ker} \lambda = \text{ker} f \lambda, $$
but in general, the Reeb vector fields of $\lambda$ and $f \lambda$ are not parallel.\ Therefore we cannot obtain the Hamiltonian structure $\omega = d \lambda$ from the contact structure $\xi = \text{ker} \lambda$.\ Since the Hamiltonian structure $\omega = d \lambda$ determines the dynamics up to time reparametrization, our major attention is on $\omega=d\lambda$.
\begin{remark}
If contact manifolds arise as energy hypersurfaces (\ref{energy_hypersurface}), then by (\ref{line_bundle}) and (\ref{ker_reeb}), i.e.\ by
$$ \langle X_{H} (x) \rangle = \text{ker}\omega_x = \langle R(x) \rangle, $$
the restriction of the Hamiltonian vector field $X_H |_{\Sigma}$ and the Reeb vector field are parallel, i.e.\ up to time reparametrization their flows coincide.\ In view of (\ref{decomposition_1}), for a periodic orbit $x$ with first return time $T_x$ the reduced monodromy (\ref{symplectic_autom_2}) corresponds to the transverse linearized Reeb flow
$$  d \varphi_R^{T_x} (x_0)|_{\xi_{x_0}} \colon \xi_{x_0} \to \xi_{x_0}.$$
\end{remark}
\begin{definition}
	A \textbf{contact form} on a Hamiltonian manifold $(\Sigma,\omega)$ is a $1$-form $\lambda$ on $\Sigma$ such that $d\lambda=\omega$ and $\lambda \wedge \omega^{\wedge(n-1)} > 0.$
\end{definition}
\begin{remark}
	Every Hamiltonian manifold $(\Sigma,\omega)$ with a contact form $\lambda$ is orientable via the volume form $\lambda \wedge \omega^{\wedge(n-1)} >0$.\ Not every Hamiltonian manifold has a contact form.\ The next lemma gives a condition.
\end{remark}
\begin{lemma1}
	Let $(\Sigma,\omega)$ be a Hamiltonian manifold which is simply connected. If there exists a closed leaf $L \subset \Sigma$ with filling disk $D \subset \Sigma$ (i.e.\ $\partial D= L$) such that $\int \limits_D \omega \leq 0$, then $(\Sigma,\omega)$ has no contact form.
\end{lemma1}
\begin{proof}
	Assume that $(\Sigma,\omega)$ has a contact form $\lambda$ and there is a closed leaf $L \subset \Sigma$ with filling disk $D\subset \Sigma$, which is a periodic orbit of the Reeb vector field $R$.\ Then there exists $T>0$ and a smooth map $x : [0,T] \to \Sigma$, which is injective on $[0,T)$ and such that $\dot{x}(t) = R\big(x(t)\big), \text{ } x(T)=x(0)$ and im$(x)=L$ (see Figure \ref{figure7}).\
	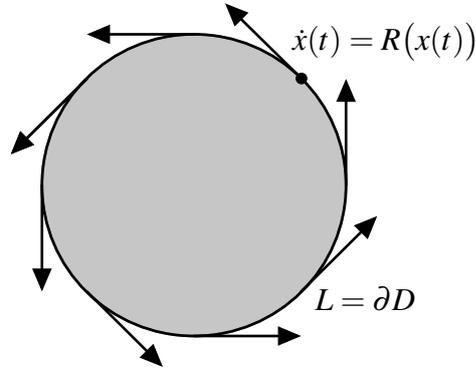
\begin{figure}[H]
		\centering
		\begin{tikzpicture}
		[line cap=round,line join=round,>=triangle 45,x=1cm,y=1cm]
		\clip(-4.2,-2.8) rectangle (4.2,2.8);
		
		\draw [line width=1pt,fill=gray,opacity=0.45] (0,0) circle (2cm);
		\draw [line width=1pt] (0,0) circle (2cm);
		\draw (1.45,-1.2) node[anchor=north west] {$L= \partial D$};
		\draw [->,line width=1pt] (1.4142,1.4142) -- (0.41,2.41);
		\draw (1.15,2.3) node[anchor=north west] {$\dot{x}(t) = R \big( x(t) \big)$};
		\draw [->,line width=1pt] (0,2) -- (-1.41,2);
		\draw [->,line width=1pt] (-1.41,1.41) -- (-2.41,0.41);
		\draw [->,line width=1pt] (-2,0) -- (-2,-1.41);
		\draw [->,line width=1pt] (-1.41,-1.41) -- (-0.41,-2.41);
		\draw [->,line width=1pt] (0,-2) -- (1.41,-2);
		\draw [->,line width=1pt] (1.41,-1.41) -- (2.41,-0.41);
		\draw [->,line width=1pt] (2,0) -- (2,1.41);
		\draw [->,line width=1pt] (-1.41,1.41) -- (-2.41,0.41);
		\begin{scriptsize}
		\draw [fill=black] (1.4142,1.4142) circle (2pt);
		\end{scriptsize}
		\end{tikzpicture}
		\caption{Periodic orbit of Reeb vector field with filling disk}
		\label{figure7}
	\end{figure}
	\noindent
	By Stokes' theorem and the definition of $R$ we obtain
	$$ \int \limits_D \omega = \int \limits_D d \lambda = \int \limits_{\partial D = L} \lambda = \int \limits_0^T \lambda \big( x(t) \big) \underbrace{\dot{x}(t)}_{\mathlarger{R\big(x(t)\big)}} dt = \int \limits_0^T 1 dt = T > 0.$$
\end{proof}
In the following let $(\Sigma, \omega = d \lambda, \sigma)$ be an involutive Hamiltonian manifold, i.e.\ $\lambda$ is a contact form on $(\Sigma,\omega)$ and $\sigma$ is a symplectic involution which preserves the orientation of the line bundle ker$\omega$ = ker$d\lambda$.\
\begin{remark} \label{remark_orientation}
For $x \in \Sigma$, as in (\ref{ker_reeb}) and (\ref{decomposition_1}) consider the decomposition
$$ T_x \Sigma = \text{ker}\omega_x + \xi_x. $$
Since $\sigma$ preserves the orientation of the line bundle, the orientation of $T \Sigma$ is $\sigma$-invariant.\
\end{remark}
\begin{remark}
For $x \in \text{Fix}(\sigma)$ we obtain by Corollary \ref{corollary_1} that
$$\langle R(x) \rangle \subset E_1\big( \sigma(x) \big) = T_x\text{Fix}(\sigma).$$
Moreover, by the first part of the Symplectic Splitting Theorem \ref{theorem_splitting} the $2n-2$ dimensional symplectic vector space $\xi_x$ splits into two symplectic spaces, namely
\begin{align} \label{splitting_contact}
\xi_x = \big( T_x\text{Fix}(\sigma) / \langle R(x) \rangle \big) \oplus \Big( E_{-1}\big( d\sigma(x) \big) \Big) = T_x \Sigma / \langle R(x) \rangle.
\end{align}
\end{remark}
\begin{lemma1} \label{lemma:contact}
	The restriction of $\lambda$ to the fixed point set $F = \text{Fix}(\sigma)$ is preserved under $\sigma$ and a contact form on $F$.
\end{lemma1}
\begin{proof}
	Since $\sigma^* \omega = \omega$ and the pullback commutes with the exterior derivate, we have
	$$d \lambda = \omega = \sigma^* \omega = \sigma^* d \lambda = d \sigma^* \lambda.$$
	Hence there is a closed 1-form $\lambda'$ on $\Sigma$ such that
	$$ \sigma^* \lambda = \lambda + \lambda'. $$
	Therefore for all $\eta \in T_x F$ we obtain
	$$ \lambda_x(\eta) = \lambda_{x} \big( d \sigma(x)\eta \big) = \lambda_x(\eta) + \lambda'_x(\eta), $$
	i.e.\ $\lambda'$ vanishes on $T_x F$.\ Thus $\lambda_F := \lambda | _{TF}$ is preserved under $\sigma$.\ We next show that $\lambda_F$ is a volume form.\ For $x \in F$ consider the symplectic splitting
	$$ \text{ker}\lambda_x = \xi_x = \big( T_x F / \langle R(x) \rangle \big) \oplus \Big( E_{-1}\big( d\sigma(x) \big) \Big) = T_x \Sigma / \langle R(x) \rangle $$
	from (\ref{splitting_contact}).\ Since $E_1':= T_x F / \langle R(x) \rangle$ is symplectic, for a basis $v_1,...,v_{2k}$ of $E_1'$ we have
	$$ (\omega_F)_x (v_1,...,v_{2k}) \neq 0. $$
	Then on the basis $R,v_1,...,v_{2k}$ of $T_x F$ we obtain
	$$ \lambda_F (x) \wedge \big( \omega_F (x) \big)^{\wedge k} (R,v_1,...,v_{2k}) = \lambda_x (R) \cdot (\omega_x)^{\wedge k} (v_1,...,v_{2k}) \neq 0 .$$
\end{proof}
\begin{remark}
	As in the spatial Hill lunar problem, the planar CR3BP arises in the spatial CR3BP as a fixed point set of a symplectic involution.\ In \cite{albers} it was proved that the regularized planar CR3BP is of contact type for energies below and also slightly above the first critical value.\ In \cite{cho} it was shown that the regularized spatial CR3BP has also the contact property.\ Now Lemma \ref{lemma:contact} together with the result from \cite{cho} implies the result from \cite{albers}.\
\end{remark}
In the previous lemma it was not necessary that $\lambda$ is preserved by $\sigma$.\ In the next lemma and corollary we show that the average $\frac{1}{2} (\lambda + \sigma^* \lambda)$ is always a contact form which is preserved under $\sigma$.\
\begin{lemma1}
	$\sigma^* \lambda$ is a contact form on $(\Sigma,\omega)$.
\end{lemma1}
\begin{proof}
	It holds that
	$$ d \sigma^*\lambda = \sigma^* d \lambda = \sigma^* \omega = \omega. $$
	By using $ \lambda \wedge \omega^{\wedge (n-1)} > 0$ and Remark \ref{remark_orientation} we have that
	\begin{align} \label{contact_volume}
	\sigma^* \lambda \wedge \omega^{\wedge (n-1)} = \sigma^* \lambda \wedge (\sigma^*\omega)^{\wedge (n-1)} = \sigma^*( \lambda \wedge \omega^{\wedge (n-1)} ) > 0.
	\end{align}
\end{proof}
\begin{corollary1}
	If $\sigma^* \lambda \neq \lambda$, then $\frac{1}{2} (\lambda + \sigma^* \lambda)$ is a contact form on ($\Sigma,\omega$) which is preserved by $\sigma$.
\end{corollary1}
\begin{proof}
	We define the average as $ \tilde{\lambda} := \frac{1}{2}( \lambda + \sigma^* \lambda ) $.\ It is easy to see that $d \tilde{\lambda} = \omega$.\ By taking the sum of $ \lambda \wedge \omega^{\wedge (n-1)} > 0$ and (\ref{contact_volume}) 
	we obtain that $ \tilde{\lambda} \wedge \omega^{\wedge (n-1)} >0$, hence $\tilde{\lambda}$ is a contact form on ($\Sigma,\omega$).\ Since $\sigma$ is an involution, it is obvious that $\sigma^* \tilde{\lambda} = \tilde{\lambda}$.\
\end{proof}

\section{On monodromy with respect to symmetries}
\label{sec:4}

\subsection{Symmetries of Hamiltonian systems}

Let $(M,\omega)$ be a $2n$ dimensional symplectic manifold and $H \in C^{\infty}(M,\mathbb{R})$ be an autonomous Hamiltonian function.\ A \textbf{symmetry} is a symplectic or anti-symplectic involution on $(M,\omega)$ which leaves $H$ invariant, meaning that $\sigma \colon M \to M$ is a diffeomorphism satisfying $\sigma^2 = \text{id}_M$, $\sigma^* \omega = \pm \omega$ and $H \circ \sigma = H$.\ If it is symplectic, then we call it a symplectic symmetry and otherwise an anti-symplectic symmetry.\

Let $\sigma$ be a symplectic symmetry.\ Then in view of definition (\ref{ham_vector_field}) of the Hamiltonian vector field $X_H$ we obtain
$$ \omega (X_H,\cdot) = \omega (X_{H \circ \sigma},\cdot) = d (H \circ \sigma) (\cdot) = \sigma^* \big( dH(\cdot) \big) = \sigma^* \big( \omega (X_H,\cdot) \big) = \underbrace{\sigma^* \omega}_{\omega} (\sigma^*X_H,\cdot). $$
Since $\omega$ is non-degenerate, $X_H$ is therefore invariant under $\sigma$, i.e.\
$$ \sigma ^* X_H = X_H. $$
In other words, its flow and $\sigma$ commute, i.e.\
\begin{align} \label{flow_sigma}
\varphi_H^t \circ \sigma = \sigma \circ \varphi_H^t.
\end{align}

Let $\rho$ be an anti-symplectic symmetry, then we have
$$ \rho ^* X_H  = - X_H,$$
which means that $X_H$ is anti-invariant under $\rho$.\ Equivalently,
\begin{align} \label{flow_rho}
\varphi_H^t \circ \rho = \rho \circ \varphi_H^{-t}.
\end{align}
A periodic orbit $x$ with first return time $T_x$ is called \textbf{symmetric with respect to $\rho$} if
$$ x(t) = \rho \big( x(-t) \big),\quad t \in [0,T_x]. $$
Note that $x(0) = x_0,x(T_x/2) \in \text{Fix}(\rho)$.\
\begin{remark}
	Consider the standard symplectic vector space $(\mathbb{R}^{2n},\omega_0)$ with
	$$ \omega_0 (v,w) : = \langle Jv,w \rangle = v^T J^T w = \langle v,J^Tw \rangle,\quad \text{ for all }v,w \in \mathbb{R}^{2n}, $$
	where
	$$ J = \begin{pmatrix}
	0 & I_n\\
	-I_n & 0
	\end{pmatrix} $$
	with respect to the splitting $\mathbb{R}^{2n} = \mathbb{R}^n \times \mathbb{R}^n$.\ Note that $J^2 = -I_{2n}$ and $J^T=J^{-1}=-J$.\ A linear isomorphism $\Psi$ of $(\mathbb{R}^{2n},\omega_0)$ is called symplectic if $ \omega_0 (\Psi v, \Psi w) = \omega_0 (v,w), \text{ for all }v,w \in \mathbb{R}^{2n},$ which is equivalent to $\Psi^T J \Psi = J$.\ The set of symplectic matrices in $\mathbb{R}^{2n}$ is denoted by
	$$ \text{Sp}(n) = \{ \Psi \colon (\mathbb{R}^{2n},\omega_0) \to (\mathbb{R}^{2n},\omega_0) \text{ linear isomorphism } | \text{ } \Psi^T J \Psi = J  \}. $$
	It is easy to show that if $\Psi,\Phi \in \text{Sp}(n)$, then $\Psi \Phi, \Psi^{-1}$, $\Psi^T \in \text{Sp}(n)$ and also $J \in \text{Sp}(n)$.\ In particular, $\text{Sp}(n)$ is a group under matrix multiplication.\ Moreover, a $2n \times 2n$ matrix which is written as
	\begin{align} \label{matrix_block}
	\begin{pmatrix}
	A & B\\
	C & D
	\end{pmatrix}
	\end{align}
	with respect to the splitting $\mathbb{R}^{2n} = \mathbb{R}^n \times \mathbb{R}^n$, is symplectic if and only if
	\begin{align} \label{matrix_symplectic}
	A^T C , B^T D \text{ are symmetric and } A^T D - C^T B = I_n.
	\end{align}
	Its inverse is given by
	\begin{align*}
	\begin{pmatrix}
	D^T & -B^T\\
	-C^T & A^T
	\end{pmatrix}.
	\end{align*}
	
	The set of anti-symplectic matrices in $\mathbb{R}^{2n}$ we denote by
	$$ \text{Sp}^-(n) = \{ \Psi \colon (\mathbb{R}^{2n},\omega_0) \to (\mathbb{R}^{2n},\omega_0) \text{ linear isomorphism } | \text{ } \Psi^T J \Psi = - J  \}, $$
	which is not a group, since for $\Psi,\Phi \in \text{Sp}^-(n)$ the multiplication $\Psi \Phi$ is symplectic.\ Nevertheless $\Psi^{-1},\Psi^T \in \text{Sp}^-(n)$ and $-J \in \text{Sp}^-(n)$.\ A $2n \times 2n$ matrix given in the block form (\ref{matrix_block}) is anti-symplectic if and only if
	\begin{align} \label{matrix_anti_symplectic}
	A^T C , B^T D \text{ are symmetric and } A^T D - C^T B = -I_n.
	\end{align}
	The inverse matrix is given by
	$$\begin{pmatrix}
	-D^T & B^T\\
	C^T & -A^T
	\end{pmatrix}.$$
\end{remark}

\subsection{Monodromy with respect to symplectic symmetries}
\label{sec:2.2}

Let $\sigma$ be a symplectic symmetry and $x$ be a periodic orbit with $x_0 \in \text{Fix}(\sigma)$ and first return time $T_x$.\ In particular, the symplectic vector space splits into two symplectic vector spaces
$$ T_{x_0} M = E_1\big( d \sigma (x_0) \big) \oplus E_{-1}\big( d \sigma (x_0) \big) = T_{x_0}\text{Fix}(\sigma) \oplus E_{-1}\big( d \sigma (x_0) \big), $$
since they are symplectically orthogonal.\ Moreover, because the Hamiltonian flow and $\sigma$ commute (see (\ref{flow_sigma})), the monodromy
$$ d \varphi_H^{T_x} (x_0) \colon T_{x_0} M \to T_{x_0}M $$
leaves the two eigenspaces invariant, i.e.\ if $\xi \in E_{\pm 1} \big( d \sigma(x_0) \big)$ then $d \varphi_H^{T_x}(x_0) \xi \in E_{\pm 1} \big( d \sigma(x_0) \big)$.\ Therefore the monodromy is of the form
$$ d\varphi _H ^{T_x} (x_0) = \begin{pmatrix}
A_1 & 0\\
0 & A_2
\end{pmatrix}, $$
where
$$ A_1 \colon T_{x_0} \text{Fix}(\sigma) \to T_{x_0} \text{Fix}(\sigma) ,\quad A_2 \colon E_{-1}\big( d\sigma(x_0) \big) \to E_{-1}\big( d\sigma(x_0) \big) $$
are symplectic matrices.\ Recall from (\ref{ker_invariant}) that $X_H(x_0)$ is an eigenvector of the monodromy to the eigenvalue 1, hence
$$ X_H(x_0) \in T_{x_0}\text{Fix}(\sigma) .$$
On energy level sets $\Sigma$, which are orientable by (\ref{orientable}), $\sigma|_{\Sigma}$ preserves the orientation of the line bundle ker$\omega | _{\Sigma} = \langle X_H | _{\Sigma} \rangle$.\ Note that the triple $(\Sigma, \omega | _{\Sigma}, \sigma | _{\Sigma})$ is an involutive Hamiltonian manifold and
$$ \text{dim}\big( T_{x_0}\text{Fix}(\sigma) \cap T_{x_0} \Sigma \big) = \text{dim}\big(\text{Fix}(\sigma)\big) - 1,\quad E_{-1}\big( d\sigma | _{\Sigma}(x_0) \big) = E_{-1}\big( d\sigma(x_0) \big). $$
Therefore by the Symplectic Splitting Theorem \ref{theorem_splitting}, the $2n-2$ dimensional symplectic vector space $T_x\Sigma / (\text{ker}\omega_{x_0} | _{T_{x_0}\Sigma})$ splits into two symplectic vector spaces, namely
$$ T_{x_0}\Sigma / (\text{ker}\omega_{x_0} | _{T_{x_0}\Sigma}) = \big( T_{x_0}\text{Fix}(\sigma | _{\Sigma}) / (\text{ker}\omega_{x_0} | _{T_{x_0}\Sigma}) \big) \oplus \Big( E_{-1}\big( d\sigma(x_0) \big) \Big) $$
and the reduced monodromy is of the form
$$ \overline{d\varphi _H ^{T_x} | _{\Sigma} (x_0)} = \begin{pmatrix}
\overline{A}_1 & 0\\
0 & A_2
\end{pmatrix}, $$
where $\overline{A}_1$ is a linear symplectic map on $T_{x_0} \text{Fix}(\sigma|_{\Sigma}) / (\text{ker}\omega_{x_0} | _{T_{x_0}\Sigma})$.\
\begin{remark}
	Note that a periodic orbit starting at the fixed point set of $\sigma$ stays there forever and the Floquet multipliers are given by those of $\overline{A}_1$ and $A_2$.\
\end{remark}

\subsection{Monodromy with respect to anti-symplectic symmetries}
\label{sec:4.3}
\subsubsection{Monodromy}
\label{sec:4.3.1}
Let $\rho$ be an anti-symplectic symmetry and $x$ be a periodic orbit with first return time $T_x$ which is symmetric with respect to $\rho$.\ Then the differential
$$d \rho (x_0) \colon T_{x_0}M \to T_{x_0}M $$
is a linear anti-symplectic involution on the symplectic vector space $(T_{x_0}M,\omega_{x_0})$.\
\begin{lemma1}
	The decomposition
	\begin{align} \label{lagrangian_splitting}
	T_{x_0}M = T_{x_0}\text{Fix}(\rho) \oplus E_{-1}\big( d \rho(x_0) \big)
	\end{align}
	is a Lagrangian splitting, meaning that the two eigenspaces are Lagrangian submanifolds, i.e.\ their respective dimensions are $\frac{1}{2} \text{dim} (M) = n$ and $\omega$ vanishes on the respective tangent bundles.\
\end{lemma1}
\begin{proof}
Their dimensions are $n$ in view of the isomorphism
$$ E_1 \big( d\rho(x_0) \big) = T_{x_0}\text{Fix}(\rho) \to E_{-1} \big( d\rho(x_0) \big),\quad \xi + d \rho (x_0) \xi \mapsto \xi - d \rho (x_0) \xi $$
and the decomposition
$$T_{x_0}M \to T_{x_0}\text{Fix}(\rho) \oplus E_{-1}\big( d \rho(x_0) \big),\quad \xi \mapsto \frac{1}{2} \big( \xi + d \rho (x_0) \xi \big) + \frac{1}{2} \big( \xi - d \rho (x_0) \xi \big). $$
Moreover, for all $\xi, \eta \in T_{x_0}\text{Fix}(\rho)$ we have
$$ \omega_{x_0} (\xi,\eta) = - (\rho^* \omega)_{x_0} (\xi,\eta) = - \omega_{\rho(x_0)} \big( d\rho(x_0) \xi, d\rho(x_0) \eta \big) = - \omega_{x_0} (\xi,\eta), $$
hence $\omega_{x_0} (\xi,\eta)=0$.\ The same holds for all $\xi, \eta \in E_{-1}\big( d \rho(x_0) \big)$.\
\end{proof}
The existence of a symplectic (or canonical) basis for a symplectic vector space is given by a skew-symmetric version of the Gram--Schmidt process (see for instance \cite[pp.\ 3--4]{hofer}).\ Similarly, in view of the Lagrangian splitting (\ref{lagrangian_splitting}), there exist bases
$$(v_1,...,v_n) \text{ of } T_{x_0}\text{Fix}(\rho),\quad (w_1,...,w_n) \text{ of } E_{-1}\big( d \rho(x_0) \big)$$
such that
$$ \omega_{x_0}(v_i,w_j) = \delta_{ij},\quad i,j=1,...,n $$
and
\begin{align} \label{basis_symplectic}
v_1,...,v_n,w_1,...,w_n
\end{align}
is a symplectic basis of $T_{x_0}M$ (see for instance \cite[pp.\ 532--533]{albers_frauenfelder}).\ We refer to this kind of basis as \textbf{Lagrangian basis}.\ With respect to this basis the differential $d \rho (x_0)$ is represented by the standard anti-symplectic involution
$$ \rho_0 = \begin{pmatrix}
I_n & 0\\
0 & -I_n
\end{pmatrix}.$$
\begin{Proposition}\label{prop_block}
	The monodromy written in block form (\ref{matrix_block}) has the form
	\begin{align} \label{matrix_block_2}
	d \varphi_H^{T_x} (x_0) = \begin{pmatrix}
	A & B\\
	C & A^T
	\end{pmatrix},
	\end{align}
	where
	\begin{align} \label{matrix_block_3}
	B, C, CA, AB \text{ are symmetric and } A^2 - BC = I_n.
	\end{align}
\end{Proposition}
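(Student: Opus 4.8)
The plan is to combine two facts: first, that $d\varphi_H^{T_x}(x_0)$ is a symplectic matrix, so it satisfies the block relations (\ref{matrix_symplectic}); and second, that the symmetry of the orbit $x$ with respect to the anti-symplectic involution $\rho$ forces an extra algebraic constraint relating the monodromy to $\rho_0$. Throughout I work in the Lagrangian basis (\ref{basis_symplectic}), in which $d\rho(x_0)$ is represented by $\rho_0 = \begin{pmatrix} I_n & 0 \\ 0 & -I_n \end{pmatrix}$, and write $\Psi := d\varphi_H^{T_x}(x_0) = \begin{pmatrix} A & B \\ C & D \end{pmatrix}$.

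First I would derive the symmetry relation for $\Psi$. Since $x$ is symmetric with respect to $\rho$, we have $x(t) = \rho\big(x(-t)\big)$; differentiating the flow relation (\ref{flow_rho}), $\varphi_H^t \circ \rho = \rho \circ \varphi_H^{-t}$, at the point $x_0 \in \text{Fix}(\rho)$ gives $d\varphi_H^{t}(x_0)\, d\rho(x_0) = d\rho(x_0)\, d\varphi_H^{-t}(x_0)$. Taking $t = T_x$ and using that $d\varphi_H^{-T_x}(x_0) = \big(d\varphi_H^{T_x}(x_0)\big)^{-1} = \Psi^{-1}$ yields the key identity
$$ \Psi\, \rho_0 = \rho_0\, \Psi^{-1}, \qquad \text{equivalently} \qquad \rho_0 \Psi \rho_0 = \Psi^{-1}. $$
Now I would insert the block forms. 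On the one hand, conjugating by $\rho_0$ flips the signs of the off-diagonal blocks:
$$ \rho_0 \Psi \rho_0 = \begin{pmatrix} A & -B \\ -C & D \end{pmatrix}. $$
On the other hand, since $\Psi$ is symplectic, its inverse is $\Psi^{-1} = \begin{pmatrix} D^T & -B^T \\ -C^T & A^T \end{pmatrix}$ by the inverse formula for symplectic block matrices. Equating these two matrices entry-by-entry gives $A = D^T$ (hence $D = A^T$, which already gives the claimed form (\ref{matrix_block_2})), together with $B = B^T$ and $C = C^T$, i.e. $B$ and $C$ are symmetric.

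Finally I would extract the remaining relations $A^2 - BC = I_n$ and the symmetry of $CA$ and $AB$ purely from the symplectic conditions (\ref{matrix_symplectic}) now that $D = A^T$. Substituting $D = A^T$ into "$A^T D - C^T B = I_n$" and using $B = B^T$, $C = C^T$ gives $A^T A^T - C B$... wait — more directly, substitute $D = A^T$ into $A^T D - C^T B = I_n$ is not quite the shape we want, so instead I would use the transposed/equivalent symplectic relations: from $\Psi^T J \Psi = J$ one also gets $D A^T - C B^T$ symmetric relations; the cleanest route is to note that $\Psi^{-1}$ being symplectic and equal to $\rho_0 \Psi \rho_0$ forces, applying (\ref{matrix_symplectic}) to $\rho_0 \Psi \rho_0 = \begin{pmatrix} A & -B \\ -C & A^T \end{pmatrix}$, that $A^T(-C) = -A^T C$ is symmetric (so $A^T C$, equivalently $C A$, is symmetric), that $(-B)^T A^T = -B^T A^T$ is symmetric (so $A B$ is symmetric), and that $A^T A^T - (-C)^T(-B) = (A^T)^2 - CB = \ldots$ — here I should instead apply (\ref{matrix_symplectic}) to $\Psi$ itself with $D = A^T$: the condition $A^T D - C^T B = I_n$ becomes $A^T A^T - C B = I_n$, which transposed (using $B, C$ symmetric) reads $A^2 - B C = I_n$; and $A^T C$ symmetric is exactly $C A$ symmetric, $B^T D = B A^T$ symmetric is exactly $A B^T = A B$ symmetric after transposing. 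Collecting these gives precisely (\ref{matrix_block_3}).

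I expect the only genuine subtlety to be bookkeeping: keeping straight which relation comes from "$\Psi$ symplectic" versus "$\rho_0 \Psi \rho_0 = \Psi^{-1}$", and making sure the symmetry conditions are stated for $CA$ and $AB$ rather than their transposes $A^TC^T$ etc. (which are equivalent since $B, C$ are symmetric). There is no analytic difficulty here — everything reduces to the differentiated flow identity (\ref{flow_rho}) at a fixed point of $\rho$ plus linear algebra of symplectic block matrices; the main "obstacle" is simply organizing the entry comparisons cleanly.
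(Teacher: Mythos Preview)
Your proposal is correct and follows essentially the same route as the paper: derive $\rho_0\,\Psi\,\rho_0 = \Psi^{-1}$ from the differentiated flow relation (\ref{flow_rho}) at $x_0\in\text{Fix}(\rho)$, compare block entries with the symplectic inverse formula to obtain $D=A^T$, $B=B^T$, $C=C^T$, and then read off the remaining conditions (\ref{matrix_block_3}) from the symplectic relations (\ref{matrix_symplectic}). The only difference is that the paper compresses your final paragraph into the single phrase ``the statement follows by (\ref{matrix_symplectic})''; your bookkeeping, once the hesitations are cleaned up, is exactly the verification behind that sentence.
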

\begin{proof}
In view of (\ref{flow_rho}) we get
\begin{align} \label{equation_monodromy}
d \rho (x_0) \circ d \varphi_H^{T_x} (x_0) \circ d \rho (x_0) = \big( d \varphi_H^{T_x} (x_0) \big)^{-1},
\end{align}
which means that the monodromy is conjugated to its inverse by the linear anti-symplectic involution.\ With respect to the choice of a Lagrangian basis (\ref{basis_symplectic}) and the monodromy written in block form (\ref{matrix_block}), the equation (\ref{equation_monodromy}) becomes
$$ \begin{pmatrix}
I_n & 0\\
0 & -I_n
\end{pmatrix} \begin{pmatrix}
A & B\\
C & D
\end{pmatrix} \begin{pmatrix}
I_n & 0\\
0 & -I_n
\end{pmatrix} = \begin{pmatrix}
D^T & -B^T\\
-C^T & A^T
\end{pmatrix}, $$
which is equivalent to
$$ \begin{pmatrix}
A & -B\\
-C & D
\end{pmatrix} = \begin{pmatrix}
D^T & -B^T\\
-C^T & A^T
\end{pmatrix} $$
and the statement follows by (\ref{matrix_symplectic}).\
\end{proof}
\begin{lemma1}
	All linear anti-symplectic involutions are symplectically conjugated to each other.\
\end{lemma1}
\begin{proof}
	Let $\rho_1, \rho_2 \in Sp^{-}(n)$ and $\{v_1,...,v_n,w_1,...w_n\}$ and $\{\tilde{v}_1,...,\tilde{v}_n,\tilde{w}_1,...\tilde{w}_n\}$ be Lagrangian bases, respectively.\ Then the basis change is given by the linear symplectic map
	$$ v_i \mapsto \tilde{v}_i,\quad w_i \mapsto \tilde{w}_i,\quad i=1,...,n. $$
\end{proof}
\begin{corollary1} \label{corollary_involution}
	Every $\rho \in Sp^{-}(n)$ is symplectically conjugated to the standard anti-symplectic involution $\rho_0$.\
\end{corollary1}
We denote the set of symplectic matrices of the form (\ref{matrix_block_2}) satisfying (\ref{matrix_block_3}) by
$$ \text{Sp}^{\rho_0}(n) = \left\{ \begin{pmatrix}
A & B\\
C & A^T
\end{pmatrix} \colon B, C, CA, AB \text{ are symmetric and } A^2 - BC = I_n \right\}. $$
The next lemma generalizes Proposition \ref{prop_block}.\
\begin{lemma1}
	Every symplectic matrix $\Psi \in \text{Sp}(n)$ is symplectically conjugated to a symplectic matrix from $\text{Sp}^{\rho_0}(n)$.\
\end{lemma1}
\begin{proof}
	By a thereom of Wonenburger \cite{wonenburger}, every $\Psi \in \text{Sp}(n)$ is the product of two linear anti-symplectic involutions, i.e.\
	$$ \Psi = \rho_1 \rho_2, $$
	where $\rho_1,\rho_2 \in \text{Sp}^-(n)$.\ Hence
	$$ \Psi^{-1} = \rho_2 \rho_1 = \rho_1 \Psi \rho_1, $$
	which is a general form of the equation (\ref{equation_monodromy}).\ By Corollary \ref{corollary_involution}, there exists $\Psi_1 \in \text{Sp}(n)$ such that
	$$ \Psi_1^{-1} \rho_1 \Psi_1 = \rho_0. $$
	This implies
	$$ \Psi_1^{-1} \Psi \Psi_1 = \Psi_1^{-1} (\rho_1 \rho_2 ) \Psi_1 = \Psi_1^{-1} (\Psi_1 \rho_0 \Psi_1^{-1} ) \rho_2 \Psi_1 = \rho_0 \Psi_1^{-1} \rho_2 \Psi_1. $$
	One readily checks that $\Psi_1^{-1} \rho_2 \Psi_1$ belongs to $\text{Sp}^-(n)$, thus $\Psi$ is symplectically conjugated to the product of two linear anti-symplectic involutions as well.\ In addition,
	$$ (\Psi_1^{-1} \Psi \Psi_1)^{-1} = \Psi_1^{-1} \rho_2 \Psi_1 \rho_0 = \rho_0 ( \Psi_1^{-1} \Psi \Psi_1 ) \rho_0, $$
	i.e.\ the symplectic conjugacy $\Psi_1^{-1} \Psi \Psi_1$ is conjugated to its inverse by the standard anti-symplectic involution.\ Therefore by the same steps as in the proof of Proposition \ref{prop_block}, the symplectic conjugacy $\Psi_1^{-1} \Psi \Psi_1$ is an element from $\text{Sp}^{\rho_0}(n)$.\
\end{proof}
\begin{remark} \label{remark4.8}
	Since $X_H$ is anti-invariant under $\rho$, we have
	$$ X_H(x_0) \in E_{-1} \big( d \rho(x_0) \big). $$ 
	On energy level sets $\Sigma$, the restriction $ T_{x_0} \text{Fix} (\rho | _{\Sigma}) $ is $n-1$ dimensional, and on the quotient by the line bundle ker$\omega | _{\Sigma} = \langle X_H | _{\Sigma} \rangle$ we obtain the splitting
	$$ T_{x_0}\Sigma / (\text{ker}\omega_{x_0} | _{T_{x_0}\Sigma}) = \big( T_{x_0} \text{Fix}(\rho | _{\Sigma}) \big) \oplus \Big( E_{-1} \big( d\rho|_{\Sigma} (x_0) \big) / ( \text{ker}\omega_{x_0} | _{T_{x_0}\Sigma} )  \Big). $$
	By using the Lagrangian basis $v_1,...,v_n,w_1,...,w_n$ (see (\ref{basis_symplectic})), $n-1$ basis vectors of the first vector space are determined by $v_1,...,v_n$ and the energy condition.\ We denote them by $\tilde{v}_1,...,\tilde{v}_{n-1}$.\ By the Steinitz exchange lemma on $w_1,...,w_n$ and $X_H|_{\Sigma}(x_0)$ we choose $\tilde{w}_1,...,\tilde{w}_{n-1}$ such that
	$$ \omega_{x_0}(\tilde{v}_i,\tilde{w}_j) = \delta_{ij},\quad i,j=1,...,n-1 $$
	and
	$$ T_{x_0} \Sigma = \langle \tilde{v}_1,...,\tilde{v}_{n-1} \rangle_{\mathbb{R}} \oplus \langle \tilde{w}_1,...,\tilde{w}_{n-1}, X_H|_{\Sigma}(x_0) \rangle_{\mathbb{R}}. $$
	With respect to this basis, the reduced monodromy is an element from $\text{Sp}^{\rho_0}(n-1)$.\
\end{remark}

\subsubsection{The signatures of a symmetric periodic orbit}

Let the monodromy be a symplectic matrix from $\text{Sp}^{\rho_0}(n)$, then the following lemma shows that its spectrum is determined by the spectrum of $A$.\
\begin{lemma1}
	The characteristic polynomial of the monodromy equals
	$$ \lambda^n \det \big( -2A - (- \lambda - \frac{1}{\lambda})I_n \big), $$
	i.e.\ $\lambda^n \chi_{-2A} (- \lambda - \frac{1}{\lambda})$.
\end{lemma1}
\begin{proof}
	The decomposition
	$$ \begin{pmatrix}
	A - \lambda I_n & B\\
	C & A^T - \lambda I_n
	\end{pmatrix} \begin{pmatrix}
	A - \lambda I_n & 0\\
	-C & I_n
	\end{pmatrix} = \begin{pmatrix}
	\lambda^2 I_n - 2 \lambda A + I_n & B\\
	0 & A^T - \lambda I_n
	\end{pmatrix} $$
	implies that the characteristic polynomial of the monodromy is given by
	$$ \det ( \lambda^2 I_n - 2 \lambda A + I_n ), $$
	which is equivalent to $\lambda^n \det \big( -2A - (- \lambda - \frac{1}{\lambda})I_n \big)$.
\end{proof}
\begin{remark}
	In the case $n=1$, i.e.\
	$$ d \varphi_H^{T_x} (x_0) =  \begin{pmatrix}
	a & b\\
	c & a
	\end{pmatrix},\quad a^2 - bc = 1, $$
	where $a,b,c \in \mathbb{R}$, we have for its characteristic polynomial,
	$$ \lambda^2 - 2a \lambda + 1. $$
	Note that $a$ is the half of its trace.\ For the case $n=2$ we obtain
	$$ \lambda^4 - 2 \text{tr}A \lambda^3 + (2 + 4 \det A)\lambda^2 - 2 \text{tr}A \lambda + 1. $$
\end{remark}
\begin{definition}
	Let $\lambda$ be an eigenvalue of $A$ of multiplicity $1$, $v$ an eigenvector of $A$ and $\tilde{v}$ an eigenvector of $A^T$ to the eigenvalue $\lambda$.\ The \textbf{signature with respect to $C$} of a symmetric periodic orbit is defined as the signature of $v^TCv$ and the \textbf{signature with respect to $B$} as the signature of $\tilde{v}^T B \tilde{v}$.\ We denote them by
	$$\text{sign}_C (\lambda) = \text{sign} (v^TCv), \quad \text{sign}_B (\lambda) = \text{sign}(\tilde{v}^T B \tilde{v}),$$
	respectively.\
\end{definition}
\begin{remark}
	Neither of the signatures depends on the resp.\ eigenvectors, since for a constant $k \in \mathbb{R}^*$,
	$$ \text{sign}\big( (kv)^T C (kv) \big) = k^2 \text{sign}(v^T C v) = \text{sign} (v^T C v) = \text{sign}_C(\lambda). $$
\end{remark}
\begin{remark} \label{remark4.13}
	If a Lagrangian basis (\ref{basis_symplectic}) is given, then $R \in \text{GL}(n,\mathbb{R})$ acts $\text{Sp}^{\rho_0}(n)$ by conjugation
	$$ \begin{pmatrix}
	R & 0\\
	0 & (R^{-1})^T
	\end{pmatrix} \begin{pmatrix}
	A & B\\
	C & A^T
	\end{pmatrix} \begin{pmatrix}
	R^{-1} & 0\\
	0 & R^T
	\end{pmatrix} = \begin{pmatrix}
	RAR^{-1} & RBR^T\\
	(R^{-1})^TCR^{-1} & (R^{-1})^TA^TR^T
	\end{pmatrix}, $$
	where $(R^{-1})^TA^TR^T = (RAR^{-1})^T$.\ Moreover, this basis change is symplectic, since
	$$ \begin{pmatrix}
	R^T & 0\\
	0 & R^{-1}
	\end{pmatrix} \begin{pmatrix}
	0 & I_n\\
	-I_n & 0
	\end{pmatrix} \begin{pmatrix}
	R & 0\\
	0 & (R^{-1})^T
	\end{pmatrix} = \begin{pmatrix}
	0 & I_n\\
	-I_n & 0
	\end{pmatrix}. $$
\end{remark}
\begin{Proposition} \label{prop_signature}
	Both signatures of a symmetric periodic orbit are invariant under the Lagrangian basis change, i.e.\ they are independent of the choice of the Lagrangian basis.\
\end{Proposition}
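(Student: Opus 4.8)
The plan is to combine the explicit description of Lagrangian basis changes in Remark \ref{remark4.13} with the elementary observation that the quadratic expressions $v^{T}Cv$ and $\tilde v^{T}B\tilde v$ are in fact left \emph{unchanged} — not merely up to a positive factor — when one passes to another Lagrangian basis.

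First I would pin down the shape of a Lagrangian basis change. The decomposition $T_{x_0}M=T_{x_0}\text{Fix}(\rho)\oplus E_{-1}\big(d\rho(x_0)\big)$ from (\ref{lagrangian_splitting}) is canonical: it is the eigenspace decomposition of the fixed linear involution $d\rho(x_0)$, hence independent of any choice. So in any Lagrangian basis $v_1,\dots,v_n,w_1,\dots,w_n$ the first $n$ vectors form a basis of $T_{x_0}\text{Fix}(\rho)$ and the last $n$ a basis of $E_{-1}\big(d\rho(x_0)\big)$; moreover, once the $v_i$ are fixed, the normalisation $\omega_{x_0}(v_i,w_j)=\delta_{ij}$ together with $w_j\in E_{-1}\big(d\rho(x_0)\big)$ determines the $w_j$ uniquely. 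Consequently any two Lagrangian bases differ by some $R\in\text{GL}(n,\mathbb{R})$ acting on the $v$-block, with the $w$-block forced to transform by $(R^{-1})^{T}$; this is precisely the symplectic conjugation of Remark \ref{remark4.13}, under which
\[
A\longmapsto RAR^{-1},\qquad B\longmapsto RBR^{T},\qquad C\longmapsto (R^{-1})^{T}CR^{-1}.
\]

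Second, I would track the eigenvectors through this conjugation. Since $\lambda$ is a simple eigenvalue of $A$, it is a simple eigenvalue of $RAR^{-1}$ as well, and $Av=\lambda v$ gives $(RAR^{-1})(Rv)=\lambda(Rv)$, so $Rv$ spans the new eigenspace; a one-line computation (using $(R^{-1})^{T}=(R^{T})^{-1}$) yields
\[
(Rv)^{T}\,\big((R^{-1})^{T}CR^{-1}\big)\,(Rv)=v^{T}Cv .
\]
Likewise, if $A^{T}\tilde v=\lambda\tilde v$, then $(R^{-1})^{T}\tilde v$ is an eigenvector of $(RAR^{-1})^{T}=(R^{-1})^{T}A^{T}R^{T}$ for $\lambda$, and
\[
\big((R^{-1})^{T}\tilde v\big)^{T}\,(RBR^{T})\,\big((R^{-1})^{T}\tilde v\big)=\tilde v^{T}B\tilde v .
\]
Combined with the remark following the definition of the signatures — which records that neither $\text{sign}_C(\lambda)$ nor $\text{sign}_B(\lambda)$ depends on the chosen eigenvector within its one-dimensional eigenspace — one concludes that the two signatures computed in the two Lagrangian bases coincide.

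The argument carries no serious obstacle; the only point deserving care is the first step, namely that a general Lagrangian basis change has the block-diagonal form $\text{diag}\big(R,(R^{-1})^{T}\big)$. This rests on the canonicity of the $(\pm1)$-eigenspace splitting of $d\rho(x_0)$ and on the fact that the $w$-part of a Lagrangian basis is forced by its $v$-part. Once this is in place, the two displayed identities are immediate matrix manipulations, and they already establish the invariance on the nose.
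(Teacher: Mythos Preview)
Your argument is correct and essentially identical to the paper's proof: both use the transformation rules of Remark~\ref{remark4.13} and verify that $(Rv)^{T}(R^{-1})^{T}CR^{-1}(Rv)=v^{T}Cv$ and $\big((R^{-1})^{T}\tilde v\big)^{T}RBR^{T}(R^{-1})^{T}\tilde v=\tilde v^{T}B\tilde v$. You add a useful clarification the paper leaves implicit, namely why every Lagrangian basis change must have the block-diagonal form $\mathrm{diag}\big(R,(R^{-1})^{T}\big)$.
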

\begin{proof}
	For the invariance of $\text{sign}_C(\lambda)$ we consider the identity
	$$ AR^{-1}R v = Av = \lambda v, $$
	thus
	$$ RAR^{-1}Rv = \lambda Rv, $$
	meaning that $Rv$ is an eigenvector of $RAR^{-1}$ to the eigenvalue $\lambda$.\ In view of the transformations
	$$ C \mapsto (R^{-1})^TCR^{-1} ,\quad A \mapsto RAR^{-1},\quad v \mapsto Rv, $$
	we have
	$$ \text{sign} \big( (Rv)^T (R^{-1})^T C R^{-1} (Rv) \big) = \text{sign} (v^T C v) = \text{sign}_C(\lambda). $$
	For $\text{sign}_B(\lambda)$ we consider
	$$ A^T R^T (R^{-1})^T \tilde{v} = A^T \tilde{v} =  \lambda \tilde{v} $$
	and therefore
	$$ (R^{-1})^T A^T R^T (R^{-1})^T \tilde{v} =  \lambda (R^{-1})^T \tilde{v} ,$$
	which means that $(R^{-1})^T \tilde{v}$ is an eigenvector of $(R^{-1})^TA^TR^T$ to the eigenvalue $\lambda$.\ In view of
	$$ B \mapsto RBR^T,\quad  A^T \mapsto  (R^T)^{-1}A^TR^T,\quad \tilde{v} \mapsto (R^{-1})^T \tilde{v},$$
	we obtain
	$$ \text{sign} \Big( \big( (R^{-1})^T \tilde{v} \big)^T RBR^T (R^{-1})^T \tilde{v} \Big) = \text{sign} (\tilde{v}^T B \tilde{v}) = \text{sign}_B(\lambda).$$
\end{proof}
\begin{remark} \label{remark4.15}
	In the case $n=1$, scaling on the Lagrangian yields
	$$ \begin{pmatrix}
	k & 0\\
	0 & \frac{1}{k}
	\end{pmatrix} \begin{pmatrix}
	a & b\\
	c & a
	\end{pmatrix} \begin{pmatrix}
	\frac{1}{k} & 0\\
	0 & k
	\end{pmatrix} = \begin{pmatrix}
	a & k^2 b\\
	\frac{1}{k^2}c & a
	\end{pmatrix},\quad k \in \mathbb{R}^*, $$
	hence the trace is invariant as well under conjugation.\
\end{remark}

\subsection{Monodromy if the symplectic \& anti-symplectic symmetries commute}
\label{sec:4.4}

Let $\sigma$ be a symplectic and $\rho$ be an anti-symplectic symmetry such that
$$ \sigma \circ \rho = \rho \circ \sigma. $$
Moreover, let $x$ be a periodic orbit with $x_0 \in \text{Fix}(\sigma)$ and first return time $T_x$, which is symmetric with respect to $\rho$, hence $x_0 \in \text{Fix}(\rho)$.\ Consider the symplectic decomposition
$$ T_{x_0} M = T_{x_0}\text{Fix}(\sigma) \oplus E_{-1} \big( d \sigma(x_0)\big) $$
and let
$$ \text{dim} \big(  T_{x_0}\text{Fix}(\sigma) \big) = 2k,\quad \text{dim} \Big( E_{-1} \big( d \sigma(x_0)\big) \Big) = 2 \tilde{k},\quad 2k + 2\tilde{k} = 2n. $$
\begin{lemma1}
	The monodromy is of the form
	$$ d\varphi _H ^{T_x} (x_0) = \begin{pmatrix}
	A_1 & 0\\
	0 & A_2
	\end{pmatrix}, $$
	where
	$$ A_1 \colon T_{x_0} \text{Fix}(\sigma) \to T_{x_0} \text{Fix}(\sigma),\quad A_2 \colon E_{-1}\big( d\sigma(x_0) \big) \to E_{-1}\big( d\sigma(x_0) \big) $$
	and
	$$ A_1 \in \text{Sp}^{\rho_0}(k),\quad A_2 \in \text{Sp}^{\rho_0}(\tilde{k}) .$$
\end{lemma1}
\begin{proof}
Since $\sigma$ and $\rho$ commute, the linear anti-symplectic involution
$$ d \rho (x_0) \colon T_{x_0} M \to T_{x_0} M $$
leaves the symplectic decomposition
$$ T_{x_0} M = T_{x_0}\text{Fix}(\sigma) \oplus E_{-1} \big( d \sigma(x_0)\big) $$
invariant, meaning that for $\xi \in E_{\pm 1} \big( d \sigma(x_0) \big)$ we have $d \rho(x_0) \xi \in E_{\pm 1} \big( d \sigma(x_0) \big)$.\ If we denote
$$ E_1^{d\sigma}:= T_{x_0}\text{Fix}(\sigma),\quad E_{-1}^{d\sigma}:= E_{-1}\big( d\sigma(x_0)\big), $$
this invariance implies that the restrictions
$$ d\rho|_{E_1^{d\sigma}} (x_0) \colon E_1^{d\sigma} \to E_1^{d\sigma},\quad d\rho|_{E_{-1}^{d\sigma}} (x_0) \colon E_{-1}^{d\sigma} \to E_{-1}^{d\sigma} $$
are linear anti-symplectic involutions.\ Hence the symplectic decomposition splits into two Lagrangian splittings,
\begin{align*}
T_{x_0}M &= E_1^{d\sigma} \oplus E_{-1}^{d\sigma}\\
&= \Big(E_1 \big( d\rho|_{E_1^{d\sigma}} (x_0) \big) \oplus E_{-1} \big( d\rho|_{E_1^{d\sigma}} (x_0)\big)\Big) \oplus \Big(E_1 \big( d\rho|_{E_{-1}^{d\sigma}} (x_0) \big) \oplus E_{-1} \big( d\rho|_{E_{-1}^{d\sigma}} (x_0)\big)\Big),
\end{align*}
with Lagrangian bases
$$ v_1,...,v_k,w_1,...,w_k,\quad \tilde{v}_1,...,\tilde{v}_{\tilde{k}},\tilde{w}_1,...,\tilde{w}_{\tilde{k}}, $$
respectively.\ In view of Section \ref{sec:4.3.1}, this proves the lemma.\
\end{proof}
\begin{remark}
The two Lagrangian bases from the proof give a Lagrangian basis
$$ v_1,...,v_k,\tilde{v}_1,...,\tilde{v}_{\tilde{k}},w_1,...,w_k,\tilde{w}_1,...,\tilde{w}_{\tilde{k}} $$
on $T_{x_0}M$ with respect to the Lagrangian splitting and decomposition
\begin{align*}
T_{x_0}M &= T_{x_0}\text{Fix}(\rho) \oplus E_{-1}\big(d\rho(x_0)\big)\\
&= \Big(E_1 \big( d\rho|_{E_1^{d\sigma}} (x_0) \big) \oplus E_{1} \big( d\rho|_{E_{-1}^{d\sigma}} (x_0)\big)\Big) \oplus \Big(E_{-1} \big( d\rho|_{E_1^{d\sigma}} (x_0) \big) \oplus E_{-1} \big( d\rho|_{E_{-1}^{d\sigma}} (x_0)\big)\Big).
\end{align*}
\end{remark}
\begin{remark}
In view of Remark \ref{remark4.8}, the reduced monodromy is of the form
$$ \overline{d\varphi _H ^{T_x} | _{\Sigma} (x_0)} = \begin{pmatrix}
\overline{A}_1 & 0\\
0 & A_2
\end{pmatrix}, $$
where
$$\overline{A}_1 \colon T_{x_0} \text{Fix}(\sigma|_{\Sigma}) / (\text{ker}\omega_{x_0} | _{T_{x_0}\Sigma}) \to T_{x_0} \text{Fix}(\sigma|_{\Sigma}) / (\text{ker}\omega_{x_0} | _{T_{x_0}\Sigma}),\quad \overline{A}_1 \in \text{Sp}^{\rho_0}(k-1). $$
\end{remark}

\subsection{GIT quotient in dimension two}
\label{sec:4.5}

Let $G$ be a Lie group, i.e.\ $G$ is a group and a smooth manifold such that
$$ G \to G,\quad g \mapsto g^{-1},\quad \quad G \times G \to G,\quad (g,h) \mapsto gh $$
are smooth.\ Let $G$ act on a manifold $M$, meaning that there is a group homomorphism $ \psi \colon G \to \text{Diff}(M)$ such that
$$ G \times M \to M,\quad (g,m) \mapsto \psi(g)(m) =: g_* m $$
is smooth.\ For any $m \in M$ the orbit through $m$ is the set $Gm = \{g_* m \mid g \in G\}$.\ If $m$ and $n$ lie in the same orbit, then $Gm=Gn$.\ Moreover, $M$ can be written as the disjoint union of orbits and the space of orbits is the quotient space $M/G$ which is in general not a Hausdorff space.\ To ensure the Hausdorff property we consider the orbit closure relation on $M$ which is defined by
$$ m \sim n \quad : \Leftrightarrow \quad \overline{Gm} \cap \overline{Gn} \neq \emptyset, $$
meaning that $m$ is related to $n$ if the closure of the orbits through $m$ and $n$ intersect.\ It is clear that this relation is reflexive and symmetric, but it is not necessarily transitive.\ If it is an equivalence relation, then the \textbf{GIT quotient} (geometric invariant theory quotient) is defined as
$$ M /\!/ G := M / \sim. $$
\begin{example}
	Let $\mathbb{R}_{>0}$ acting on $\mathbb{R}$ by multiplication, then there are exactly the three orbits $  \mathbb{R}^-, \{0\}$ and $\mathbb{R}^+$.\ The open sets in the orbit space $\mathbb{R} / \mathbb{R}_{>0}$ are $ \{ \mathbb{R}^- \}, \{\mathbb{R}^+\}, \{ \mathbb{R}^-, \{ 0\}, \mathbb{R}^+\}$ and $\{\emptyset\},$
	hence it is not Hausdorff.\ By the closure of the orbits $ \overline{\{0\}} = \{0\}, \overline{\mathbb{R}^-} = (-\infty,0]$ and $\overline{\mathbb{R}^+} = [0,\infty)$ we see that the GIT quotient is
	$$ \mathbb{R} /\!/ \mathbb{R}_{>0} = \{\text{pt}.\} .$$
\end{example}
\begin{example}
	Let $\text{GL}(n,\mathbb{R})$ act on $\text{Mat}(n,\mathbb{R})$ by conjugation.\ The GIT quotient avoids Jordan factors, therefore two matrices $A,B$ are equivalent if and only if their characteristic polynomials are the same (see \cite[Appendix A]{frauenfelder_moreno} for details).\ For a matrix $A$ let $\chi_A(\lambda) = \lambda^n + a_{n-1}\lambda^{n-1} + ... + a_0$ be its characteristic polynomial.\ Then a homeomorphism is given by
	$$ \text{Mat}(n,\mathbb{R}) /\!/  \text{GL}(n,\mathbb{R}) \to \mathbb{R}^n,\quad [A] \mapsto (a_{n-1},...,a_0). $$
\end{example}
\begin{example}
For this paper the relevant GIT quotient is
$$ \text{Sp}^{\rho_0} (1) /\!/ \text{GL}(1,\mathbb{R}), $$
which is well studied in \cite[pp.\ 25--28]{zhou}.\ In particular, this space is important for the study of periodic orbits whose reduced monodromy is an element from $\text{Sp}^{\rho_0}(1)$.\ Let $A \in  \text{Sp}^{\rho_0} (1)$, and recall from Subsection \ref{sec:stability_floquet} that the eigenvalues in the elliptic and hyperbolic case are resp.\ given by
$$ a \pm \text{i} \sqrt{(1-a^2)},\quad a \pm \sqrt{(a^2 - 1)}. $$ Each of the positive and negative hyperbolic cases consists of two subcases, namely
\begin{center}
\begin{tabular}{cccc}\centering
	pos. hyperb. I & pos. hyperb. II & neg. hyperb. I & neg. hyperb. II\\
	$\begin{pmatrix}
	a > 1 & b < 0\\
	c < 0 & a > 1
	\end{pmatrix}$ & $\begin{pmatrix}
	a > 1 & b > 0\\
	c > 0 & a > 1
	\end{pmatrix}$ & $\begin{pmatrix}
	a < -1 & b > 0\\
	c > 0 & a < -1
	\end{pmatrix}$ & $\begin{pmatrix}
	a < -1 & b < 0\\
	c < 0 & a < -1
	\end{pmatrix}$
\end{tabular}
\end{center}
Furthermore, recall from Remark \ref{remark4.15} that the action is given by
$$ k_* A :=  \begin{pmatrix}
a & k^2 b\\
\frac{1}{k^2}c & a
\end{pmatrix},\quad k \in \mathbb{R}^*. $$
Note that $A_1, A_2 \in \text{Sp}^{\rho_0} (1)$ are equivalent in the GIT quotient if and only if $ \overline{k_* A_1} \cap \overline{k_* A_2} \neq \emptyset $.\

If 1 is not an eigenvalue and $b\neq0,c\neq0$, then one can always choose $k$ such that $k^2 b = \pm \frac{1}{k^2}c$.\

If $A$ is elliptic, then $A$ is equivalent to a rotation of $\mathbb{R}^2$.\

If $A$ is hyperbolic, then $A$ is equivalent to
$$ \begin{pmatrix}
a & \pm \sqrt{(a^2 - 1)}\\
\pm \sqrt{(a^2 - 1)} & a
\end{pmatrix},\quad a>1 \text{ or } a<-1 .$$

Obviously, the identity matrix lies in the closure of the orbits of
$$ \begin{pmatrix}
1 & \pm b\\
0 & 1
\end{pmatrix},\quad \begin{pmatrix}
1 & 0\\
\pm b & 1
\end{pmatrix},\quad b > 0, $$
hence these three matrices are equivalent in the GIT quotient and identified to the single point $\{ +1 \}$.\ The same holds for the matrices replacing 1 in the diagonals by $-1$, which we identify to the single point $\{ -1 \}$.\

Topologically, the GIT quotient $\text{Sp}^{\rho_0} (1) /\!/ \text{GL}(1,\mathbb{R})$ is isomorphic to a circle with four spikes (see Figure \ref{circle_four_spikes}).\ Geometrically, the unit circle $\{ z \in \ \mathbb{C} \mid |z| = 1 \} \setminus \{ \pm 1 \}$ corresponds to equivalence classes of elliptic matrices, and each spike minus $\{ \pm 1 \}$ represents a hyperbolic subcase.\
\begin{figure}[H]
	\centering
	\begin{tikzpicture}[line cap=round,line join=round,>=triangle 45,x=1.0cm,y=1.0cm]
	\clip(-8,-4) rectangle (8,4.5);
	
	\draw [line width=1.5pt,color=blue] (0,0) circle (1.5cm);
	\draw [->,line width=1pt] (-4,0) -- (4.5,0);
	\draw [->,line width=1pt] (0,-4) -- (0,4.5);
	
	\draw [fill=blue,color=blue] (1.5,0) circle (2pt);
	\draw [fill=blue,color=blue] (-1.5,0) circle (2pt);
	
	\draw (-1.7,3.2) node[anchor=north west,color=blue] {$\begin{pmatrix}
		\cos \theta & - \sin \theta\\
		\sin \theta & \cos \theta
		\end{pmatrix}$};
	
	\draw (-1.7,-1.6) node[anchor=north west,color=blue] {$\begin{pmatrix}
		\cos \theta & \sin \theta\\
		- \sin \theta & \cos \theta
		\end{pmatrix}$};
	
	\draw (3,3.5) node[anchor=north west,color=blue] {$\begin{pmatrix}
		\cosh x & \sinh x\\
		\sinh x & \cosh x
		\end{pmatrix}$};
	
	\draw (3.4,1.8) node[anchor=north west,color=blue] {$\text{pos. hyperb. II}$};
	
	\draw (-7.1,3.5) node[anchor=north west,color=blue] {$\begin{pmatrix}
		- \cosh x & \sinh x\\
		\sinh x & - \cosh x
		\end{pmatrix}$};
	
	\draw (3.4,-1.2) node[anchor=north west,color=blue] {$\text{pos. hyperb. I}$};
	
	\draw (-7.1,-1.9) node[anchor=north west,color=blue] {$\begin{pmatrix}
		- \cosh x & - \sinh x\\
		- \sinh x & - \cosh x
		\end{pmatrix}$};
	
	\draw (-6.5,1.8) node[anchor=north west,color=blue] {$\text{neg. hyperb. I}$};
	
	\draw (3,-1.9) node[anchor=north west,color=blue] {$\begin{pmatrix}
		\cosh x & - \sinh x\\
		- \sinh x & \cosh x
		\end{pmatrix}$};
	
	\draw (-6.5,-1.2) node[anchor=north west,color=blue] {$\text{neg. hyperb. II}$};
	
	\draw [line width=1.5pt,color=blue] (3,2.8284271247) .. controls (1,0) .. (3,-2.8284271247);
	\draw [line width=1.5pt,color=blue] (-3,2.8284271247) .. controls (-1,0) .. (-3,-2.8284271247);
	
	\end{tikzpicture}
	\caption{Topology of $\text{Sp}^{\rho_0} (1) /\!/ \text{GL}(1,\mathbb{R})$}
	\label{circle_four_spikes}
\end{figure}
\noindent
Note that the eigenvalues of the hyperbolic matrices are $e^{\pm x}$ for the pos.\ hyperb.\ and $-e^{\pm x}$ for the neg.\ hyperb.\ cases, which equal the Floquet multipliers $\lambda$ and $1/\lambda$.\ Furthermore, in the elliptic case, if $b<0$, then the rotation is by $\theta \in (0,\pi)$ and if $b>0$, then it is by $-\theta$, so the rotation angle equals $2\pi - \theta \in (\pi,2\pi)$.\
\end{example}
\begin{example}
	Topologically, the GIT quotient $\text{Sp}(1) /\!/ \text{Sp}(1)$, where $\text{Sp}(1) = \text{SL}(2,\mathbb{R})$ acts on itself by conjugation, is isomorphic to a circle with two spikes (see Figure \ref{circle_two_spikes} and \cite[Section 10.5]{frauenfelder} for details).\ Geometrically, the unit circle $\{ z \in \ \mathbb{C} \mid |z| = 1 \} \setminus \{ \pm 1 \}$ corresponds to equivalence classes of elliptic matrices, the spike $\{ r \in \mathbb{R} \mid r > 1 \}$ represents the positive hyperbolic case, the spike $\{ r \in \mathbb{R} \mid r < -1 \}$ represents the negative hyperbolic case.\ Moreover, since
	$$ \begin{pmatrix}
	1 & 0\\
	0 & 1
	\end{pmatrix}, \begin{pmatrix}
	1 & 1\\
	0 & 1
	\end{pmatrix}, \begin{pmatrix}
	1 & -1\\
	0 & 1
	\end{pmatrix},\quad\quad \begin{pmatrix}
	-1 & 0\\
	0 & -1
	\end{pmatrix}, \begin{pmatrix}
	-1 & 1\\
	0 & -1
	\end{pmatrix}, \begin{pmatrix}
	-1 & -1\\
	0 & -1
	\end{pmatrix}, $$
	are resp.\ equivalent, the single point $\{ +1 \}$ corresponds to the first three Jordan forms and the single point $\{ -1 \}$ corresponds to the second three Jordan forms.\
	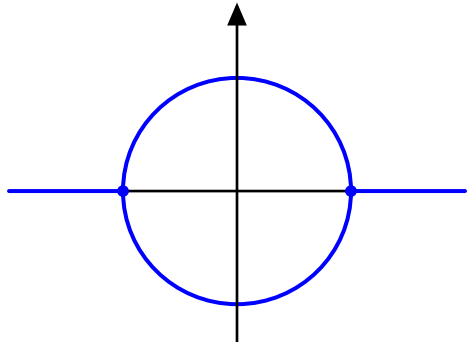
\begin{figure}[H]
		\centering
		\begin{tikzpicture}[line cap=round,line join=round,>=triangle 45,x=1.0cm,y=1.0cm]
		\clip(-8,-2) rectangle (8,2.5);
		
		\draw [line width=1.5pt,color=blue] (0,0) circle (1.5cm);
		\draw [line width=1.5pt,color=blue] (-3,0) -- (-1.5,0);
		\draw [line width=1pt] (-1.5,0) -- (1.5,0);
		\draw [line width=1.5pt,color=blue] (1.5,0) -- (3,0);
		\draw [->,line width=1pt] (0,-2) -- (0,2.5);
		\draw [fill=blue,color=blue] (1.5,0) circle (2pt);
		\draw [fill=blue,color=blue] (-1.5,0) circle (2pt);
		
		\end{tikzpicture}
		\caption{Topology of $\text{Sp}(1) /\!/ \text{Sp}(1)$}
		\label{circle_two_spikes}
	\end{figure}
\end{example}

\section{Planar and spatial Hill lunar problem} \label{sec:spatial_Hills_lunar}

\begin{center}
	\textit{``The pioneer in the search for periodic orbits in the restricted three-body problem was Hill}\\
	\textit{with his discovery of the retrograde and direct periodic orbit in Hill's lunar problem...}\\
	\textit{His motivation was to describe the motion of the moon."}
\end{center}
\begin{flushright}
	- Urs Frauenfelder and Otto van Koert \cite[p.\ 94]{frauenfelder}
\end{flushright}

\subsection{Short astronomical lunar overview and Hill's concept}

In astronomy, one distinguishes two kinds of periodic orbits, retrograde and direct ones.\ The sun rotates about its own axis.\ The planets circle around the sun in the same direction, and all of them rotate about their own axis in this direction, except Venus and Uranus, which rotate about their own axis in the other direction.\ Usually, moons, which are the companions of the planet, move around the planet in the way the planet circles around the sun.\ Such a periodic orbit of the moon is called a direct periodic orbit, and in the other case is a retrograde one, see Figure \ref{figure1}.\ For instance, the moon ``Triton" of Neptun is retrograde.\ Our moon is direct.
\begin{figure}[H]
	\centering
	\definecolor{qqqqff}{rgb}{0.0,0.0,1.0}
	\definecolor{uuuuuu}{rgb}{0.26666666666666666,0.26666666666666666,0.26666666666666666}
	\begin{tikzpicture}[line cap=round,line join=round,>=triangle 45,x=1.0cm,y=1.0cm]
	\clip(-1.5,-1.4) rectangle (10.0,1.4);
	
	\draw [fill=black,fill opacity=0.3, line width=1pt] (-0.4,0.0) circle (1.0cm);
	
	\draw [fill=black,fill opacity=0.3,line width=1pt] (4.0,0.0) circle (0.3cm);
	
	\draw [shift={(0.0-0.4,0.0)},line width=1pt] [decoration={markings, mark=at position 0.8 with {\arrow{>}}}, postaction={decorate}] plot[domain=-0.6186443509247468:0.7687330396835076,variable=\t]({1.0*1.4484474446799924*cos(\t r)+-0.0*1.4484474446799924*sin(\t r)},{0.0*1.4484474446799924*cos(\t r)+1.0*1.4484474446799924*sin(\t r)});
	
	\draw [shift={(0,0.0)},line width=1pt] [decoration={markings, mark=at position 0.18 with {\arrow{>}}}, postaction={decorate}] plot[domain=0.07500383243652106:0.6998928697192437,variable=\t]({1.0*3.9761263820253814*cos(\t r)+-0.0*3.9761263820253814*sin(\t r)},{0.0*3.9761263820253814*cos(\t r)+1.0*3.9761263820253814*sin(\t r)});
	
	\draw [shift={(3.95,0.0)},line width=1pt] [decoration={markings, mark=at position 0.65 with {\arrow{>}}}, postaction={decorate}] plot[domain=-0.588002603547574:0.8615476803894101,variable=\t]({1.0*0.5812585692566169*cos(\t r)+-0.0*0.5812585692566169*sin(\t r)},{0.0*0.5812585692566169*cos(\t r)+1.0*0.5812585692566169*sin(\t r)});
	
	\draw (-0.3923106060605868-0.4,-1.05) node[anchor=north west] {$sun$};
	\draw (3.37018939393941,-0.2249368686868733) node[anchor=north west] {$planet$};
	
	\draw [dashed,line width=1pt] (4.0,0.0) [decoration={markings, mark=at position 0.224 with {\arrow{>}}}, postaction={decorate}] circle (0.9105960150386894cm);
	
	\draw (4.746325757575772,1.0984722222222156) node[anchor=north west] {$moon$};
	\begin{scriptsize}
	\draw [fill=black] (4.673598484848501,0.6127398989898932) circle (2pt);
	\end{scriptsize}
	
	
	\draw [fill=black,fill opacity=0.3,line width=1pt] (4.0+4,0.0) circle (0.3cm);
	
	
	\draw [shift={(4.0,-0.0)},line width=1pt] [decoration={markings, mark=at position 0.18 with {\arrow{>}}}, postaction={decorate}] plot[domain=0.07500383243652106:0.6998928697192437,variable=\t]({1.0*3.9761263820253814*cos(\t r)+-0.0*3.9761263820253814*sin(\t r)},{0.0*3.9761263820253814*cos(\t r)+1.0*3.9761263820253814*sin(\t r)});
	
	\draw [shift={(3.95+4,0.0)},line width=1pt] [decoration={markings, mark=at position 0.65 with {\arrow{>}}}, postaction={decorate}] plot[domain=-0.588002603547574:0.8615476803894101,variable=\t]({1.0*0.5812585692566169*cos(\t r)+-0.0*0.5812585692566169*sin(\t r)},{0.0*0.5812585692566169*cos(\t r)+1.0*0.5812585692566169*sin(\t r)});
	
	\draw (3.37018939393941+4,-0.2249368686868733) node[anchor=north west] {$planet$};
	
	\draw [dashed,line width=1pt] (4.0+4,0.0) [decoration={markings, mark=at position 0.04 with {\arrow{<}}}, postaction={decorate}] circle (0.9105960150386894cm);
	\draw (4.746325757575772+4,1.0984722222222156) node[anchor=north west] {$moon$};
	\begin{scriptsize}
	\draw [fill=black] (4.673598484848501+4,0.6127398989898932) circle (2pt);
	\end{scriptsize}

	\end{tikzpicture}
	\caption{Direct and retrograde periodic orbit}
	\label{figure1}
\end{figure}
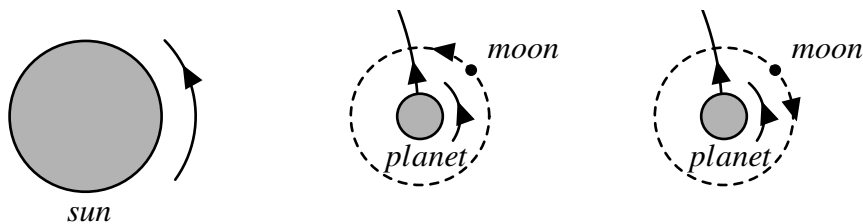
\noindent
The synodic month, or lunation, is the period of the Moon's full moon phase, i.e.,\ the average period of the Moon's orbit with respect to the line joining the sun and the earth.\ Of course one can look at new moons instead.\ If the moon is closer to the earth, then its motion is faster, i.e.\ the speed of the moon varies.\ The anomalistic month is the time the moon takes from the closest point (``perigee") back to the closest point, or equivalently to return to the same speed.\ Considering the farthest point (``apogee") gives the same period.\ The orbit of the moon is inclined to the ecliptic by about $5^{\circ}$, and the draconitic month is the period from one intersection point with the ecliptic, called node, back to itself.\ The term ``draconitic" or ``the nodes dragon" has the following background.\ When the moon is near to one of its nodes, then a solar eclipse appears at full moon and a moon eclipse at new moon.\ In the past, in case of these eclipses people said that there is a dragon eating the sun or the moon.\ However, the mean values of these three periods, named in the abstract, were already known to the Babylonians around 600 BCE.\ For a description about Babylonian astronomy we refer to Neugebauer \cite{neugebauer} and to a more current work by Brack-Bernsen \cite{brack_bernsen}, and about the history of lunar theory to Linton \cite{linton}.\ These periods are also mentioned in Ptolemy's (c.90--c.160) Almagest (see \cite{toomer}) which was one of the most influential works for scientists, in particular for those mentioned in the following quote.\
\begin{center}
	\textit{``Over the
		centuries, through the work of men such as Ptolemy, Ibn ash-Sh\={a}\d{t}ir,}\\
	\textit{Copernicus, Tycho Brahe, Kepler, and Newton, models of the heavens}\\
	\textit{came to reproduce the results of observations with greater and greater accuracy.}"
\end{center}
\begin{flushright}
	- Christopher M. Linton \cite[p.\ xi]{linton}
\end{flushright}
According to planetological data (see \cite[pp.\ 52--53, 62]{schultz}) we have Table \ref{planetological_data}.\
\begin{table}[H]
	\centering
	\begin{tabular}{c|c|c|c}
		& mass & mean distance to sun & mean distance to earth\\
		\hline sun & $1.989 \times 10^{30}$ kg & & \\
		earth & $5.98 \times 10^{24}$ kg & $149.6 \times 10^6$ km & \\
		moon & $7.35 \times 10^{22}$ kg & & $384000$ km
	\end{tabular}
	\caption{Planetological data}
	\label{planetological_data}
\end{table}
\noindent
We see that the mass of the earth is about 0.0003\% compared to the sun and the one of the moon is about 1.234\% compared to the earth.\ Moreover, the mean distance of the moon to the earth compared to the one between earth and sun is about 0.25668\%.\ Given these extreme proportions, Hill's idea was to study a limit case in which the earth is at the origin, the moon is very close to it, but the huge sun is infinitely far away.\ One zooms into a region around the earth (see Figure \ref{figure:hill}) and this is a simplified model of the circular restricted three body problem.\
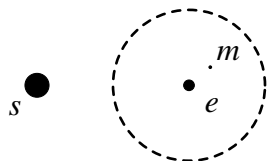
\begin{figure}[H]
	\centering
	\begin{tikzpicture}[line cap=round,line join=round,>=triangle 45,x=1.0cm,y=1.0cm]
	\clip(-2.5,-1) rectangle (2.5,1.1);
	
	\draw (-2.52,-0.05999999999999944) node[anchor=north west] {$s$};
	\draw (0.06,-0.019999999999999435) node[anchor=north west] {$e$};
	\draw (0.22,0.6600000000000007) node[anchor=north west] {$m$};
	\draw [dashed,color=black,line width=1pt] (0.0,-0.0) circle (1.0cm);
	\begin{scriptsize}
	\draw [fill=black] (0.0,-0.0) circle (2.0pt);
	\draw [fill=black] (-2.0,0.0) circle (4.5pt);
	\draw [fill=black] (0.28,0.24) circle (0.5pt);
	\end{scriptsize}
	\end{tikzpicture}
	\caption{Hill lunar problem}
	\label{figure:hill}
\end{figure}
Hill's clever concept was that in his equation the true trajectory of the moon must be close to a periodic orbit centered at the earth, called ``variational orbit" or ``Hill's intermediate orbit", i.e.\ the true orbit is almost periodic.\ For the ``motion of the lunar perigee" (i.e.\ the analysis of anomalistic period) he arrived, by transforming his equations to an infinite set of homogeneous linear equations, at an infinite determinant, which corresponds to these equations written in power series.\ He was the first who attacked such a problem.\ In 1877 John Couch Adams (1819--1892) used exactly the same method in \cite{adams} to study the motion of the lunar node (i.e.\ the analysis of draconitic period).\ Since the determinant is infinite, it was not obvious that it converges.\ In 1881, according to \cite[p.\ 116]{wilson}, Poincaré, who was greatly influenced by Hill's approach, proved the relevant theorem, namely that an infinite determinant converges if and only if the non-diagonal elements have finite sum and the product of the elements on the diagonal is finite.\ We refer to \cite{gutzwiller_2} and \cite{wilson} for more details with a lot of history.\

Nowadays nobody makes use of their huge computations (see \cite{adams}, \cite{hill_det} and \cite{hill}).\ In contrast to this computational approach by Hill and Adams, our geometrical way leads us to a much less computational approximation of their periods in terms of the Floquet multipliers of the linearized spatial Hill equation and their Conley--Zehnder indices.\

\subsection{Derivation of the Hamiltonians}
\label{subsec:discussion}

Since our periodic orbits are in the plane, we start to introduce the planar circular restricted three body problem (PCR3BP from now on).\ In the restricted three body problem we consider two masses, which we call sun and earth, and a massless moon which does not influence the two masses and is attracted by them according to Newton's law of gravitation.\ We denote them respectively by $s$, $e$ and $m$.\ This assumption is a good approximation of the actual system in view of the relations of their masses (see Table \ref{planetological_data} from previous subsection).\ The goal is to understand the dynamics of the massless body, which moves in the same plane as the sun and the earth.\ Moreover, we normalize the total mass to unity, i.e.\ the mass of the earth is $\mu \in [0,1]$ and that of the sun is $1-\mu$.\ If $\mu$ is bigger than $1-\mu$, then we can just interchange their roles.\ Furthermore, the two masses move on circles with common center of mass, with coordinates $ s(t)= - \mu (\cos t, \sin t)$ and $e(t) = (1 - \mu)(\cos t, \sin t)$ (see Figure \ref{figure8}).\
\begin{figure}[H]
	\centering
	\begin{tikzpicture}[line cap=round,line join=round,>=triangle 45,x=1.0cm,y=1.0cm]
	\clip(-4.2,-2.3) rectangle (4.2,2.8);
	
	\draw(0.0,0.0) [decoration={markings, mark=at position 0.63 with {\arrow{>}}}, postaction={decorate},line width=1pt] circle (1.0cm);
	\draw(0.0,0.0) [decoration={markings, mark=at position 0.065 with {\arrow{>}}}, postaction={decorate},line width=1pt] circle (2.0cm);
	\draw [->,line width=1pt] (0.0,-3.0) -- (0.0,2.8);
	\draw (-1.826666666666667,0.6999999999999997) node[anchor=north west] {$s(t)$};
	\draw (1.9533333333333345,0.01999999999999944) node[anchor=north west] {$e(t)$};
	\draw (3.533333333333354,-0.040000000000000584) node[anchor=north west] {$q_1$};
	\draw (0.19333333333333377,2.9400000000000006) node[anchor=north west] {$q_2$};
	\draw (2.5533333333333344,1.6799999999999998) node[anchor=north west] {$m$};
	\draw [->,line width=1pt] (-3.0,0.0) -- (3.6,0.0);
	\begin{scriptsize}
	\draw [fill=black] (0.0,0.0) circle (2pt);
	\draw [fill=black] (-1.0,1.2246467991473532E-16) circle (2pt);
	\draw [fill=black] (2.0,0.0) circle (2pt);
	\draw [fill=black] (2.54,1.28) circle (1pt);
	\end{scriptsize}
	\end{tikzpicture}
	\caption{PCR3BP}
	\label{figure8}
\end{figure}
\noindent
We exclude collisions of the moon with one of the masses, such that the configuration space is $\mathbb{R}^2 \setminus\{s(t),e(t)\}$ and the phase space the trivial cotangent bundle $T^*\big(\mathbb{R}^2\setminus\{s(t),e(t)\}\big) = \big(\mathbb{R}^2\setminus\{s(t),e(t)\}\big) \times \mathbb{R}^2$ with the canonical symplectic form $\omega = dq_1 \wedge dp_1 + dq_2 \wedge dp_2$.\ Let $q=(q_1,q_2)$ denote the position of $m$ and $p=(p_1,p_2)$ its momentum in the fiber.\

By the time dependence of the Hamiltonian in the inertial frame of the moon, which is given by the kinetic energy and Newton's potential, the energy is not preserved by the Hamiltonian flow.\ Hence we consider the angular momentum
\begin{align} \label{angular_momentum_original}
L \colon T^* \mathbb{R}^2 \to \mathbb{R},\quad (q,p) \mapsto p_1q_2 - p_2q_1,
\end{align}
which generates a uniform counterclockwise rotation of the coordinate system such that the sun and the earth are fixed at $s = (- \mu,0)$ and $e = (1 - \mu,0)$ on the $q_1$-axis in the rotating frame.\ 

In this new coordinate system, the Hamiltonian describing the motion of the moon is now autonomous,
$$H \colon T^* \big(\mathbb{R}^2\setminus\{s,e\}\big) \to \mathbb{R},\quad (q,p) \mapsto \frac{1}{2}|p|^2 - \frac{1-\mu}{|q-s|} - \frac{\mu}{|q-e|} + p_1q_2 - p_2q_1.$$
This integral was discovered by Jacobi, so it is called the Jacobi integral.\ The traditional one is $-2H$.\ To understand the physics, we complete the squares and obtain
\begin{align} \label{hamiltonian_1}
H(q,p) = \frac{1}{2}\big( (p_1 + q_2)^2 + (p_2 - q_1)^2 \big) - \frac{1-\mu}{|q-s|} - \frac{\mu}{|q-e|} - \frac{1}{2}(q_1^2 + q_2^2).
\end{align}
The last three terms only depend on the position $q$, so one defines the effective potential by
\begin{align} \label{effective_1}
U \colon \mathbb{R}^2\setminus\{s,e\} \to \mathbb{R},\quad q \mapsto - \frac{1-\mu}{|q-s|} - \frac{\mu}{|q-e|} - \frac{1}{2}(q_1^2 + q_2^2).
\end{align}
Note that $U$ consists of the Newtonian potential for the gravitational force between sun and earth and of the term $-\frac{1}{2}(q_1^2 + q_2^2)$, which appears in rotating coordinates - it is the centrifugal force.\ In the kinetic part is a twist corresponding to an extra force, namely the Coriolis force, which depends on the velocity.\ The dynamics of the moon is therefore very complicated.\

The Lagrange points are the critical points of the Hamiltonian $H$.\ The set of critical points of $H$ and of the effective potential $U$ are in bijection under the restriction of the footpoint projection
$$\pi\vert_{\text{crit}(H)} \colon \text{crit}(H) \to \text{crit}(U),\quad (q,p) \mapsto q,$$
and its inverse given by $(q_1,q_2) \mapsto (q_1,q_2,-q_2,q_1)$.\

For the mass $\mu \in (0,1)$, i.e.\ if neither the sun nor the earth have zero mass, there are five Lagrange points (see \cite[pp.\ 63--68]{frauenfelder} for details).\ $L_1$, $L_2$ and $L_3$ are saddle points and $L_4$ and $L_5$ are global maxima of $U$.\
\begin{figure}[H]
	\centering
	\begin{tikzpicture}[line cap=round,line join=round,>=triangle 45,x=1.0cm,y=1.0cm]
	\clip(-4.5,-2.2) rectangle (4.5,2.2);
	
	\draw (-1.4066666666666668,0.01999999999999887) node[anchor=north west] {$s$};
	\draw (1.8833333333333345,0.01999999999999887) node[anchor=north west] {$e$};
	\draw (-0.22666666666666638,0.6069999999999991) node[anchor=north west] {$0$};
	\draw [line width=1pt] (-3.0,0.0)-- (4.0,0.0);
	\draw (1.1333333333333342,0.01999999999999887) node[anchor=north west] {$\textcolor{red}{L_1}$};
	\draw (-2.446666666666667,0.01999999999999887) node[anchor=north west] {$\textcolor{red}{L_3}$};
	\draw (1.1133333333333342,2.32) node[anchor=north west] {$\textcolor{red}{L_4}$};
	\draw (1.013333333333334,-1.7200000000000017) node[anchor=north west] {$\textcolor{red}{L_5}$};
	\draw (2.3333333333333344,0.01999999999999887) node[anchor=north west] {$\textcolor{red}{L_2}$};
	\begin{scriptsize}
	\draw [fill=black] (0.0,0.0) circle (2pt);
	\draw [fill=black] (-1.0,0.0) circle (2pt);
	\draw [fill=black] (2.0,0.0) circle (2pt);
	\draw [fill=red,color=red] (-2.0,0.0) circle (2pt);
	\draw [fill=red,color=red] (1.7,0.0) circle (2pt);
	\draw [fill=red,color=red] (2.3,0.0) circle (2pt);
	\draw [fill=red,color=red] (1.0,1.73) circle (2pt);
	\draw [fill=red,color=red] (1.0,-1.73) circle (2pt);
	\end{scriptsize}
	\end{tikzpicture}
	\caption{The five Lagrange points}
	\label{figure9}
\end{figure}
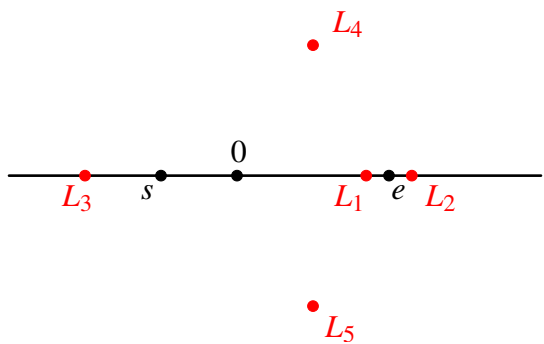
\noindent
One limit case of PCR3BP is the rotating Kepler problem, a two body problem, for which $\mu = 0$, i.e.\ the earth is massless and can ben neglected.\ Hence the moon is just attracted by the sun with mass 1.\ For a detailed discussion we refer to \cite[pp.\ 71--72]{frauenfelder}.\ Another limit case is Hill lunar problem, whose idea and motivation we described in the introduction.\ In contrast to the Kepler problem, the motion here is chaotic, but it simplifies the Hamiltonian (\ref{hamiltonian_1}).\

Recall from the introduction that the orbit of the moon is inclined to the ecliptic, thus from now on the moon moves spatially in three dimensional Euclidean space $\mathbb{R}^3$, i.e.\ its position is given by $q=(q_1,q_2,q_3)$ and its momentum by $p=(p_1,p_2,p_3)$.\ The fixed sun and earth in rotating coordinates are at the positions
$$s = (-\mu, 0,0),\quad e =(1-\mu,0,0).$$
We assume that the sun has a much bigger mass than the earth and the earth a much bigger mass than the moon.\ In addition, the moon moves very close to the earth.\ To goal is to shift the earth into the origin, blow up the coordinates around it and let its mass $\mu$ tend to zero.\ Hence we zoom in an area around the earth (see Figure \ref{figure:hill}).\

Based on the arguments for the planar case in \cite[pp.\ 77--78]{frauenfelder}, we treat the spatial problem in an analogous way.\ The symplectic change of coordinates and momenta
$$ T^* \mathbb{R}^3 \to T^*\mathbb{R}^3, \quad (q,p) \mapsto (q_1 - 1 + \mu, q_2,q_3, p_1, p_2 - 1 + \mu,p_3) $$
puts the earth to the origin and the sun to $(-1,0,0)$.\ By adding the constant $\frac{(1-\mu)^2}{2}$, which does not change the Hamiltonian vector field, we obtain the new Hamiltonian $\widetilde{H}$ on $T^*\big( \mathbb{R}^3 \setminus \{ (-1,0,0),(0,0,0) \} \big)$,
$$\widetilde{H}(q,p) = \frac{1}{2}|p|^2 - \frac{\mu}{|q|} - (1 - \mu) \Bigg( \frac{1}{\sqrt{(q_1 + 1)^2 + q_2^2 + q_3^2}} + q_1 \Bigg) + p_1q_2 - p_2q_1. $$
Now consider the conformally symplectic scaling (the blow up)
$$ \phi_{\mu} \colon T^*\mathbb{R}^3 \to T^*\mathbb{R}^3,\quad (q,p) \mapsto (\mu^{\frac{1}{3}}q,\mu^{\frac{1}{3}}p) $$
by the constant conformal factor $\mu^{\frac{2}{3}}$, i.e.\ $\phi^*_{\mu} \omega = \mu^{\frac{2}{3}}\omega$.\ We define the family of Hamiltonians
$$H^{\mu} \colon T^* \big( \mathbb{R}^3 \setminus \{ (-\mu^{-\frac{1}{3}},0,0),(0,0,0) \} \big) \to \mathbb{R},\quad (q,p) \mapsto \mu^{-\frac{2}{3}} \big( ( \tilde{H} \circ \phi_\mu )(q,p) + 1 - \mu \big). $$
Then
$$ \phi_{\mu}^* X_{\widetilde{H}} = X_{H^{\mu}} $$
and, in explicit form,
$$ H^{\mu}(q,p) = \frac{1}{2}|p|^2 - \frac{1}{|q|} + p_1 q_2 - p_2 q_1 - \frac{1 - \mu}{\mu^{\frac{2}{3}}} \bigg( \frac{1}{\sqrt{ 1 + 2 \mu^{\frac{1}{3}} q_1 + \mu^{\frac{2}{3}}|q|^2  } } + \mu^{\frac{1}{3}} q_1 - 1 \bigg). $$
We simplify $H^{\mu}$ by using the second order Taylor expansion of the function
$$\frac{1}{\sqrt{1+x}} = 1 - \frac{x}{2} + \frac{3x^2}{8} + \mathcal{O}(x^3)$$
for $|x|<1$.\ By setting $x = 2\mu^{\frac{1}{3}}q_1 + \mu^{\frac{2}{3}}|q|^2$, we obtain
$$ H^{\mu}(q,p) = \frac{1}{2}|p|^2 - \frac{1}{|q|} + p_1 q_2 - p_2 q_1 - \bigg( - \frac{1}{2}|q|^2 + \frac{3}{2}q_1^2 + \mathcal{O}(\mu)\bigg). $$
For $\mu \to 0$, $H^{\mu}$ converges uniformly in the $C^{\infty}$-topology on each compact subset to the following Hamiltonian $H$, which we call \textbf{Hamiltonian of the spatial Hill lunar problem}.\ It is given by
\begin{align} \label{hamiltonian_hill}
H \colon T^* \big( \mathbb{R}^3 \setminus \{ (0,0,0) \} \big) \to \mathbb{R},\quad (q,p) \mapsto \frac{1}{2}|p|^2 - \frac{1}{|q|} + p_1q_2 - p_2q_1 - q_1^2 + \frac{1}{2}q_2^2 + \frac{1}{2}q_3^2.
\end{align}
Completing the squares gives us
\begin{align} \label{hamiltonian_2}
H \colon T^* \big( \mathbb{R}^3 \setminus \{ (0,0,0) \} \big) \to \mathbb{R},\quad (q,p) \mapsto \frac{1}{2}\big( (p_1 + q_2)^2 + (p_2 - q_1)^2 + p_3^2 \big) +V(q),
\end{align}
where the effective potential is defined as
\begin{align*}
V \colon \mathbb{R}^3 \setminus \{ (0,0,0) \} \to \mathbb{R},\quad q \mapsto - \frac{1}{|q|} - \frac{3}{2}q_1^2 + \frac{1}{2}q_3^2.
\end{align*}
In the planar case there are no $q_3$ and $p_3$ terms. Compared to PCR3BP, (\ref{hamiltonian_1}) and (\ref{effective_1}), we see that there is again the velocity dependent Coriolis force in the kinetic part.\ This time the effective potential includes just the Newtonian potential for the earth, given by the gravitational force.\ In addition, the therm $-\frac{3}{2}q_1^2$ appers, which is a combination of the gravitational force and the centrifugal force by the huge sun infinitely far away.\ They cancel each other up which is a kind of tidal force and repulsive in the $q_1$-direction.\ In the spatial case, since the sun is in the plane, there is a strong attraction back to the ecliptic, hence the term $\frac{1}{2}q_3^2$ is a kind of left over of the gravitational force of the sun.\

Moreover, in contrast to the PCR3BP the planar Hill lunar problem has only two critical points (see \cite[pp.\ 80--81]{frauenfelder} for details).\ They are saddle points at $(\pm 3 ^{-\frac{1}{3}},0)$, which are the limits of $L_1$ and $L_2$ (from Figure \ref{figure9}), by blowing up the coordinates around the earth.\ In view of symmetries (see (\ref{involution_1} in next Section \ref{sec:involutions_shooting}), they have the same critical value $- \frac{1}{2} 3^{4/3}$, while the traditional one is $3^{4/3}$.\

\subsection{Hill equation and linearization}

For finding periodic orbits, we use the Hamiltonian equation of motion
$$ \frac{dq_i}{dt} = \frac{\partial H}{\partial p_i},\quad \frac{d p_i}{dt} = - \frac{\partial H}{\partial q_i} $$
and compute for the Hamiltonian (\ref{hamiltonian_2}) the \textbf{spatial Hill equation in $(q,p)$-coordinates}:
\begin{align} \label{equation_of_motion_q_p}
\begin{cases}
\dot{q}_1 = p_1 + q_2,\quad\quad\quad & \mathlarger{\dot{p}_1 = p_2 - q_1  - \frac{\partial V}{\partial q_1}},\\[0.8em]
\dot{q}_2 = p_2 - q_1, & \mathlarger{\dot{p}_2 = - p_1 - q_2 - \frac{\partial V}{\partial q_2}},\\
\dot{q}_3 = p_3, & \mathlarger{\dot{p}_3 = - \frac{\partial V}{\partial q_3}}.
\end{cases}
\end{align}
For the \textbf{spatial Hill equation in $(q,\dot{q})$-coordinates} we transform the first order ODE to a second order ODE
$$ \ddot{q}_1 = \dot{p}_1 + \dot{q}_2,\quad \ddot{q}_2 = \dot{p}_2 - \dot{q}_1,\quad \ddot{q}_3 = \dot{p}_3 $$
and obtain
\begin{align} \label{equation_of_motion_0}
\begin{pmatrix}
\ddot{q}_1\\
\ddot{q}_2\\
\ddot{q}_3
\end{pmatrix} + 2 \begin{pmatrix}
0 & -1 & 0\\
1 & 0 & 0\\
0 & 0 & 0
\end{pmatrix} \cdot \begin{pmatrix}
\dot{q}_1\\
\dot{q}_2\\
\dot{q}_3
\end{pmatrix} + \nabla V (q) = 0,
\end{align}
which reads
\begin{equation*}
\left\{  \begin{array}{l}
\mathlarger{\ddot{q}_1 = 2 \dot{q}_2 + 3q_1 - \frac{q_1}{|q|^3}} \\[0.7em]
\mathlarger{\ddot{q}_2 = -2\dot{q}_1 - \frac{q_2}{|q|^3}} \\[0.7em]
\mathlarger{\ddot{q}_3 = -q_3\bigg(\frac{1}{|q|^3} + 1 \bigg)}.
\end{array}  \right.
\end{equation*}
Note that the first two equations are for the planar case and the submatrix
$$ \begin{pmatrix}
0 & -1\\
1 & 0
\end{pmatrix}, $$
which is a rotation by $\frac{\pi}{2}$, arises by the coriolis force and is inherited from the planar problem.\

To linearize the Hill equation along a periodic orbit we expand $(q + \Delta q, \dot{q} + \Delta \dot{q})$ near $(q,\dot{q})$.\ Therefore, in view of (\ref{equation_of_motion_0}) the \textbf{linearized equation in $(q,\dot{q})$-coordinates} is given by
$$ \begin{pmatrix}
\Delta \ddot{q}_1\\
\Delta \ddot{q}_2\\
\Delta \ddot{q}_3
\end{pmatrix} + 2 \begin{pmatrix}
0 & -1 & 0\\
1 & 0 & 0\\
0 & 0 & 0
\end{pmatrix} \cdot \begin{pmatrix}
\Delta \dot{q}_1\\
\Delta \dot{q}_2\\
\Delta \dot{q}_3
\end{pmatrix} + \text{H}_V (q) \cdot \begin{pmatrix}
\Delta q_1\\
\Delta q_2\\
\Delta q_3
\end{pmatrix} = 0, $$
which is
\begin{equation*}
\left\{  \begin{array}{l}
\mathlarger{\Delta \ddot{q}_1 - 2 \Delta \dot{q}_2 = \bigg(\frac{2q_1^2 - q_2^2 - q_3^2}{|q|^5} + 3\bigg) \Delta q_1 + \frac{3q_1q_2}{|q|^5} \Delta q_2 + \frac{3q_1q_3}{|q|^5} \Delta q_3}\\[1em]
\mathlarger{\Delta \ddot{q}_2 + 2 \Delta \dot{q}_1 = \frac{3q_1q_2}{|q|^5} \Delta q_1 + \frac{2q_2^2 - q_1^2 - q_3^2}{|q|^5} \Delta q_2 + \frac{3q_2q_3}{|q|^5} \Delta q_3} \\[1em]
\mathlarger{\Delta \ddot{q}_3 = \frac{3q_1q_3}{|q|^5} \Delta q_1 + \frac{3q_2q_3}{|q|^5} \Delta q_2 + \bigg(\frac{2q_3^2 - q_1^2 - q_2^2}{|q|^5} - 1\bigg) \Delta q_3 }.
\end{array}  \right.
\end{equation*}
For a planar periodic orbit which is considered in the spatial system, by the restriction of the Hessian to $\{q_3=0\}$, the linearized equations become
\begin{equation*}
\left\{  \begin{array}{l}
\mathlarger{\Delta \ddot{q}_1 - 2 \Delta \dot{q}_2 = \bigg(\frac{2q_1^2 - q_2^2}{|q|^5} + 3\bigg) \Delta q_1 + \frac{3q_1q_2}{|q|^5} \Delta q_2} \\[1em]
\mathlarger{\Delta \ddot{q}_2 + 2 \Delta \dot{q}_1 = \frac{3q_1q_2}{|q|^5} \Delta q_1 + \frac{2q_2^2 - q_1^2}{|q|^5} \Delta q_2} \\[1em]
\mathlarger{\Delta \ddot{q}_3 = - \bigg(\frac{1}{|q|^3} + 1\bigg) \Delta q_3, }
\end{array}  \right.
\end{equation*}\\
which are similar to the equations in Gutzwiller \cite[p.\ 70]{gutzwiller} and for the planar case in Wintner \cite[p.\ 403]{wintner}.\ The \textbf{linearized equation in $(q,p)$-coordinates} with respect to (\ref{equation_of_motion_q_p}) yields
\begin{align*}
\begin{cases}
\Delta \dot{q}_1 = \Delta p_1 + \Delta q_2,\quad\quad\quad & \Delta \dot{p}_1 = \Delta p_2 - \Delta q_1  - \left(\text{H}_V (q) \cdot \Delta q \right)_1,\\
\Delta \dot{q}_2 = \Delta p_2 - \Delta q_1, & \Delta \dot{p}_2 = - \Delta p_1 - \Delta q_2 - \left(\text{H}_V (q) \cdot \Delta q \right)_2,\\
\Delta \dot{q}_3 = \Delta p_3, & \Delta \dot{p}_3 = - \left(\text{H}_V (q) \cdot \Delta q \right)_3,
\end{cases}
\end{align*}
which reads for planar periodic orbits
\begin{align} \label{linearized_q_p_fix}
\begin{cases}
\Delta \dot{q}_1 = \Delta p_1 + \Delta q_2,\quad\quad\quad & \Delta \dot{p}_1 = \Delta p_2 - \Delta q_1  + \mathlarger{\bigg(\frac{2q_1^2 - q_2^2}{|q|^5} + 3\bigg) \Delta q_1 + \frac{3q_1q_2}{|q|^5} \Delta q_2},\\[1em]
\Delta \dot{q}_2 = \Delta p_2 - \Delta q_1, & \Delta \dot{p}_2 = - \Delta p_1 - \Delta q_2 + \mathlarger{\frac{3q_1q_2}{|q|^5} \Delta q_1 + \frac{2q_2^2 - q_1^2}{|q|^5} \Delta q_2}, \\[1em]
\Delta \dot{q}_3 = \Delta p_3, & \Delta \dot{p}_3 = \mathlarger{ - \bigg(\frac{1}{|q|^3} + 1\bigg) \Delta q_3 }.
\end{cases}
\end{align}

\subsection{The group of linear symmetries}
\label{sec:involutions_shooting}

\textbf{Planar case:}\ The planar part of the Hamiltonian (\ref{hamiltonian_hill})
\begin{align} \label{hamiltonian_linear_planar}
H \colon T^* \big( \mathbb{R}^2 \setminus \{ (0,0) \} \big) \to \mathbb{R},\quad (q,p) \mapsto \frac{1}{2}|p|^2 - \frac{1}{|q|} + p_1q_2 - p_2q_1 - q_1^2 + \frac{1}{2}q_2^2
\end{align}
is invariant under the double-symmetry given by the two commuting linear anti-symplectic involutions
\begin{align} \label{involution_1}
&\rho_1 \colon T^* \mathbb{R}^2 \to T^*\mathbb{R}^2,\quad (q,p) \mapsto (q_1,-q_2,-p_1,p_2), \nonumber \\
&\rho_2 \colon T^* \mathbb{R}^2 \to T^*\mathbb{R}^2,\quad (q,p) \mapsto (-q_1,q_2,p_1,-p_2).
\end{align}
Their product $\rho_1 \circ \rho_2 = \rho_2 \circ \rho_1 = -\text{id}$ is symplectic.\ Geometrically, in the configuration space, the Hamiltonian $H$ in (\ref{hamiltonian_linear_planar}) is invariant under the reflections about the $q_1$- and $q_2$-axes, i.e.\ it is not possible to say whether we are going to the sun or away from it.\ The symplectic involutions $\pm$id correspond to a rotation by $0$ and $\pi$, respectively.\
\begin{remark}
	Algebraically, $\rho_1, \rho_2$ and $\pm$ id form a Klein four-group, i.e.
	$$ \text{Sym}^{\text{P}}_{\text{L}}(H) := \langle \rho_1, \rho_2 \mid \rho_1^2 = \rho_2^2 = (\rho_1 \circ \rho_2)^2 = \text{id} \rangle \cong \mathbb{Z}_2 \times \mathbb{Z}_2.$$	
\end{remark}
\noindent
Now we show that these four linear symmetries are the only ones in the planar system.\
\begin{Proposition} \label{prop_planar}
The set
\begin{align} \label{set_1}
\{ \rho \colon T^* \mathbb{R}^2 \to T^* \mathbb{R}^2 \text{ linear} \mid \rho^2 = \text{ id, } H \circ \rho = H \text{ and } \rho ^* \omega = \pm \omega \}
\end{align}
is $\text{Sym}^{\text{P}}_{\text{L}}(H)$.
\end{Proposition}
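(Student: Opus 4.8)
The plan is to treat $\rho$ as a real $4\times 4$ matrix, written in block form $\rho(q,p)=(Aq+Bp,\,Cq+Dp)$ with $A,B,C,D\in\mathrm{Mat}(2,\mathbb{R})$, and to cut it down by using the three hypotheses in turn. First I would separate the singular part of $H$ from the polynomial part. Write $H=Q-1/|q|$, where $Q(q,p)=\tfrac12|p|^2+p_1q_2-p_2q_1-q_1^2+\tfrac12 q_2^2$ is a quadratic form (positively homogeneous of degree $2$) and $-1/|q|$ is positively homogeneous of degree $-1$. Since $\rho$ is linear, $H\circ\rho=Q\circ\rho-1/|Aq+Bp|$ with $Q\circ\rho$ homogeneous of degree $2$ and $1/|Aq+Bp|$ homogeneous of degree $-1$; the set where $H\circ\rho=H$ makes sense is an open cone, and it is dense because $\rho$ is invertible (being an involution). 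Applying the scaling $(q,p)\mapsto(tq,tp)$, $t>0$, to $H\circ\rho=H$ and matching powers of $t$ forces, on that dense cone and hence everywhere by continuity, both $Q\circ\rho=Q$ and $|Aq+Bp|=|q|$. Squaring the latter and reading it as a polynomial identity in $(q,p)$ yields $B^{T}B=0$, $A^{T}B=0$ and $A^{T}A=I$, so $B=0$ and $A\in O(2)$.

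Next I would bring in $\rho^2=\mathrm{id}$ and $\rho^{*}\omega=\pm\omega$. With $B=0$, the relation $\rho^2=\mathrm{id}$ gives $A^2=I$, $D^2=I$ and $CA+DC=0$. Writing $M$ for the matrix of $\rho$ and imposing $M^{T}JM=\varepsilon J$ (with $\varepsilon=+1$ in the symplectic and $\varepsilon=-1$ in the anti-symplectic case, the matrix criterion recalled in Section~\ref{sec:4}), a one-line block computation gives that $A^{T}C$ is symmetric and $A^{T}D=\varepsilon I$, hence $D=\varepsilon A$ (using $A^{-1}=A^{T}$). Finally I would feed all of this into $Q\circ\rho=Q$. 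Expanding $Q(q,p)=\tfrac12\,p^{T}p+q^{T}jp+q^{T}Nq$ with $j=\left(\begin{smallmatrix}0&-1\\1&0\end{smallmatrix}\right)$ and $N=\mathrm{diag}(-1,\tfrac12)$, the invariance $Q(Aq,\,Cq+Dp)=Q(q,p)$ splits into its $pp$-, $qp$- and $qq$-parts. The $pp$-part only re-gives $D\in O(2)$; substituting $D=\varepsilon A$ into the $qp$-part gives $C^{T}A=\varepsilon j-A^{T}jA$, and combining this with the symmetry of $A^{T}C$ forces both $A^{T}jA=\varepsilon j$ (equivalently $AjA^{-1}=\varepsilon j$) and $C=0$. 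The remaining $qq$-part then reads $A^{T}NA=N$, i.e.\ $A$ commutes with $N$; since $N$ has distinct eigenvalues, $A$ must be diagonal. Orthogonal diagonal matrices are $\mathrm{diag}(\pm1,\pm1)$, and $AjA^{-1}=\varepsilon j$ forces $\det A=\varepsilon$: for $\varepsilon=+1$ this leaves $A=\pm I$, giving $\rho=\pm\mathrm{id}$, and for $\varepsilon=-1$ it leaves $A=\mathrm{diag}(1,-1)$ or $A=\mathrm{diag}(-1,1)$, giving $\rho=\rho_1$ or $\rho=\rho_2$. Thus the set in (\ref{set_1}) is contained in $\text{Sym}^{\text{P}}_{\text{L}}(H)$, and since the reverse inclusion has already been observed, equality follows.

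The one place where care is genuinely needed is the first step: the identity $H\circ\rho=H$ holds a priori only on the common domain of the two sides, so one must argue — via the degree decomposition together with density of that domain and continuity of the individual pieces — that it really implies the clean polynomial identity $|Aq+Bp|^2=|q|^2$. After that everything is elementary $2\times2$ linear algebra, and the split into $\varepsilon=+1$ and $\varepsilon=-1$ is exhaustive because $O(2)=SO(2)\sqcup\{\text{reflections}\}$, with $\det$ distinguishing the two.
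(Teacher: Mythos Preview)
Your proof is correct and follows the same overall architecture as the paper's: decompose $H$ into its homogeneous pieces to separate the constraints, use $|Aq+Bp|=|q|$ to kill $B$ and force $A\in O(2)$, then combine $\rho^2=\mathrm{id}$ with the (anti-)symplectic condition to get $D=\varepsilon A$, and finally use invariance of the quadratic part to finish.

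The execution of the last step is where you diverge. The paper writes $A=\left(\begin{smallmatrix}a&b\\b&c\end{smallmatrix}\right)$ and $C$ with explicit entries, expands $H_2\big(\rho(q,p)\big)$ fully, and chases individual coefficients ($p_1q_2$, $p_2q_1$, $p_iq_i$, $q_1^2$, $q_2^2$, $q_1q_2$) through a couple of case distinctions to squeeze out $C=0$ and the allowed $A$'s. You instead package $Q$ as $\tfrac12 p^Tp+q^Tjp+q^TNq$ and read the $qp$-block of $Q\circ\rho=Q$ as $C^TA=\varepsilon j-A^TjA$; since the left side is symmetric (from $A^TC$ symmetric) and the right side is skew, both vanish, giving $C=0$ and $\det A=\varepsilon$ in one stroke. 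The $qq$-block then says $A$ commutes with $N=\mathrm{diag}(-1,\tfrac12)$, forcing $A$ diagonal. This is tidier and more conceptual than the paper's coefficient chase, and it makes transparent \emph{why} $\det A$ matches the sign $\varepsilon$ (it is the relation $A^TjA=(\det A)j$ for $A\in O(2)$). The paper's approach, on the other hand, is entirely elementary and requires no insight about how the blocks interact; it also generalises verbatim to the spatial Proposition~\ref{prop_spatial} with more bookkeeping, whereas your argument would need the $3\times3$ analogues of the same observations.

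One minor remark: your closing sentence about $O(2)=SO(2)\sqcup\{\text{reflections}\}$ is not what makes the case split exhaustive --- that comes directly from the hypothesis $\rho^*\omega=\pm\omega$. What the $O(2)$ decomposition really gives you is the identification $\det A=\varepsilon$ via $A^TjA=(\det A)j$, which you already used earlier.
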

\begin{proof}
	Let $\rho$ be an element from the set (\ref{set_1}).\ We prove the proposition in three steps where the first one is obvious.\\
	\textbf{Step 1.}\ \textit{The Hamiltonian (\ref{hamiltonian_linear_planar}) is the sum of
	$$ H_2(q,p) = \frac{1}{2}|p|^2 + p_1q_2 - p_2q_1 - q_1^2 + \frac{1}{2}q_2^2 \quad \text{ and } \quad H_{-1}(q,p) = - \frac{1}{|q|}, $$
	where $H_2$ is homogeneous of degree 2 and $H_{-1}$ is homogeneous of degree $-1$.\ Hence $H_2 \circ \rho = H_2$ and $H_{-1} \circ \rho = H_{-1}.$}\\
	\textbf{Step 2.}\ \textit{The matrix form of $\rho$ with respect to the splitting $\mathbb{R}^4 = \mathbb{R}^2 \times \mathbb{R}^2$ and to the coordinates $(q_1,q_2,p_1,p_2)$ is
	\begin{align*}
	\begin{cases}
	\begin{pmatrix}
	A & 0\\
	C & A
	\end{pmatrix},\quad A \in O(2),\quad A=A^T,\quad C=-C^T,\quad AC=-CA, & \text{ if $\rho$ is symplectic}\\
	\\
	\begin{pmatrix}
	A & 0\\
	C & -A
	\end{pmatrix},\quad A \in O(2),\quad A=A^T,\quad C=C^T,\quad AC=CA, & \text{ if $\rho$ is anti-symplectic.}
	\end{cases}
	\end{align*}}
	To see that, we write $\rho$ in matrix form
	$$ \begin{pmatrix}
	A & B\\
	C & D
	\end{pmatrix}, $$
	with respect to the coordinates $(q_1,q_2,p_1,p_2)$, where $A,B,C,D \in \text{Mat}(2,\mathbb{R}).$\ The $\rho$-invariance of $H_{-1}$ yields
	$$ |Aq + Bp| = |q|,\quad \forall q,p. $$
	For fixed $p$ we take $q$ with $|q|$ very small and find $Bp=0$.\ Hence
	$$B = 0,\quad A \in O(2).$$
	Next, the $\rho$-invariance of $H_2$ yields for $q=0$
	$$ |Dp| = |p|,\quad \forall p, $$
	whence also $D \in O(2).$\ Since $\rho$ is an involution, we obtain
	$$ \rho \circ \rho = \begin{pmatrix}
	A^2 & 0\\
	CA + DC & D^2
	\end{pmatrix} = \begin{pmatrix}
	I_2 & 0\\
	0 & I_2
	\end{pmatrix}, $$
	and with $AA^T = DD^T = I_2$, we obtain
	\begin{align} \label{relation_1}
	A = A^T,\quad D = D^T,\quad CA + DC = 0.
	\end{align}
	If $\rho$ is symplectic, then (\ref{relation_1}) and the linear symplectic relations (\ref{matrix_symplectic}) imply
	$$AC=A^TC=C^TA,\quad A^TD=AD=I_2.$$
	With $ A^2=I_2 $ we have
	$$D=A,$$
	and therefore
	$$ 0 = CA + DC = CA + AC = CA + C^TA = (C+C^T)A.$$
	Since det$(A)=\pm1$, the matrix $C$ is skew-symmetric and this proves the first assertion of the second step.\\
	If $\rho$ is anti-symplectic, then by (\ref{relation_1}) and the linear anti-symplectic conditions (\ref{matrix_anti_symplectic}) we obtain
	$$AC=A^TC=C^TA,\quad A^T D = AD = -I_2,$$
	hence
	$$D=-A,$$
	and
	$$ 0 = CA + DC = CA - AC = CA - C^TA = (C-C^T)A.$$
	Therefore the matrix $C$ is symmetric and the second assertion follows.\\
	\textbf{Step 3.}\ \textit{In both cases, the matrix $C$ is the zero matrix and
	\begin{align*}
	\begin{cases}
	\rho \in \{ \pm \text{id} \}, &\text{ if $\rho$ is symplectic}\\
	\rho \in \{\rho_1, \rho_2\}, & \text{ if $\rho$ is anti-symplectic.}
	\end{cases}
	\end{align*}}
	To prove this, we first note that in both cases, $A$ is of the form
	$$ A = \begin{pmatrix}
	a & b\\
	b & c
	\end{pmatrix}. $$
	Since $A^2 = I_2$ and $A\in O(2)$, we have
	\begin{align} \label{relation_2}
	\begin{pmatrix}
	a^2 + b^2 & b(a+c)\\
	b(a+c) & b^2 + c^2
	\end{pmatrix} = \begin{pmatrix}
	1 & 0\\
	0 & 1
	\end{pmatrix},\quad a^2 = c^2,\quad \det(A)=ac-b^2 = \pm 1.
	\end{align}
	If $\rho$ is symplectic, then $\rho(q,p)$ is of the form
	$$ \begin{pmatrix}
	a & b & 0 & 0\\
	b & c & 0 & 0\\
	0 & d & a & b\\
	-d & 0 & b & c
	\end{pmatrix} \cdot \begin{pmatrix}
	q_1\\
	q_2\\
	p_1\\
	p_2
	\end{pmatrix} = \begin{pmatrix}
	aq_1 + bq_2\\
	bq_1 + cq_2\\
	dq_2 + ap_1 + bp_2\\
	-dq_1 + bp_1 + cp_2
	\end{pmatrix}. $$
	Since $AC=-CA$ we have
	\begin{align} \label{relation_3}
	AC = \begin{pmatrix}
	-bd & ad\\
	-cd & bd
	\end{pmatrix} = \begin{pmatrix}
	-bd & -cd\\
	ad & bd
	\end{pmatrix} = -CA,\quad ad=-cd.
	\end{align}
	In the following we show that $AC=0$.\ Then $\det(A)=\pm1$ shows that $C$ needs to be the zero matrix.\ In view of the $\rho$-invariance of $H_2$ we compare
	$$ H_2(q,p) = \frac{1}{2}|p|^2 + p_1q_2 - p_2q_1 - q_1^2 + \frac{1}{2}q_2^2 $$
	with
	\begin{align*}
	H_2\big(\rho(q,p)\big) = &\frac{1}{2}|p|^2 + p_1q_2\big(ad + \det(A)\big) - p_2q_1\big( cd + \det(A) \big) - q_1^2 \left( - \frac{1}{2}d^2 - ad + a^2 - \frac{1}{2}b^2 \right)\\
	& + \frac{1}{2}q_2^2 \left(d^2 + 2cd - 2b^2 + c^2\right) - bdp_1q_1 + bdp_2q_2 + q_1q_2\left(2bd-2ab+bc\right).
	\end{align*}
	Therefore $bd=0$.\ Now looking at the coefficients of $p_1q_2$ and $p_2q_1$, we find $ad=cd$ and by the second equation in (\ref{relation_3}),	$ad=cd=0$.\ Hence $AC=0$ and $C=0$ as well.\ Furthermore
	$$\det(A) = 1,$$
	i.e.\ $A \in SO(2)$, meaning that $A$ is an involutive rotation, hence $A = \pm I_2$ and therefore
	$$ \rho \in \{ \pm \text{id} \}. $$
	Analogously, in the anti-symplectic case,
	$$ \begin{pmatrix}
	a & b & 0 & 0\\
	b & c & 0 & 0\\
	c_1 & c_2 & -a & -b\\
	c_2 & c_3 & -b & -c
	\end{pmatrix} \cdot \begin{pmatrix}
	q_1\\
	q_2\\
	p_1\\
	p_2
	\end{pmatrix} = \begin{pmatrix}
	aq_1 + bq_2\\
	bq_1 + cq_2\\
	c_1q_1 + c_2q_2 - ap_1 - bp_2\\
	c_2q_1 + c_3q_2 - bp_1 - cp_2
	\end{pmatrix}, $$
	where $AC=CA$, yields
	\begin{align} \label{relation_4}
	\begin{pmatrix}
	ac_1 + bc_2 & ac_2 + bc_3\\
	bc_1 + cc_2 & bc_2 + cc_3
	\end{pmatrix} = \begin{pmatrix}
	ac_1 + bc_2 & bc_1 + cc_2\\
	ac_2 + bc_3 & bc_2 + cc_3
	\end{pmatrix},\quad ac_2 + bc_3 = bc_1 + cc_2.
	\end{align}
	Now $H_2\big( \rho(q,p) \big)$ is
	\begin{align*}
	& \frac{1}{2}|p|^2 + p_1q_2\big( -ac_2 - bc_3 - \det(A) \big) - p_2q_1 \big( bc_1 + cc_2 - \det(A) \big)\\
	& -q_1^2 \left( - \frac{1}{2}c_1^2 - \frac{1}{2}c_2^2 - bc_1 + ac_2 + a^2 - \frac{1}{2}b^2 \right) + \frac{1}{2}q_2^2 \left(c_2^2 + c_3^2 + 2cc_2 - 2bc_3 - 2b^2 + c^2\right)\\
	& - p_1q_1 \left(ac_1 + bc_2\right) - p_2q_2\left(bc_2 + cc_3\right) + q_1q_2 \left(c_1c_2 + c_2c_3 + cc_1 - ac_3 - 2ab + bc\right).
	\end{align*}
	Hence $ac_1+bc_2 = bc_2+cc_3=0$ and the coefficients of $p_1q_2$ and $p_2q_1$ imply $-ac_2-bc_3=bc_1+cc_2$.\ The second equation in (\ref{relation_4}) yields $ac_2+bc_3 = bc_1 + cc_2 = 0 $, thus $AC=0$ and $C=0$.\ Moreover by the other coefficients,
	$$ \det(A) = -1,\quad a^2 - \frac{1}{2}b^2 = 1,\quad -2b^2 + c^2 =1, $$
	and together with (\ref{relation_2}),
	$$ b=0,\quad a^2 = c^2 = 1,\quad ac=-1 ,$$
	which correspond to $\rho_1$ and $\rho_2$.
\end{proof}
\noindent
\textbf{Spatial case:}\ The following eight linear symplectic or anti-symplectic involutions leave invariant the spatial Hamiltonian from (\ref{hamiltonian_hill})
\begin{align} \label{hamiltonian_linear_spatial}
H \colon T^* \big( \mathbb{R}^3 \setminus \{ (0,0,0) \} \big) \to \mathbb{R},\quad (q,p) \mapsto \frac{1}{2}|p|^2 - \frac{1}{|q|} + p_1q_2 - p_2q_1 - q_1^2 + \frac{1}{2}q_2^2 + \frac{1}{2}q_3^2.
\end{align}
The reflection across the ecliptic $\{q_3=0\}$ induces the symplectic involution
\begin{align} \label{symplectic_involution}
\sigma \colon T^* \mathbb{R}^3 \to T^*\mathbb{R}^3,\quad (q,p) \mapsto (q_1,q_2,-q_3,p_1,p_2,-p_3).
\end{align}
Its fixed point set
$$\text{Fix}(\sigma) = \{ (q_1,q_2,0,p_1,p_2,0) \}$$
is the planar components, hence the planar problem can be interpreted as $\text{Fix}(\sigma)$ and the $q_1q_2$-plane is invariant under $\sigma$.\

The two commuting anti-symplectic involutions from the planar case are now of the form
\begin{align} \label{involution_2}
&\rho_1 \colon T^* \mathbb{R}^3 \to T^*\mathbb{R}^3,\quad (q,p) \mapsto (q_1,-q_2,q_3,-p_1,p_2,-p_3), \nonumber \\
&\rho_2 \colon T^* \mathbb{R}^3 \to T^*\mathbb{R}^3,\quad (q,p) \mapsto (-q_1,q_2,q_3,p_1,-p_2,-p_3),
\end{align}
where $\rho_1$ corresponds to the reflection at the $q_1q_3$-plane and $\rho_2$ to the reflection at the $q_2q_3$-plane.\ Their product gives the symplectic involution,
\begin{align} \label{involution_3}
\rho_1 \circ \rho_2 = \rho_2 \circ \rho_1 = - \sigma \colon T^* \mathbb{R}^3 \to T^*\mathbb{R}^3,\quad (q,p) \mapsto (-q_1,-q_2,q_3,-p_1,-p_2,p_3),
\end{align}
which corresponds to a rotation around the $q_3$-axis by $\pi$.\ Its fixed point set is
\begin{align} \label{fixed_point_2}
\text{Fix}(-\sigma) = \{ ( 0,0,q_3,0,0,p_3 ) \},
\end{align}
thus the $q_3$-axis is invariant under $-\sigma$.\

Moreover, each of the anti-symplectic involutions $\rho_1$ and $\rho_2$ commutes with $\sigma$ and they yield the two anti-symplectic involutions
\begin{align}
\overline{\rho_1}:= \rho_1 \circ \sigma = \sigma \circ \rho_1 \colon T^* \mathbb{R}^3 \to T^*\mathbb{R}^3,\quad &(q,p) \mapsto (q_1,-q_2,-q_3,-p_1,p_2,p_3)\\
\overline{\rho_2}:= \rho_2 \circ \sigma = \sigma \circ \rho_2 \colon T^* \mathbb{R}^3 \to T^*\mathbb{R}^3,\quad &(q,p) \mapsto (-q_1,q_2,-q_3,p_1,-p_2,p_3).
\end{align}
Note that $\overline{\rho_1}$ corresponds to a rotation around the $q_1$-axis by $\pi$ and $\overline{\rho_2}$ to a rotation around the $q_2$-axis by $\pi$ as well.\

The two symplectic involutions $-\sigma$ and $\sigma$ commute as well and give $-$id.\ Together with id, we have now 8 linear symplectic or anti-symplectic involutions which leave the Hamiltonian (\ref{hamiltonian_linear_spatial}) invariant.\ Four of them are anti-symplectic and four are symplectic.\ They form a group  which we denote by $\text{Sym}^{\text{S}}_{\text{L}}(H)$.\ Their group structure is given by Table \ref{group_structure}.
\begin{table}[H]
	\centering
	\begin{tabular}{c|cccccccc}
		$\circ$ & id & $-$id & $-\sigma$ & $\sigma$ & $\rho_1$ & $\rho_2$ & $\overline{\rho_1}$ & $\overline{\rho_2}$\\
		\hline id & id & $-$id & $-\sigma$ & $\sigma$ & $\rho_1$ & $\rho_2$ & $\overline{\rho_1}$ & $\overline{\rho_2}$ \\
		$-$id & $-$id & id & $\sigma$ & $-\sigma$ & $\overline{\rho_2}$ & $\overline{\rho_1}$ & $\rho_2$ & $\rho_1$ \\
		$-\sigma$ & $-\sigma$ & $\sigma$ & id & $-$id & $\rho_2$ & $\rho_1$ & $\overline{\rho_2}$ & $\overline{\rho_1}$\\
		$\sigma$ & $\sigma$ & $-\sigma$ & $-$id & id & $\overline{\rho_1}$ & $\overline{\rho_2}$ & $\rho_1$ & $\rho_2$\\
		$\rho_1$ & $\rho_1$ & $\overline{\rho_2}$ & $\rho_2$ & $\overline{\rho_1}$ & id & $-\sigma$ & $\sigma$ & $-$id\\
		$\rho_2$ & $\rho_2$ & $\overline{\rho_1}$ & $\rho_1$ & $\overline{\rho_2}$ & $-\sigma$ & id & $-$id & $\sigma$\\
		$\overline{\rho_1}$ & $\overline{\rho_1}$ & $\rho_2$ & $\overline{\rho_2}$ & $\rho_1$ & $\sigma$ & $-$id & id & $-\sigma$\\
		$\overline{\rho_2}$ & $\overline{\rho_2}$ & $\rho_1$ & $\overline{\rho_1}$ & $\rho_2$ & $-$id & $\sigma$ & $-\sigma$ & id
	\end{tabular}
	\caption{Group structure of $\text{Sym}^{\text{S}}_{\text{L}}(H)$}
	\label{group_structure}
\end{table}
\noindent
It is generated by $\{ \rho_1,\rho_2,\sigma \}$ and
$$ \text{Sym}^{\text{S}}_{\text{L}}(H) \cong \mathbb{Z}_2 \times \mathbb{Z}_2 \times \mathbb{Z}_2 . $$
By considering the four symplectic involutions we see that like in the planar case, a Klein four-group arises, namely as a sub-group of $\text{Sym}^{\text{S}}_{\text{L}}(H)$.\ It is generated by $\{\pm \sigma\}$ and we denote it by $\omega\text{-Sym}^{\text{P}}_{\text{L}}(H) \cong \mathbb{Z}_2 \times \mathbb{Z}_2$, hence
$$ \text{Sym}^{\text{P}}_{\text{L}}(H) \cong \omega\text{-Sym}^{\text{S}}_{\text{L}}(H) \subset \text{Sym}^{\text{S}}_{\text{L}}(H). $$
The following proposition shows that these eight linear symmetries are the only ones in the spatial problem.\
\begin{Proposition} \label{prop_spatial}
	The set
	\begin{align} \label{set_2}
	\{ \rho \colon T^* \mathbb{R}^3 \to T^* \mathbb{R}^3 \text{ linear} \mid \rho^2 = \text{ id, } H \circ \rho = H \text{ and } \rho ^* \omega = \pm \omega \}
	\end{align}
	is $\text{Sym}^{\text{S}}_{\text{L}}(H)$.
\end{Proposition}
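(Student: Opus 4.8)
The plan is to run the three-step argument of the proof of Proposition~\ref{prop_planar}, replacing its Step~3 (a brute-force coefficient comparison) by a short structural argument using the effective potential. \textbf{Step 1 (homogeneity).} Split the Hamiltonian (\ref{hamiltonian_linear_spatial}) as $H=H_2+H_{-1}$, where $H_2(q,p)=\tfrac12|p|^2+p_1q_2-p_2q_1-q_1^2+\tfrac12q_2^2+\tfrac12q_3^2$ is homogeneous of degree $2$ and $H_{-1}(q,p)=-1/|q|$ is homogeneous of degree $-1$; since $\rho$ is linear, $H\circ\rho=H$ forces $H_2\circ\rho=H_2$ and $H_{-1}\circ\rho=H_{-1}$ separately. \textbf{Step 2 (block form).} Writing $\rho$ in block form (\ref{matrix_block}) with $A,B,C,D\in\mathrm{Mat}(3,\mathbb{R})$ with respect to $(q,p)$, the $\rho$-invariance of $H_{-1}$ gives $|Aq+Bp|=|q|$ for all $q,p$, hence (taking $|q|$ small) $B=0$ and then $A\in O(3)$; evaluating $H_2\circ\rho=H_2$ at $q=0$ gives $D\in O(3)$. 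From $\rho^2=\mathrm{id}$ we obtain $A^2=D^2=I_3$ and $CA+DC=0$, so $A=A^T$, $D=D^T$; and $\rho^*\omega=\pm\omega$ together with $B=0$ and the relations (\ref{matrix_symplectic}), (\ref{matrix_anti_symplectic}) forces $D=A$ with $C$ skew-symmetric if $\rho$ is symplectic, and $D=-A$ with $C$ symmetric if $\rho$ is anti-symplectic.

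\textbf{Step 3 ($A$ is diagonal).} The key observation is that, because $B=0$, the map $\rho$ carries the momentum subspace $P:=\{(0,p)\}$ onto itself, and because $H_2\circ\rho=H_2$ it preserves the symmetric bilinear form $b(v,w):=H_2(v+w)-H_2(v)-H_2(w)$; hence $\rho$ also preserves the orthogonal complement $P^{\perp}$ of $P$ with respect to $b$. A one-line computation identifies $P^{\perp}$ with the zero-velocity subspace $\{(q_1,q_2,q_3,-q_2,q_1,0)\}$, the graph of the spatial analogue $q\mapsto(q,-q_2,q_1,0)$ of the footpoint section used for the Lagrange points in Section~\ref{subsec:discussion}. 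On this $3$-plane, written in the coordinate $q$, the map $\rho$ acts by $q\mapsto Aq$ while $b$ restricts to twice the quadratic part $V_2(q)=-\tfrac32q_1^2+\tfrac12q_3^2$ of the effective potential of (\ref{hamiltonian_2}), whose matrix $\mathrm{diag}(-3,0,1)$ has three distinct eigenvalues. Since $A\in O(3)$ preserves this form, it commutes with $\mathrm{diag}(-3,0,1)$ and hence preserves each coordinate axis, so $A=\mathrm{diag}(\epsilon_1,\epsilon_2,\epsilon_3)$ with $\epsilon_i\in\{\pm1\}$ (and $D=\pm A$ is diagonal too).

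\textbf{Step 4 (conclusion and main difficulty).} With $A,D$ diagonal, $H_2\circ\rho=H_2$ reduces to an elementary coefficient comparison: matching the mixed monomials $p_iq_j$ forces $C_{ij}=0$ except possibly for $(i,j)\in\{(1,2),(2,1)\}$ and leaves two scalar relations among $\epsilon_1,\epsilon_2,C_{12},C_{21}$; combining these with the skew-symmetry (resp.\ symmetry) of $C$ from Step~2 forces $C=0$ together with $\epsilon_1=\epsilon_2$ in the symplectic case and $\epsilon_1=-\epsilon_2$ in the anti-symplectic case. Running through the four choices of $(\epsilon_1,\epsilon_3)$ in each case, the symplectic solutions are $\mathrm{id},-\mathrm{id},\sigma,-\sigma$ and the anti-symplectic ones are $\rho_1,\rho_2,\overline{\rho_1},\overline{\rho_2}$ --- exactly $\text{Sym}^{\text{S}}_{\text{L}}(H)$. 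I expect the only real obstacle to be Step~3: in three dimensions a direct expansion of $H_2(\rho(q,p))$ produces many cross terms involving $q_3$, and it is the identification of $P^{\perp}$ with the zero-velocity plane carrying the non-degenerate effective potential that makes the argument short; a reader preferring the hands-on route of Proposition~\ref{prop_planar} would simply have to carry out that longer bookkeeping instead.
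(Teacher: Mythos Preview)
Your proof is correct and complete, and Steps~1--2 coincide with the paper's own argument. Where you diverge is in Step~3: the paper carries out exactly the ``hands-on route'' you mention at the end, expanding $H_2\big(\rho(q,p)\big)$ in full for a general symmetric $A\in O(3)$ and symmetric/skew $C$, and then eliminating the off-diagonal entries of $A$ and all entries of $C$ by a long (and somewhat delicate) coefficient comparison. Your structural replacement --- identifying the $b$-orthogonal complement of the momentum plane with the zero-velocity subspace, on which $H_2$ restricts to the quadratic part $V_2$ of the effective potential, and using that $A\in O(3)$ commutes with $\operatorname{diag}(-3,0,1)$ --- is a genuine simplification: it diagonalizes $A$ in one stroke and reduces the residual bookkeeping to the six $p_iq_j$ coefficients, which you handle in a few lines. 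The upshot is the same eight maps, but your argument also explains \emph{why} the answer is what it is: the three distinct eigenvalues of the Hessian of the effective potential force $A$ to preserve each coordinate axis. The paper's approach has the minor advantage of being entirely self-contained (no appeal to the geometry of $P^\perp$), at the cost of a page of coefficient chasing.
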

\begin{proof}
	Let $\rho$ be an element from the set (\ref{set_2}).\ As in the planar case, we prove the proposition in three analogous steps where only the calculation for the third assertion involves more coefficients.\\
	\textbf{Step 1.}\ \textit{The Hamiltonian (\ref{hamiltonian_linear_spatial}) is the sum of
		$$ H_2(q,p) = \frac{1}{2}|p|^2 + p_1q_2 - p_2q_1 - q_1^2 + \frac{1}{2}q_2^2 + \frac{1}{2}q_3^2 \quad \text{ and } \quad H_{-1}(q,p) = - \frac{1}{|q|}, $$
		where $H_2$ is homogeneous of degree 2 and $H_{-1}$ is homogeneous of degree $-1$.\ Hence $H_2 \circ \rho = H_2$ and $H_{-1} \circ \rho = H_{-1}.$}\\
	\textbf{Step 2.}\ \textit{The matrix form of $\rho$ with respect to the splitting $\mathbb{R}^6 = \mathbb{R}^3 \times \mathbb{R}^3$ and to the coordinates $(q_1,q_2,q_3,p_1,p_2,p_3)$ is
		\begin{align*}
		\begin{cases}
		\begin{pmatrix}
		A & 0\\
		C & A
		\end{pmatrix},\quad A \in O(3),\quad A=A^T,\quad C=-C^T,\quad AC=-CA, & \text{ if $\rho$ is symplectic}\\
		\\
		\begin{pmatrix}
		A & 0\\
		C & -A
		\end{pmatrix},\quad A \in O(3),\quad A=A^T,\quad C=C^T,\quad AC=CA, & \text{ if $\rho$ is anti-symplectic.}
		\end{cases}
		\end{align*}}
	\textbf{Step 3.}\ \textit{In both cases, the matrix $C$ is the zero matrix and
		\begin{align*}
		\begin{cases}
		\rho \in \{ \sigma, \overline{\sigma}, \pm \text{id} \}, &\text{ if $\rho$ is symplectic}\\
		\rho \in \{\rho_1, \rho_2\, \overline{\rho_1}, \overline{\rho_2}\}, & \text{ if $\rho$ is anti-symplectic.}
		\end{cases}
		\end{align*}}
	In both cases, $A$ is of the form
	$$ A = \begin{pmatrix}
	a & d & e\\
	d & b & f\\
	e & f & c
	\end{pmatrix}. $$
	Since $A^2 = I_3$ and $A \in O(3)$, we have
	\begin{align} \label{relation_5}
	A^2 = \begin{pmatrix}
	a^2 + d^2 + e^2 & ad+bd+ef & ae+df+ce\\
	ad+bd+ef & d^2 + b^2 + f^2 & de+bf+cf\\
	ae+df+ce & de+bf+cf & e^2 + f^2 + c^2
	\end{pmatrix} = \begin{pmatrix}
	1 & 0 & 0\\
	0 & 1 & 0\\
	0 & 0 & 1
	\end{pmatrix}
	\end{align}
	and
	$$ \det(A)= abc + 2def - af^2 - be^2 - cd^2 = \pm 1. $$
	If $\rho$ is symplectic, then $\rho(q,p)$ is of the form
	$$ \begin{pmatrix}
	a & d & e & 0 & 0 & 0\\
	d & b & f & 0 & 0 & 0\\
	e & f & c & 0 & 0 & 0\\
	0 & c_1 & c_2 & a & d & e\\
	-c_1 & 0 & c_3 & d & b & f\\
	-c_2 & -c_3 & 0 & e & f & c
	\end{pmatrix} \cdot \begin{pmatrix}
	q_1\\
	q_2\\
	q_3\\
	p_1\\
	p_2\\
	p_3
	\end{pmatrix} = \begin{pmatrix}
	aq_1 + dq_2 + eq_3\\
	dq_1 + bq_2 + fq_3\\
	eq_1 + fq_2 + cq_3\\
	c_1q_2 + c_2q_3 + ap_1 + dp_2 + ep_3\\
	-c_1q_1 + c_3q_3 + dp_1 + bp_2 + fp_3\\
	-c_2q_1 - c_3q_2 + ep_1 + fp_2 + cp_3
	\end{pmatrix}. $$
	The equation $AC=-CA$ yields
	\begin{align} \label{relation_6}
	\begin{pmatrix}
	-dc_1 - ec_2 & ac_1 - ec_3 & ac_2 + dc_3\\
	-bc_1 - fc_2 & dc_1 - fc_3 & dc_2 + bc_3\\
	-fc_1 - cc_2 & ec_1 - cc_3 & ec_2 + fc_3
	\end{pmatrix} = \begin{pmatrix}
	-dc_1 - ec_2 & -bc_1 - fc_2 & -fc_1 - cc_2\\
	ac_1 - ec_2 & dc_1 - fc_3 & ec_1 - cc_3\\
	ac_2 + dc_3 & dc_2 + bc_3 & ec_2 + fc_3
	\end{pmatrix},
	\end{align}
	and therefore
	\begin{align} \label{relation_7}
	ac_1 - ec_3 = -bc_1 - fc_2,\quad ac_2 + dc_3 = -fc_1 - cc_2,\quad dc_2 + bc_3 = ec_1 - cc_3.
	\end{align}
	In view of the $\rho$-invariance of $H_2$ we compare
	$$ H_2(q,p) = \frac{1}{2}|p|^2 + p_1q_2 - p_2q_1 - q_1^2 + \frac{1}{2}q_2^2 + \frac{1}{2}q_3^2 $$
	with $H_2\big( \rho(q,p) \big)$, which is
	\begin{align*}
	&\frac{1}{2}|p|^2 + p_1q_2\left(ac_1 - ec_3 + ab - d^2\right) - p_2q_1\left(bc_1 + fc_2 + ab - d^2\right)\\
	& - q_1^2 \left(\frac{1}{2}\left(-c_1^2 -c_2^2 - d^2 - e^2 \right) - ac_1 + a^2 \right)\\
	& + \frac{1}{2}q_2^2 \left(c_1^2 + c_3^2 + 2bc_1 - 2d^2 + b^2 + f^2\right)\\
	& + \frac{1}{2}q_3^2 \left(c_2^2 + c_3^2 + 2fc_2 - 2ec_3 - 2e^2 + f^2 + c^2\right)\\
	& + p_1q_1 \left( -dc_1 - ec_2 \right) + p_2q_2 \left( dc_1 - fc_3 \right) + p_3q_3 \left( ec_2 + fc_3 \right)\\
	& + p_1q_3 \left(ac_2 + dc_3 + af - de\right) + p_3q_1 \left( -fc_1 - cc_2 + de - af \right)\\
	& + p_2q_3 \left( dc_2 + bc_3 + df - be \right) + p_3q_2 \left( ec_1 - cc_3 + be - df \right)\\
	& + q_1q_2 \left( c_2c_3 + 2dc_1 - 2ad + bd + ef \right)\\
	& + q_1q_3 \left( -c_1c_3 + dc_2 + ec_1 - ac_3 - 2ae + df + ce \right)\\
	& + q_2q_3 \left( c_1c_2 + fc_1 + bc_2 - dc_3 - 2de + bf + cf \right).
	\end{align*}
	By the coefficients of $p_i q_i$, for $i=1,2,3$, we immediately have that the diagonal entries of $AC$ in (\ref{relation_6}) are all zero.\ To see that the other entries of $AC$ are also all zero, we set equal the coefficients of $p_1q_2$ with $p_2q_1$, of $p_1q_3$ with $p_3q_1$, of $p_2q_3$ with $p_3q_2$, and use (\ref{relation_7}), which imply
	$$ ac_1 - ec_3 = -bc_1 - fc_2=0,\quad ac_2 + dc_3 = -fc_1 - cc_2=0,\quad dc_2 + bc_3 = ec_1 - cc_3=0. $$
	Hence $AC=0$ and thus $C=0$.\ By the coefficients of $p_1q_2$ and $p_2q_1$,
	$$ ab-d^2 = 1, $$
	which means that $a \neq 0$ and $b \neq 0$.\ In view of $A^2 = I_3$ in (\ref{relation_5}) the two equations $ad+bd+ef=0$ and $ae+df+ce=0$ together with the coefficients of $q_1q_2$ and $q_1q_3$ imply $ad=ae=0$.\ Since $a \neq 0$ we obtain
	$$ d=e=0 .$$
	Furthermore, by the coefficient of $p_1q_3$ we have $af=de=0$, hence $f=0$.\ Together with the coefficients from the second until the fourth lines, we obtain
	$$ ab=1,\quad a^2=b^2=c^2=1 ,$$
	which correspond to $\sigma$, $\overline{\sigma}$ and $\pm$id.\
	
	If $\rho$ is anti-symplectic, then $\rho(q,p)$ is of the form
	$$ \begin{pmatrix}
	a & d & e & 0 & 0 & 0\\
	d & b & f & 0 & 0 & 0\\
	e & f & c & 0 & 0 & 0\\
	c_1 & c_2 & c_3 & -a & -d & -e\\
	c_2 & c_4 & c_5 & -d & -b & -f\\
	c_3 & c_5 & c_6 & -e & -f & -c
	\end{pmatrix} \cdot \begin{pmatrix}
	q_1\\
	q_2\\
	q_3\\
	p_1\\
	p_2\\
	p_3
	\end{pmatrix} = \begin{pmatrix}
	aq_1 + dq_2 + eq_3\\
	dq_1 + bq_2 + fq_3\\
	eq_1 + fq_2 + cq_3\\
	c_1q_1 + c_2q_2 + c_3q_3 - ap_1 - dp_2 - ep_3\\
	c_2q_1 + c_4q_2 + c_5q_3 - dp_1 - bp_2 - fp_3\\
	c_3q_1 + c_5q_2 + c_6q_3 - ep_1 - fp_2 - cp_3
	\end{pmatrix}. $$
	The equation $AC=CA$ yields
	\begin{align*} \footnotesize
	\begin{pmatrix}
	ac_1 + dc_2 + ec_3 & ac_2 + dc_4 + ec_5 & ac_3 + dc_5 + ec_6\\
	dc_1 + bc_2 + fc_3 & dc_2 + bc_4 + fc_5 & dc_3 + bc_5 + fc_6\\
	ec_1 + fc_2 + cc_3 & ec_2 + fc_4 + cc_5 & ec_3 + fc_5 + cc_6
	\end{pmatrix} = \begin{pmatrix}
	ac_1 + dc_2 + ec_3 & dc_1 + bc_2 + fc_3 & ec_1 + fc_2 + cc_3\\
	ac_2 + dc_4 + ec_5 & dc_2 + bc_4 + fc_5 & ec_2 + fc_4 + cc_5\\
	ac_3 + dc_5 + ec_6 & dc_3 + bc_5 + fc_6 & ec_3 + fc_5 + cc_6
	\end{pmatrix}
	\end{align*}
	and thus \footnotesize{
	\begin{align*}
	ac_2 + dc_4 + ec_5 = dc_1 + bc_2 + fc_3,\quad ac_3 + dc_5 + ec_6 = ec_1 + fc_2 + cc_3,\quad dc_3 + bc_5 + fc_6 = ec_2 + fc_4 + cc_5.
	\end{align*} } \normalsize
	Now $H_2\big( \rho(q,p) \big)$ is
	\begin{align*}
	&\frac{1}{2}|p|^2 + p_1q_2\left(-ac_2 - dc_4 - ec_5 + d^2 - ab\right) - p_2q_1\left(dc_1 + bc_2 + fc_3 + d^2 - ab\right)\\
	& - q_1^2 \left(\frac{1}{2}\left(-c_1^2 -c_2^2 - d^2 - e^2 \right) - dc_1 + ac_2 + a^2 \right)\\
	& + \frac{1}{2}q_2^2 \left(c_2^2 + c_4^2 + c_5^2 + 2bc_2 - 2dc_4 - 2d^2 + b^2 + f^2\right)\\
	& + \frac{1}{2}q_3^2 \left(c_3^2 + c_5^2 + c_6^2 + 2fc_3 - 2ec_5 - 2e^2 + f^2 + c^2\right)\\
	& + p_1q_1 \left( -ac_1 - dc_2 - ec_3 \right) + p_2q_2 \left( -dc_2 - bc_4 - fc_5 \right) + p_3q_3 \left( -ec_3 - fc_5 - cc_6 \right)\\
	& + p_1q_3 \left(-ac_3 - dc_5 - ec_6 + de - af\right) + p_3q_1 \left( -ec_1 - fc_2 - cc_3 + af - de \right)\\
	& + p_2q_3 \left( -dc_3 - bc_5 - fc_6 + be - df \right) + p_3q_2 \left( -ec_2 - fc_4 - cc_5 + df - be \right)\\
	& + q_1q_2 \left( c_1c_2 + c_2c_4 + c_3c_5 + bc_1 - ac_4 - 2ad + bd + ef \right)\\
	& + q_1q_3 \left( c_1c_3 + c_2c_5 + c_3c_6 + fc_1 + dc_3 - ec_2 - ac_5 - 2ae + df + ce \right)\\
	& + q_2q_3 \left( c_2c_3 + c_4c_5 + c_5c_6  + fc_2 + bc_3 - ec_4 - dc_5 - 2de + bf + cf \right).
	\end{align*}
	In a similar way to the symplectic case we find that $C=0$.\ In view of $A^2 = I_3$ in (\ref{relation_5}) the three equations $ad+bd+ef=0$, $ae+df+ce=0$ and $de + bf + cf=0$ together with the coefficients of $q_1q_2$, $q_1q_3$ $q_2q_3$ imply $ad=ae=de=0$.\ Since the coefficients of $p_1q_3$ and $p_3q_1$ yield $de=af$, we have
	$$  ad=ae=af=0.$$
	Suppose that $a = 0$, then in view of the coefficients of $q_1^2$ we see that $d^2 + e^2 = -2$ which is a contradiction.\ Hence
	$$a \neq 0,\quad d=e=f=0.$$
	By the first four lines we obtain
	$$ ab = -1,\quad a^2 = b^2 = c^2 = 1,$$
	which correspond to $\rho_1,\rho_2,\overline{\rho_1}$ and $\overline{\rho_2}$.\
\end{proof}

\subsection{Symmetric periodic orbits via shooting}
\label{sec:symmetries}

\textbf{Planar periodic orbits:}\ The fixed point sets of (\ref{involution_1})
\begin{align*}
\text{Fix}(\rho_1) = \{ (q_1,0,0,p_2) \},\quad \text{Fix}(\rho_2) = \{ (0,q_2,p_1,0) \}
\end{align*}
are Lagrangian submanifolds.\ A planar orbit intersects Fix$(\rho_1)$ if it hits the $q_1$-axis perpendicularly, and it intersects Fix$(\rho_2)$ if it hits the $q_2$-axis perpendicularly.\ If an orbit starts at Fix$(\rho_1)$ perpendicularly and hits Fix$(\rho_2)$ perpendicularly in time $t$, then by using the double symmetry $\rho_1,\rho_2$, we obtain a periodic orbit with period $4t = T_q$.\ We refer to this kind of a planar periodic orbit as \textbf{doubly-symmetric}.\ If it starts at Fix($\rho_1$) or Fix($\rho_2$) and hits perpendicularly only the same fixed point set then we call it \textbf{simply-symmetric}.\ Note that a family of simply-symmetric periodic orbits is mapped to another such family by the other symmetry.\

Finding orbits in such a way is known as shooting method.\ In particular, George David Birkhoff (1884--1944) gave an analytical proof for the existence of a planar retrograde periodic orbit which is doubly symmetric (see Figure \ref{1_birkhoff}) for energy level sets below the unique critical value $- \frac{1}{2} 3^{4/3}$ by the so called ``Birkhoff's shooting method", see for instance \cite[pp.\ 140--144]{frauenfelder}.
\begin{figure}[H]
	\centering
	\begin{tikzpicture}[line cap=round,line join=round,>=triangle 45,x=1.0cm,y=1.0cm]
	\clip(-4.5,-2.35) rectangle (4.5,3);
	
	\draw [shift={(0.0,-0.0)},line width=1pt] [decoration={markings, mark=at position 0.82 with {\arrow{<}}}, postaction={decorate}] plot[domain=1.5707963267948966:3.141592653589793,variable=\t]({1.0*2.0*cos(\t r)+-0.0*2.0*sin(\t r)},{0.0*2.0*cos(\t r)+1.0*2.0*sin(\t r)});
	
	\draw [shift={(0.0,-0.0)},dashed,line width=1pt]  plot[domain=-3.141592653589793:1.5707963267948966,variable=\t]({1.0*2.0*cos(\t r)+-0.0*2.0*sin(\t r)},{0.0*2.0*cos(\t r)+1.0*2.0*sin(\t r)});
	
	\draw (-2.15,0.42) node[anchor=north west] {$\boxdot$};
	\draw [->,line width=1pt] (-3.0,0.0) -- (4.0,0.0);
	\draw (3.38,0.6) node[anchor=north west] {$q_1$};
	\draw [->,line width=1pt] (0.0,-2.56) -- (0.0,3);
	\draw (0.16,3.0400000000000005) node[anchor=north west] {$q_2$};
	\draw (-0.44,2.128) node[anchor=north west] {$\boxdot$};
	\draw (0,0) node[anchor=north west] {$e$};
	\begin{scriptsize}
	\draw [fill=black] (-2,0) circle (2pt);
	\draw [fill=black] (0,0) circle (2pt);
	\end{scriptsize}
	\end{tikzpicture}
	\caption{Birkhoff's shooting method for the retrograde periodic orbit}
	\label{1_birkhoff}
\end{figure}
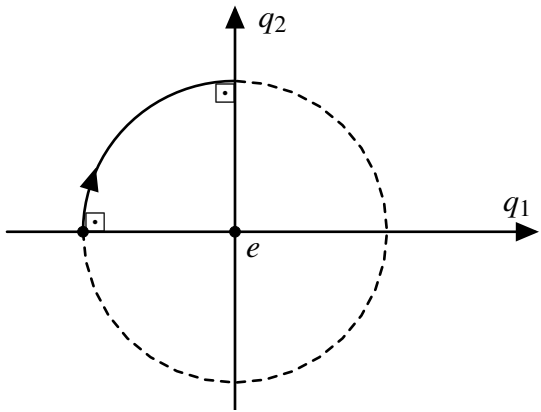
\noindent
Note that in PCR3BP the masses of the sun and the earth are comparable, so its Hamiltonian (\ref{hamiltonian_1}) is only invariant under $\rho_1$.\ In this case Birkhoff used a double shooting argument (see  \cite[pp.\ 144--146]{frauenfelder}).\

Our moon, however, is close to a direct periodic orbit, and for such orbits Birkhoff's analytical proof does not work.\ In 1915 Birkhoff \cite{birkhoff} conjectured that the retrograde one spans a disk-like global surface of section.\ By applying Brouwer's translation theorem to its Poincaré return map one can find a fixed point which should correspond to a direct one.\ This conjecture is still an open question and nowadays modern methods of symplectic geometry are developed to find an answer.\ We refer to \cite{frauenfelder} for a profound discussion.\

Nevertheless, his shooting method can be applied to find and study numerically periodic orbits.\ For direct periodic orbits, we start shooting upwards right from the origin.\

Note that in this paper we work with symplectic $(q,p)$-coordinates, hence the initial conditions for Hill's equation in $(q,p)$-coordinates (\ref{equation_of_motion_q_p}) are determined as follows:\

Since we start perpendicularly at Fix($\rho_1$), we need to give the position and its momentum at time zero, i.e.\ $q_1(0)$ and $p_2(0)$.\ Its velocity $\dot{q}_2(0)$ is determined in the following way.\ Consider the traditional Jacobi integral $\Gamma := -2c$ in $(q,\dot{q})$-coordiantes, where $c$ is the energy from the Hamiltonian (\ref{hamiltonian_2}), meaning that
\begin{align} \label{energy_gamma}
\Gamma = \frac{2}{|q|} + 3q_1^2 - \dot{q}_1^2 - \dot{q}_2^2.
\end{align}
If the starting point $q_1(0)$ and the energy $\Gamma$ are given, then the starting velocity $\dot{q}_2(0)$ is given by the energy condition (\ref{energy_gamma}), i.e.\
\begin{align*}
\big( \dot{q}_2(0) \big)^2 = \frac{2}{|q_1(0)|} + 3\big( q_1(0) \big)^2 - \Gamma
\end{align*}
and therefore in view of the transformation (\ref{equation_of_motion_q_p}), the momentum by
$$p_2(0) = \dot{q}_2(0) + q_1(0).$$
\textbf{Spatial periodic orbits:}\ Different combinations of the fixed point sets of the four linear anti-symplectic symmetries give various doubly and simply symmetric spatial periodic orbits.\ For their initial conditions we consider the Jacobi integral $\Gamma$ in $(q,\dot{q})$-coordiantes in the spatial case which is
\begin{align} \label{energy_gamma_spatial}
\Gamma = \frac{2}{|q|} + 3q_1^2 - q_3^2 - \dot{q}_1^2 - \dot{q}_2^2 - \dot{q}_3^2.
\end{align}
The initial conditions for position and velocity are determined as follows:\
\begin{table}[H] \centering \footnotesize
	\setlength{\extrarowheight}{.5em}
	\begin{tabular}{c|c|c}
		starting perpendicularly & given $\Gamma$ and & by energy condition (\ref{energy_gamma_spatial})\\
		\hline $\text{Fix}(\overline{\rho_1}) = \{ (q_1,0,0,0,p_2,p_3) \}$ & $q_1(0)$, $\dot{q}_2(0)$  & $\big( \dot{q}_3(0) \big)^2 = \frac{2}{|q_1(0)|} + 3\big( q_1(0) \big)^2 - \big( \dot{q}_2(0) \big)^2 - \Gamma$ \\
		$\text{Fix}(\rho_1) = \{ (q_1,0,q_3,0,p_2,0) \}$ & $q_1(0)$, $q_3(0)$  & $\big( \dot{q}_2(0) \big)^2 = 2 / \sqrt{ \big( q_1(0) \big)^2 + \big( q_3(0) \big)^2 } + 3\big( q_1(0) \big)^2 - \big( q_3(0) \big)^2 - \Gamma$ \\
		$\text{Fix}(\overline{\rho_2}) = \{ (0,q_2,0,p_1,0,p_3) \}$ & $q_2(0)$, $\dot{q}_1(0)$  & $\big( \dot{q}_3(0) \big)^2 = \frac{2}{|q_2(0)|} - \big( \dot{q}_1(0) \big)^2 - \Gamma$ \\
		$\text{Fix}(\rho_2) = \{ (0,q_2,q_3,p_1,0,0) \}$ & $q_2(0)$, $q_3(0)$  & $\big( \dot{q}_1(0) \big)^2 = 2 / \sqrt{ \big( q_2(0) \big)^2 + \big( q_3(0) \big)^2 } - \big( q_3(0) \big)^2 - \Gamma$
	\end{tabular}
	\caption{Initial conditions for symmetric spatial periodic orbits}
	\label{initial_conditions_spatial}
\end{table}
\noindent
In view of the transformations (\ref{equation_of_motion_q_p}), the $p$-coordinates are obtained by
$$ p_1(0) = \dot{q}_1(0) - q_2(0),\quad p_2(0) = \dot{q}_2(0) + q_1(0),\quad p_3(0) = \dot{q}_3(0). $$

\subsection{On monodromy \& reduced monodromy for symmetric periodic orbits}
\label{sec:6.5}

In this subsection we apply the discussion from Section \ref{sec:4} to planar and spatial symmetric periodic orbits.\

\subsubsection{For planar ones \& the synodic, anomalistic and draconitic periods}
\label{sec:6.5.1}

Recall from (\ref{symplectic_involution}) and (\ref{involution_2}) the symplectic and anti-symplectic symmetries
$$ \sigma(q,p) = (q_1,q_2,-q_3,p_1,p_2,-p_3),\quad \rho_1 (q,p) = (q_1,-q_2,q_3,-p_1,p_2,-p_3), $$
which commute.\ In the following we focus on planar periodic orbits $q:=(q,p) \in \text{Fix}(\sigma)$ which are symmetric with respect to $\rho_1$.\ Note that the discussion of symmetric planar periodic orbits with respect to $\rho_2$ from (\ref{involution_2}) works analogously.\

Let $T_q$ be the first return time.\ We denote $q_0:=\big( q(0),p(0) \big)$ and after choosing a Lagrangian basis with respect to the symplectic decomposition into planar and spatial components,
$$ T_{q_0} T^*\mathbb{R}^3 = T_{q_0} \text{Fix}(\sigma) \oplus E_{-1}\big(d \sigma (q_0) \big), $$
the monodromy is of the form
$$ d\varphi _H ^{T_q} (q_0) = \begin{pmatrix}
A_p & 0\\
0 & A_s
\end{pmatrix}, $$
where
$$ A_p \colon T_{q_0} \text{Fix}(\sigma) \to T_{q_0} \text{Fix}(\sigma),\quad A_s \colon E_{-1}\big( d\sigma(q_0) \big) \to E_{-1}\big( d\sigma(q_0) \big) $$
and
$$ A_p \in \text{Sp}^{\rho_0}(2),\quad A_s \in \text{Sp}^{\rho_0}(1) .$$
For the planar monodromy $A_p$ we can ignore the spatial components and for $A_s$ the planar ones, such that in view of
$$ \text{Fix}(\rho_1) = \{ (q_1,0,q_3,0,p_2,0) \},\quad E_{-1} \big( d\rho_1 (q_0) \big) = \{ (0,q_2,0,p_1,0,p_3) \} ,$$
Lagrangian basis vectors for $ T_{q_0} \text{Fix}(\sigma)$ and $E_{-1}\big(d \sigma (q_0) \big)$ are given by
$$ \left\{ \begin{pmatrix}
1\\0\\0\\0
\end{pmatrix}, \begin{pmatrix}
0\\0\\0\\1
\end{pmatrix}, \begin{pmatrix}
0\\0\\-1\\0
\end{pmatrix}, \begin{pmatrix}
0\\1\\0\\0
\end{pmatrix} \right\} \text{ and } \left\{ \begin{pmatrix}
1\\0
\end{pmatrix}, \begin{pmatrix}
0\\-1
\end{pmatrix} \right\}. $$
For computing the matrices $A_p$ and $A_s$ we consider the linearized equations in $(q,p)$-coordinates for planar periodic orbits (\ref{linearized_q_p_fix}).\ In particular, for computing $A_s$, which is of the form
$$ A_s =  \begin{pmatrix}
\tilde{a} & \tilde{b}\\
\tilde{c} & \tilde{a}
\end{pmatrix},\quad \tilde{a}^2 - \tilde{b}\tilde{c} = 1, $$
we only need to consider the third line of (\ref{linearized_q_p_fix}).\ This implies the system of linear equations of the form
$$ d\varphi_H^{T_q}\big( (1,0) \big) = \begin{pmatrix}
\tilde{a}\\
\tilde{c}
\end{pmatrix} = \tilde{a} \begin{pmatrix}
1\\
0
\end{pmatrix} - \tilde{c} \begin{pmatrix}
0\\
-1
\end{pmatrix},\quad d\varphi_H^{T_q}\big((0,-1)\big) = \begin{pmatrix}
\tilde{b}\\
\tilde{a}
\end{pmatrix} = \tilde{b} \begin{pmatrix}
1\\
0
\end{pmatrix} - \tilde{a} \begin{pmatrix}
0\\
-1
\end{pmatrix}. $$
For the planar monodromy $A_p$ we consider the first two lines of (\ref{linearized_q_p_fix}) and obtain four vectors by the linearized flow to each of the four Lagrangian basis vectors.\ Since these four vectors are linear combinations of the four Lagrangian basis vectors, the coefficients of these linear combinations give the matrix coefficients of $A_p \in \text{Sp}^{\rho_0}(2)$.\

For the planar reduced monodromy $\overline{A}_p \in \text{Sp}^{\rho_1}(1)$, the restriction to the energy hypersurface $\Sigma$ means to linearize the planar Jacobi integral $\Gamma$ from (\ref{energy_gamma}) in $(q,p)$-coordinates, which is equivalent to
$$ \Delta p_2(0) = \frac{1}{p_2(0) - q_1(0)} \bigg( \frac{- q_1(0)}{|q|^3} + 2q_1(0) + p_2(0) \bigg)\Delta q_1(0). $$
Therefore $\Delta q_1(0)$ and $\Delta p_2(0)$ determine each other such that a first basis vector of
$$  T_{q_0} \text{Fix}(\sigma|_{\Sigma}) / (\text{ker}\omega_{q_0} | _{T_{q_0}\Sigma}) $$
is given by
$$ \tilde{v}:= (\Delta q_1(0),0,0,\Delta p_2(0)). $$
The Hamiltonian vector field $X_H \vert _{\Sigma} (q_0) \in E_{-1}\big(d \sigma (q_0) \big) = \langle (0,0,-1,0),(0,1,0,0) \rangle_{\mathbb{R}} $ is of the form
$$ \big( 0, \Delta q_2(0), \Delta p_1(0), 0 \big) = \big( 0, \dot{q}_2(0), \dot{p}_1(0), 0 \big), $$
which is determined by the inital conditions and Hill's equation in $(q,p)$-coordinates (\ref{equation_of_motion_q_p}).\ For simplicity we choose $\Delta q_1(0)=1$ such that the second basis vector is given by
$$\tilde{w} := (0,0,-1,0). $$
Note that
$$T_{q_0} \text{Fix}(\sigma|_{\Sigma}) = \langle \tilde{v},\tilde{w},X_H \vert _{\Sigma}(q_0) \rangle_{\mathbb{R}}$$
and
$$\omega_{q_0} (\tilde{v},\tilde{w})=1,\quad \omega_{q_0} \big( \tilde{v} , X_H \vert _{\Sigma}(q_0) \big) = 0,\quad \omega_{q_0}\big( \tilde{w} , X_H \vert _{\Sigma}(q_0) \big) = 0.$$
By writing the two vectors
$$ d \varphi_H^{T_q} (\tilde{v}),\quad d \varphi_H^{T_q} (\tilde{w}) $$
as a linear combination of $\tilde{v},\tilde{w}$ and $X_H \vert _{\Sigma} (q_0)$, the coefficients of $\tilde{v}$ and $\tilde{w}$ give the planar reduced monodromy $\overline{A}_p \in \text{Sp}^{\rho_0}(1)$, i.e.\ the reduced monodromy is of the form
$$ \overline{d\varphi _H ^{T_q} | _{\Sigma} (q_0)} = \begin{pmatrix}
\overline{A}_p & 0\\
0 & A_s
\end{pmatrix} = \begin{pmatrix}
a & b & 0 & 0\\
c & a & 0 & 0\\
0 & 0 & \tilde{a} & \tilde{b}\\
0 & 0 & \tilde{c} & \tilde{a}
\end{pmatrix},$$
where $a^2 - bc = 1$ as well.\ Recall from Proposition \ref{prop_signature} that the signatures of $b,c,\tilde{b}$ and $\tilde{c}$ are invariant under the choice of the Lagrangian basis.\ Moreover, for a symmetric planar periodic orbit we separate the elliptic case and each of the four hyerbolic subcases in planar and spatial parts.\

In both elliptic cases, i.e.\ the Floquet multipliers are of the form $e^{\pm \text{i}\theta}$ resp. $e^{\pm \text{i}\vartheta}$, we denote by
$$ \varphi_p := \begin{cases}
\theta, & \text{if } b < 0,\\
2\pi - \theta, & \text{if } b >0,
\end{cases}\quad\quad \varphi_s := \begin{cases}
\vartheta, & \text{if } \tilde{b} < 0,\\
2\pi - \vartheta, & \text{if } \tilde{b} >0,
\end{cases} $$
the \textbf{mean planar angle of rotation} and the \textbf{mean spatial angle of rotation}, respectively.\
In particular, we have
$$ \overline{d \varphi _H^{T_q} | _{\Sigma} (q_0) } = \begin{pmatrix}
\overline{A}_p & 0 \\
0 & A_s
\end{pmatrix} \sim \begin{pmatrix}
\cos \varphi_p & - \sin \varphi_p & 0 & 0\\
\sin \varphi_p & \cos \varphi_p & 0 & 0\\
0 & 0 & \cos \varphi_s & - \sin \varphi_s\\
0 & 0 & \sin \varphi_s & \cos \varphi_s
\end{pmatrix}. $$
With respect to the symplectic splitting the transversal Conley--Zehnder index $\mu_{CZ}$ of $q$ is additive, meaning that
\begin{align*}
\mu_{CZ} = \mu_{CZ}^p + \mu_{CZ}^s,
\end{align*}
where the planar and spatial indices are the Conley--Zehnder indices of any path of symplectic matrices generated by the planar and spatial part of the linearized flow, respectively.\ Furthermore recall from Section \ref{sec:conley_zehnder_index} that the measure of the number of complete rotations of each neighbouring orbit during $T_q$ is given by the Conley--Zehnder index.\

We denote by $\theta(t)$ and $\vartheta(t)$ the respective rotation functions for $t \in [0,T_q]$.\ Note that $\theta(T_q) \equiv \varphi_p \text{ mod } 2 \pi$ and $\vartheta(T_q) \equiv \varphi_s \text{ mod } 2 \pi$.\ Moreover, let $\text{rot}^p(q)$ resp.\ $\text{rot}^s(q)$ be the number of complete rotations of neighbouring orbits during $T_q$.\ As in (\ref{index_1}) and (\ref{index_2}),
$$ \text{rot}^p(q) := \lfloor \theta(T_q)/(2\pi) \rfloor \in \mathbb{Z}, \quad \text{rot}^s(q) := \lfloor \vartheta(T_q)/(2\pi) \rfloor \in \mathbb{Z} $$
and
$$ \mu_{CZ}^p = 2 \lfloor \theta(T_q)/(2\pi) \rfloor + 1,\quad \mu_{CZ}^s = 2 \lfloor \vartheta(T_q)/(2\pi) \rfloor + 1. $$

The \textbf{synodic period} $T_s$ is related to $T_q$ in the following way.\ Recall that $T_s$ is the number of days from full moon to full moon.\ The lunarity $m$ of a moon is defined as the average number of synodic months during a complete rotation of the planet around the sun, which is 365.25 days for the Earth.\ In the Hill lunar problem, since the earth and sun are fixed, we have $m= 2 \pi / T_q$, hence
\begin{align}\label{synodic_period}
T_s = \frac{365.25}{m} = \frac{365.25 \cdot T_q}{2 \pi}.
\end{align}

The \textbf{anomalistic period} $T_a$ is defined as the mean time (in days) for a complete rotation of the planar neighbouring orbits during $T_q$.\ The mean angular velocity $v_a$ of the anomaly is given by $ \theta(T_q) / T_s$ and we obtain $T_a$ by $v_a \cdot T_a = 2 \pi$, i.e.\
\begin{align} \label{anomalistic_period}
T_a = \frac{2 \pi \cdot T_s}{( \mu_{CZ}^p - 1 )\cdot \pi + \varphi_p}.
\end{align}

In a similar way the \textbf{draconitic period} $T_d$ is defined as the mean time (in days) for a complete rotation of the spatial neighbouring orbits during $T_q$, and
\begin{align} \label{draconitic_period}
T_d = \frac{2 \pi \cdot T_s}{( \mu_{CZ}^s - 1 )\cdot \pi + \varphi_s}.
\end{align}
Note that $T_a$ and $T_d$ only exist in elliptic cases.\ In addition, we note that the bounded component of the regularized Hill lunar problem has a contact structure for energies below the critical value, where the Conley--Zehnder indices of closed characteristics are nonnegative (see \cite{lee} for details).\

\subsubsection{For spatial periodic orbits}
\label{sec:6.5.2}

We recall the four linear anti-symplectic symmetries
\begin{align*}
&\overline{\rho_1}(q,p) = (q_1,-q_2,-q_3,-p_1,p_2,p_3),\quad &\rho_1(q,p) = (q_1,-q_2,q_3,-p_1,p_2,-p_3),\\
&\overline{\rho_2}(q,p) = (-q_1,q_2,-q_3,p_1,-p_2,p_3),\quad &\rho_2(q,p) = (-q_1,q_2,q_3,p_1,-p_2,-p_3),
\end{align*}
and consider spatial periodic orbits $q:=(q,p)$ which are symmetric with respect to $\overline{\rho_1}$.\ Note that for any other linear anti-symplectic symmetry the following procedure is analogous.\ Let $q_0 := \big( q(0),p(0) \big)$ and $T_q$ be its first return time.\ In view of
$$ q_0 \in \text{Fix}(\overline{\rho_1}) = \{ (q_1,0,0,0,p_2,p_3) \},\quad E_{-1} \big( d\overline{\rho_1} (q_0) \big) = \{ (0,q_2,q_3,p_1,0,0) \} ,$$
a Lagrangian basis is given by
$$ \left\{ \begin{pmatrix}
	1\\0\\0\\0\\0\\0
	\end{pmatrix}, \begin{pmatrix}
	0\\0\\0\\0\\1\\0
	\end{pmatrix}, \begin{pmatrix}
	0\\0\\0\\0\\0\\1
	\end{pmatrix}, \begin{pmatrix}
	0\\0\\0\\-1\\0\\0
	\end{pmatrix}, \begin{pmatrix}
	0\\1\\0\\0\\0\\0
	\end{pmatrix}, \begin{pmatrix}
	0\\0\\1\\0\\0\\0
	\end{pmatrix} \right\} .$$
By using this basis the monodromy becomes an element of $\text{Sp}^{\rho_0}(3)$.\ By fixing the energy, the linearization of the spatial Jacobi integral $\Gamma$ from (\ref{energy_gamma_spatial}) in $(q,p)$-coordinates is equivalent to
$$ \Delta p_3(0) = \frac{1}{p_3(0)} \Bigg( \bigg( \frac{- q_1(0)}{|q|^3} + 2q_1(0) + p_2(0) \bigg)\Delta q_1(0) + \big(q_1(0) - p_2(0)\big) \Delta p_2(0) \Bigg) , $$
which means that any two of $\Delta p_3(0), \Delta q_1(0)$ and $\Delta p_2(0)$ determine the thirds.\ Hence two basis vectors of the five dimensional vector space $T_{q_0} \Sigma$ are of the form
$$ \big( \Delta q_1(0), 0, 0, 0, \Delta p_2(0), \Delta p_3(0) \big). $$
For simplicity we choose
$$ \tilde{v}_1 = \big( 1, 0, 0, 0, 0, \Delta p_3(0) \big),\quad \tilde{v}_2 = \big( 0, 0, 0, 0, 1, \Delta p_3(0) \big). $$
The Hamiltonian vector field $X_H |_{\Sigma}(q_0) \in E_{-1} \big( d\overline{\rho_1} (q_0) \big)$, which is spanned by the last three Lagrangian basis vectors, is of the form
$$ \big( 0,\Delta q_2(0), \Delta q_3(0), \Delta p_1(0),0,0 \big) =  \big( 0, \dot{q}_2(0), \dot{q}_3(0), \dot{p}_1(0), 0, 0 \big),$$
determined by the inital conditions and Hill's equation in $(q,p)$-coordinates (\ref{equation_of_motion_q_p}).\ By choosing
$$ \tilde{w}_1 = (0,0,0,-1,0,0),\quad \tilde{w}_2 = (0,1,0,0,0,0) $$
we obtain
$$ T_{q_0}\Sigma = \langle \tilde{v}_1,\tilde{v}_2,\tilde{w}_1,\tilde{w}_2,X_H |_{\Sigma}(q_0)\rangle_{\mathbb{R}},\quad \omega_{x_0}(\tilde{v}_i,\tilde{w}_j) = \delta_{ij},\quad i,j=1,2, $$
and that the reduced monodromy is a symplectic matrix from $\text{Sp}^{\rho_0}(2)$, meaning that it is of the form
$$ \overline{d \varphi_H^{T_q} (q_0)} = \begin{pmatrix}
A & B\\
C & A^T
\end{pmatrix},\quad B, C, CA, AB \text{ are symmetric and } A^2 - BC = I_2.$$

\section{Proof of Theorem \ref{theorem_a} and \ref{theorem_b}}
\label{sec:6} 

\subsection{Regularized energy hypersurface}

\textbf{Planar case:}\ To avoid collision orbits, for given energy $c<0$, we regularize the Hamiltonian of the planar Hill lunar problem (\ref{hamiltonian_hill}) by
\begin{align*}
K_c (-p,q) &= \frac{1}{2} \bigg( - \frac{|q|}{2c} \Big( H\big( - \frac{q}{2c}, \sqrt{-2c}p \big) - c \Big) + 1 \bigg)^2 - \frac{1}{2}\\
&= \frac{1}{2} \bigg( \frac{1}{2} \big( 1 + |p|^2 \big) + \frac{p_1 q_2 - p_2 q_1}{(-2c)^{\frac{3}{2}}} + \frac{- q_1^2 + \frac{1}{2}q_2^2}{(-2c)^3} \bigg)^2 |q|^2 - \frac{1}{2} \nonumber
\end{align*}
which satisfies $\Sigma_c:=H^{-1}(c) = K_c^{-1}(0)$.\ Note that the transformation consists of the switch map
\begin{align} \label{switch_map}
(-p,q) \mapsto (q,p),
\end{align}
which is a linear symplectomorphism on $\mathbb{R}^4$, and $ (q,p) \mapsto ( - \frac{q}{2c} , \sqrt{-2c} p )$, which is conformally symplectic with its conformal factor $\frac{1}{\sqrt{-2c}}$.\ In symplectic geometry, we don't want to change the dynamics and indeed this factor only gives rise to a reparametrization of the Hamiltonian flow.\ Moreover, the original angular momentum (\ref{angular_momentum_original}) is invariant under the switch map (\ref{switch_map}), i.e.\
\begin{align} \label{angular_momentum_invariant}
L(q,p) = p_1 q_2 - p_2 q_1 = L(-p,q).
\end{align}
Now the roles of the positions $q$ and momenta $p$ are switched and $-p$ corresponds to the base coordinate and $q$ to the fiber coordinate.\ By thinking of the two-sphere as $S^2 = \mathbb{R}^2 \cup \{ \infty \}$ via stereographic projection from the north pole $N$, we obtain the inclusion
\begin{align} \label{inclusion}
\iota \colon T^* \mathbb{R}^2 \hookrightarrow T^* S^2,
\end{align}
where by adding $N$ the closure of the regularized energy hypersurface $\overline{\iota(\Sigma_c)}$ is a subset of $T^* S^2$.\

Furthermore, for every $c<0$, $K_c$ smoothly extends to a Hamiltonian on $T^* S^2$, and by abuse of notation we denote this canonical smooth extension by the same letter.\ To study the limit case $c \to - \infty$, we replace the energy parameter by $c = \frac{-1}{2r^{2/3}}$, for a homotopy variable $r \in (0,\infty)$.\ Hence we obtain
\begin{align} \label{regularized_hamiltonian}
K_r(-p,q) := \frac{1}{2} \Big( \frac{1}{2} \big( 1 + |p|^2 \big) + (p_1 q_2 - p_2 q_1)r + (- q_1^2 + \frac{1}{2}q_2^2)r^2 \Big)^2 |q|^2 - \frac{1}{2}.
\end{align}
The Hamiltonian (\ref{regularized_hamiltonian}) smoothly extends to $r=0$, where it becomes just the regularized Kepler Hamiltonian
\begin{align*}
K_0 (-p,q) = \frac{1}{2} \Big( \frac{1}{2}(1 + |p|^2) \Big)^2|q|^2 - \frac{1}{2},
\end{align*}
which is the kinetic energy of the ``momentum" $q$ with respect to the round metric on $S^2$, i.e.\ $ g_{ij} = \frac{4 \delta_{ij}}{( 1 + |p|^2 )^2}$.\ In general, the regularization of collision orbits for the Kepler problem in $\mathbb{R}^n$ goes back to Moser \cite{moser}, where the Kepler flow is just the geodesic flow on $S^n$ in the chart given by stereographic projection from $N$.\ To regularize the flow means to add the fiber over $N$.\ Going through $N$ (the point at infinity) corresponds precisely to the collisions, where $p$ explodes.\ In the original picture: By regularizing, the third body moves into the mass at the origin like falling onto some kind of trampoline, i.e.\ at collision it bounces back.\ Hence, these bounce orbits become periodic according to the periodic geodesic flows on $S^2$.\
	
Note that if $K(q,p)$ is the Hamiltonian of the Kepler problem, then on the energy hypersurface $K_0^{-1}(0) = K^{-1}(0)$ the Hamiltonian vector fields are related by
\begin{align} \label{two_hamilt_2}
X_{K_0}|_{K^{-1}(0)}(p,q) = |q| X_K |_{K^{-1}(0)}(-q,p),
\end{align}
see for instance \cite[pp.\ 47--48]{frauenfelder}.\

Now by adding $N$, the closure of the regularized energy hypersurface $\Sigma := K_0^{-1}(0)$, which is the image of $\Sigma$ under the embedding (\ref{inclusion}), i.e.\
$$ \overline{\iota(\Sigma)} \subset T^*S^2 $$
corresponds to the space of great circles on $S^2$ parametrized by arc length, i.e.\ the space of oriented simple closed geodesics on $S^2$.\ Topologically, it is the set of pairs $(-p,q)$ formed by base points $-p$ and unit co-vectors $q$ at $-p$ (see Figure \ref{figure_geodesic}), meaning that it is diffeomorphic to the unit cotangent bundle of $S^2$,
$$ \overline{\iota(\Sigma)} \cong S^*S^2 = \{ (-p,q) : |q|_{-p} = 1 \}, $$
where $|q|_{-p}$ is the length of $q$ with respect to the round co-metric on $S^2$.
\begin{figure}[H]
	\centering
	\begin{tikzpicture}[line cap=round,line join=round,>=triangle 45,x=1cm,y=1cm]
	\clip(-2.2,-2.05) rectangle (2.2,2.05);
	
	\draw [line width=1pt] (0,0) circle (2cm);
	
	\draw [dashed,line width=1pt] (2,0) arc (0:180:2 and 0.5);
	
	\draw [decoration={markings, mark=at position 0.5 with {\arrow{>}}}, postaction={decorate},line width=1pt] (-2,0) arc (180:360:2 and 0.5);
	
	\draw [rotate=60,color=blue,dashed,line width=1pt] (2,0) arc (0:180:2 and 0.5);
	
	\draw [rotate=60,color=blue,decoration={markings, mark=at position 0.8 with {\arrow{>}}}, postaction={decorate},line width=1pt] (-2,0) arc (180:360:2 and 0.5);
	
	\draw [color=blue](-0.5,-1.4) node[anchor=north west] {$-p$};
	\draw [color=blue](0.2,-0.8) node[anchor=north west] {$q$};
	
	\draw [->,color=blue,line width=1pt] (-0.46,-1.46) -- (0.3,-0.85);
	
	\begin{scriptsize}
	\draw [fill=blue] (-0.46,-1.46) circle (2pt);
	\end{scriptsize}
	\end{tikzpicture}
	\caption{The space of parametrized great circles on $S^2$}
	\label{figure_geodesic}
\end{figure}
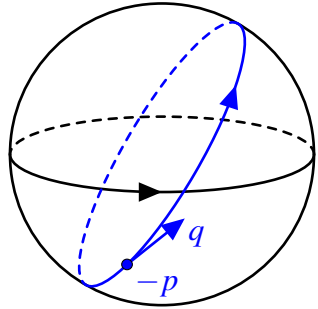
\noindent
By identifying it with the unit tangent bundle, denoted by $SS^2$, it is diffeomorphic to
$$ \overline{\iota(\Sigma)} \cong S^*S^2 \cong SS^2 \cong SO(3) \cong S^3 / \mathbb{Z}_2 \cong \mathbb{R}P^3,$$
see \cite[p.\ 195]{bott_tu}.\\
\\
\textbf{Spatial case:}\ By the same regularization of the Hamiltonian for the spatial Hill lunar problem (\ref{hamiltonian_hill}), the closure of the regularized energy hypersurface is the unit cotangent bundle $S^*S^3$ which is the space of parametrized great circles (parametrized simple closed geodesics) on $S^3$.\ To see that the tangent bundle of $S^3$ is trivial, we identify $\mathbb{R}^4$ with the set of quaternions $\mathbb{H}$ via the canonical bijection
$$ \mathbb{R}^4 \to \mathbb{H},\quad (a,b,c,d) \mapsto a + bi + cj + dk ,$$
with the identities
$$ i^2 = j^2 = k^2 = ikj = -1,\quad ij=k, jk=i, ki=j,\quad ji=-k, kj=-i, ik=-j.  $$
Consider $S^3$ as the unit quaternions, i.e.\
$$ S^3 = \{ a + bi + cj + dk \mid \sqrt{ a^2 + b^2 + c^2 + d^2 } = 1 \}. $$
For $q = a + bi + cj + dk \in S^3$ we have
\begin{align*}
(a,b,c,d) \mapsto (-b,a,-d,c), \quad \quad \quad & q \mapsto iq,\\
(a,b,c,d) \mapsto (-c,d,a,-b), \quad \quad \quad & q \mapsto jq,\\
(a,b,c,d) \mapsto (-d,-c,b,a), \quad \quad \quad & q \mapsto kq.
\end{align*}
It is obvious that the three resulting vectors are orthogonal to each other and to $(a,b,c,d)$, hence they are three linearly independent non vanishing tangent vector fields on $S^3$ and form an orthonormal basis on $T_qS^3$.\ Therefore $TS^3 \cong S^3 \times \mathbb{R}^3 $ and $SS^3 \cong S^3 \times S^2$.\

\subsection{From the Rabinowitz action functional to Morse--Bott}

\subsubsection{Rabinowitz action functional}

\textbf{Planar case:}\ We interpret variationally parametrized periodic orbits of $K_r$ (\ref{regularized_hamiltonian}) as critical points of the Rabinowitz action functional given by
\begin{align} \label{rabinowitz_action_functional}
\mathscr{A}_r := \mathscr{A}^{K_r} \colon \mathcal{L} \times \mathbb{R}_{>0} \to \mathbb{R},\quad (\gamma,\tau ) \mapsto \int_{S^1} \lambda\big( \dot{\gamma}(t) \big) - \tau \int_{S^1} K_r\big(\gamma(t)\big) dt,
\end{align}
where $\mathcal{L} = C^{\infty}(S^1 = \mathbb{R} / \mathbb{Z},T^* S^2)$ is the free loop space of $T^* S^2$, $\lambda$ is the canonical Liouville one-form and where $\tau$ can be regarded as the period of $\gamma$.\ This functional can be thought as the Lagrange multiplier functional of the area functional for the constraint given by the mean value of $K_r$.\

For the critical points of $\mathscr{A}_r$ let $(\gamma, \tau) \in \mathcal{L} \times \mathbb{R}_{>0}$ and $( \hat{\gamma}_1, \hat{\tau}_1 ), (\hat{\gamma}_2, \hat{\tau}_2) \in T_{(\gamma,\tau)} (\mathcal{L} \times \mathbb{R}_{>0})$ two tangent vectors.\ For the gradient of $\mathscr{A}_r$ we denote by $g$ the metric on $\mathcal{L} \times \mathbb{R}_{>0}$ which is defined as the product metric of the $L^2$-metric on $\mathcal{L}$ and the standard metric on $\mathbb{R}_{>0}$.\ For the $L^2$-metric we choose a smooth family $\{J_t\}_{t \in S^1}$ of $\omega$-compatible almost complex structures on $T^* S^2$, meaning that $J_t$ is an automorphism of $T_{\gamma} T^* S^2$ with $J_t^2 = - \text{id}$ and $\omega ( \cdot , J_t \cdot) $ defines a Riemannian metric.\ Then the metric $g$ is given by
$$ g \big( (\hat{\gamma}_1, \hat{\tau}_1 ), (\hat{\gamma}_2, \hat{\tau}_2 ) \big) = \langle \hat{\gamma}_1 , \hat{\gamma}_2 \rangle_{L^2} + \hat{\tau}_1 \hat{\tau}_2 = \int_{S^1} \omega \big( \hat{\gamma}_1 , J_t(\gamma) \hat{\gamma}_2 \big) dt + \hat{\tau}_1 \hat{\tau}_2$$
and with respect to this metric the gradient of $\mathscr{A}_r$ reads
\begin{align} \label{gradient_rabinowitz}
\nabla_g \mathscr{A}_r (\gamma,\tau) = \begin{pmatrix}
- J_t (\gamma) \left( \dot{\gamma}(t) - \tau X_{K_r} \big(\gamma(t)\big) \right)\\
- \int_{S^1} K_r\big(\gamma (t) \big) dt
\end{pmatrix} .
\end{align}
Hence critical points of $\mathscr{A}_r$ consist of pairs $(\gamma,\tau)$ which are solutions of
$$ \begin{cases}
\dot{\gamma}(t) = \tau X_{K_r}\big( \gamma(t) \big)\\
0 = \int_{S^1} K_r\big( \gamma(t) \big) dt
\end{cases} \Leftrightarrow\quad \begin{cases}
\dot{\gamma}(t) = \tau X_{K_r}\big( \gamma(t) \big)\\
0 = K_r\big(\gamma(t)\big),
\end{cases} $$
where the equivalence follows from preservation of energy.\ In other words, the critical points of $\mathscr{A}_r$ are parametrized periodic orbits of $X_{K_r}$ of period $\tau$, i.e.\ of the form $\gamma(t) = \varphi_{K_r}^{\tau t}$, $t \in \mathbb{R}$, on the fixed energy level set $K_r^{-1}(0)$.\

The circle $S^1$ acts on $\mathcal{L}$ by rotating the loop $\gamma$.\ This $S^1$-action extends to an action on $\mathcal{L} \times \mathbb{R}_{>0}$, where $S^1$ acts trivially on $\mathbb{R}_{>0}$, that is
\begin{align} \label{circle_action}
s_* \big( \gamma(t), \tau \big) = \big( \gamma (t + s), \tau \big),\quad t,s \in S^1.
\end{align}
Note that for every $r$ the Rabinowitz action functional $\mathscr{A}_r$ is invariant under (\ref{circle_action}).\ By the Morse Lemma (see for instance \cite[pp.\ 6--8]{milnor}) non-degenerate critical points are isolated.\ Since the critical points of  $\mathscr{A}_r$ come in $S^1$-families, $\mathscr{A}_r$ is never a Morse function, i.e.\ the kernel of its Hessian at a critical point is never just the zero vector space.\ Further, the geodesic flow on $S^2$ is invariant under rotation, thus closed geodesics are not isolated.\

If $\gamma : S^1 \to S^* S^2$ is a parametrized periodic orbit of $X_{K_0}$ corresponding to a simple closed geodesic on $S^2$, then $ (\gamma,2\pi) \in \text{crit}\mathscr{A}_0$.\ We denote the space of all these pairs by
\begin{align} \label{c_critical_set}
C:= \left\{ (\gamma, 2\pi) \mid \dot{\gamma}(t) = 2\pi X_{K_0}\big( \gamma(t) \big), 0 = K_0\big( \gamma(t) \big) \right\} \cong \mathbb{R}P^3 \subset \text{crit}\mathscr{A}_0,
\end{align}
on which the circle $S^1$ also acts by time shift.\ 

We next study the derivative of $\mathscr{A}_r$ with respect to $r$ at $r=0$ at $( \gamma, 2 \pi ) = (-p,q,2\pi) \in C$.\ For this we compute
$$ \frac{\partial K_r}{\partial r} \bigg|_{r=0} (-p,q) = \frac{1}{2} \big(1 + |p|^2\big) (p_1q_2 - p_2q_1)|q|^2 = \sqrt{\big(2 K_0(-p,q) + 1\big)} |q| (p_1 q_2 - p_2 q_1). $$
Recall from (\ref{angular_momentum_invariant}) that $p_1 q_2 - p_2 q_1 = L(q,p) = L(-p,q)$.\ For $( \gamma, 2 \pi ) \in C \subset \text{crit}(\mathscr{A}_0)$ we have that $K_0(\gamma)=0$ and since $L$ is constant along periodic orbits of the Kepler problem we obtain
\begin{align} \label{partial_derivative_functional}
\mathring{\mathscr{A}}_0(\gamma,2\pi) := \frac{\partial \mathcal{A}_r}{\partial r} \bigg|_{r=0} (\gamma,2\pi) = - 2 \pi \int_{S^1} \frac{\partial K_r}{\partial r} \bigg|_{r=0} \big(\gamma(t)\big) &= - 2\pi \int_{S^1} |q|L\big( \gamma(t) \big) dt \nonumber\\
&= - 2\pi L \big( \gamma(0) \big) \int_{S^1} |q|dt.\nonumber\\
&= - 2\pi L (-p,q).
\end{align}
Note that by (\ref{two_hamilt_2}) the last integral term determines the ratio of a Kepler ellipse's period of energy 0 before and after regularization, which is $2\pi$ in this two cases.\ Hence it is 1 and does not depend on the orbit.\ In the \textbf{spatial case}, the analogous procedure gives the same equation and function (\ref{partial_derivative_functional}) of the same form, which we denote by $-L \colon SS^3 \to \mathbb{R}, (-p,q) \mapsto q_1 p_2 - q_2p_1$.\

\subsubsection{Critical points of $-L$ on $SS^2$ and $SS^3$ and their Morse--Bott indices}
\label{sec:explicit_computation}

Recall that if $x$ is a critical point of a Morse--Bott function $f$, then the Morse--Bott index $\text{ind}_f(x)$ of $f$ at $x$ is the number of negative eigenvalues of the Hessian of $f$ at $x$.\\
\\
In view of (\ref{partial_derivative_functional}) we wish to compute the critical points of
\begin{align} \label{function_critical_points}
-L \colon SS^2 \to \mathbb{R},\quad (-p,q) \mapsto q_1p_2 - q_2p_1
\end{align}
and of
\begin{align} \label{function_critical_points_spatial}
-L \colon SS^3 \to \mathbb{R},\quad (-p,q) \mapsto q_1p_2 - q_2p_1.
\end{align}
\begin{lemma1}[Planar case]
	\textit{The critical points of (\ref{function_critical_points}) are exactly two circles over the equator moving in opposite direction (see Figure \ref{figure_geodesic}) and (\ref{function_critical_points}) is Morse--Bott along them.\ Furthermore, one is a maximum and the other one is a minimum, and
	\begin{center}
		\begin{tabular}{c|c|c}
			\textnormal{crit}($-L$) & Morse--Bott index & corresponding periodic orbit in the original picture\\
			\hline maximum & 2 & direct\\
			minimum & 0 & retrograde
		\end{tabular}
	\end{center}}
\end{lemma1}
\begin{proof}
Given the switch (\ref{switch_map}), let $S^2 = \{ (-p_1,-p_2,-p_3) \mid p_1^2 + p_2^2 + p_3^2 = 1 \} \subset \mathbb{R}^3$ be the unit sphere.\ Then view $T^*S^2$ as a subset of $\mathbb{R}^6 = \{ -p_1,-p_2,-p_3,q_1,q_2,q_3 \}$ and the unit tangent bundle $SS^2 \subset TS^2$, which is the closure of the regularized energy hypersurface
$$ \mathbb{R}P^3 \cong S^*S^2 \cong SS^2 = \{ (-p,q) \in \mathbb{R}^3 \times \mathbb{R}^3 \mid \| p \|^2 = \| q \|^2 = 1, \langle - p , q \rangle = 0 \} \subset \mathbb{R}^6. $$
To find the critical points of (\ref{function_critical_points}) we parametrize $S^2$ by
$$ \Phi \colon [0,2\pi) \times (-\frac{\pi}{2},\frac{\pi}{2}) \to \mathbb{R}^3,\quad (\varphi,\theta) \mapsto \begin{pmatrix}
\cos \theta \cos \varphi\\
\cos \theta \sin \varphi\\
\sin \theta
\end{pmatrix}, $$
hence the north and south poles are not parametrized.\ This does not matter since one can check that the same calculation with a chart including the north and south poles shows that there are no critical points of (\ref{function_critical_points}) at the two circles there.\ However, the tangent plane $T_{-p}S^2$ at a point $-p \in S^2$ is spanned by the two orthogonal vectors
$$ \frac{\partial \Phi}{\partial \varphi} = \begin{pmatrix}
- \cos \theta \sin \varphi\\
\cos \theta \cos \varphi\\
0
\end{pmatrix},\quad \frac{\partial \Phi}{\partial \theta} = \begin{pmatrix}
- \sin \theta \cos \varphi\\
- \sin \theta \sin \varphi\\
\cos \theta
\end{pmatrix} $$
with $|| \frac{\partial \Phi}{\partial \varphi} || = \cos \theta$ and $|| \frac{\partial \Phi}{\partial \theta} || = 1$, whence we can take the orthonormal basis $\left\{ (\cos \theta)^{-1} \frac{\partial \Phi}{\partial \varphi}, \frac{\partial \Phi}{\partial \theta} \right\}$.\ Any unit tangent vector $q$ at $-p$ is given by
$$ q = \cos \alpha (\cos \theta)^{-1} \frac{\partial \Phi}{\partial \varphi} + \sin \alpha \frac{\partial \Phi}{\partial \theta} = \cos \alpha \begin{pmatrix}
- \sin \varphi\\
\cos \varphi\\
0
\end{pmatrix} + \sin \alpha \begin{pmatrix}
- \sin \theta \cos \varphi\\
- \sin \theta \sin \varphi\\
\cos \theta
\end{pmatrix} , $$
for $\alpha \in [0,2\pi),$ and we can parametrize $SS^2$ away from the circles at the two poles by $(\varphi,\theta,\alpha)$.\ With this we readily find that (\ref{function_critical_points}) becomes
$$ -L(\varphi,\theta,\alpha) = \cos \theta \cos \alpha. $$
Hence the gradient $\nabla (-L) = ( 0 , - \sin \theta \cos \alpha , - \cos \theta \sin \alpha )^{\mathrm{T}} $ vanishes exactly for $\theta = 0$ and $ \alpha \in \{ 0, \pi \}$, i.e.\ at the circles over the equator.\ For each $\varphi$, the two critical points
\begin{align*}
C_1 := &( \cos \varphi, \sin \varphi, 0, -\sin \varphi, \cos \varphi, 0 ),\quad -L( \varphi,0,0 ) = 1\\
C_2 := &( \cos \varphi, \sin \varphi, 0, \sin \varphi, -\cos \varphi, 0 ),\quad -L( \varphi,0,\pi ) = -1\nonumber
\end{align*}
give the initial conditions at $\varphi_0 := \varphi \in [0,2\pi)$.\ The corresponding trajectories in the loop space $C^{\infty}(S^1,SS^2)$ are, respectively,
\begin{align*}
\gamma_1(t) &= \big( \cos(\varphi_0 + t),\sin (\varphi_0 + t),0,- \sin (\varphi_0 + t), \cos (\varphi_0 + t),0 \big)\\
\gamma_2(t) &= \big( \cos(\varphi_0 - t),\sin (\varphi_0 - t),0, \sin (\varphi_0 - t), - \cos (\varphi_0 - t),0 \big),\nonumber
\end{align*}
for $t \in [0,2\pi)$.\ The Hessian of $-L$ at these two circles $\gamma_1(t)$ and $\gamma_2(t)$ is, respectively:
$$ \text{H}_{-L}(\varphi_0 + t,0,0) = \begin{pmatrix}
0 & 0 & 0\\
0 & -1 & 0\\
0 & 0 & -1
\end{pmatrix},\quad \text{H}_{-L}(\varphi_0 - t,0,\pi) = \begin{pmatrix}
0 & 0 & 0\\
0 & 1 & 0\\
0 & 0 & 1
\end{pmatrix}. $$
Therefore $-L$ is Morse--Bott along these two circles.\ In other words, all critical points are transverse Morse and its Hessian is non-degenerate in the direction normal to $SS^2 \cong \mathbb{R}P^3$.\ The maximum is attained along $\gamma_1(t)$ and the minimum along $\gamma_2(t)$ and their Morse--Bott indices are
\begin{align*}
\text{ind}_{-L}\big( \gamma_1(t) \big) = 2,\quad  \text{ind}_{-L}\big( \gamma_2(t) \big) = 0.
\end{align*}
Note that $\gamma_1(t)$ and $\gamma_2(t)$ are two circles over the equator moving in opposite direction, of Figure \ref{figure_geodesic}.\ To see them in the original picture, we consider the stereographic projection
$$ \sigma_N \colon S^2 \setminus \{N\} \to \mathbb{R}^2,\quad (x_1,x_2,x_3) \mapsto \bigg( \frac{x_1}{1-x_3}, \frac{x_2}{1 - x_3} \bigg), $$
and its cotangent lift
$$ T^* \big(S^2 \setminus \{N\} \big) \to T^*\mathbb{R}^2,\quad (x,y) \mapsto \big( \sigma_N(x), ( d \sigma_N(x)^{\mathrm{T}} )^{-1} (y) \big), $$
where $ (d \sigma_N(x)^{\mathrm{T}} )^{-1} (y) = \big(y_1(1-x_3) + x_1y_3, y_2(1-x_3) + x_2y_3 \big) $.\ Together with the switch (\ref{switch_map}) we obtain that the maximum $\gamma_1$ corresponds to
$$ \big( q_1(t),q_2(t),p_1(t),p_2(t) \big) = \big( -\sin(\varphi_0 + t), \cos (\varphi_0 + t) , - \cos (\varphi_0 + t), - \sin (\varphi_0 + t) \big), $$
which rotates in a direct motion, and the minimum $\gamma_2$ to
$$ \big( q_1(t),q_2(t),p_1(t),p_2(t) \big) = \big( \sin(\varphi_0 - t), - \cos (\varphi_0 - t) , - \cos (\varphi_0 - t), - \sin (\varphi_0 - t) \big), $$
which rotates in a retrograde motion.\ Their angular momentum is $-1$ and $1$, respectively.\
\end{proof}
\begin{lemma1}[Spatial case]
	\textit{The critical points of (\ref{function_critical_points_spatial}) are exactly four circles, and (\ref{function_critical_points_spatial}) is Morse--Bott along them.\ There are one maximum, two saddle points and one minimum.\ The maximum and minimum are two critical points inherited from the planar case, and the two saddle points are two circles moving through the north pole.\ Moreover,
	\begin{center}
		\begin{tabular}{c|c|c|c}
			\textnormal{crit}($-L$) & Morse--Bott index & corresponding periodic orbit in the original picture\\
			\hline maximum & 4 & direct (planar)\\
			saddle point & 2 & collision orbit bouncing back (spatial)\\
			saddle point & 2 & collision orbit bouncing back (spatial)\\
			minimum & 0 & retrograde (planar)
		\end{tabular}
	\end{center}}
\end{lemma1}
\begin{proof}
By using a parametrization of $S^3$ where the north and south poles are not parametrized, one obtains two critical points inherited from the planar problem, namely
\begin{align*}
C_1 := &( \cos \varphi, \sin \varphi, 0, 0, -\sin \varphi, \cos \varphi, 0, 0 ),\quad\quad -L( C_1 ) = 1, &\text{ind}_{-L}(C_1) &= 4,\\
C_2 := &( \cos \varphi, \sin \varphi, 0, 0, \sin \varphi, -\cos \varphi, 0, 0 ),\quad\quad -L( C_2 ) = -1, &\text{ind}_{-L}(C_2) &= 0.\nonumber
\end{align*}
Therefore we parametrize $S^3$ by
$$ \Phi \colon [0,2\pi) \times (-\frac{\pi}{2},\frac{\pi}{2}) \times (-\frac{\pi}{2},\frac{\pi}{2}) \to \mathbb{R}^4,\quad (\varphi,\theta_1,\theta_2) \mapsto \begin{pmatrix}
\sin \theta_2\\
\sin \theta_1 \cos \theta_2\\
\cos \theta_1 \cos \theta_2 \cos \varphi\\
\cos \theta_1 \cos \theta_2 \sin \varphi
\end{pmatrix}. $$
At a point $-p \in S^3$ we choose $ \left\{ (\cos \theta_1)^{-1} (\cos \theta_2)^{-1} \frac{\partial \Phi}{\partial \varphi}, (\cos \theta_2)^{-1} \frac{\partial \Phi}{\partial \theta_1}, \frac{\partial \Phi}{\partial \theta_2} \right\} $ as the orthonormal basis of the tangent plane $T_{-p}S^3$.\ Then every unit tangent vector $q$ at $-p$ is written as
$$ q = \cos \alpha_1 \cos \alpha_2 \begin{pmatrix}
0\\
0\\
- \sin \varphi\\
\cos \varphi
\end{pmatrix} + \cos \alpha_1 \sin \alpha_2 \begin{pmatrix}
0\\
\cos \theta_1 \\
- \sin \theta_1 \cos \varphi\\
- \sin \theta_1 \sin \varphi
\end{pmatrix} + \sin \alpha_1 \begin{pmatrix}
\cos \theta_2\\
-\sin \theta_1 \sin \theta_2\\
- \cos \theta_1 \sin \theta_2 \cos \varphi\\
- \cos \theta_1 \sin \theta_2 \sin \varphi
\end{pmatrix}, $$
for $\alpha_1 \in (-\frac{\pi}{2},\frac{\pi}{2})$, $\alpha_2 \in [0,2\pi)$.\ Moreover, we parametrize $SS^3$ away from circles on $S^3$ with only first and second coordinates by $(\varphi,\theta_1,\theta_2,\alpha_1,\alpha_2)$.\ Then (\ref{function_critical_points_spatial}) becomes
$$ - L (\varphi,\theta_1,\theta_2,\alpha_1,\alpha_2) = \cos \theta_1 \sin \theta_2 \cos \alpha_1 \sin \alpha_2 - \sin \theta_1 \sin \alpha_1. $$
For the critical points, the third component of the gradient
$$ \nabla(-L) = \begin{pmatrix}
0\\
- \sin \theta_1 \sin \theta_2 \cos \alpha_1 \sin \alpha_2 - \cos \theta_1 \sin \alpha_1\\
\cos \theta_1 \cos \theta_2 \cos \alpha_1 \sin \alpha_2\\
- \cos \theta_1 \sin \theta_2 \sin \alpha_1 \sin \alpha_2 - \sin \theta_1 \cos \alpha_1\\
\cos \theta_1 \sin \theta_2 \cos \alpha_1 \cos \alpha_2
\end{pmatrix} $$
implies that $\sin\alpha_2=0$, hence $\alpha_2 \in \{0,\pi\}$.\ Morevoer, by the fifth component we obtain that $\sin \theta_2=0$, i.e.\ $\theta_2=0$.\ The two cases for $\alpha_2$ together with the second and fourth component give the further solutions $\alpha_1=0$ and $\theta_1=0$.\ Therefore in addition there are for every $\varphi$ two critical points
\begin{align*}
C_3 := &(0,0, \cos \varphi, \sin \varphi,0,0, -\sin \varphi, \cos \varphi ),\quad\quad -L( \varphi,0,0,0,0 ) = 0,\\
C_4 := &(0,0, \cos \varphi, \sin \varphi, 0, 0, \sin \varphi, -\cos \varphi ),\quad\quad -L( \varphi,0,0,0,\pi ) = 0,\nonumber
\end{align*}
giving the initial conditions at $\varphi_0 := \varphi \in [0,2\pi)$ for the corresponding trajectories in the loop space $C^{\infty}(S^1,SS^3)$, which are respectively
\begin{align*}
\gamma_3(t) &= \big(0,0, \cos(\varphi + t),\sin (\varphi + t),0,0,- \sin (\varphi + t), \cos (\varphi + t) \big)\\
\gamma_4(t) &= \big(0,0, \cos(\varphi - t),\sin (\varphi - t),0,0, \sin (\varphi - t), - \cos (\varphi - t) \big).\nonumber
\end{align*}
The Hessian at $(\varphi+t,0,0,0,0)$ and $(\varphi-t,0,0,0,\pi)$ is given respectively by
$$ \text{H}_{-L} = \begin{pmatrix}
0 & 0 & 0 & 0 & 0\\
0 & 0 & 0 & -1 & 0\\
0 & 0 & 0 & 0 & 1\\
0 & -1 & 0 & 0 & 0\\
0 & 0 & 1 & 0 & 0
\end{pmatrix},\quad \text{H}_{-L} = \begin{pmatrix}
0 & 0 & 0 & 0 & 0\\
0 & 0 & 0 & -1 & 0\\
0 & 0 & 0 & 0 & -1\\
0 & -1 & 0 & 0 & 0\\
0 & 0 & -1 & 0 & 0
\end{pmatrix}.$$
Hence $-L$ is Morse--Bott along $\gamma_3$ and $\gamma_4$, and in each case there are besides the zero eigenvalue the eigenvalues 1 and $-1$ with double multiplicity, thus $\gamma_3(t)$ and $\gamma_4(t)$ are saddle points with indices
\begin{align*}
\text{ind}_{-L}\big( \gamma_3(t) \big) = \text{ind}_{-L}\big( \gamma_4(t) \big) = 2.
\end{align*}
Moreover, these two circles move trough the north pole for $\varphi_0 \pm t = \frac{\pi}{2}$ in opposite direction, i.e.\ in the original picture, these two great circles correspond to collision orbits moving into the mass at the origin with only the spatial coordinates bouncing back.\ More precisely, analogous to the planar case, by using the stereographic projection from the north pole
$$ \sigma_N \colon S^3 \setminus \{N\} \to \mathbb{R}^3,\quad (x_1,x_2,x_3,x_4) \mapsto \bigg( \frac{x_1}{1-x_4}, \frac{x_2}{1 - x_4}, \frac{x_3}{1 - x_4} \bigg), $$
its cotangent lift
$$ T^* \big(S^3 \setminus \{N\} \big) \to T^*\mathbb{R}^3,\quad (x,y) \mapsto \big( \sigma_N(x), y_1(1-x_4) + x_1y_4, y_2(1-x_4) + x_2y_4, y_3(1-x_4) + x_3y_4 \big) $$
and the switch (\ref{switch_map}), $\gamma_3$ and $\gamma_4$ become
\begin{align*}
\bigg( 0,0,1 - \sin (\varphi_0 + t),0,0,\frac{\cos (\varphi_0 + t)}{\sin (\varphi_0 + t)-1} \bigg), \quad  \bigg( 0,0,\sin (\varphi_0 - t)-1,0,0,\frac{\cos (\varphi_0 - t)}{\sin (\varphi_0 - t)-1} \bigg),
\end{align*}
respectively.\ The first one collides with the origin from above and the second one from below, see Figure \ref{figure_spatial_collision_orbits}.\ Their angular momentum is zero.\
\end{proof}

\subsection{Morse case and bifurcation of family $g$ and $f$ from the geodesic flow}
\label{sec:7.3}

In each case, the $S^1$-action obtained by rotating the loop corresponds to the zero eigenvalue of the Hessian.\ By taking the quotient by this action we obtain the space of oriented unparametrized great circles on $S^2$ (planar case), and the same is true for all $n \geq 2$, see for instance \cite{besse}, \cite{klingenberg}.\ Denote by $\widetilde{S^n_+}$ the space of oriented unparametrized great circles on $S^n$.\ Since an oriented great circle on $S^n$ corresponds to an oriented 2-plane through the origin of $\mathbb{R}^{n+1}$, $\widetilde{S^n_+}$ is diffeomorphic to the oriented Grassmannian $G^+(2,n+1)$ of oriented 2-planes through the origin of $\mathbb{R}^{n+1}$.\ For instance, $\widetilde{S^2_+}$ is clearly $S^2$ by associating to an oriented 2-plane its unit normal vector.\ The space $\widetilde{S^3_+}$, which is important for the spatial problem, is diffeomorphic to $S^2 \times S^2$, see \cite[p.\ 55]{besse}.\ We summarize:
\begin{align} \label{unparametrized_space}
\widetilde{S^n_+} \cong G^+(2,n+1) = \begin{cases}
S^2, & n=2\\
S^2 \times S^2, & n=3
\end{cases}
\end{align}
\textbf{Planar case:}\ By the invariance of the functionals $\mathscr{A}_r$ under the circle action (\ref{circle_action}), they induce action functionals
$$ \overline{\mathscr{A}_r} : ( \mathcal{L} \times \mathbb{R}_{>0} )/S_1 \to \mathbb{R}. $$
Note that the quotient $( \mathcal{L} \times \mathbb{R}_{>0} )/S_1$ is an orbifold.\ Recall the definition (\ref{c_critical_set}) of the critical component $C$.\ Since $(\gamma,2\pi) \in C$ corresponds to a simple closed geodesic on $S^2$, the $S^1$-action at $C \subset \mathcal{L} \times \mathbb{R}_{>0}$ is free, hence the quotient space
$$C/S^1 \subset \text{crit}(\overline{\mathscr{A}_0})$$
is a submanifold of $(\mathcal{L} \times \mathbb{R}_{>0})/S^1$.\ This quotient space is diffeomorphic to $\mathbb{R}P^3/S^1 \cong S^2$ which is the space of oriented unparametrized great circles on $S^2$.\ Hence in view of (\ref{partial_derivative_functional}) the restriction
$$\mathring{\overline{\mathscr{A}_0}} | _{S^2} = \overline{ - 2 \pi L(-p,q) } | _{S^2} \colon S^2 \to \mathbb{R}$$
to the quotient space $C / S^1 \cong S^2$ is a Morse function on $S^2$ having one maximum and one minimum with Morse indices 2 and 0, respectively.\ In particular, $\mathring{\overline{\mathscr{A}_0}} | _{S^2}$ is diffeomorphic to the standard height function on $S^2$.\ Hence two families of periodic orbits bifurcate like the critical points on $S^2$.\ The familly bifurcating from the circular direct one (corresponding to the maximum) is referred to as \textbf{direct periodic orbits}, and the family bifurcating from the circular retrograde orbit (corresponding to the minimum) as \textbf{retrograde periodic orbits}.\ We call them \textbf{family $g$ and $f$}, respectively.\ This bifurcation is generated by a small perturbation of $\mathring{\overline{\mathscr{A}_0}} | _{S^2}$ on $C/S^1 \cong S^2$, as follows from the implicit function theorem.\ More precisely, there exists $\varepsilon > 0$, an open neighborhood $U$ of $S^2$ in $( \mathcal{L} \times \mathbb{R}_{>0} )/S^1$ and a smooth function
\begin{align} \label{small_perturbation}
f : \text{crit} ( \mathring{\overline{\mathscr{A}_0}} | _{S^2} ) \times [0,\varepsilon) \to U
\end{align}
with the following two properties:
\begin{itemize}
	\item[i)] If $\iota : \text{crit} ( \mathring{\overline{\mathscr{A}_0}} | _{S^2} ) \to U $ is the inclusion, then $ f(\cdot,0) = \iota $.
	\item[ii)] For every $\tilde{r} \in (0, \varepsilon)$ the restriction $\mathring{\overline{\mathscr{A}_{\tilde{r}}}} | _U$ is Morse and $ \text{crit}( \mathring{\overline{\mathscr{A}_{\tilde{r}}}} | _U ) = \text{im}( f(\cdot,\tilde{r}) ).$
\end{itemize}
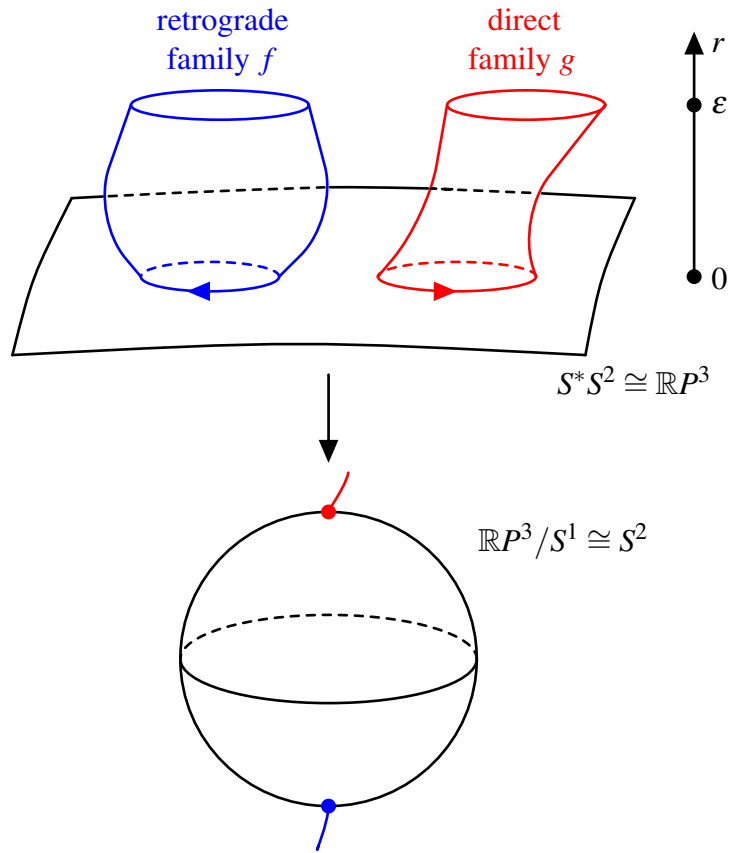
\begin{figure}[H]
	\centering
	\begin{tikzpicture}[line cap=round,line join=round,>=triangle 45,x=1cm,y=1cm,scale=1.3]
	\clip(-6.2,-5.9) rectangle (6.5,2.75);
	
	\draw [dashed,color=blue,line width=1pt] (-0.5,0) arc (0:180:0.7 and 0.15);
	\draw [decoration={markings, mark=at position 0.5 with {\arrow{<}}}, postaction={decorate},color=blue,line width=1pt] (-1.9,0) arc (180:360:0.7 and 0.15);
	
	\draw [color=blue,line width=1pt] (-0.2,1.75) arc (0:360:0.9 and 0.15);
	
	\draw[rounded corners=20pt,color=blue,line width=1pt](-1.9,0)--(-2.4,0.6)--(-2,1.75);
	
	\draw[rounded corners=20pt,color=blue,line width=1pt](-0.5,0)--(0.1,0.6)--(-0.2,1.75);
	
	\draw [dashed,color=red,line width=1pt] (2.1,0) arc (0:180:0.8 and 0.15);
	\draw [decoration={markings, mark=at position 0.5 with {\arrow{>}}}, postaction={decorate},color=red,line width=1pt] (0.5,0) arc (180:360:0.8 and 0.15);
	
	\draw [color=red,line width=1pt] (2.8,1.75) arc (0:360:0.8 and 0.15);
	
	\draw[rounded corners=20pt,color=red,line width=1pt](0.5,0)--(1,0.6)--(1.2,1.75);
	
	\draw[rounded corners=20pt,color=red,line width=1pt](2.1,0)--(1.9,0.6)--(2.8,1.75);
	
	
	\draw [line width=1pt] (-3.2,-0.8) .. controls (-0.3,-0.65) .. (2.6,-0.8);
	
	\draw [line width=1pt] (2.6,-0.8) .. controls (2.75,0) .. (3.1,0.8);
	
	\draw [line width=1pt] (-3.2,-0.8) .. controls (-3,0) .. (-2.6,0.8);
	
	\draw [dash pattern=on 2pt off 2pt,opacity=0] (-2.6,0.8) .. controls (0.25,0.95) ..
	coordinate[pos=0.038] (A)
	coordinate[pos=0.02] (B)
	coordinate[pos=0.046] (C)
	coordinate[pos=0.432] (D)
	coordinate[pos=0.2] (E)
	coordinate[pos=0.445] (F)
	coordinate[pos=0.67] (G)
	coordinate[pos=0.72] (H) 
	coordinate[pos=0.53] (I)
	coordinate[pos=0.87] (J)
	coordinate[pos=0.876] (K)
	coordinate[pos=0.65] (L)
	coordinate[pos=0.94] (M) (3.1,0.8);
	
	
	\draw [line width=1pt] (-2.6,0.8) .. controls (B) .. (A);
	\draw [dashed,line width=1pt] (C) .. controls (E) .. (D);
	\draw [line width=1pt] (F) .. controls (I) .. (G);
	\draw [dashed,line width=1pt] (H) .. controls (L) .. (J);
	\draw [line width=1pt] (K) .. controls (M) .. (3.1,0.8); 
	
	\draw [->,line width=1pt] (3.7,0) -- (3.7,2.5);
	
	\draw (3.7611111111111205,2.55) node[anchor=north west] {$r$};
	\draw (3.7611111111111205,1.95) node[anchor=north west] {$\varepsilon$};
	\draw (3.7611111111111205,0.2) node[anchor=north west] {$0$};
	\draw (2.2,-0.8) node[anchor=north west] {$S^*S^2 \cong \mathbb{R}P^3$};
	
	\draw [->,line width=1pt] (0,-1) -- (0,-1.9);
	\draw [line width=1pt] (0,-3.9) circle (1.5cm);
	
	\draw [dashed,line width=1pt] (1.5,-3.9) arc (0:180:1.5 and 0.45);
	\draw [line width=1pt] (-1.5,-3.9) arc (180:360:1.5 and 0.45);
	
	\draw (1.4,-2.4) node[anchor=north west] {$\mathbb{R}P^3 / S^1 \cong S^2$};
	
	\draw[color=red] (1.5,2.85) node[anchor=north west] {$\text{direct}$};
	
	\draw[color=red] (1.3,2.45) node[anchor=north west] {$\text{family }g$};
	
	\draw[color=blue] (-1.85,2.85) node[anchor=north west] {$\text{retrograde}$};
	
	\draw[color=blue] (-1.75,2.45) node[anchor=north west] {$\text{family }f$};
	
	\begin{scriptsize}
	\draw [fill=black] (3.7,0) circle (2pt);
	\draw [fill=black] (3.7,1.75) circle (2pt);
	\draw [color=red,fill=red] (0,-2.4) circle (2pt);
	\draw [color=blue,fill=blue] (0,-5.4) circle (2pt);
	\draw [color=red,line width=1pt] (0,-2.4) to[out=-130,in=-100] (0.2,-2);
	\draw [color=blue,line width=1pt] (0,-5.4) to[out=-90,in=-110] (-0.1,-5.8);
	\end{scriptsize}
	\end{tikzpicture}
	\caption{Periodic orbits bifurcate like ciritical points of the height function on $S^2$}
	\label{figure_planar_bifurcation}
\end{figure}
\textbf{Spatial case:}\ Recall by (\ref{unparametrized_space}) that the space of oriented unparametrized great circles on $S^3$ is given by $S^2 \times S^2$.\ Thus the restriction
$$\mathring{\overline{\mathscr{A}_0}} | _{S^2 \times S^2} = \overline{ - 2 \pi L (-p,q) } | _{S^2 \times S^2} \colon S^2 \times S^2 \to \mathbb{R}$$
is a Morse function on $S^2 \times S^2$ having one maximum, two saddle points and one minimum with Morse indices 4, 2, 2, and 0, respectively.\ The planar direct orbit, two collision spatial orbits and planar retrograde perdiodic orbit bifurcate now like these critial points on $S^2 \times S^2$, respectively.\ We illustrate the bifurcation scenario in the spatial problem in Figure \ref{figure_spatial_bifurcation}.\
\begin{figure}[H]
	\centering
	\definecolor{grgr}{RGB}{33,189,63}
	\begin{tikzpicture}[line cap=round,line join=round,>=triangle 45,x=1cm,y=1cm,scale=1.3]
	\clip(-3.5,-6.5) rectangle (8.2,2.9);
	
	\draw [dashed,color=blue,line width=1pt] (-0.5,0) arc (0:180:0.7 and 0.15);
	\draw [decoration={markings, mark=at position 0.5 with {\arrow{<}}}, postaction={decorate},color=blue,line width=1pt] (-1.9,0) arc (180:360:0.7 and 0.15);
	
	\draw [color=blue,line width=1pt] (-0.2,1.75) arc (0:360:0.9 and 0.15);
	
	\draw[rounded corners=20pt,color=blue,line width=1pt](-1.9,0)--(-2.4,0.6)--(-2,1.75);
	
	\draw[rounded corners=20pt,color=blue,line width=1pt](-0.5,0)--(0.1,0.6)--(-0.2,1.75);
	
	\draw [dashed,color=red,line width=1pt] (2.1,0) arc (0:180:0.8 and 0.15);
	\draw [decoration={markings, mark=at position 0.5 with {\arrow{>}}}, postaction={decorate},color=red,line width=1pt] (0.5,0) arc (180:360:0.8 and 0.15);
	
	\draw [color=red,line width=1pt] (2.8,1.75) arc (0:360:0.8 and 0.15);
	
	\draw[rounded corners=20pt,color=red,line width=1pt](0.5,0)--(1,0.6)--(1.2,1.75);
	
	\draw[rounded corners=20pt,color=red,line width=1pt](2.1,0)--(1.9,0.6)--(2.8,1.75);
	
	\draw [dashed,color=magenta,line width=1pt] (4.6,0) arc (0:180:0.7 and 0.15);
	\draw [decoration={markings, mark=at position 0.5 with {\arrow{<}}}, postaction={decorate},color=magenta,line width=1pt] (3.2,0) arc (180:360:0.7 and 0.15);
	
	\draw [color=magenta,line width=1pt] (4.5,1.75) arc (0:360:0.6 and 0.15);
	
	\draw[rounded corners=20pt,color=magenta,line width=1pt](3.2,0)--(3.5,0.6)--(3.3,1.75);
	
	\draw[rounded corners=20pt,color=magenta,line width=1pt](4.6,0)--(5.2,0.9)--(4.5,1.75);
	
	\draw [dashed,color=grgr,line width=1pt] (6.9,0) arc (0:180:0.8 and 0.15);
	\draw [decoration={markings, mark=at position 0.5 with {\arrow{>}}}, postaction={decorate},color=grgr,line width=1pt] (5.3,0) arc (180:360:0.8 and 0.15);
	
	\draw [color=grgr,line width=1pt] (6.9,1.75) arc (0:360:0.8 and 0.15);
	
	\draw[rounded corners=20pt,color=grgr,line width=1pt](5.3,0)--(5.6,0.6)--(5.3,1.75);
	
	\draw[rounded corners=20pt,color=grgr,line width=1pt](6.9,0)--(6.6,0.6)--(6.9,1.75);
	
	
	\draw [line width=1pt] (-3.2,-0.8) .. controls (1.9,-0.6) .. (7,-0.8);
	\draw [line width=1pt] (7,-0.8) .. controls (7.15,0) .. (7.5,0.8);
	\draw [line width=1pt] (-3.2,-0.8) .. controls (-3,0) .. (-2.6,0.8);
	\draw [dash pattern=on 2pt off 2pt,opacity=0pt] (-2.6,0.8) .. controls (2.45,1) ..
	coordinate[pos=0.021] (A)
	coordinate[pos=0.01] (B) 
	coordinate[pos=0.025] (C)
	coordinate[pos=0.205] (D)
	coordinate[pos=0.1] (E)
	coordinate[pos=0.21] (F)
	coordinate[pos=0.318] (G)
	coordinate[pos=0.25] (H)
	coordinate[pos=0.326] (I)
	coordinate[pos=0.461] (J)
	coordinate[pos=0.4] (K)
	coordinate[pos=0.469] (L)
	coordinate[pos=0.622] (M)
	coordinate[pos=0.55] (N)
	coordinate[pos=0.629] (O)
	coordinate[pos=0.8] (P)
	coordinate[pos=0.71] (Q)
	coordinate[pos=0.804] (R)
	coordinate[pos=0.844] (S)
	coordinate[pos=0.825] (T)
	coordinate[pos=0.849] (U)
	coordinate[pos=0.942] (V)
	coordinate[pos=0.9] (W)
	coordinate[pos=0.946] (X)
	coordinate[pos=0.97] (Y) (7.5,0.8);

	
	\draw [line width=1pt] (-2.6,0.8) .. controls (B) .. (A);
	\draw [dashed,line width=1pt] (C) .. controls (E) .. (D);
	\draw [line width=1pt] (F) .. controls (H) .. (G);
	\draw [dashed,line width=1pt] (I) .. controls (K) .. (J);
	\draw [line width=1pt] (L) .. controls (N) .. (M);
	\draw [dashed,line width=1pt] (O) .. controls (Q) .. (P);
	\draw [line width=1pt] (R) .. controls (T) .. (S);
	\draw [dashed,line width=1pt] (U) .. controls (W) .. (V);
	\draw [line width=1pt] (X) .. controls (Y) .. (7.5,0.8);
	
	\draw [->,line width=1pt] (7.8,0) -- (7.8,2.5);
	
	\draw (7.8611111111111205,2.55) node[anchor=north west] {$r$};
	\draw (7.8611111111111205,1.95) node[anchor=north west] {$\varepsilon$};
	\draw (7.8611111111111205,0.2) node[anchor=north west] {$0$};
	\draw (5.1,-0.85) node[anchor=north west] {$S^*S^3 \cong S^3 \times S^2$};
	\draw (5.02,-5.3) node[anchor=north west] {$(S^3 \times S^2)/S^1 \cong S^2 \times S^2$};
	
	\draw [->,line width=1pt] (1.85,-1.5) -- (1.85,-2.4);
	
	\draw [line width=1pt] (0,-4.5) circle (1.5cm);
	
	\draw [dashed,line width=1pt] (1.5,-4.5) arc (0:180:1.5 and 0.45);
	\draw [line width=1pt] (-1.5,-4.5) arc (180:360:1.5 and 0.45);
	
	\draw [line width=1pt] (3.8,-4.5) circle (1.5cm);
	
	\draw [dashed,line width=1pt] (5.3,-4.5) arc (0:180:1.5 and 0.45);
	\draw [line width=1pt] (2.3,-4.5) arc (180:360:1.5 and 0.45);
	
	\draw (1.67,-4.34) node[anchor=north west] {$\times$};
	
	\draw[color=red] (1.35,2.5) node[anchor=north west] {$\text{family } g$};
	
	\draw[color=red] (1.1,2.9) node[anchor=north west] {$\text{planar direct}$};
	
	\draw[color=blue] (-1.75,2.5) node[anchor=north west] {$\text{family }f$};
	
	\draw[color=blue] (-2.25,2.9) node[anchor=north west] {$\text{planar retrograde}$};
	
	\draw[color=magenta] (3.25,2.5) node[anchor=north west] {$\text{collision}$};
	
	\draw[color=magenta] (3.37,2.9) node[anchor=north west] {$\text{spatial}$};
	
	\draw[color=grgr] (5.45,2.5) node[anchor=north west] {$\text{collision}$};
	
	\draw[color=grgr] (5.57,2.9) node[anchor=north west] {$\text{spatial}$};
	
	\begin{scriptsize}
	\draw [fill=black] (7.8,0) circle (2pt);
	\draw [fill=black] (7.8,1.75) circle (2pt);
	
	\draw [color=magenta,fill=red] (0,-3) circle (2pt);
	\draw [color=grgr,fill=blue] (0,-6) circle (2pt);
	\draw [color=grgr,fill=red] (3.8,-3) circle (2pt);
	\draw [color=magenta,fill=blue] (3.8,-6) circle (2pt);
	
	
	\draw [color=red,line width=1pt] (0,-3) to[out=-130,in=-100] (0.2,-2.6);
	\draw [color=blue,line width=1pt] (0,-6) to[out=-90,in=-110] (-0.1,-6.4);
	\draw [color=red,line width=1pt] (3.8,-3) to[out=-60,in=-100] (3.6,-2.6);
	\draw [color=blue,line width=1pt] (3.8,-6) to[out=-70,in=-80] (3.9,-6.4);
	\draw [color=grgr,line width=1pt] (3.8,-3) to[out=-130,in=-100] (4,-2.6);
	\draw [color=magenta,line width=1pt] (3.8,-6) to[out=-90,in=-110] (3.7,-6.4);
	\draw [color=magenta,line width=1pt] (0,-3) to[out=-60,in=-100] (-0.2,-2.6);
	\draw [color=grgr,line width=1pt] (0,-6) to[out=-70,in=-80] (0.1,-6.4);
	\end{scriptsize}
	\end{tikzpicture}
	\caption{Bifurcation picture in the spatial case}
	\label{figure_spatial_bifurcation}
\end{figure}
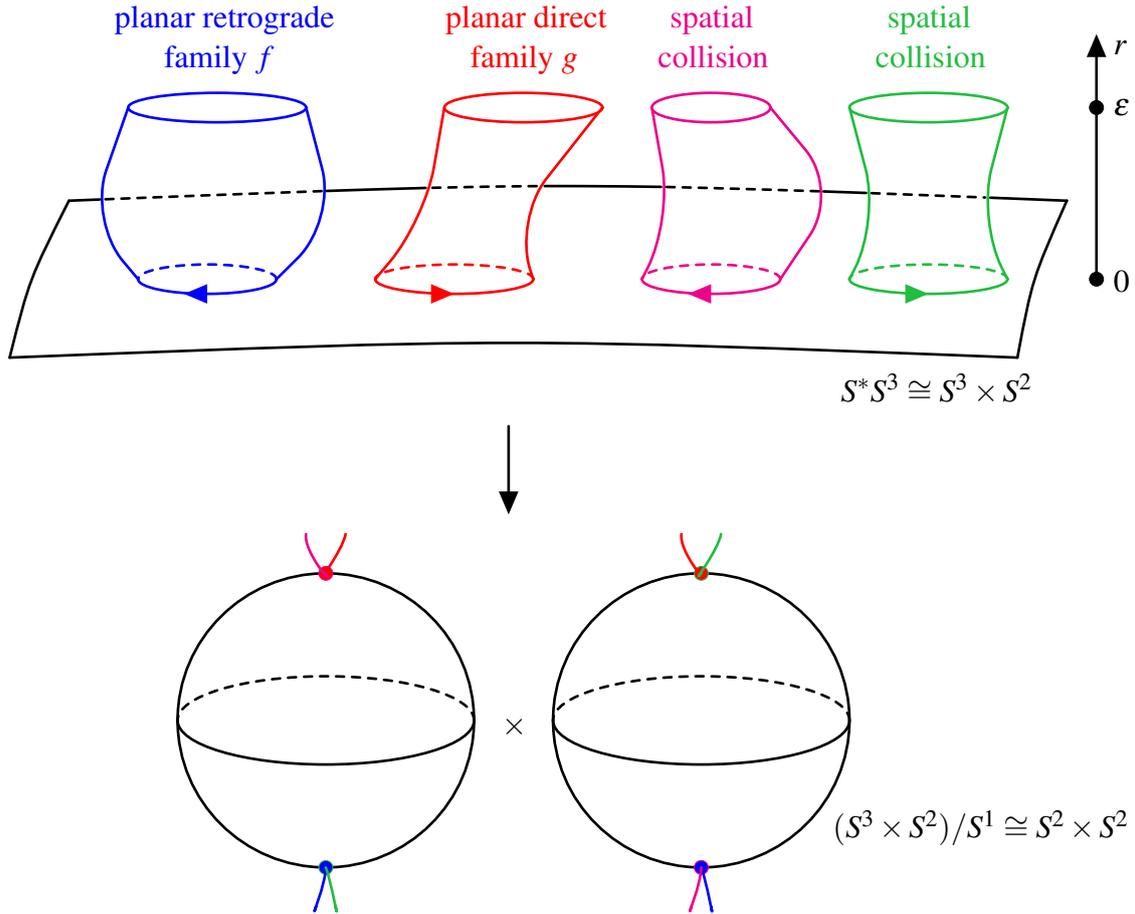

\subsection{Conley--Zehnder indices and the months $T_a$ and $T_d$}

For the planar case, in our transversally non-degenerate setting the transversal Conley--Zehnder index equals the Morse index (see \cite{weber_1}, \cite{weber_2}).\ By the Morse index theorem (see \cite[§15]{milnor}), the Morse--Bott index of every simple closed geodesic on the Morse--Bott component $C \cong \mathbb{R}P^3$ of $\mathcal{A}_0$ is 1, since such a geodesic only goes through one antipodal point.\ After the small perturbation on the quotient space $\mathbb{R}P^3 / S^1 \cong S^2$, the orbits in the families $g$ and $f$ bifurcating from the geodesic flow acquire additionally their respective Morse indices of 2 and 0.\ This proves Theorem \ref{theorem_a}.\

For the spatial case, again by the Morse index theorem, we start with Morse--Bott index 2 for every simple closed geodesic on $SS^3 \cong S^3 \times S^2$.\ After the bifurcation scenario, they obtain additionally their respective Morse indices, which implies Theorem \ref{theorem_b}.\

Now we have
\begin{table}[H]\centering
	\begin{tabular}{c|c|c|c}
		& $\mu_{CZ}$ & $\mu_{CZ}^p$ & $\mu_{CZ}^s$\\
		\hline family $g$ of direct periodic orbits (planar)& 6 & 3 & 3\\
		\hline family $f$ of retrograde periodic orbits (planar) & 2 & 1 & 1
	\end{tabular}
	\caption{Conley--Zehnder indices for very low energies}
	\label{table_indices}
\end{table}
\noindent
For all sufficiently small energies up until the undetermined  $\varepsilon_0 > 0$ given by the implicit function theorem (\ref{small_perturbation}), we obtain for the anomalistic period $T_a$ (\ref{anomalistic_period}) and draconitic period $T_d$ (\ref{draconitic_period}) that
\begin{align} \label{periods_low_energies}
T_a = \begin{cases}
\mathlarger{\frac{2 \pi \cdot T_s}{2 \pi + \varphi_p}}\\[1em]
\mathlarger{\frac{2 \pi \cdot T_s}{ \varphi_p}}
\end{cases} \text{ and }\quad T_d = \begin{cases}
\mathlarger{\frac{2 \pi \cdot T_s}{2 \pi + \varphi_s}} & \text{ for family } g\\[1em]
\mathlarger{\frac{2 \pi \cdot T_s}{ \varphi_s}} & \text{ for family }f.
\end{cases}
\end{align}

\subsection{General Hamiltonians}

\begin{Proposition}
The bifurcation picture of this section holds for all Hamiltonians of the form
\begin{align} \label{hamiltonian_general}
T^*\big( \Omega \setminus \{ (0,0,0) \} \big) \to \mathbb{R},\quad (q,p) \mapsto \frac{1}{2}|p|^2 - \frac{\mu}{|q|} + p_1 q_2 - p_2q_1 + V(q),
\end{align}
where $\Omega \subset \mathbb{R}^3$ is an open subset containing the origin, $V \colon \Omega \to \mathbb{R}$ is a smooth function such that $(0,0,0) \in \text{crit}(V)$ and $\mu > 0$.\
\end{Proposition}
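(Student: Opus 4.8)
The plan is to re-run the proof of Theorems \ref{theorem_a} and \ref{theorem_b} from Section \ref{sec:6} essentially verbatim, simply carrying the parameter $\mu>0$ and the general potential $V$ through the regularization step. The point is that the specific form of the Hill Hamiltonian is used in only two places: the construction of the regularized Hamiltonian $K_r$ in (\ref{regularized_hamiltonian}) together with its extension to $r=0$, and the computation (\ref{partial_derivative_functional}) of $\mathring{\overline{\mathscr{A}_0}}:=\partial_r\mathscr{A}_r|_{r=0}$. Everything downstream — the identification of the critical manifold $C$ with the space of parametrized great circles ($\cong\mathbb{R}P^3$ in the planar, $\cong S^*S^3$ in the spatial case), the computation of $\mathrm{crit}(-L)$ and its Morse--Bott indices on $SS^2$ and $SS^3$ in Subsection \ref{sec:explicit_computation}, the passage to the quotient $C/S^1\cong S^2$ resp. $\cong S^2\times S^2$ in Subsection \ref{sec:7.3}, the implicit function theorem (\ref{small_perturbation}), and the Morse-index-theorem computation of the Conley--Zehnder indices (using $\mu_{CZ}=\text{Morse index}$, \cite{weber_1}, \cite{weber_2}) — depends only on the function $-L$ and the geometry of the geodesic flow on $S^n$, not on $H$, and so is unchanged.

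First I would set up the regularization. After subtracting the constant $V(0)$ from $H$ (which does not change the Hamiltonian vector field, only relabels the energy), so that we may assume $V(0)=0$, I apply the Moser-type regularization \cite{moser} of the attracting centre $-\mu/|q|$ for energy levels $c$ below $\inf V$: the switch map $(-p,q)\mapsto(q,p)$ together with the $\mu$- and $c$-dependent conformal rescaling (conformal factor $\propto 1/\sqrt{-2c}$) and the addition of a suitable constant produce a Hamiltonian $K_c$ on $T^*S^n$ (with $n=2$ planar, $n=3$ spatial) satisfying $\Sigma_c=K_c^{-1}(0)$ and extending smoothly over the fibre at infinity, exactly as in \cite[pp.\ 47--48]{frauenfelder}; since the orbits under consideration are $O(1/|c|)$-close to the centre, they eventually lie in $\Omega$, so this is legitimate. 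Reparametrizing the energy by a homotopy variable $r$ (with $-2c$ a fixed positive power of $1/r$, as below (\ref{regularized_hamiltonian})), the term $-\mu/|q|$ reproduces the rescaled round-metric kinetic energy, the Coriolis term $p_1q_2-p_2q_1$ enters at order $r$, and — the \emph{crucial} point — the potential $V$, thanks to $\nabla V(0)=0$, enters only at order $r^2$: its quadratic part at the origin contributes a term of order exactly $r^2$ and its cubic and higher part a term of order $o(r^2)$. Hence $K_r$ is $C^1$ in $r$ near $r=0$ (in fact $C^2$), which is all the implicit function theorem needs, and $K_0$ is precisely the regularized Kepler Hamiltonian whose flow on $K_0^{-1}(0)$ is a reparametrization of the geodesic flow on $S^n$.

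Next I would compute $\mathring{\overline{\mathscr{A}_0}}$ on $C$. Because the $V$-contribution to $K_r$ is $O(r^2)$, it contributes nothing to $\partial_r K_r|_{r=0}$, so the computation (\ref{partial_derivative_functional}) goes through unchanged up to a positive constant $\kappa$ (the ratio of the period of a Kepler circle of energy $0$ before and after regularization, which depends on $\mu$ but not on the orbit): $\mathring{\overline{\mathscr{A}_0}}(\gamma,2\pi)=-2\pi\kappa\, L(-p,q)$. From here the argument is identical to Subsection \ref{sec:7.3} and the subsequent subsection on Conley--Zehnder indices: the lemmas on $\mathrm{crit}(-L)$ on $SS^2$ and $SS^3$ apply verbatim, the restriction of $\mathring{\overline{\mathscr{A}_0}}$ to $C/S^1\cong S^2$ (resp. $S^2\times S^2$) is a Morse function with the same critical points and indices as before, and the implicit function theorem produces, for all sufficiently small $r\in(0,\varepsilon)$ — i.e. for all sufficiently low energies $\varepsilon\in(0,\varepsilon_0]$ with $\varepsilon_0>0$ now depending on $V$ and $\mu$ — the families $g$, $f$ and, in the spatial case, the two families of collision orbits bifurcating like the critical points of the height function, with Conley--Zehnder indices (\ref{conley_zehnder_index_planar}), (\ref{conley_zehnder_index_spatial}) and, in the elliptic cases, anomalistic and draconitic periods (\ref{periods_low_energies}).

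The main obstacle is exactly the regularity-and-higher-order-ness claimed in the second paragraph: one must check carefully that the Moser regularization of the centre $-\mu/|q|$ still extends smoothly over $T^*S^n$ when an arbitrary smooth $V$ is added, and that — thanks precisely to $(0,0,0)\in\mathrm{crit}(V)$ — the $V$-term decays fast enough in $r$ that $K_r$ is differentiable in $r$ at $r=0$ and $\mathring{\overline{\mathscr{A}_0}}$ stays a multiple of the angular momentum. If $\nabla V(0)\neq 0$, the gradient term of $V$ would enter the regularized Hamiltonian at a lower order in $r$, its $r$-derivative at $r=0$ would fail to exist or would add a non-angular-momentum summand to $\mathring{\overline{\mathscr{A}_0}}$, and the clean bifurcation from the height function on $S^2$ resp. $S^2\times S^2$ would break down; this is why the critical-point condition on $V$ is the right hypothesis. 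Finally, when $V$ additionally satisfies $V(q_1,q_2,q_3)=V(q_1,q_2,-q_3)$, so that the symplectic involution $\sigma$ of reflection at $\{q_3=0\}$ preserves $H$, the planar/spatial splitting $\mu_{CZ}=\mu_{CZ}^p+\mu_{CZ}^s$ of Theorem \ref{theorem_b} persists as well, by the same application of the Symplectic Splitting Theorem \ref{theorem_splitting}.
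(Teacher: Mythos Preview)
Your proposal is correct and follows essentially the same route as the paper's proof: subtract $V(0)$, write down the regularized family $K_r$ with the rescaling $c=-1/(2r^{2/3})$, observe that the $V$-term enters as $(V(qr^{2/3})-V(0))r^{2/3}$, use $(0,0,0)\in\mathrm{crit}(V)$ to see this is $O(r^2)$ so that $K_r$ is $C^2$ in $r$ and $\partial_rK_r|_{r=0}$ is independent of $V$, and then let the rest of Section~\ref{sec:6} run unchanged. The paper defers the $C^2$-verification to \cite[pp.\ 138--140]{frauenfelder} and records the $\sigma$-invariance consequence (the splitting $\mu_{CZ}=\mu_{CZ}^p+\mu_{CZ}^s$ and the periods (\ref{periods_low_energies})) as a separate remark rather than inside the proof, but otherwise your argument and the paper's coincide; your explicit explanation of \emph{why} the hypothesis $\nabla V(0)=0$ is the right one is a nice addition.
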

\begin{remark}
These forms of Hamiltonians consist of the rotating Kepler problem plus some velocity independent forces given by $V(q)$, such as the Hill lunar problem (\ref{hamiltonian_hill}).\
\end{remark}
\begin{proof}[Proof of Proposition 6.3]
For the proof one needs to verify the same steps as Frauenfelder and van Koert did in \cite[pp.\ 138--140]{frauenfelder} for the analog statement of the planar case.\ For given $c < 0 $, the regularization of (\ref{hamiltonian_general}) is given by
\begin{align*}
K_c (-p,q) &= \frac{1}{2} \bigg( - \frac{|q|}{2c} \Big( H\big( - \frac{q}{2c}, \sqrt{-2c}p \big) - c - V(0) \Big) + \mu \bigg)^2 - \frac{\mu^2}{2}\\
&= \frac{1}{2} \bigg( \frac{1}{2} \big( 1 + |p|^2 \big) + \frac{p_1 q_2 - p_2 q_1}{(-2c)^{\frac{3}{2}}} - \frac{V(-\frac{q}{2c}) - V(0)}{2c} \bigg)^2 |q|^2 - \frac{\mu^2}{2}.
\end{align*}
As in the spatial Hill lunar problem, we change the energy parameter to $c = \frac{-1}{2r^{2/3}}$ and obtain
\begin{align*}
K_r(-p,q) := \frac{1}{2} \Big( \frac{1}{2} \big( 1 + |p|^2 \big) + (p_1 q_2 - p_2 q_1)r + \big( V(qr^{\frac{2}{3}}) - V(0) \big)r^{\frac{2}{3}} \Big)^2 |q|^2 - \frac{\mu^2}{2}.
\end{align*}
The Hamiltonian
\begin{align*}
K_0 (-p,q) = \frac{1}{2} \Big( \frac{1}{2}(1 + |p|^2) \Big)^2|q|^2 - \frac{\mu^2}{2}
\end{align*}
is independent from the choice of $V$, and its flow on $K_0^{-1}(0)$ is the geodesic flow on $S^3$ up to reparametrization.\ The same calculation as in \cite[pp.\ 138--140]{frauenfelder} shows that $K_r$ is twice continuously differentiable in $r \in [0,\infty)$ and
$$ \frac{\partial K_r}{\partial r} \bigg|_{r=0} (-p,q) = \sqrt{\big(2 K_0(-p,q) + \mu^2\big)} |q| (p_1 q_2 - p_2 q_1) = \sqrt{\big(2 K_0(-p,q) + \mu^2\big)} |q| L(-p,q), $$
which does not depend on $V$ as well.\ Hence, exactly as in the spatial Hill lunar problem, the geodesic flow bifurcates at $r=0$ into the four known periodic orbits.\
\end{proof}
\begin{remark}
	The further assumption that the Hamiltonian (\ref{hamiltonian_general}) is invariant under the symplectic involution $\sigma$ from (\ref{symplectic_involution}) implies the splitting of the Conley--Zehnder indices as in Table \ref{table_indices} and the two periods $T_a$ and $T_d$ in (\ref{periods_low_energies}).\
\end{remark}

\section{Local equivariant Rabinowitz-Floer homology}
\label{sec:local_rabinowitz}

\subsection{Local Floer homology and its Euler characteristic}
\label{sec:7.1}

As in (\ref{rabinowitz_action_functional}), we consider for the Hamiltonian of the spatial Hill lunar problem (\ref{hamiltonian_hill}) the Rabinowitz action functional
\begin{align*}
\mathscr{A}^H \colon \mathcal{L} \times \mathbb{R}_{>0} \to \mathbb{R},\quad (\gamma,\tau ) \mapsto \int_{S^1} \lambda\big( \dot{\gamma}(t) \big) - \tau \int_{S^1} H\big(\gamma(t)\big) dt,
\end{align*}
where $\mathcal{L} = C^{\infty}(S^1,T^* \mathbb{R}^3)$ is the free loop space of $M:=T^* \mathbb{R}^3$.\ The Floer homology for this functional was introduced by Cieliebak--Frauenfelder \cite{cieliebak_frauenfelder}.\ In view of (\ref{gradient_rabinowitz}),
$$ \nabla_g \mathscr{A}^H (\gamma,\tau) = \begin{pmatrix}
- J_t (\gamma) \left( \partial_t \gamma - \tau X_H \big(\gamma(t) \big) \right)\\
- \int_{S^1} H \big( \gamma(t) \big) dt
\end{pmatrix} $$
and hence, the critical points of $\mathscr{A}^H$ are parametrized periodic orbits of $X_H$ of period $\tau$ on the energy hypersurface $H^{-1}(0)$.\ Therefore gradient flow lines are maps $(\gamma,\tau) \in C^{\infty}(\mathbb{R},\mathcal{L} \times \mathbb{R}_{>0})$ satisfying
$$ \partial_s \big( \gamma(s), \tau(s) \big) = \nabla_g \mathscr{A}^H \big( \gamma(s), \tau(s) \big), $$
i.e.\ they are solutions $\big(\gamma,\tau\big) \in C^{\infty} (\mathbb{R} \times S^1,M) \times C^{\infty}(\mathbb{R},\mathbb{R}_{>0})$ of the PDE
$$ \begin{cases}
\partial_s \gamma + J_t (\gamma) \big( \partial_t \gamma - \tau X_H( \gamma ) \big) = 0\\
\partial_s \tau + \int_{S^1} H(\gamma) dt = 0.
\end{cases}$$
We now work locally near a family of non-degenerate periodic orbits.\ Since we consider unparametrized periodic orbits, we need to use the local $S^1$-equivariant Rabinowitz-Floer homology, which is the local $S^1$-equivariant Floer homology of the Rabinowitz functional.\ Since its discussion goes beyond the scope of this article, its details will be discussed in a later work, based on the following articles.\ In the non-equivariant and non-Rabinowitz case, without Lagrangian multipliers, the local Floer homology is described in \cite{ginzburg}, and the equivariant Floer homology and its local version are described in \cite{ginzburg_gurel}.\ In the same way, one can work out the local version of the $S^1$-equivariant Rabinowitz-Floer homology constructed by Frauenfelder--Schlenk \cite{frauenfelder_schlenk}.\ Note that for every $\tilde{k}$-th cover of a periodic orbit there is a local $S^1$-equivariant Rabinowitz Floer homology associated to this $\tilde{k}$-th cover.\ Given a family $\tilde{\gamma}$ of non-degenerate unparametrized periodic orbits, we denote by $RFH^{S^1}_*(\tilde{\gamma})$ its local $S^1$-equivariant Rabinowitz-Floer homology and by $\chi (\tilde{\gamma})$ its Euler characteristic
$$ \chi (\tilde{\gamma}) = \sum_{m \in \mathbb{Z}} (-1)^m \dim RFH^{S^1}_m (\tilde{\gamma}). $$
Since the local homology is invariant, the Euler characteristic is invariant as well.\ The following two examples, which are analogous to the two examples in \cite[p.\ 540]{ginzburg_gurel}, are crucial for our study.\
\begin{example}
	Let $\tilde{\gamma}$ be a family of simple closed non-degenerate periodic orbits.\ Then, $RFH^{S^1}_*(\tilde{\gamma})$ has rank one when $*$ equals the Conley--Zehnder index of $\tilde{\gamma}$ and zero otherwise.\
\end{example}
\begin{example} \label{example_7_2}
	Assume that an iterated planar periodic orbit $x^{\tilde{k}}$ is non-degenerate for all $\tilde{k} \geq 1$.\	Recall that its Conley--Zehnder index is the sum of a planar and spatial index, which we denote by $\mu_{CZ}^p(x^{\tilde{k}})$ and  $\mu_{CZ}^s(x^{\tilde{k}})$.\ Furthermore, each index is given by the index iteration in Table \ref{table_index_iteration}.\ Let  $\mu_{CZ}^p(x)$ and $\mu_{CZ}^s(x)$ be the indices of the underlying simple closed periodic orbit $x$, then if
	$$ \mu_{CZ}^p(x^{\tilde{k}}) \equiv \mu_{CZ}^p(x) \text{ mod } 2 ,\quad \mu_{CZ}^s(x^{\tilde{k}}) \equiv \mu_{CZ}^s(x) \text{ mod } 2, $$
	then $x^{\tilde{k}}$ is called a \textbf{good orbit}.\ Otherwise, $x^{\tilde{k}}$ is called a \textbf{bad orbit}.\ All simple closed periodic orbits are good.\ Furthermore, bad orbits occur as $\tilde{k}$-th cover of negative hyperbolic periodic orbits, where $\tilde{k}$ is even.\ If $\tilde{\gamma}$ is a family of good orbits, then
	$$ RFH^{S^1}_* (\tilde{\gamma} \, ; \, \mathbb{Q}) = \begin{cases}
	\mathbb{Q}, & * = \mu_{CZ}\\
	0, & \text{otherwise}.
	\end{cases} $$
	If $\tilde{\gamma}$ is a family of bad orbits, then
	$$ RFH^{S^1}_* (\tilde{\gamma} \, ; \, \mathbb{Q}) = 0 $$
	in all degrees.\ Hence bad orbits contribute nothing to the local homology and the Euler characteristic.\ Note that for planar families we have two local $S^1$-equivariant Rabinowitz-Floer homologies with its resp.\ Euler characteristics, namely one viewed in the planar problem and one in the spatial problem.\ We denote them by
	$$  pRFH^{S^1}_* (\tilde{\gamma} \, ; \, \mathbb{Q}),\quad \chi_p(\tilde{\gamma}),\qquad \quad sRFH^{S^1}_* (\tilde{\gamma} \, ; \, \mathbb{Q}),\quad \chi_s(\tilde{\gamma}).$$
	They differ only by the index shift given by $\mu_{CZ}^s$.\
\end{example}

\begin{remark} \label{remark_8.1}
	We denote by $\mathfrak{J}$ the space of $\omega$-compatible almost complex structures and by $\mathfrak{M}$ the space of all Riemannian metrics (inner products), i.e.\ positive definite symmetric bilinear forms.\ There is the natural map
	$$ \mathfrak{J} \to \mathfrak{M},\quad J \mapsto g_J := \omega(\cdot, J \cdot). $$
	Starting with a Riemannian metric $g$ there is a construction (see the proof of Proposition 2.50 in \cite[pp.\ 63--65]{mcduff_salamon}) of a map
	$$ \mathfrak{M} \to \mathfrak{J},\quad g \mapsto J_g $$
	such that $J_g$ is an $\omega$-compatible almost complex structure, which depends on $g$, and
	$$ J_{g_J} = J,\quad \sigma^* J_g = J_{\sigma^*g}, $$
	for every symplectomorphism $\sigma$.\ Furthermore, $ \rho^* J_g = - J_{\rho^*g}$, for every anti-symplectic linear map $\rho$.\ Now let $\rho$ be an anti-symplectic involution.\ Since $g$ is not necessarily invariant under $\rho$, we consider its average $\frac{1}{2} (\rho^* g + g)$, which satisfies $ \rho^* \big( \frac{1}{2} (\rho^* g + g) \big) = \frac{1}{2} (\rho^* g + g)$.\ From this $\rho$-invariant Riemannian metric we obtain an $\omega$-compatible almost complex structure $ J_{\frac{1}{2} (\rho^* g + g)} $ which is anti-invariant under $\rho$.\ This gives raise to the symmetry of the gradient flow lines with respect to $\rho$, meaning that we can reflect them with respect to $\rho$.\
\end{remark}

\subsection{The index of planar and spatial families bifurcating from $g$ and $f$}
\label{sec8.2}

Let us consider the two families $g$ and $f$ for very low energies from Section \ref{sec:7.3}.\ In view of their Conley--Zehnder indices from Table \ref{table_indices} we obtain
$$ pRFH^{S^1}_* (g \, ; \, \mathbb{Q}) = \begin{cases}
\mathbb{Q}, & *=3\\
0, &\text{otherwise},
\end{cases}\quad sRFH^{S^1}_* (g \, ; \, \mathbb{Q}) = \begin{cases}
\mathbb{Q}, & *=6\\
0, &\text{otherwise},
\end{cases} $$
and
$$ pRFH^{S^1}_* (f \, ; \, \mathbb{Q}) = \begin{cases}
\mathbb{Q}, & *=1\\
0, &\text{otherwise},
\end{cases}\quad sRFH^{S^1}_* (f \, ; \, \mathbb{Q}) = \begin{cases}
\mathbb{Q}, & *=2\\
0, &\text{otherwise}.
\end{cases} $$
Therefore the resp.\ Euler characteristics are
$$ \chi_p(g) = -1,\quad \chi_s(g) = 1,\quad \chi_p(f) = -1, \quad \chi_s(f) = 1. $$
By Remark \ref{remark_2_7}, if any of $\overline{A}_p$ and $A_s$ or its $\tilde{k}$-th iteration (in the case that the rotation angle is a $\tilde{k}$-th root of unity) moves trough the eigenvalue 1, then the respective index jumps by $\pm 1$ or $\pm 2$.\ Moreover, the crossing of the eigenvalue 1 generates bifurcations of new families of planar or spatial periodic orbits, respectively.\ For details on the existence and properties of such bifurcations we refer to the book of Abraham--Marsden \cite[pp.\ 597--604]{abraham_marsden} and to the articles \cite{deng} and \cite{kim} for a Floer-theoretical approach.\ Since the local Floer homology and its Euler characteristic are not changed under such transitions, together with the signatures, this helps to classify the Conley--Zehnder indices of new bifurcation families and to search for bridges between two families.\ By a \textbf{bridge} we mean a family of periodic orbits with constant Conley--Zehnder index connecting two families of periodic orbits.\ Therefore a bridge is an orbit cylinder between two families.\

\section{Application to symmetric periodic orbits in the spatial Hill lunar problem}
\label{sec:8}

\subsection{Our moon - the companion of the Earth}
\label{sec:our moon}

Hill \cite[p.\ 259]{hill} found that for given energy $\Gamma = 6.5088$ and position $q_1(0) = 0.176097$ one obtains a planar direct periodic orbit (variational orbit), to which our moon is close.\ Our numerical data (see the next subsection) verify that for this energy value the Conley--Zehnder indices $\mu_{CZ}^p=3$ and $\mu_{CZ}^s=3$ do not change, so we use (\ref{periods_low_energies}) to calculate $T_a$ and $T_d$.\

For the given initial data, by our first Python program (see Appendix (\ref{python1})) we compute $\dot{q}_2(0)=2.222972, T_q=0.507959, m=12.369448,$ and in view of (\ref{synodic_period}), the synodic month corresponds to
$$ T_s = 29.528396 .$$

Our second program (see Appendix (\ref{python2})) computes the planar reduced monodromy $\overline{A}_p \in \text{Sp}^{\rho_1}(1)$ and $A_s \in \text{Sp}^{\rho_1}(1)$ as well with their relevant data.\ They are
$$ \overline{A}_p = \begin{pmatrix}
0.900415 & -0.047423\\
3.987879 & 0.900272
\end{pmatrix},\quad A_s = \begin{pmatrix}
0.860448 & -0.035353\\
7.344293 & 0.860422
\end{pmatrix}. $$
Note that $ \det(\overline{A}_p) = 0.999739,\ \text{tr}(\overline{A}_p) = 1.800688, \det(A_s) = 1.000000$ and $\text{tr}(A_s) = 1.720871$.\ Therefore this orbit is planar and spatial elliptic, and the Floquet multipliers are on the unit circle, so of the form $e^{\pm \text{i} \theta}, e^{\pm \text{i} \vartheta}$.\ In particular, in the case of our moon the orbit has to be planar and spatial elliptic since otherwise our moon would fly away.\ Moreover, $ \text{sign}_b(\theta) < 0$ and $\text{sign}_{\tilde{b}}(\vartheta) < 0 $, hence each rotation is by
$$\varphi_p = \theta = 0.450236,\quad \varphi_s = \vartheta = 0.534603,$$
respectively.\ For the anomalistic and draconitic periods we compute
$$T_a = 27.553954, \quad T_d = 27.212712,$$
thus these computed values are a very good approximation to the physically measured data.\

\subsection{Planar direct periodic orbits}
\label{sec:other_direct}

\subsubsection{The family $g$}

Our first plot in Figure \ref{plot_1} is similar to the well-known pictures from Hill \cite[p.\ 261]{hill}, Hénon \cite[p.\ 228]{henon} and Gutzwiller \cite[p.\ 69]{gutzwiller}, \cite[p.\ 621]{gutzwiller_2}.\ It goes until the energy 2.55788, which is the last value with a periodic orbit found by Hill.\ However, Hénon \cite{henon} found further ones for higher energy values.\ These orbits are all doubly-symmetric with respect to $\rho_1$ and $\rho_2$.\
\begin{figure}[H]
	\centering
	\includegraphics[scale=0.65]{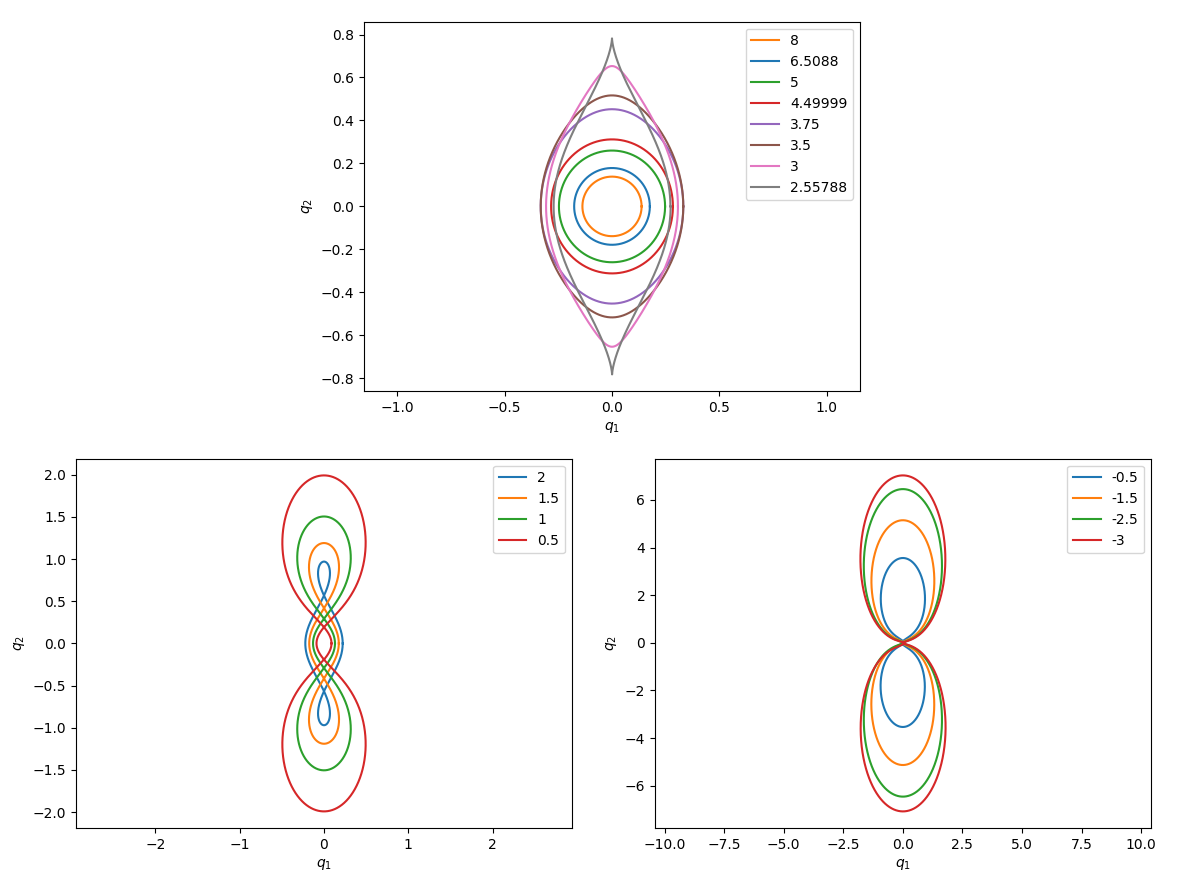}
	\caption{The family $g$}
	\label{plot_1}
\end{figure}
\noindent
We read the following Table \ref{table_3}, and all tables of this type in this section, in the direction of decreasing $\Gamma$.\ Recall that this corresponds to the increase of the energy.\
\begin{table}[H]\scriptsize \centering
	\begin{tabular}{ccccccccccc}
		$\Gamma$ & $q_1(0)$ &  $\dot{q}_2(0)$ & $T_s$ & tr($\overline{A}_p$) & $\text{sign}_{c/b} (\varphi_p / \lambda_p)$ & $T_a$ & tr($A_s$) & $\text{sign}_{\tilde{c}/\tilde{b}} (\varphi_s / \lambda_s)$ & $T_d$ & $\mu_{CZ}^p /\mu_{CZ}^s / \mu_{CZ}$\\
		\hline8 & 0.13772 & 2.56 & 19.78 & 1.89 & $(+/-)$ $\varphi_p =  0.31$ & 18.82 & 1.87 & $(+/-)$ $\varphi_s = 0.35$ & 18.73 & 3 / 3 / 6\\
		6.5088 & 0.176097 & 2.22 & 29.52 & 1.80 & $(+/-)$ $\varphi_p =  0.44$ & 27.55 & 1.72 & $(+/-)$ $\varphi_s = 0.53$ & 27.21 & 3 / 3 / 6\\
		5 & 0.247 & 1.81 & 53.64 & 1.65 & $(+/-)$ $\varphi_p =  0.59$ & 49.01 & 1.07 & $(+/-)$ $\varphi_s = 1.01$ & 46.27 & 3 / 3 / 6\\
		4.49999 & 0.283500 & 1.67 & 71.25 & 1.99 & $(+/-)$ $\varphi_p = 0.03$ & 70.91 & 0.43 & $(+/-)$ $\varphi_s = 1.34$ & 58.66 & 3 / 3 / 6\\
		4.278924 & 0.301158 & 1.62 & 82.44 & 2.71 & $(-/-)$ $\lambda_p = 2.26$ & & $\approx 0$ & $(+/-)$ $\varphi_s = 1.57$ & 65.95 & 2 / 3 / 5\\
		3.876616 & 0.327645 & 1.59 & 109.5 & 8.64 & $(-/-)$ $\lambda_p = 8.53$ & & $-$1.0 & $(+/-)$ $\varphi_s = 2.09$ & 82.15 & 2 / 3 / 5\\
		3.75 & 0.33178 & 1.61 & 119.3 & 14.1 & $(-/-)$ $\lambda_p = 14.04$ & & $-$1.3 & $(+/-)$ $\varphi_s = 2.27$ & 87.63 & 2 / 3 / 5\\
		3.5 & 0.331730 & 1.69 & 139.9 & 37.5 & $(-/-)$ $\lambda_p = 37.48$ & & $-$1.7 & $(+/-)$ $\varphi_s = 2.62$ & 98.05 & 2 / 3 / 5\\
		3.057471 & 0.310843 & 1.91 & 171.2 & 154.3 & $(-/-)$ $\lambda_p = 154.3$ & & $-$2 & $(+/-)$ $\varphi_s = 3.14$ & 114.1 & 2 / 3 / 5\\
		3 & 0.306900 & 1.94 & 175.1 & 178.9 & $(-/-)$ $\lambda_p = 178.9$ & & $-$1.9 & $(-/+)$ $\varphi_s = 3.20$ & 116.0 & 2 / 3 / 5\\
		2.55788 & 0.271795 & 2.24 & 204.8 & 453.9 & $(-/-)$ $\lambda_p = 453.9$ & & $-$1.7 & $(-/+)$ $\varphi_s = 3.64$ & 129.7 & 2 / 3 / 5\\
		2.073537 & 0.228450 & 2.61 & 238.9 & 932.6 & $(-/-)$ $\lambda_p = 932.6$ & & $-$0.9 & $(-/+)$ $\varphi_s = 4.18$ & 143.3 & 2 / 3 / 5\\
		2 & 0.221683 & 2.67 & 244.5 & 1034 & $(-/-)$ $\lambda_p = 1034$ & & $-$0.8 & $(-/+)$ $\varphi_s = 4.29$ & 145.2 & 2 / 3 / 5\\
		1.746370 & 0.198221 & 2.90 & 264.9 & 1356 & $(-/-)$ $\lambda_p = 1356$ & & $\approx 0$ & $(-/+)$ $\varphi_s = 4.71$ & 151.4 & 2 / 3 / 5\\
		1.5 & 0.175446 & 3.16 & 287.1 & 1746 & $(-/-)$ $\lambda_p = 1746$ & & 1.22 & $(-/+)$ $\varphi_s = 5.37$ & 154.8 & 2 / 3 / 5\\
		1.383094 & 0.164715 & 3.35 & 298.6 & 1916 & $(-/-)$ $\lambda_p = 1916$ & & 2.00 & $(+/+)$ $\lambda_s = 1.01$ & & 2 / 4 / 6\\
		1 & 0.130319 & 3.79 & 341.7 & 2601 & $(-/-)$ $\lambda_p = 2601$ & & 5.86 & $(+/+)$ $\lambda_s = 5.69$ & & 2 / 4 / 6\\
		0.5 & 0.089019 & 4.68 & 412.1 & 4913 & $(-/-)$ $\lambda_p = 4913$ & & 13.4 & $(+/+)$ $\lambda_s = 13.3$ & & 2 / 4 / 6\\
		$-$0.5 & 0.033920 & 7.71 & 550.5 & 9830 & $(-/-)$ $\lambda_p = 9830$ & & 26.5 & $(+/+)$ $\lambda_s = 26.4$ & & 2 / 4 / 6\\
		$-$1.5 & 0.013560 & 12.2 & 620.5 & 27860 & $(-/-)$ $\lambda_p = 27860$ & & 16.2 & $(+/+)$ $\lambda_s = 16.2$ & & 2 / 4 / 6\\
		$-$2.5 & 0.006313 & 17.8 & 653.8 & 63560 & $(-/-)$ $\lambda_p = 63560$ & & 6.07 & $(+/+)$ $\lambda_s = 5.90$ & & 2 / 4 / 6\\
		$-$3 & 0.004523 & 21.1 & 665.7 & 90300 & $(-/-)$ $\lambda_p = 90300$ & & 3.69 & $(+/+)$ $\lambda_s = 3.39$ & & 2 / 4 / 6
	\end{tabular}
	\caption{The family $g$}
	\label{table_3}
\end{table}
\noindent
Table \ref{table_3} shows that the maximum of $q_1(0)$ is reached at about $\Gamma=3.75$ and the orbits come closer and closer to the earth.\ According to \cite[pp.\ 230--234]{henon}, based on numerical results and not on analytical arguments, the distance $q_1(0)$ converges to $0$ if the energy $\Gamma$ goes to $-\infty$.\ Hence in the limit there is a collision.\ In that case the period is $4 \pi$, thus the synodic month $T_s$ takes 730.5 days.\ In addition, by \cite[p.\ 319]{henon_2}, tr$(A_s)$ converges to 2 from above.\ Moreover, the speed of the orbit increases if it is closer to the earth.\

Very shortly above $\Gamma = 4.49999$ the planar index $\mu_{CZ}^p$ jumps from 3 to 2 since the rotation by $\varphi_p$ goes to zero at that point.\ From then on the orbits are planar positive hyperbolic type I.\ Slightly before this transition the anomalistic period $T_a$ is almost the synodic period $T_s$.\ Furthermore, at this transition a new family of planar periodic orbits bifurcates (see the family $g'$ in the next subsection).\ This bifurcation arises below the critical value $3^{4/3}$ and above the energy value for our moon.\

Furthermore, these orbits are all spatial elliptic until shortly before the energy value $\Gamma = 1.383094$ where they become spatial positive hyperbolic type II and $\varphi_s$ goes through $2 \pi$.\ Thus $\mu_{CZ}^s$ jumps from 3 to 4, and shortly before this change, the draconitic period $T_d$ is almost half of the synodic period $T_s$.\ At this transition a new family of spatial periodic orbits bifurcates from the planar one which we discuss in the Subsection \ref{sec:9.4.0}.\

We note that all initial data are from \cite{henon}, \cite{henon_2} and \cite{hill}, except the ones for the energy values 4.278924, 3.876616, 2.073537 and 1.746370, where $\varphi_s$ is a 3th resp.\ 4th root of unity, which are from \cite{kalantonis}.\

\subsubsection{The family $g'$}

Hénon \cite{henon} found starting from the energy value $\Gamma=4.49999$ (see Table \ref{table_3}), where the planar index $\mu_{CZ}^p$ of $g$ jumps from 3 to 2, a new family of planar direct periodic orbits bifurcating from $g$.\ At this bifurcation the double-symmetry breaks, meaning that the orbits are only symmetric with respect to $\rho_1$ (see Figure \ref{figure_retrograde_plot_1}).\ By using $\rho_2$, i.e.\ the reflection on the $q_2$-axis, these orbits appear twice.\ The data of the family $g'$ are collected in the Table \ref{table_6}.\ The family $g'$ and its symmetric family start being planar as well as spatial elliptic and they start with the planar index 3.\ Let us verify that this is in accordance with the Euler characteristics before and after bifurcation.\ The local planar Floer homology before this transition is
$$ pRFH^{S^1}_* (g \, ; \, \mathbb{Q}) = \begin{cases}
\mathbb{Q}, & *=3\\
0, &\text{otherwise},
\end{cases} $$
and by the index shift by $\mu_{CZ}^s = 3$, it is in the spatial problem
$$ sRFH^{S^1}_* (g \, ; \, \mathbb{Q}) = \begin{cases}
\mathbb{Q}, & *=6\\
0, &\text{otherwise}.
\end{cases} $$
Therefore the Euler characteristics are in the planar problem
$$ \chi_p(g) = (-1)^3 = -1,\quad \text{resp.}\quad \chi_p(g) = (-1)^2 + 2 \cdot (-1)^3 = -1 ,$$
and in the spatial problem
$$ \chi_s(g) = (-1)^6 = 1,\quad \text{resp.}\quad \chi_s(g) = (-1)^5 + 2 \cdot (-1)^6 = 1 .$$
\begin{figure}[H]
	\centering
	\includegraphics[scale=0.65]{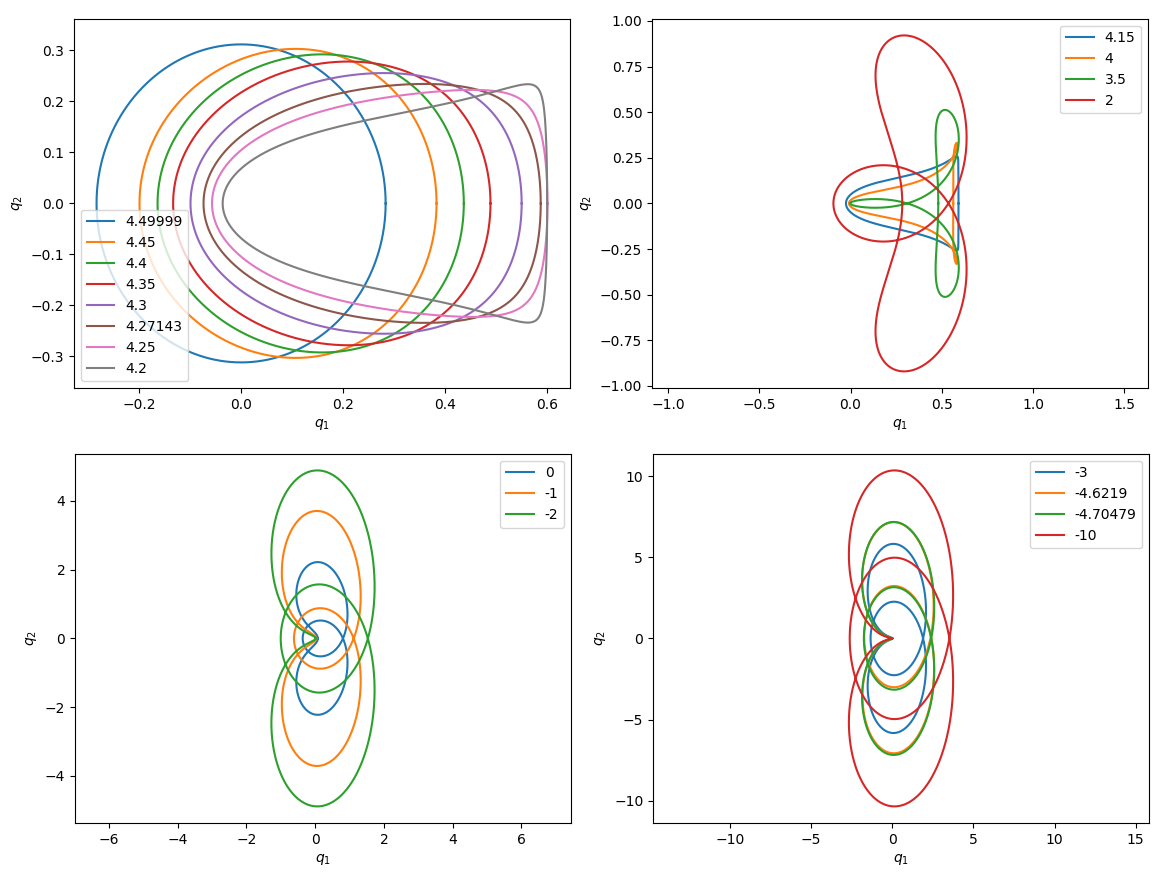}
	\caption{The family $g'$}
	\label{figure_retrograde_plot_1}
\end{figure}
\noindent

\begin{table}[H]\scriptsize \centering
	\begin{tabular}{ccccccccccc}
		$\Gamma$ & $q_1(0)$ &  $\dot{q}_2(0)$ & $T_s$ & tr($\overline{A}_p$) & $\text{sign}_{c/b} (\varphi_p / \lambda_p$) & $T_a$ & tr($A_s$) & $\text{sign}_{\tilde{c}/\tilde{b}} (\varphi_s / \lambda_s)$ & $T_d$ & $\mu_{CZ}^p / \mu_{CZ}^s / \mu_{CZ}$ \\
		\hline4.49999 & 0.283500 & 1.67 & 71.25 & 1.99 & $(+/-)$ $\varphi_p = 0.03$ & 70.92 & 0.43 & $(+/-)$ $\varphi_s = 1.34$ & 58.66 & 3 / 3 / 6 \\
		4.45 & 0.383360 & 1.09 & 76.34 & 1.77 & $(+/-)$ $\varphi_p = 0.47$ & 70.96 & 0.11 & $(+/-)$ $\varphi_s = 1.51$ & 61.52 & 3 / 3 / 6 \\
		4.435711 & 0.399433 & 1.02 & 78.07 & 1.69 & $(+/-)$ $\varphi_p = 0.55$ & 71.72 & $\approx 0$ & $(+/-)$ $\varphi_s = 1.57$ & 62.45 & 3 / 3 / 6\\
		4.4 & 0.436840 & 0.86 & 83.11 & 1.45 & $(+/-)$ $\varphi_p = 0.75$ & 74.18 & $-$0.3 & $(+/-)$ $\varphi_s = 1.73$ & 65.11 & 3 / 3 / 6\\
		4.35 & 0.489180 & 0.67 & 93.16 & 0.95 & $(+/-)$ $\varphi_p = 1.07$ & 79.57 & $-$0.9 & $(+/-)$ $\varphi_s = 2.07$ & 70.02 & 3 / 3 / 6\\
		4.347942 & 0.491443 & 0.66 & 93.70 & 0.93 & $(+/-)$ $\varphi_p = 1.08$ & 79.88 & $-$1.0 & $(+/-)$ $\varphi_s = 2.09$ & 70.27 & 3 / 3 / 6\\
		4.3 & 0.550290 & 0.49 & 112.1 & 0.04 & $(+/-)$ $\varphi_p = 1.54$ & 90.01 & $-$1.8 & $(+/-)$ $\varphi_s = 2.73$ & 78.18 & 3 / 3 / 6\\
		4.285183 & 0.570854 & 0.44 & 122.2 & $-$0.5 & $(+/-)$ $\varphi_p = 1.86$ & 94.28 & $-$1.9 & $(+/-)$ $\varphi_s = 3.13$ & 81.54 & 3 / 3 / 6\\
		4.282893 & 0.573907 & 0.43 & 124.0 & $-$0.7 & $(+/-)$ $\varphi_p = 1.94$ & 94.73 & $-$2.0 & $(+/+)$ $\lambda_s = -1.06$ & & 3 / 3 / 6\\
		4.280603 & 0.576960 & 0.42 & 125.9 & $-$0.9 & $(+/-)$ $\varphi_p = 2.04$ & 95.05 & $-$1.9 & $(-/+)$ $\varphi_s = 3.14$ & 83.95 & 3 / 3 / 6\\
		4.27143 & 0.587690 & 0.41 & 133.8 & $-$1.9 & $(+/-)$ $\varphi_p = 3.11$ & 89.47 & $-$1.8 & $(-/+)$ $\varphi_s = 3.47$ & 86.15 & 3 / 3 / 6\\
		4.25 & 0.600900 & 0.40 & 150.0 & $-$8.1 & $(+/+)$ $\lambda_p = -8.0$ & & $-$1.2 & $(-/+)$ $\varphi_s = 4.04$ & 91.25 & 3 / 3 / 6\\
		4.242877 & 0.602371 & 0.40 & 154.0 & $-$11.2 & $(+/+)$ $\lambda_p = -$11.1 & & $-$0.9 & $(-/+)$ $\varphi_s = 4.18$ & 92.43 & 3 / 3 / 6\\
		4.200105 & 0.600418 & 0.46 & 169.0 & $-$35.6 & $(+/+)$ $\lambda_p = -$35.6 & & $\approx 0$ & $(-/+)$ $\varphi_s = 4.71$ & 96.58 & 3 / 3 / 6\\
		4.2 & 0.600400 & 0.46 & 169.0 & $-$35.6 & $(+/+)$ $\lambda_p = -$35.6 & & $\approx 0$ & $(-/+)$ $\varphi_s = 4.71$ & 96.58 & 3 / 3 / 6\\
		4.15 & 0.59171 & 0.52 & 177.8 & $-$67.2 & $(+/+)$ $\lambda_p = -$67.2 & & 0.6 & $(-/+)$ $\varphi_s = 5.07$ & 98.86 & 3 / 3 / 6\\
		4 & 0.562913 & 0.70 & 190.1 & $-150$ & $(+/+)$ $\lambda_p = -150$ & & 1.3 & $(-/+)$ $\varphi_s = 5.43$ & 102.0 & 3 / 3 / 6\\
		3.5 & 0.480802 & 1.16 & 207.9 & $-305$ & $(+/+)$ $\lambda_p = -304$ & & 1.9 & $(-/+)$ $\varphi_s = 6.02$ & 106.1 & 3 / 3 / 6\\
		3.390159 & 0.464697 & 1.24 & 211.0 & $-$313 & $(+/+)$ $\lambda_p = -$313 & & 2.0 & $(+/+)$ $\lambda_s = 1.00$ & & 3 / 4 / 7\\
		2 & 0.283653 & 2.30 & 262.1 & $-421$ & $(+/+)$ $\lambda_p = -421$ & & 2.6 & $(+/+)$ $\lambda_s = 2.16$ & & 3 / 4 / 7\\
		1.5 & 0.224220 & 2.75 & 293.2 & $-439$ & $(+/+)$ $\lambda_p = -439$ & & 2.7 & $(+/+)$ $\lambda_s = 2.36$ & & 3 / 4 / 7\\
		1 & 0.167780 & 3.31 & 337.7 & $-465$ & $(+/+)$ $\lambda_p = -465$ & & 2.7 & $(+/+)$ $\lambda_s = 2.35$ & & 3 / 4 / 7\\
		0.5 & 0.116370 & 4.08 & 401.6 & $-511$ & $(+/+)$ $\lambda_p = -511$ & & 2.1 & $(+/+)$ $\lambda_s = 1.32$ & & 3 / 4 / 7\\
		0.477157 & 0.114196 & 4.13 & 405.1 & $-$514 & $(+/+)$ $\lambda_p = -$514 & & 2.0 & $(+/+)$ $\lambda_s = 1.03$ & & 3 / 4 / 7\\
		0.063099 & 0.078843 & 5.03 & 474.4 & $-$596 & $(+/+)$ $\lambda_p = -$596 & & $\approx 0$ & $(+/-)$ $\varphi_s = 1.57$ & 210.8 & 3 / 5 / 8\\
		0 & 0.074220 & 5.19 & 485.8 & $-614$ & $(+/+)$ $\lambda_p = -614$ & & $-$0.4 & $(+/-)$ $\varphi_s = 1.78$ & 212.6 & 3 / 5 / 8\\
		$-$0.081977 & 0.068564 & 5.40 & 500.2 & $-$639 & $(+/+)$ $\lambda_p = -$639 & & $-$0.9 & $(+/-)$ $\varphi_s = 2.09$ & 214.4 & 3 / 5 / 8\\	
		$-$0.219528 & 0.059949 & 5.79 & 524.6 & $-$686 & $(+/+)$ $\lambda_p = -$686 & & $-$1.9 & $(+/-)$ $\varphi_s = 3.13$ & 209.9 & 3 / 5 / 8\\
		$-$1 & 0.029281 & 8.32 & 643.0 & $-880$ & $(+/+)$ $\lambda_p = -880$ & & $-$6.6 & $(-/-)$ $\lambda_s = -6.5$ &  & 3 / 5 / 8\\
		$-$2 & 0.014641 & 11.7 & 743.8 & $-732$ & $(+/+)$ $\lambda_p = -732$ & & $-$11.8 & $(-/-)$ $\lambda_s = -11.7$ & & 3 / 5 / 8\\
		$-$3 & 0.008613 & 15.3 & 796.6 & $-499$ & $(+/+)$ $\lambda_p = -499$ & & $-$14.9 & $(-/-)$ $\lambda_s = -14.8$ & & 3 / 5 / 8\\
		$-$4.69219 & 0.004302 & 21.6 & 835.1 & $-2$ & $(+/+)$ $\lambda_p = -1.0$ & & $-$18.3 & $(-/-)$ $\lambda_s = -18.2$ & & 3 / 5 / 8\\
		$-$4.69849 & 0.004292 & 21.6 & 836.2 & 0.07 & $(-/+)$ $\varphi_p = 4.75$ & 476.2 & $-$18.9 & $(-/-)$ $\lambda_s = -18.8$ & & 3 / 5 / 8\\
		$-$4.70479 & 0.004283 & 21.7 & 836.3 & $2$ & $(-/-)$ $\lambda_p = 1.0$ & & $-$19.5 & $(-/-)$ $\lambda_s = -19.4$ & & 4 / 5 / 9\\
		$-$5 & 0.003870 & 22.8 & 840.4 & 96.8 & $(-/-)$ $\lambda_p = 96.8$ & & $-$20.3 & $(-/-)$ $\lambda_s = -20.2$ & & 4 / 5 / 9\\
		$-$6 & 0.002837 & 26.6 & 850.9 & 432 & $(-/-)$ $\lambda_p = 432$ & & $-$22.9 & $(-/-)$ $\lambda_s = -22.8$ & & 4 / 5 / 9\\
		$-$10 & 0.001162 & 41.6 & 870.8 & 1989 & $(-/-)$ $\lambda_p = 1989$ & & $-$59.5 & $(-/-)$ $\lambda_s = -59.5$ & & 4 / 5 / 9\\
		$-$20 & 0.000363 & 74.2 & 876.6 & 7190 & $(-/-)$ $\lambda_p = 7190$ & & $-$3727 & $(-/-)$ $\lambda_s = -3727$ & & 4 / 5 / 9
	\end{tabular}
	\caption{The family $g'$}
	\label{table_6}
\end{table}
\noindent
Moreover, we see the same behaviour as for the family $g$ of the distances $q_1(0)$, namely there is a collision in the limit.\ Note that we have found the initial conditions for the $\Gamma$ value $-4.69849$ by ourselves.\ Since at this $\Gamma$ we have $\varphi_p=4.75$, we can imply that the planar index jumps from 3 to 4  in which we can see that $\varphi_p$ goes through $2 \pi$ and hence $\mu_{CZ}^p$ jumps from 3 to 4.\ The initial data for the energy values 4.435711, 4.347942, 4.242877, 4.200105, 0.063099 and $-0.081977$, where $\varphi_s$ is a 3rd resp.\ 4th root of unity, are from \cite{kalantonis} and the others from \cite{henon} and \cite{henon_2}.\

\subsection{Planar retrograde periodic orbits}
\label{sec:retrograde}

\subsubsection{The family $f$}
Some of its orbits are plotted in Figure \ref{fig_fam_f} and its data are given in Table \ref{table_9}.\ We observe that the retrograde periodic orbits are all planar and spatial elliptic and are at larger and larger distance from the earth.\ Therefore the index of the simple closed orbits does not change.\
\begin{figure}[H]
	\centering
	\includegraphics[scale=0.65]{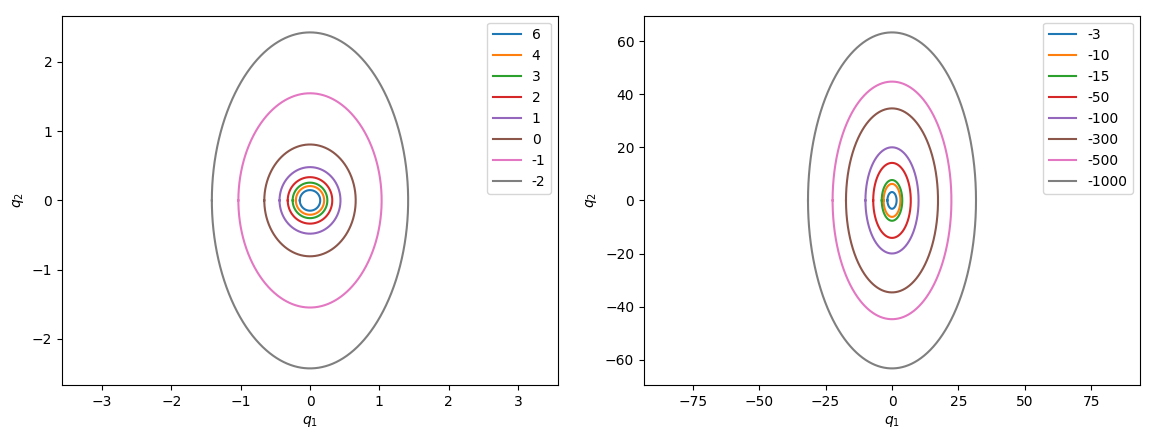}
	\caption{The family $f$}
	\label{fig_fam_f}
\end{figure}
\begin{table}[H]\scriptsize \centering
	\begin{tabular}{ccccccccccc}
		$\Gamma$ & $q_1(0)$ &  $\dot{q}_2(0)$ & $T_s$ & tr($\overline{A}_p$) & $\text{sign}_{c/b} (\varphi_p)$ & $T_a$ & tr($A_s$) & $\text{sign}_{\tilde{c}/\tilde{b}} (\varphi_s)$ & $T_d$ & $\mu_{CZ}^p / \mu_{CZ}^s / \mu_{CZ}$\\
		\hline6 & $-$0.147790 & 2.75 & 19.72 & 1.88 & $(-/+)$ $\varphi_p = 5.93$ & 20.87 & 1.89 & $(-/+)$ $\varphi_s = 5.97$ & 20.80 & 1 / 1 / 2\\
		4 & $-$0.204210 & 2.43 & 31.19 & 1.70 & $(-/+)$ $\varphi_p = 5.73$ & 34.19 & 1.75 & $(-/+)$ $\varphi_s = 5.78$ & 33.89 & 1 / 1 / 2\\
		3 & $-$0.250710 & 2.27 & 41.49 & 1.48 & $(-/+)$ $\varphi_p = 5.55$ & 47.40 & 1.59 & $(-/+)$ $\varphi_s = 5.63$ & 46.24 & 1 / 1 / 2\\
		2 & $-$0.321630 & 2.12 & 58.28 & 1.05 & $(-/+)$ $\varphi_p = 5.27$ & 69.48 & 1.30 & $(-/+)$ $\varphi_s = 5.42$ & 67.52 & 1 / 1 / 2\\
		1.359293 & $-$0.389537 & 2.05 & 75.25 & 0.57 & $(-/+)$ $\varphi_p = 5.00$ & 94.5 & 0.99 & $(-/+)$ $\varphi_s = 5.23$ & 90.35 & 1 / 1 / 2\\
		1 & $-$0.439910 & 2.03 & 88.25 & 0.20 & $(-/+)$ $\varphi_p = 4.81$ & 115.1 & 0.78 & $(-/+)$ $\varphi_s = 5.11$ & 108.4 & 1 / 1 / 2\\
		0.755141 & $-$0.481217 & 2.02 & 99.10 & $-$0.08 & $(-/+)$ $\varphi_p = 4.66$ & 133.4 & 0.61 & $(-/+)$ $\varphi_s = 5.02$ & 123.8 & 1 / 1 / 2\\
		0.015388 & $-$0.655072 & 2.07 & 145.5 & $-$0.99 & $(-/+)$ $\varphi_p = 4.18$ & 218.3 & 0.18 & $(-/+)$ $\varphi_s = 4.80$ & 190.3 & 1 / 1 / 2\\
		0 & $-$0.659660 & 2.08 & 146.7 & $-$1.01 & $(-/+)$ $\varphi_p = 4.18$ & 220.6 & 0.18 & $(-/+)$ $\varphi_s = 4.80$ & 192.0 & 1 / 1 / 2\\
		$-$0.2154 & $-$0.72779 & 2.13 & 165.0 & $-$1.24 & $(-/+)$ $\varphi_p = 4.03$ & 256.8 & 0.14 & $(-/+)$ $\varphi_s = 4.78$ & 216.7 & 1 / 1 / 2\\
		$-$0.2538 & $-$0.74179 & 2.14 & 168.3 & $-$1.23 & $(-/+)$ $\varphi_p = 4.04$ & 261.4 & 0.14 & $(-/+)$ $\varphi_s = 4.78$ & 221.1 & 1 / 1 / 2\\
		$-$0.2681 & $-$0.74679 & 2.14 & 169.6 & $-$1.24 & $(-/+)$ $\varphi_p = 4.04$ & 263.7 & 0.14 & $(-/+)$ $\varphi_s = 4.78$ & 222.8 & 1 / 1 / 2\\
		$-$0.2847 & $-$0.75279 & 2.15 & 171.1 & $-$1.24 & $(-/+)$ $\varphi_p = 4.03$ & 266.2 & 0.14 & $(-/+)$ $\varphi_s = 4.78$ & 224.7 & 1 / 1 / 2\\
		$-$0.3152 & $-$0.76259 & 2.16 & 174.0 & $-$1.30 & $(-/+)$ $\varphi_p = 4.00$ & 273.1 & 0.15 & $(-/+)$ $\varphi_s = 4.78$ & 228.3 & 1 / 1 / 2\\
		$-$0.5269 & $-$0.84189 & 2.24 & 196.9 & $-$1.37 & $(-/+)$ $\varphi_p = 3.95$ & 308.1 & 0.22 & $(-/+)$ $\varphi_s = 4.82$ & 252.4 & 1 / 1 / 2\\
		$-$1 & $-$1.034000 & 2.47 & 237.1 & $-$1.29 & $(-/+)$ $\varphi_p = 4.01$ & 372.4 & 0.62 & $(-/+)$ $\varphi_s = 5.02$ & 296.2 & 1 / 1 / 2\\
		$-$1.411618 & $-$1.199879 & 2.71 & 267.3 & $-$0.99 & $(-/+)$ $\varphi_p = 4.18$ & 401.1 & 1.02 & $(-/+)$ $\varphi_s = 5.25$ & 319.7 & 1 / 1 / 2\\
		$-$2 & $-$1.416810 & 3.07 & 296.5 & $-$0.46 & $(-/+)$ $\varphi_p = 4.47$ & 416.8 & 1.44 & $(-/+)$ $\varphi_s = 5.52$ & 337.6 & 1 / 1 / 2\\
		$-$3 & $-$1.731950 & 3.62 & 323.2 & 0.28 & $(-/+)$ $\varphi_p = 4.85$ & 418.3 & 1.76 & $(-/+)$ $\varphi_s = 5.79$ & 350.3 & 1 / 1 / 2\\
		$-$10 & $-$3.162278 & 6.37 & 357.4 & 1.64 & $(-/+)$ $\varphi_p = 5.67$ & 395.8 & 1.99 & $(-/+)$ $\varphi_s = 6.18$ & 362.8 & 1 / 1 / 2\\
		$-$15 & $-$3.872983 & 7.77 & 360.9 & 1.80 & $(-/+)$ $\varphi_p = 5.83$ & 388.8 & 1.99 & $(-/+)$ $\varphi_s = 6.23$ & 363.9 & 1 / 1 / 2\\
		$-$50 & $-$7.071067 & 14.1 & 364.5 & 1.96 & $(-/+)$ $\varphi_p = 6.09$ & 375.4 & 1.99 & $(-/+)$ $\varphi_s = 6.27$ & 365.0 & 1 / 1 / 2\\
		$-$100 & $-$10 & 20.0 & 364.9 & 1.98 & $(-/+)$ $\varphi_p = 6.17$ & 371.4 & 1.99 & $(-/+)$ $\varphi_s = 6.28$ & 365.1 & 1 / 1 / 2\\
		$-$300 & $-$17.320508 & 34.6 & 365.2 & 1.99 & $(-/+)$ $\varphi_p = 6.23$ & 368.0 & 1.99 & $(-/+)$ $\varphi_s = 6.28$ & 365.2 & 1 / 1 / 2\\
		$-$500 & $-$22.360679 & 44.7 & 365.2 & 1.99 & $(-/+)$ $\varphi_p = 6.25$ & 367.1 & 1.99 & $(-/+)$ $\varphi_s = 6.28$ & 365.2 & 1 / 1 / 2\\
		$-$1000 & $-$31.622776 & 63.2 & 365.2 & 1.99 & $(-/+)$ $\varphi_p = 6.26$ & 366.3 & 1.99 & $(-/+)$ $\varphi_s = 6.28$ & 365.2 & 1 / 1 / 2
	\end{tabular}
	\caption{The family $f$}
	\label{table_9}
\end{table}
\noindent
Moreover, the planar rotation angle $\varphi_p$ as well as the spatial one $\varphi_s$ decrease and then increase.\ The orbits for the $\Gamma$ values from $-0.2154$ to $-0.5269$ we have found ourselves.\ They show that $\varphi_s$ never becomes a 4th root of unity, hence the smallest one is a 5th one.\ Both rotation angles approach $2\pi$.\ In other words, all three periods approach 365.25 days, which is the period of the earth around the sun.\ The initial data for the $\Gamma$ values 1.359293 and 0.755141, where $\varphi_s$ is a 6th resp. 5th root of unity, are from the data provided on request by the author of \cite{kalantonis}.\ All the others are from \cite{henon_0} and \cite{henon}.\

\textbf{The limit case.}\ Like Hénon \cite[p.\ 227]{henon} for $\Gamma \to -\infty$ in the planar problem, we can neglect the gravitational force of the earth for a first approximation, since $q_1(0)$ increases for higher energies.\ Then the spatial Hill equation in $(q,\dot{q})$-coordiantes (see (\ref{equation_of_motion_0})) reduces to
\begin{equation} \label{limit_hill}
\left\{  \begin{array}{l}
\mathlarger{\ddot{q}_1 = 2 \dot{q}_2 + 3q_1} \\
\mathlarger{\ddot{q}_2 = -2\dot{q}_1} \\
\mathlarger{\ddot{q}_3 = -q_3.}
\end{array}  \right.
\end{equation}
A planar solution of (\ref{limit_hill}) is given by
\begin{align} \label{limit_solution}
q_1(t) = c \cos t, \quad q_2(t)= - 2c \sin t,
\end{align}
where $c \in \mathbb{R}$ is a constant.\ This limit solution in $q$-variables is an ellipse in a retrograde motion with the earth at the origin as its center, semi-major axis $c$ and semi-minor axis $-2c$.\ The energy condition (\ref{energy_gamma}) implies
$$\Gamma = 3q_1^2(t) - \dot{q}_1^2(t) - \dot{q}_2^2(t) = -c^2,$$
hence the relation between the initial condition for the position and the energy is
\begin{align} \label{energy_limit}
q_1(0) = c = - \sqrt{- \Gamma}.
\end{align}
Note that for Table \ref{table_9}, from $\Gamma = -15$ on, we use the formula (\ref{energy_limit}) for $q_1(0)$.\
\begin{Proposition} \label{proposition_retrograde}
	\textit{In the limit case for $\Gamma \to -\infty$, the planar retrograde periodic orbit converges to a planar retrograde periodic orbit with synodic period $T_s$ of 365.25 days and which is planar and spatial degenerate.\ In particular, \textnormal{tr}$(\overline{A}_p)$ and \textnormal{tr}$(A_s)$ converge to 2 from below and $\varphi_p$ and $\varphi_s$ to $2 \pi$.\ In other words, each of $T_a$ and $T_d$ converges to 365.25 days during $T_s$.\ }
\end{Proposition}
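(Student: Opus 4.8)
The plan is to exploit the rescaling that already appears in the paper, namely to change the energy parameter to $c = \frac{-1}{2r^{2/3}}$ (equivalently $r = (-2c)^{-3/2}$), so that $\Gamma \to -\infty$ corresponds to $r \to 0^+$. Under the regularization used in Section \ref{sec:6}, the family $f$ is realized, for each small $r>0$, as a critical point of $\mathscr{A}_r$ which is a small perturbation of the minimum circle $\gamma_2$ of $-L$ on $SS^3$, and at $r=0$ it degenerates into the simple closed geodesic $\gamma_2$ on $S^3$ (the retrograde great circle). So the first step is to record that the limit orbit is exactly $\gamma_2$, whose period after regularization is $2\pi$; by the definition of the lunarity, $m = 2\pi/T_q$, and since the Kepler-energy-$0$ ellipse period is $2\pi$ both before and after regularization (see (\ref{two_hamilt_2}) and the remark following (\ref{partial_derivative_functional})), $T_q \to 2\pi$, hence by (\ref{synodic_period}) we get $T_s \to 365.25$. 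This also explains why the orbit ``centered at the earth'' in the original picture is precisely the degenerate limit: pulling $\gamma_2$ back through the stereographic projection and the switch map (\ref{switch_map}) gives exactly the retrograde ellipse described by (\ref{limit_solution}).

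Second, I would verify the claim about the linearized flow by linearizing the reduced Hill equation directly, using (\ref{limit_hill}) and (\ref{limit_solution}). Near the limit, the gravitational term $\frac{1}{|q|^3}$ in the linearized equations (Section \ref{sec:spatial_Hills_lunar}) is of order $|q|^{-3} = O\big(|\Gamma|^{-3/2}\big) \to 0$ since $q_1(0) = -\sqrt{-\Gamma}$ by (\ref{energy_limit}); so the linearized planar and spatial systems converge (uniformly on $[0,T_q]$) to the constant-coefficient systems obtained from (\ref{limit_hill}):
\begin{align*}
\Delta\ddot q_1 - 2\Delta\dot q_2 = 3\Delta q_1,\quad \Delta\ddot q_2 + 2\Delta\dot q_1 = 0,\quad \Delta\ddot q_3 = -\Delta q_3.
\end{align*}
The spatial equation is just the harmonic oscillator, so $A_s \to \begin{pmatrix}\cos 2\pi & *\\ * & \cos 2\pi\end{pmatrix}$ with $\operatorname{tr}(A_s) \to 2$ and $\vartheta \to 2\pi$ (here one uses $T_q \to 2\pi$). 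For the planar block one computes the monodromy of the above $2$-dof linear system over time $2\pi$; a short calculation (diagonalizing the $4\times 4$ coefficient matrix, whose characteristic polynomial has roots $\pm i$ with the appropriate multiplicities and a nilpotent block reflecting the conserved energy direction) shows $\operatorname{tr}(\overline A_p) \to 2$ as well, with $\theta \to 2\pi$. Third, I would read off the signs: from Table \ref{table_9} (and the monotonicity visible there) one has $\operatorname{sign}_b(\theta)>0$ and $\operatorname{sign}_{\tilde b}(\vartheta)>0$ for $\Gamma$ sufficiently negative, so $\varphi_p = 2\pi - \theta$ and $\varphi_s = 2\pi - \vartheta$ both tend to $2\pi$, and since $\mu_{CZ}^p = \mu_{CZ}^s = 1$ is constant along $f$, formulas (\ref{anomalistic_period}) and (\ref{draconitic_period}) give $T_a = \frac{2\pi T_s}{\varphi_p} \to T_s$ and $T_d = \frac{2\pi T_s}{\varphi_s} \to T_s$, i.e.\ all three months converge to $365.25$ days. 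Degeneracy of the limit orbit is then immediate: $\operatorname{tr}(\overline A_p) = \operatorname{tr}(A_s) = 2$ means $1$ is a Floquet multiplier in both the planar and the spatial direction.

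The main obstacle is making the limiting argument for the linearized flow genuinely rigorous rather than formal. One has to control two things simultaneously: that the orbit $q(t)$ itself converges (after the rescaling) to the exact ellipse (\ref{limit_solution}) in $C^1$ on a full period, and that the coefficient matrices of the linearized equation — which involve $q(t)$ through negative powers of $|q(t)|$ — converge uniformly, so that the time-$T_q$ flow maps converge by continuous dependence on parameters (Gronwall). The subtlety is that $T_q$ is itself varying with $\Gamma$ and must be shown to converge to $2\pi$; this is where the regularized picture of Section \ref{sec:6} does the real work, since there the family $f$ is literally a $C^1$-small deformation of the geodesic $\gamma_2$ and the convergence of period, orbit, and linearization all follow from the implicit function theorem statement (\ref{small_perturbation}). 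So the cleanest route is to phrase the whole argument in the regularized coordinates $(-p,q)\in SS^3$ with homotopy parameter $r$, prove the limit there, and only at the very end translate $\operatorname{tr}(\overline A_p),\operatorname{tr}(A_s),\varphi_p,\varphi_s,T_a,T_d$ back to the original Hill variables; the ad hoc computation with (\ref{limit_hill}) then serves only as an independent sanity check of the rotation angles.
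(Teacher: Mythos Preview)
Your principal approach is built on a sign error: the substitution $c=-\frac{1}{2r^{2/3}}$ from Section~\ref{sec:6} gives $\Gamma=-2c=r^{-2/3}$, so $r\to0^+$ corresponds to $\Gamma\to+\infty$, not $\Gamma\to-\infty$. The regularization and the implicit function theorem statement (\ref{small_perturbation}) describe the \emph{low-energy} limit in which the Hill system approaches the Kepler problem and the $f$-orbit is close to the earth; Proposition~\ref{proposition_retrograde} concerns the opposite end of the family, where the orbit is far from the earth and the gravitational term becomes negligible. Nothing in Section~\ref{sec:6} gives you control of the $f$-family in that regime, so the ``cleanest route'' you outline cannot be carried out.

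What the paper actually does is essentially your second paragraph, but promoted from sanity check to the whole argument and made precise by a different rescaling: one zooms \emph{out} via the conformally symplectic map $\phi_\gamma(q,p)=(\sqrt{-2\gamma}\,q,\sqrt{-2\gamma}\,p)$ and lets $\gamma\to-\infty$, so that the rescaled Hamiltonians $H_\gamma=-\frac{1}{2\gamma}(H\circ\phi_\gamma)$ converge to the quadratic Hamiltonian $\widetilde H$ obtained by deleting $-1/|q|$. On $\widetilde H^{-1}(1/2)$ the orbit is exactly (\ref{limit_solution}) with $c=-1$, period $2\pi$, hence $T_s=365.25$. The linearization is then the constant-coefficient system you wrote down. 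One point you should sharpen: the planar $4\times4$ monodromy over $[0,2\pi]$ is \emph{not} the identity---the general solution contains the secular term $-3c_3t$ in $\Delta q_2$. The paper writes out the general solution explicitly and observes that the energy constraint forces $c_3=0$; only after this restriction (and the quotient by the flow direction) are all solutions $2\pi$-periodic, yielding $\overline A_p=I_2$. Your phrase ``nilpotent block reflecting the conserved energy direction'' gestures at this, but the actual mechanism is that the secular growth lives entirely in the direction transverse to the energy surface. The spatial block is, as you say, the harmonic oscillator and gives $A_s=I_2$ directly.
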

\begin{proof}
To obtain the Hamiltonian in the limit, for a constant $\gamma < 0$ we zoom out by the coordinate transformation
$$ \phi_\gamma \colon T^* \mathbb{R}^3 \to T^* \mathbb{R}^3,\quad (q,p) \mapsto \big( \sqrt{(-2 \gamma)} q , \sqrt{(-2 \gamma)} p \big), $$
which is conformally symplectic, i.e.\ $\phi_\gamma ^* \omega = -2 \gamma \omega$.\ We introduce the family of Hamiltonians
$$ H_\gamma \colon T^* \big( \mathbb{R} \setminus \{(0,0,0)\} \big) \to \mathbb{R},\quad (q,p) \mapsto - \frac{1}{2 \gamma} (H \circ \phi_\gamma) (q,p), $$
and compute
$$H_\gamma (q,p) = \frac{1}{2} \big( (p_1 + q_2)^2 + (p_2 - q_1)^2 + p_3^2 \big) - \frac{3}{2}q_1^2 + \frac{1}{2}q_3^2 - \frac{1}{|q|2 \gamma \sqrt{(-2 \gamma)}}. $$
\\
For $\gamma \to -\infty$, $H_\gamma$ converges uniformly in the $C^{\infty}$-topology on each compact subset to the Hamiltonian
$$ \widetilde{H} \colon T^* \mathbb{R}^3 \to \mathbb{R},\quad (q,p) \mapsto \frac{1}{2} \big( (p_1 + q_2)^2 + (p_2 - q_1)^2 + p_3^2 \big) - \frac{3}{2}q_1^2 + \frac{1}{2}q_3^2. $$
Note that this limit Hamiltonian does not contain the gravitational force of the earth, in contrast to the spatial Hill lunar problem (\ref{hamiltonian_2}).\ We restrict to the planar case and consider the energy hypersurface $\Sigma := \widetilde{H}^{-1}(\frac{1}{2})$.\ Hence by the relation (\ref{energy_limit}) and the limit solution (\ref{limit_solution}) we obtain
$$ c = -1,\quad q_1(t) = - \cos t,\quad q_2(t) = 2 \sin t, $$
with the first return time $T_q = 2 \pi$.\ Thus the synodic month is 365.25 days.\ Since the gravitational force disappears, the linearized equation in $(q,\dot{q})$-coordiantes reduces to
\begin{equation*}
\left\{  \begin{array}{l}
\mathlarger{\Delta \ddot{q}_1 = 2 \Delta \dot{q}_2 + 3 \Delta q_1  } \\
\mathlarger{ \Delta \ddot{q}_2 = -2 \Delta \dot{q}_1 } \\
\mathlarger{ \Delta \ddot{q}_3 = - \Delta q_3.}
\end{array}  \right.
\end{equation*}
By a short calculation, planar linearized solutions in $(q,p)$-coordinates are given by
\begin{equation*}
\left\{  \begin{array}{l}
\mathlarger{ \Delta q_1(t) = c_1 \cos t + c_2 \sin t + 2c_3  } \\
\mathlarger{ \Delta q_2(t) = -2c_1 \sin t + 2 c_2 \cos t - 3 c_3 t + c_4 } \\
\mathlarger{ \Delta p_1(t) = c_1 \sin t - c_2 \cos t + 3 c_3 t - c_4 } \\
\mathlarger{ \Delta p_2(t) = - c_1 \cos t - c_2 \sin t - c_3, }
\end{array}  \right.
\end{equation*}
where $c_1$, $c_2$, $c_3$ and $c_4$ are constants.\ Note that these solutions are not periodic along the flow if $c_3 \neq 0$.\ The basis vectors of the tangent space at $q_0$ are of the form
$$ \big( \Delta q_1(0), \Delta q_2(0), \Delta p_1(0), \Delta p_2(0) \big) = ( c_1 + 2c_3, 2c_2 + c_4, - c_2 - c_4, - c_1 - c_3 ). $$
The energy condition (see Subsection \ref{sec:6.5.1}) implies
$$ \Delta q_1(0) = - \Delta p_2(0),\quad c_3= 0.$$
Therefore the linearized solutions on the energy hypersurface are $2 \pi$ periodic, which means that
$$\overline{A}_p = \begin{pmatrix}
1 & 0\\
0 & 1
\end{pmatrix},$$
where the eigenvalue 1 has algebraic and geometric multiplicity 2.\

A real fundamental system for the spatial equation $\Delta \ddot{q}_3 + \Delta q_3 = 0$ is given by $\{ \cos t, \sin t \}$, hence the $2 \pi$-periodic solutions in $(q,p)$-coordiantes are of the form
\begin{equation*}
\left\{  \begin{array}{l}
\mathlarger{ \Delta q_3(t) = c_1 \cos t + c_2 \sin t } \\
\mathlarger{ \Delta p_3(t) = -c_1 \sin t + c_2 \cos t,}
\end{array}  \right.
\end{equation*}
where $c_1$ and $c_2$ are constants.\ In particular, we have\\
$$A_s = \begin{pmatrix}
1 & 0\\
0 & 1
\end{pmatrix}, $$
where the eigenvalue $1$ has algebraic and geometric multiplicity $2$ as well.\ Therefore the limit orbit is planar and spatial degenerate, and the planar and spatial neighbouring orbits make exactly one complete rotation during $T_q$, which corresponds to 365.25 days.
\end{proof}

\subsubsection{Family $g_3$:\ Planar bifurcation from a 3rd cover of $f$ to a 3rd cover of $f$}

At the $\Gamma$ values 0.015388 and $-1.411618$ for the family $f$, where $\varphi_p$ is a 3th root of unity (see Table \ref{table_9}), the planar index of each 3rd cover jumps from 5 to 3 resp.\ from 3 to 5.\ Before and after each transition of each 3rd cover of these two retrograde orbits, new families of planar retrograde periodic orbits bifurcate.\ These families were discovered by Hénon \cite{henon_0}, \cite{henon_1} in which he named them family $g_3$.\

The orbits in these families do not lose the symmetry from $f$, i.e.\ they are also doubly-symmetric with respect to $\rho_1$ and $\rho_2$ (see Figure \ref{family_g_3}).\ Their data are collected in Table \ref{table_10}.\
\begin{figure}[H]
	\centering
	\includegraphics[scale=0.42]{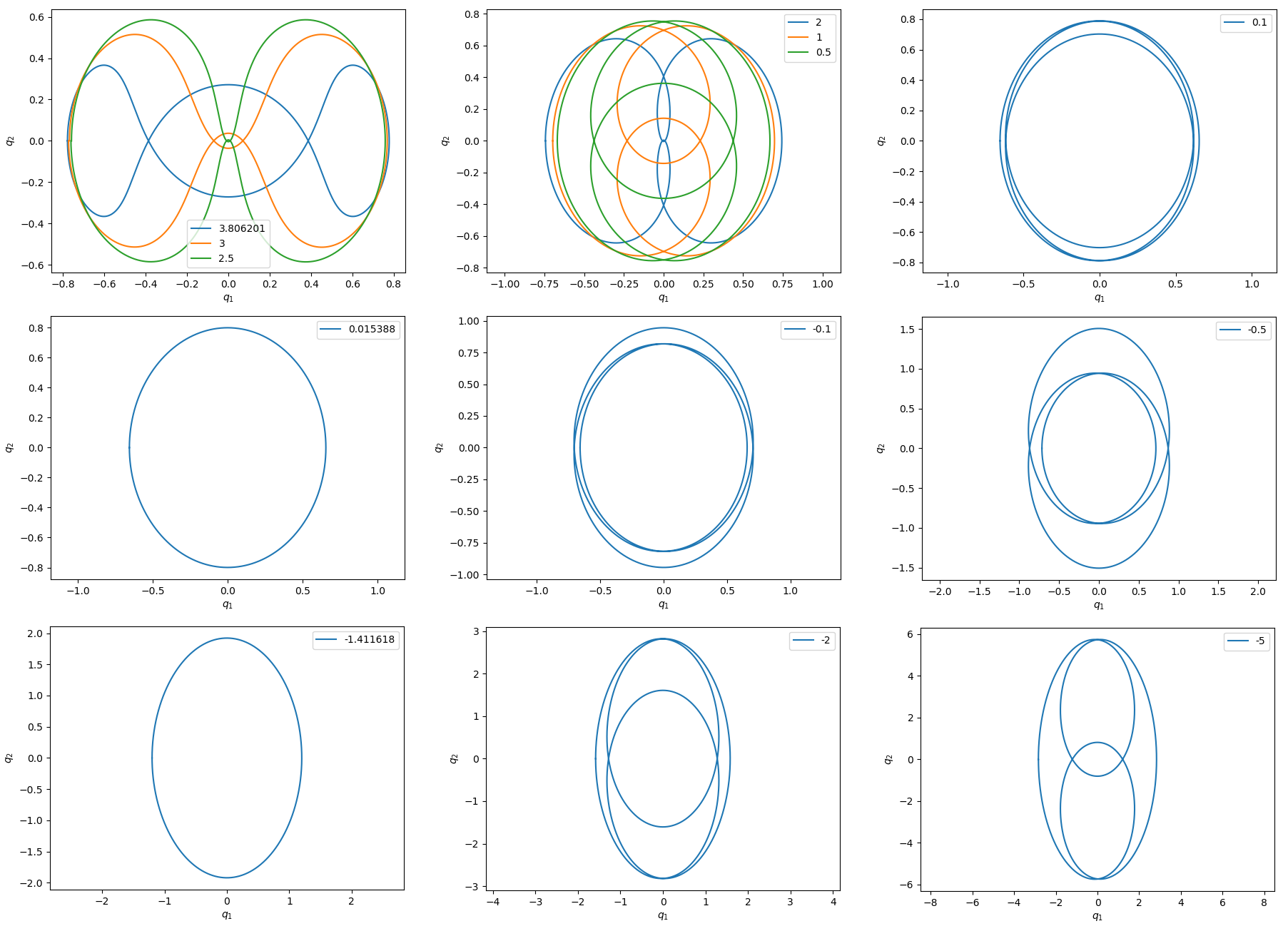}
	\caption{The family $g_3$}
	\label{family_g_3}
\end{figure}
\noindent
Recall that the planar index of the 3rd cover of the $f$-orbit at 0.015388 jumps from 5 to 3 and the one of the $f$-orbit at $-1.411618$ from 3 to 5.\ At each transition each new family is planar positive hyperbolic, hence at each transition each new family has $\mu_{CZ}^p = 4$, and thereby the resp.\ local Floer homology as well as its Euler characteristic are zero.\

Furthermore, the family starting at 0.015388 and ending at $-1.411618$ has constant Conley--Zehnder index, thus it forms a planar bridge between these 3rd covers.\ Note that this bridge is planar positive hyperbolic and spatial elliptic.\

Moreover, if the energy decreases, then the family $g_3$ ends at a degenerate periodic orbit which is of birth-death type.\ In other words, the family $g_3$ starts from one branch of a degenerate planar periodic orbit which is of birth-death type.\ Recall from the introduction that a periodic orbit of birth-death type is a degenerate orbit from which two families bifurcate with an index difference of 1 and into the same energy direction.\ Its local Floer homology and its Euler characteristic are therefore zero.\

\begin{table}[H]\scriptsize \centering
	\begin{tabular}{ccccccccccc}
		$\Gamma$ & $q_1(0)$ &  $\dot{q}_2(0)$ & $T_s$ & tr($\overline{A}_p$) & $\text{sign}_{c/b} (\varphi_p / \lambda_p)$ & $T_a$ & tr($A_s$) & $\text{sign}_{\tilde{c}/\tilde{b}} (\varphi_s / \lambda_s)$ & $T_d$ & $\mu_{CZ}^p / \mu_{CZ}^s / \mu_{CZ}$\\
		\hline & & & & & & & & &  & birth-death\\
		3.806201 & $-$0.77896 & 0.76 & 386.1 & 2.01 & $(-/-)$ $\lambda_p = 1.01$ &  & 2.10 & $(+/+)$ $\lambda_s = 1.37$ &  & 4 / 6 / 10\\
		3.2 & $-$0.77176 & 1.08 & 279.2 & 5921 & $(-/-)$ $\lambda_p = 5921$ &  & $-$1.7 & $(-/+)$ $\varphi_s = 3.62$ & 108.36 & 4 / 5 / 9\\
		3 & $-$0.76939 & 1.17 & 271.6 & 3665 & $(-/-)$ $\lambda_p = 3665$ &  & $-$1.9 & $(-/+)$ $\varphi_s = 3.45$ & 106.5 & 4 / 5 / 9\\
		2.5 & $-$0.75961 & 1.36 & 262.7 & 1649 & $(-/-)$ $\lambda_p = 1649$ &  & $-$1.9 & $(-/+)$ $\varphi_s = 3.20$ & 104.6 & 4 / 5 / 9\\
		2 & $-$0.74453 & 1.53 & 263.5 & 367 & $(-/-)$ $\lambda_p = 367$ &  & $-1.9$ & $(+/-)$ $\varphi_s = 3.03$ & 106.1 & 4 / 5 / 9\\
		1 & $-$0.69838 & 1.82 & 298.0 & 26.4 & $(-/-)$ $\lambda_p = 26.3$ &  & $-$1.7 & $(+/-)$ $\varphi_s = 2.62$ & 123.2 & 4 / 5 / 9\\
		0.5 & $-$0.66969 & 1.95 & 346.9 & 5.40 & $(-/-)$ $\lambda_p = 5.21$ &  & $-$1.2 & $(+/-)$ $\varphi_s = 2.26$ & 147.0 & 4 / 5 / 9\\
		0.1 & $-$0.65482 & 2.05 & 417.3 & 2.07 & $(-/-)$ $\lambda_p = 1.31$ &  & $-$0.6 & $(+/-)$ $\varphi_s = 1.90$ & 181.2 & 4 / 5 / 9\\
		0.015388 & $-$0.655072 & 2.07 & 436.5 & 2.00 & $\lambda_p^3 = 1.00$ &  & $-$0.5 & $(+/-)$ $\varphi_s = 1.84$ & 190.3 & 5$\shortto$3 / 5 / 10$\shortto$8\\
		$-$0.1 & $-$0.65866 & 2.10 & 465.3 & 2.12 & $(+/+)$ $\lambda_p = 1.41$ &  & $-$0.4 & $(+/-)$ $\varphi_s = 1.81$ & 203.3 & 4 / 5 / 9\\
		$-$0.5 & $-$0.7161 & 2.19 & 575.5 & 3.50 & $(+/+)$ $\lambda_p = 3.19$ &  & $-$0.8 & $(+/-)$ $\varphi_s = 2.03$ & 247.6 & 4 / 5 / 9\\
		$-$1 & $-$0.92602 & 2.39 & 709.6 & 2.92 & $(+/+)$ $\lambda_p = 2.53$ &  & $-$1.7 & $(+/-)$ $\varphi_s = 2.58$ & 294.2 & 4 / 5 / 9\\
		$-$1.3 & $-$1.1221 & 2.61 & 779.7 & 2.1 & $(+/+)$ $\lambda_p = 1.32$ &  & $-$1.9 & $(+/-)$ $\varphi_s = 3.01$ & 314.3 & 4 / 5 / 9\\
		$-$1.411618 & $-$1.199879 & 2.71 & 801.9 & 2.00 & $\lambda_p^3 = 1.00$ &  & $-$1.9 & $(-/+)$ $\varphi_s = 3.19$ & 319.7 & 3$\shortto$5 / 5 / 8$\shortto$10\\
		$-$1.6 & $-$1.32953 & 2.89 & 834.0 & 2.24 & $(-/-)$ $\lambda_p = 1.63$ &  & $-$1.8 & $(-/+)$ $\varphi_s = 3.48$ & 326.5 & 4 / 5 / 9\\
		$-$2 & $-$1.58227 & 3.28 & 882.4 & 4.63 & $(-/-)$ $\lambda_p = 4.41$ &  & $-$1.3 & $(-/+)$ $\varphi_s = 4.00$ & 334.6 & 4 / 5 / 9\\
		$-$3 & $-$2.0889 & 4.11 & 941.8 & 27.1 & $(-/-)$ $\lambda_p = 27.0$ &  & 0.25 & $(-/+)$ $\varphi_s = 4.84$ & 339.9 & 4 / 5 / 9\\
		$-$4 & $-$2.49041 & 4.83 & 967.9 & 84.7 & $(-/-)$ $\lambda_p = 84.7$ &  & 1.37 & $(-/+)$ $\varphi_s = 5.46$ & 337.2 & 4 / 5 / 9\\
		$-$5 & $-$2.49041 & 5.45 & 982.2 & 188 & $(-/-)$ $\lambda_p = 188$ &  & 2.25 & $(-/-)$ $\lambda_s = 1.64$ &  & 4 / 6 / 10\\
		$-$9 & $-$3.90443 & 7.43 & 1005 & 1223 & $(-/-)$ $\lambda_p = 1223$ &  & 5.40 & $(-/-)$ $\lambda_s = 5.21$ &  & 4 / 6 / 10
	\end{tabular}
	\caption{The family $g_3$}
	\label{table_10}
\end{table}

\subsection{Spatial periodic orbits bifurcating from planar ones}

\subsubsection{From the spatial index jump of $g$}
\label{sec:9.4.0}

From the family $g$ at the $\Gamma$ value 1.383094 (see Table \ref{table_3}), where $\mu_{CZ}^s$ jumps from 3 to 4, a new family of spatial periodic orbits bifurcates, which was found by Batkhin--Batkhina \cite{batkhin} (family $g_{2v}$).\ Some of its orbits are plotted in Figure \ref{figure_spatial_g}.\ They are doubly-symmetric with respect to
$$ \rho_1(q,p) = \{ (q_1,-q_2,q_3,-p_1,p_2,-p_3) \},\quad \rho_2(q,p) = \{ (-q_1,q_2,q_3,p_1,-p_2,-p_3) \}, $$
hence they start perpendicularly and hit perpendicularly
$$ \text{Fix}(\rho_1) = \{ (q_1,0,q_3,0,p_2,0) \},\quad \text{Fix}(\rho_2) = \{ (0,q_2,q_3,p_1,0,0) \}. $$
Recall in view of the symmetry $\sigma(q,p)=(q_1,q_2,-q_3,p_1,p_2,-p_3)$ that $\rho_1 \circ \rho_2 = \rho_2 \circ \rho_1 = -\sigma$, and note that the orbits are invariant under $-\sigma$, but by using $\sigma$, these spatial orbits give raise to yet another family of spatial orbits.\ Since $\sigma \circ \rho_i = \rho_i \circ \sigma = \overline{\rho_i}$, for $i \in \{1,2\}$, the orbits of the symmetric family are doubly-symmetric with respect to $\overline{\rho_1}$ and $\overline{\rho_2}$ (see the last row in Figure \ref{figure_spatial_g}).\ The local spatial Floer homology before this transition is
$$ sRFH^{S^1}_* (g \, ; \, \mathbb{Q}) = \begin{cases}
\mathbb{Q}, & *=5\\
0, &\text{otherwise},
\end{cases} $$
and after bifurcation the index of $g$ is 6.\ In view of the data given in Table \ref{table_10_1} (its initial data was provided on personal request by the first author of \cite{batkhin}), after this transition the index of the family $g_{2v}$ and its symmetric family equals 5.\ We check that the Euler characteristics are
$$ \chi_s(g) = (-1)^5 = -1,\quad \text{resp.}\quad \chi_s(g) = 2 \cdot (-1)^5 + (-1)^6 = -1 .$$
\begin{figure}[H]
	\centering
	\includegraphics[scale=0.59]{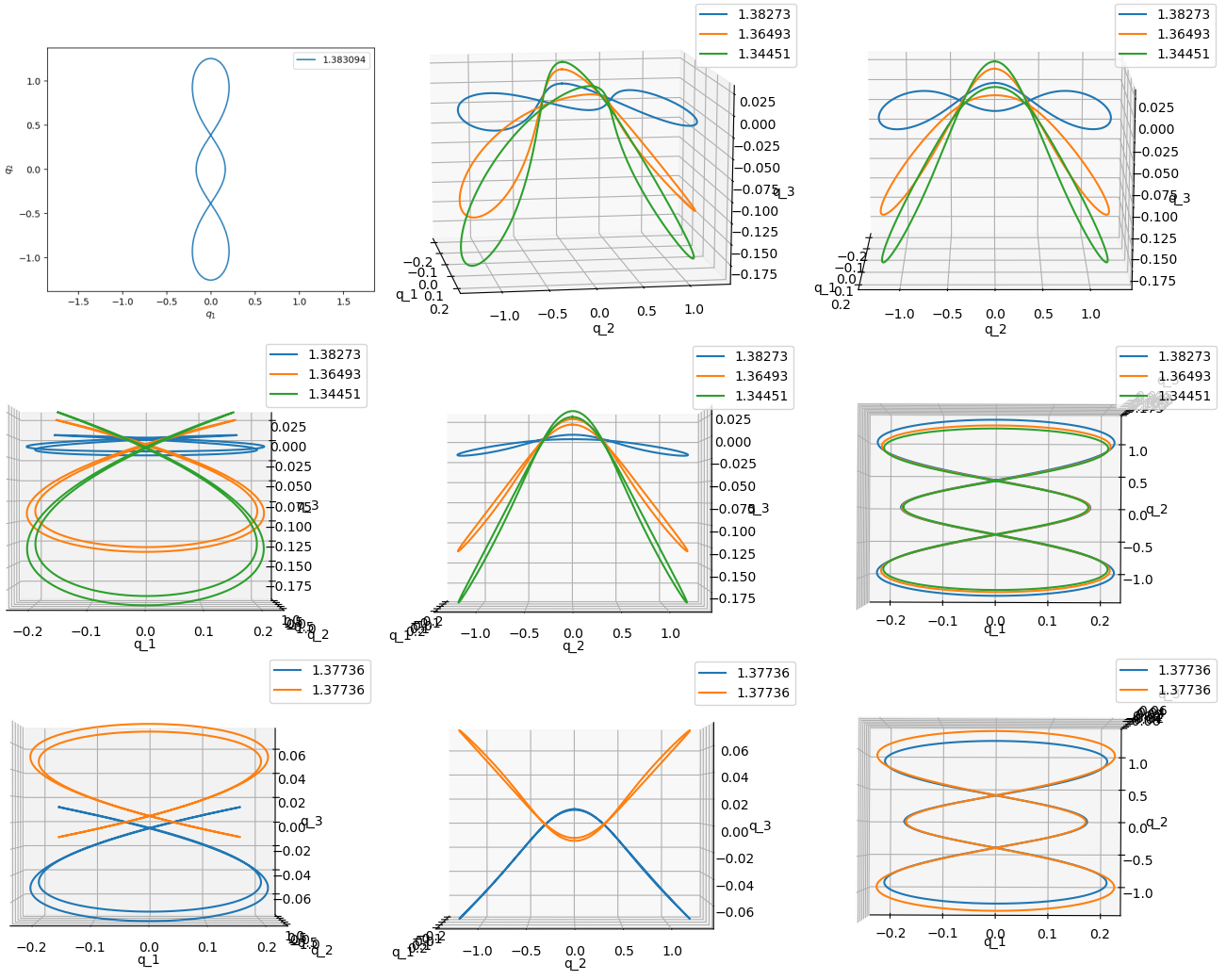}
	\caption{From the spatial index jump of $g$ (family $g_{2v}$)}
	\label{figure_spatial_g}
\end{figure}
\begin{table}[H]\scriptsize \centering
	\begin{tabular}{ccccccc}
		$\Gamma$ & $q_1(0)$ & $q_3(0)$ & $\dot{q}_2(0)$ & $T_q / 2$ & $\text{sign}_{C/B}$ and Floquet multipliers & $\mu_{CZ}$\\
		\hline 1.383094 & $-$0.164715 & 0 & $-$3.2924 & 2.56855 & $(-/-)$ $\lambda = 1916.4$ \& $\varphi_s =0$ & $2+(3 \shortto 4) = 5 \shortto 6$\\
		1.38273 & $-$0.164661 & 0.002986 & $-$3.2928 & 2.56857 & $(-/-)$ $\lambda = 1881.2$ \& $(-/+)$ $\varphi = 6.180$ & 5\\
		1.37736 & $-$0.16386 & 0.011810 & $-$3.2980 & 2.5678 & $(-/-)$ $\lambda = 1906.7$ \& $(-/+)$ $\varphi = 5.993$ & 5\\
		1.36493 & $-$0.162007 & 0.020887 & $-$3.3101 & 2.566 & $(-/-)$ $\lambda = 2013.8$ \& $(-/+)$ $\varphi = 5.791$ & 5\\
		1.34451 & $-$0.158974 & 0.030092 & $-$3.3304 & 2.563 & $(-/-)$ $\lambda = 1853.9$ \& $(-/+)$ $\varphi = 5.531$ & 5\\
		1.30865 & $-$0.153669 & 0.040951 & $-$3.3669 & 2.55756 & $(-/-)$ $\lambda = 1816.1$ \& $(-/+)$ $\varphi = 5.241$ & 5
	\end{tabular}
	\caption{From the spatial index jump of $g$ (family $g_{2v}$)}
	\label{table_10_1}
\end{table} 

\subsubsection{The 2nd cover of $g$ and the 2nd cover of $g'$}
\label{sec:9.4.1}

From the double cover of the orbits of the families $g$ and $g'$ at the $\Gamma$ values 3.057471 resp.\ 4.285183 each spatial index jumps by +2 (index jump from 9 to 11) resp.\ +1 (index jump from 10 to 11) (see Table \ref{table_3} resp.\ \ref{table_6}).\ Note that in view of Example \ref{example_7_2}, in the latter there exist bad orbits, which are the double covers of the $g'$-orbits after the index jump.\ More precisely, the underlying simple closed orbits of $g'$ are planar elliptic, and the spatial behaviour changes from elliptic to negative hyperbolic with the resp.\ indices $ \mu_{CZ}^p(g') = \mu_{CZ}^s(g') = 3$.\ The indices of the double cover before and after this transition are
$$ \mu_{CZ}^p(g'^2) = \mu_{CZ}^s(g'^2) = 5,\quad \text{resp.}\quad \mu_{CZ}^p(g'^2) = 5,\quad \mu_{CZ}^s(g'^2) = 6. $$
Recall that the bad orbits do not contribute to the local Floer homology groups nor to the Euler characteristics.\

At these transitions new families of spatial symmetric periodic orbits bifurcate.\ In particular, two families bifurcate from the double cover of $g$ and one family from the double cover of $g'$.\ All these families bifurcate after the index jump.\ Using the same notation as in Figure \ref{overview_conclusion} of the introduction, the bifurcation graph for this scenario is shown in Figure \ref{overview_double_cover}.\ Note that we draw the families of bad orbits by dashed black edges, thus the dashed black edges do not contribute to the local Floer homology groups nor to the Euler characteristics.\

\begin{figure}[H]
	\centering
	\definecolor{grgr}{RGB}{33,189,63}
	\begin{tikzpicture}[line cap=round,line join=round,>=triangle 45,x=1.0cm,y=1.0cm]
	\clip(-7.5,-1) rectangle (6,8);
	
	\draw [->, line width=1pt] (-3-3,-0.5) -- (-3-3,7.5);
	\draw (-3-3,7.5) node[anchor=south] {$\Gamma$};
	\draw (-4-3,7.5) node[anchor=south] {$-\infty$};
	\draw (-4-3,-0.5) node[anchor=north] {$+\infty$};
	\draw[fill] (-3-3,0) circle (1.5pt);
	\draw (-3-3,0) node[anchor=east] {4.285};
	\draw[fill] (-3-3,3) circle (1.5pt);
	\draw (-3-3,3) node[anchor=east] {3.057};
	\draw[fill] (-3-3,4) circle (1.5pt);
	\draw (-3-3,4) node[anchor=east] {3.013};
	\draw[fill] (-3-3,5) circle (1.5pt);
	\draw (-3-3,5) node[anchor=east] {2.952};
	\draw[fill] (-3-3,6.5) circle (1.5pt);
	\draw (-3-3,6.5) node[anchor=east] {2.484};

	\draw [line width=1pt,color=blue] (0,3) .. controls (0.5,4.5) and (1.5,5.5) .. (2,6.5);
	
	\draw [dashed,line width=1pt,color=blue] (0,3) .. controls (-0.5,4.5) and (-1.5,5.5) .. (-2,6.5);
	
	\draw [color=blue] (0.4,3.15) node[anchor=south] {$9$};
	\draw [color=blue] (-0.4,3.15) node[anchor=south] {$9$};
	
	\draw [color=blue] (0.75,3.7) node[anchor=south] {$10$};
	\draw [color=blue] (-0.75,3.7) node[anchor=south] {$10$};
	
	\draw [color=blue] (1.3,4.7) node[anchor=south] {$9$};
	\draw [color=blue] (-1.3,4.7) node[anchor=south] {$9$};
	
	\draw [line width=1pt,color=blue] (2,6.5) .. controls (2.5,4) and (2.5,3) .. (4,0);
	
	\draw [dashed,line width=1pt,color=blue] (-2,6.5) .. controls (-2.5,4) and (-2.5,3) .. (-4,0);
	
	\draw [color=blue] (3.4,0.3) node[anchor=south] {$10$};
	\draw [color=blue] (-3.4,0.3) node[anchor=south] {$10$};
	
	\draw [line width=1pt,color=grgr] (0,3) .. controls (0.2,3.1) and (0.3,3.2) .. (1.2,3.5);
	
	\draw [dashed,line width=1pt,color=grgr] (0,3) .. controls (-0.2,3.1) and (-0.3,3.2) .. (-1.2,3.5);
	
	\draw [color=grgr] (1.2,3.5) node[anchor=north] {$10$};
	\draw [color=grgr] (-1.2,3.5) node[anchor=north] {$10$};
	
	\draw [line width=1pt] (0,2.5) -- (0,3.5);
	
	\draw (0,2.5) node[anchor=north] {$9$};
	\draw (0,3.5) node[anchor=south] {$11$};
	
	\draw[fill] (0,3) circle (2pt);
	\draw (0.3,3) node[anchor=west] {$g^2$};
	
	\draw[fill] (0.33,3.78) circle (2pt);
	\draw[fill] (-0.33,3.78) circle (2pt);
	
	\draw[fill] (0.72,4.5) circle (2pt);
	\draw[fill] (-0.72,4.5) circle (2pt);
	
	\draw [line width=1pt] (4,-0.5) -- (4,0);
	\draw [dashed,line width=1pt] (4,0) -- (4,0.5);
	
	\draw (4,-0.5) node[anchor=north] {$10$};
	\draw (4,0.5) node[anchor=south] {$11$};
	
	\draw [line width=1pt] (-4,-0.5) -- (-4,0);
	\draw [dashed,line width=1pt] (-4,0) -- (-4,0.5);
	
	\draw (-4,-0.5) node[anchor=north] {$10$};
	\draw (-4,0.5) node[anchor=south] {$11$};
	
	\draw[fill] (4,0) circle (2pt);
	\draw (4.1,0) node[anchor=west] {$g'^2$};
	
	\draw[fill] (-4,0) circle (2pt);
	\draw (-4.1,0) node[anchor=east] {$g'^2$};
	
	\draw[fill] (-2,6.5) circle (2pt);
	\draw (-2,6.5) node[anchor=south] {b-d};
	
	\draw[fill] (2,6.5) circle (2pt);
	\draw (2,6.5) node[anchor=south] {b-d};
	
	\end{tikzpicture}
	\caption{The bifurcation graph between the 2nd cover of $g'$ and the 2nd cover of $g$ with the families \textcolor{blue}{$g1v$} and \textcolor{grgr}{$g_{1v}^{YOZ}$}}
	\label{overview_double_cover}
\end{figure}
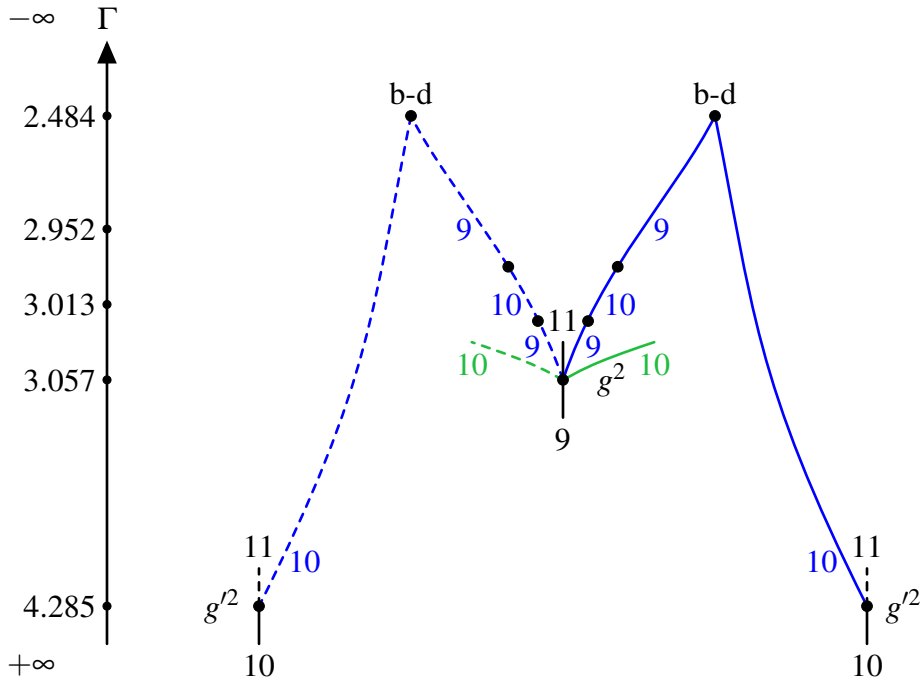
\noindent
The family \textcolor{blue}{$g1v$} bifurcates from the double cover of $g$.\ It was found by Michalodimitrakis \cite{michalodimitrakis} where the initial conditions for our data given in Table \ref{table_11} are from.\ Some of its orbits are plotted in Figure \ref{figure_double_cover_g}).\ If the energy increases, then the orbits end at a degenerate periodic orbit of birth-death type.\ The other branch bifurcation of this birth-death type periodic orbit is the branch bifurcation from the double cover of $g'$.\ All the orbits of the family \textcolor{blue}{$g1v$} are doubly-symmetric with respect to $\overline{\rho_1}$ and $\rho_1$, therefore they start perpendicularly and hit perpendicularly
$$ \text{Fix}(\overline{\rho_1}) = \{ (q_1,0,0,0,p_2,p_3) \},\quad \text{Fix}(\rho_1) = \{ (q_1,0,q_3,0,p_2,0) \}. $$
Moreover, they are invariant under $\sigma$, but the symmetry $-\sigma$ yields the symmetrical family (dashed in the bifurcation graph), whose orbits are also symmetric with respect to $\rho_2$ and $\overline{\rho_2}$ (see the last row in Figure \ref{figure_double_cover_g}).\

The other family \textcolor{grgr}{$g_{1v}^{YOZ}$} bifurcating from the double cover of $g$ was discovered by Batkhin--Batkhina \cite{batkhin}.\ The orbits are doubly-symmetric with respect to $\overline{\rho_2}$ and $\rho_2$.\ Note that we have not studied them further, but since at the value $\Gamma = 3.057$ the index of the double cover of $g$ jumps from 9 to 11 and the family \textcolor{blue}{$g1v$} and its symmetric family start with index 9, the family \textcolor{grgr}{$g_{1v}^{YOZ}$} and its symmetric family have to start with index 10.\

The Euler characteristics are
$$ \chi_s(g^2) = (-1)^9 = -1,\quad \text{resp.}\quad \chi_s(g^2) = 2\cdot(-1)^9 + 2\cdot(-1)^{10} + (-1)^{11} = -1 $$
and
$$ \chi_s(g'^2) = (-1)^{10} = 1,\quad \text{resp.}\quad \chi_s(g'^2) = (-1)^{10} + 0 \cdot(-1)^{11} = 1.$$

\begin{figure}[H]
	\centering
	\includegraphics[scale=0.59]{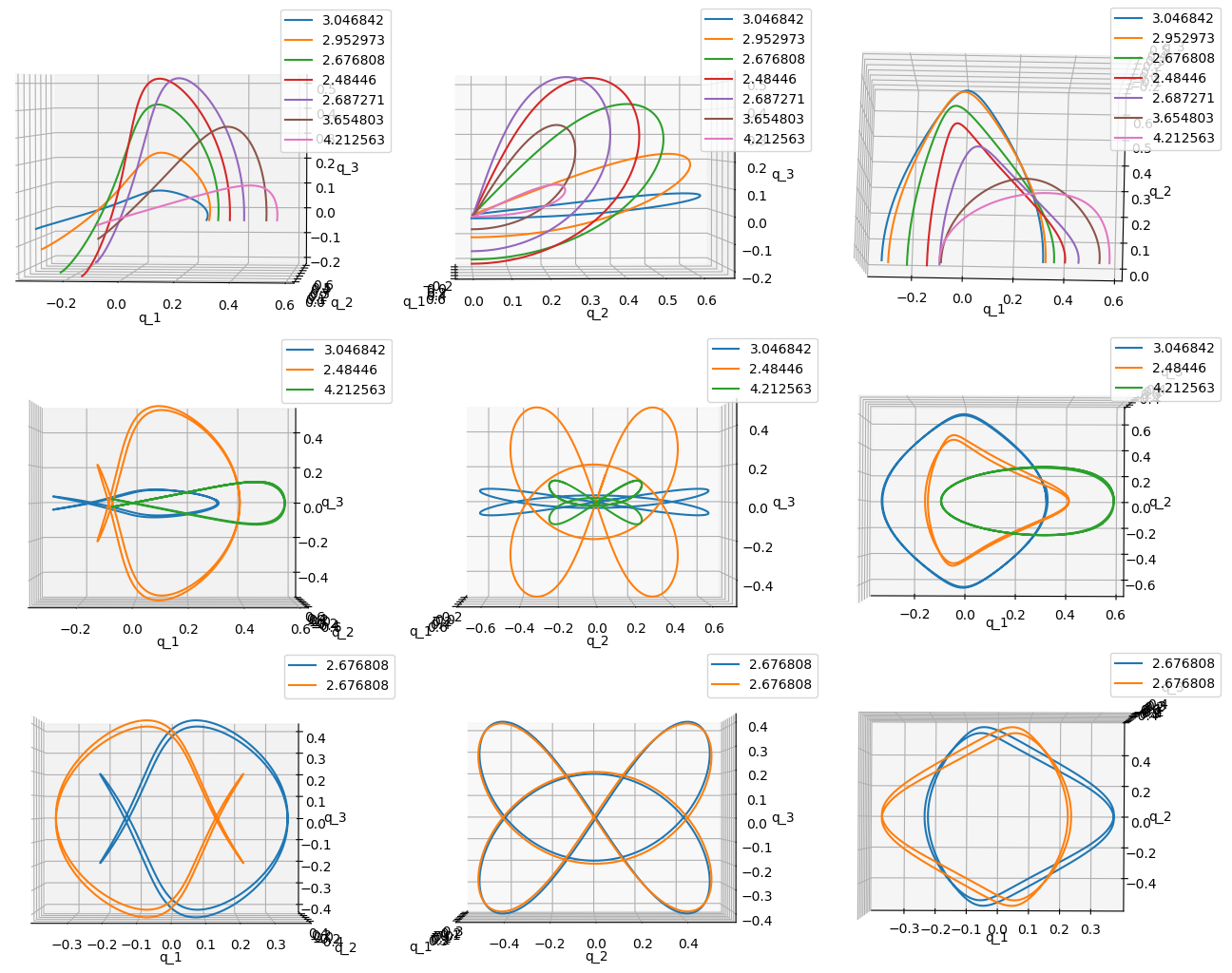}
	\caption{From the 2nd cover of $g$ to the 2nd cover of $g'$ (family \textcolor{blue}{$g1v$})}
	\label{figure_double_cover_g}
\end{figure}
\begin{table}[H]\scriptsize \centering
	\begin{tabular}{ccccccc}
		$\Gamma$ & $q_1(0)$ &  $\dot{q}_2(0)$ & $\dot{q}_3(0)$ & $T_q / 4$ & $\text{sign}_{C/B}$ and Floquet multipliers & $\mu_{CZ}$\\
		\hline 3.057471 & 0.310843 & 1.9148 & 0 & 1.4727 & $(-/-)$ $\lambda_p^2 = 24713.62$ \& $\varphi_s^2 =0$ & $4+(5 \shortto 7) = 9 \shortto 11$\\
		3.046842 & 0.311905 & 1.88871 & 0.299 & 1.4733 & $(-/-)$ $\lambda = 22211$ \& $(-/+)$ $\varphi = 5.847$ & 9\\
		3.013637 & 0.31526 & 1.807899 & 0.600 & 1.4751 & $(-/-)$ $\lambda_1 = 20289$ \& $(-/-)$ $\lambda_2 = 1.250$ & 10\\
		2.952973 & 0.321529 & 1.663566 & 0.899 & 1.4783 & $(-/-)$ $\lambda = 15610$ \& $(-/+)$ $\varphi = 6.117$ & 9\\
		2.852855 & 0.332366 & 1.43388 & 1.199 & 1.4833 & $(-/-)$ $\lambda = 10175$ \& $(-/+)$ $\varphi = 6.013$ & 9\\
		2.676808 & 0.353928 & 1.048739 & 1.500 & 1.4895 & $(-/-)$ $\lambda = 3938.3$ \& $(-/+)$ $\varphi = 5.772$ & 9\\
		2.48446 & 0.396 & 0.561861 & 1.649 & 1.4675 & $(-/-)$ $\lambda = 300.99$ \& $(-/+)$ $\varphi = 5.661$ & 9\\
		& & & & & & birth-death\\
		2.687271 & 0.44745 & 0.364873 & 1.500 & 1.3488 & $-8.925 \pm 3.401\text{i}$ \& $-0.097 \pm 0.037\text{i}$ & 10\\
		3.202673 & 0.49409 & 0.370874 & 1.199 & 1.2151 & $-3.433 \pm 6.225\text{i}$ \& $-0.067 \pm 0.123\text{i}$ & 10\\
		3.654803 & 0.52756 & 0.401491 & 0.899 & 1.1343 & $-4.565 \pm 2.778\text{i}$ \& $-0.159 \pm 0.097\text{i}$ & 10\\
		3.998524 & 0.551501 & 0.424737 & 0.600 & 1.0857 & $-2.900 \pm 0.096\text{i}$ \& $-0.344 \pm 0.011\text{i}$ & 10\\
		4.212563 & 0.565996 & 0.438274 & 0.300 & 1.0596 & $(-/+)$ $\varphi_1 = 3.507$ \& $(-/+)$  $\varphi_2 = 4.757$ & 10\\
		4.285183 & 0.570854 & 0.4426 & 0 & 1.0513 & $(-/+)$ $\varphi_p^2 = 3.72$ \& $\varphi_s^2 =0$ & $5+(5 \shortto 6) = 10 \shortto 11$
	\end{tabular}
	\caption{From the 2nd cover of $g$ to the 2nd cover of $g'$ (family \textcolor{blue}{$g1v$})}
	\label{table_11}
\end{table}

\subsubsection{The 3rd cover of $g$, the 5th cover of $f$ and the 3rd cover of $g'$}

In this subsection we collect the underlying data for the bifurcation graph in Figure \ref{overview_conclusion} and its discussion from the introduction with the families \textcolor{blue}{$f_g^{(2,3)}$}, \textcolor{grgr}{$f_g^{(2cut,3)}$}, \textcolor{red}{$f_{g'}^{(2,3)}$} and \textcolor{magenta}{$f_{g'}^{(2cut,3)}$}.\ All the inital data are provided on personal request from the author of \cite{kalantonis}.\

The orbits of the family \textcolor{blue}{$f_g^{(2,3)}$} are doubly-symmetric with respect to $\overline{\rho_1}$ and $\overline{\rho_2}$.\ Some of its orbits are plotted in Figure \ref{figure_spatial_g_f}.\ Its symmetric family is obtained by using $\sigma$ (see the last row in Figure \ref{figure_spatial_g_f}).\ The data for the orbits are given in Table \ref{table_12}.\

\begin{figure}[H]
	\centering
	\includegraphics[scale=0.5]{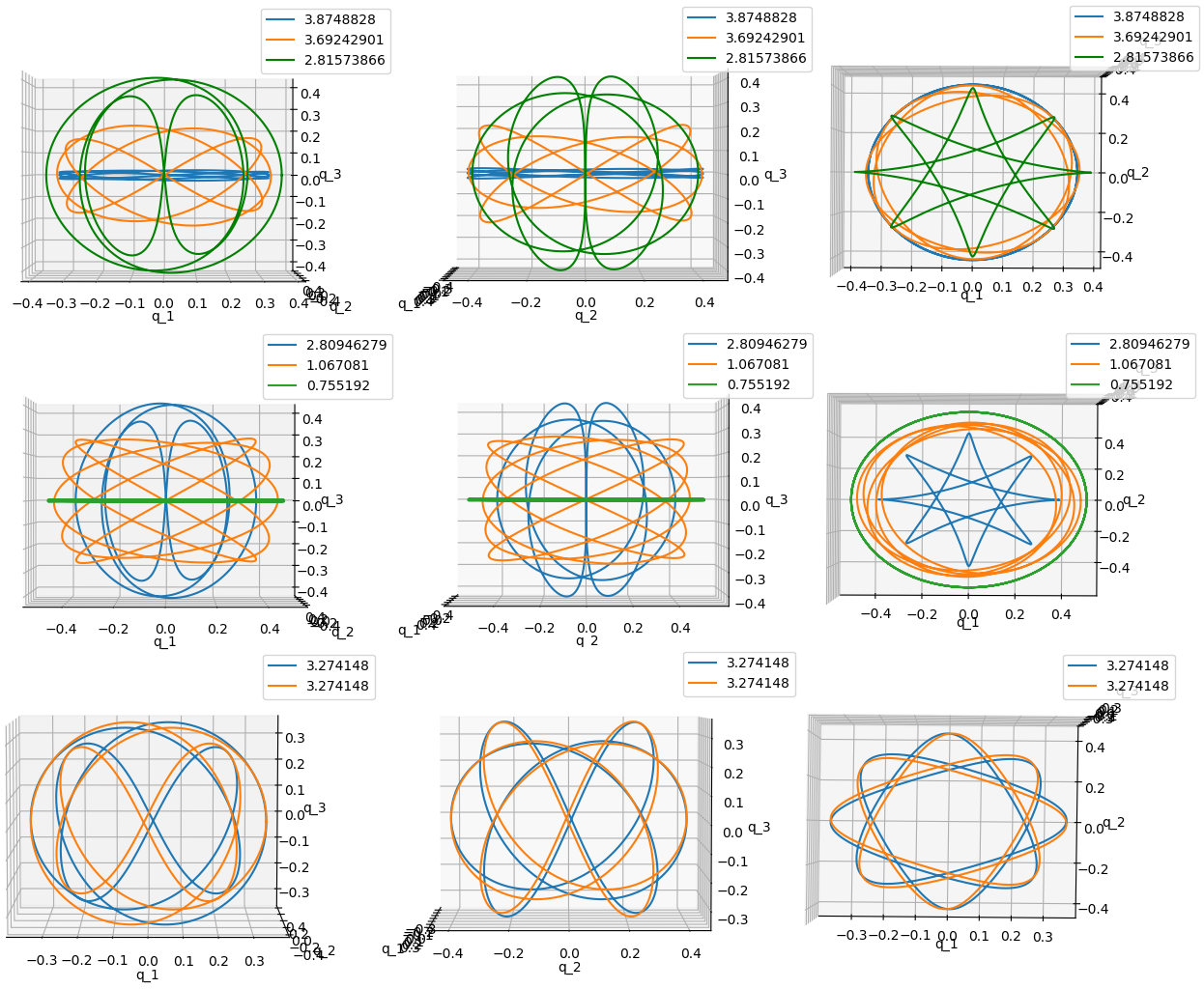}
	\caption{From the 3rd cover of $g$ to the 5th cover of $f$ (family \textcolor{blue}{$f_g^{(2,3)}$})}
	\label{figure_spatial_g_f}
\end{figure}

\begin{table}[H]\tiny \centering
	\begin{tabular}{ccccccc}
		$\Gamma$ & $q_1(0)$ &  $\dot{q}_2(0)$ & $\dot{q}_3(0)$ & $T_q / 2$ & $\text{sign}_{C/B}$ and Floquet multipliers & $\mu_{CZ}$\\
		\hline 3.876616 & 0.327645 & 1.596748 & 0 & 2.82652 & $(-/-)$ $\lambda_p^3 = 619.96$ \& $\varphi_s^3 =0$ & $6+(7 \shortto 9) = 13 \shortto 15$\\
		3.876404 & 0.327652 & 1.596390 & 0.034 & 2.82665 & $(-/-)$ $\lambda_1 = 620.39$ \& $(-/-)$ $\lambda_2 = 1.003$ & 14\\
		3.874882 & 0.327708 & 1.593827 & 0.100 & 2.82688 & $(-/-)$ $\lambda_1 = 615.97$ \& $(-/-)$ $\lambda_2 = 1.010$ & 14\\
		3.866582 & 0.328011 & 1.579855 & 0.239 & 2.82811 & $(-/-)$ $\lambda_1 = 592.21$ \& $(-/-)$ $\lambda_2 = 1.008$ & 14\\
		3.692429 & 0.334427 & 1.292631 & 0.975 & 2.85507 & $(-/-)$ $\lambda_1 = 238.02$ \& $(-/-)$ $\lambda_2 = 1.023$ & 14\\
		3.274148 & 0.350335 & 0.647245 & 1.544 & 2.92956 & $(-/-)$ $\lambda_1 = 1.228$ \& $(-/-)$ $\lambda_2 = 1.053$ & 14\\
		3.274118 & 0.350344 & 0.647179 & 1.544 & 2.92982 & $(-/-)$ $\lambda_1 = 1.231$ \& $\lambda_2 = 1$ & $14 \shortto 15$\\
		3.272832 & 0.350387 & 0.645310 & 1.545 & 2.92982 & $(-/-)$ $\lambda = 1.229$ \& $(+/-)$ $\varphi = 0.334$ & 15\\
		3.239877 & 0.351673 & 0.597055 & 1.569 & 2.93637 & $(-/-)$ $\lambda = 1.270$ \& $(+/-)$ $\varphi = 1.750$ & 15\\
		3.171140 & 0.354373 & 0.497587 & 1.612 & 2.95039 & $(-/-)$ $\lambda = 1.402$ \& $(-/+)$ $\varphi = 3.154$ & 15\\
		3.088817 & 0.357637 & 0.380540 & 1.656 & 2.96784 & $(-/-)$ $\lambda = 1.852$ \& $(-/+)$ $\varphi = 4.538$ & 15\\
		2.815738 & 0.368722 & 0.008233 & 1.736 & 3.03160 & $(-/-)$ $\lambda = 17.24$ \& $(-/+)$ $\varphi = 5.876$ & 15\\
		2.809462 & 0.368982 & $-$0.000038 & 1.737 & 3.03319 & $(-/-)$ $\lambda = 17.64$ \& $(-/+)$ $\varphi = 5.879$ & 15\\
		2.659074 & 0.375277 & $-$0.194503 & 1.747 & 3.07285 & $(-/-)$ $\lambda = 25.89$ \& $(-/+)$ $\varphi = 5.922$ & 15\\
		1.783658 & 0.415479 & $-$1.185880 & 1.463 & 3.39795 & $(-/-)$ $\lambda = 20.95$ \& $(-/+)$ $\varphi = 6.051$ & 15\\
		1.067081 & 0.456943 & $-$1.812982 & 0.805 & 3.89795 & $(-/-)$ $\lambda = 2.986$ \& $(-/+)$ $\varphi = 6.213$ & 15\\
		0.939632 & 0.466113 & $-$1.904426 & 0.613 & 4.03095 & $(-/-)$ $\lambda = 1.035$ \& $(-/+)$ $\varphi = 5.563$ & 15\\
		0.813156 & 0.476186 & $-$1.988018 & 0.338 & 4.18381 & $(-/-)$ $\lambda = 1.015$ \& $(-/+)$ $\varphi = 4.764$ & 15\\
		0.755192 & 0.481212 & $-$2.023751 & 0.010 & 4.26211 & $(-/-)$ $\lambda = 1.008$ \& $(-/+)$ $\varphi = 4.503$ & 15\\
		0.755141 & 0.481217 & $-$2.02378 & 0 & 4.26215 & $\varphi_s^5 = 0$ \& $(-/+)$ $\varphi_p^5 = 4.503$ & $7+(9 \shortto 7) = 16 \shortto 14$
	\end{tabular}
	\caption{From the 3rd cover of $g$ to the 5th cover of $f$ (family \textcolor{blue}{$f_g^{(2,3)}$})}
	\label{table_12}
\end{table}

The orbits of the family \textcolor{red}{$f_{g'}^{(2,3)}$} are simply-symmetric with respect to $\overline{\rho_1}$.\ Some of its orbits are plotted in Figure \ref{figure_spatial_g'_g_1}.\ The 3rd row in Figure \ref{figure_spatial_g'_g_1} shows the symmetric orbit obtained by using $\overline{\rho_2}$, which bifurcates from the planar orbit which is symmetric to $g'$.\ The symmetry $\sigma$ yields the symmetric family bifurcation from the same planar orbit of $g'$ (see the last row in Figure \ref{figure_spatial_g'_g_1}).\ The data for the orbits are given in Table \ref{table_14}.\

\begin{figure}[H]
	\centering
	\includegraphics[scale=0.48]{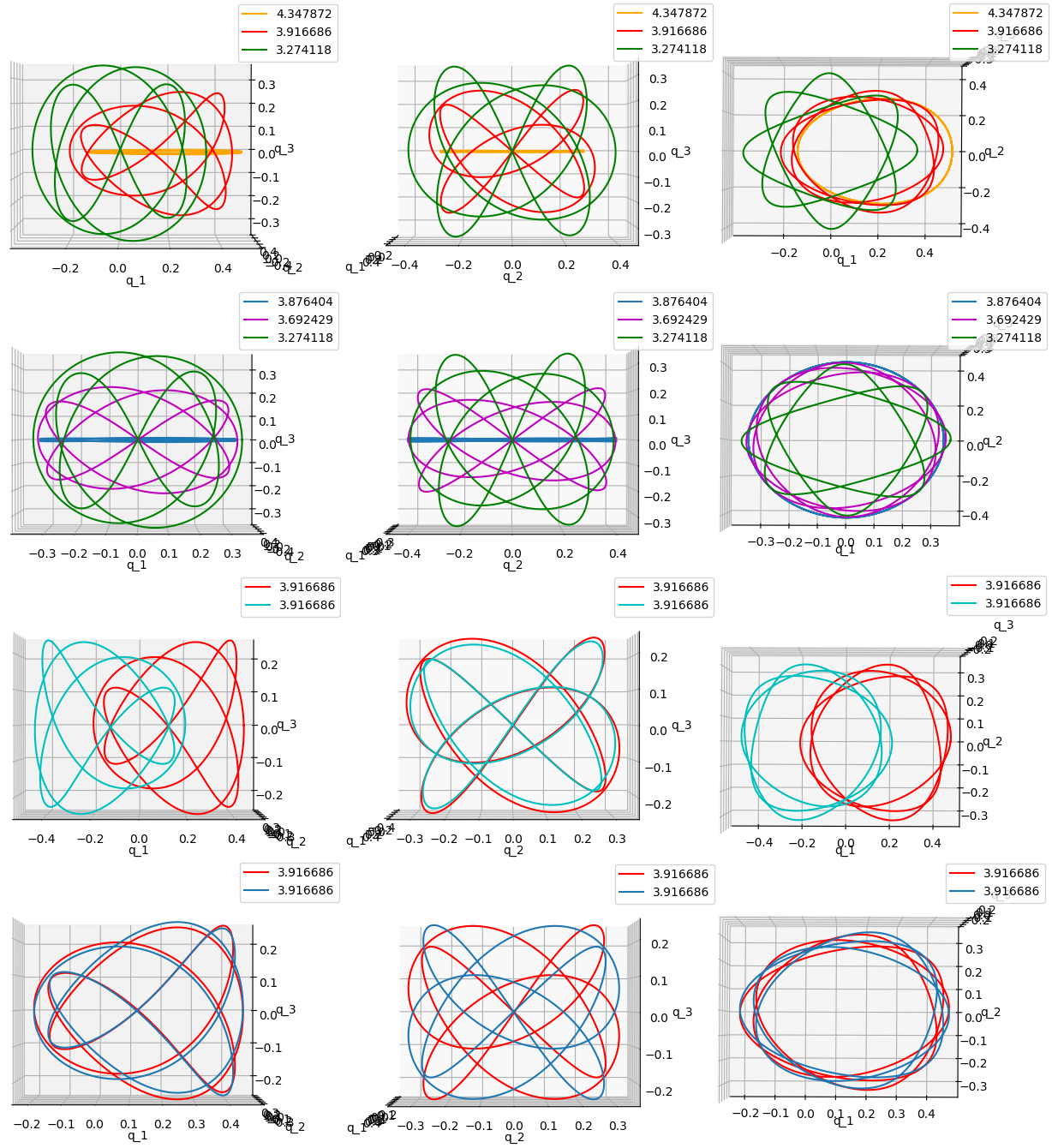}
	\caption{From the 3rd cover of $g'$ to the index jump $(14 \shortto 15)$ in Table \ref{table_12} (family \textcolor{red}{$f_{g'}^{(2,3)}$})}
	\label{figure_spatial_g'_g_1}
\end{figure}

\begin{table}[H]\tiny \centering
	\begin{tabular}{ccccccc}
		$\Gamma$ & $q_1(0)$ &  $\dot{q}_2(0)$ & $\dot{q}_3(0)$ & $T_q / 2$ & $\text{sign}_{C/B}$ and Floquet multipliers & $\mu_{CZ}$\\
		\hline 4.347942 & 0.491443 & 0.668022 & 0 & 2.41792 & $(-/+)$ $\varphi_p^3 = 3.260$ \& $\varphi_s^3 =0$ & $7+(7 \shortto 9) = 14 \shortto 16$\\
		4.347872 & 0.491437 & 0.668012 & 0.010 & 2.41797 & $(-/+)$ $\varphi = 3.261$ \& $(-/-)$ $\lambda = 1.014$ & 15\\
		4.332819 & 0.490248 & 0.666260 & 0.154 & 2.42320 & $(-/+)$ $\varphi = 3.238$ \& $(-/-)$ $\lambda = 1.006$ & 15\\
		4.308139 & 0.488286 & 0.663388 & 0.251 & 2.43186 & $(-/-)$ $\lambda_1 = -1.008$ \& $(-/-)$ $\lambda_2 = 1.012$ & 15\\
		3.916686 & 0.454659 & 0.618385 & 0.848 & 2.58670 & $(-/-)$ $\lambda_2 = -1.591$ \& $(-/-)$ $\lambda_2 = 1.390$ & 15\\
		3.720467 & 0.435295 & 0.597320 & 1.042 & 2.67843 & $(+/-)$ $\varphi = 3.126$ \& $(-/-)$ $\lambda = 1.705$ & 15\\
		3.419999 & 0.397616 & 0.577871 & 1.322 & 2.84056 & $(+/-)$ $\varphi = 1.622$ \& $(-/-)$ $\lambda = 2.049$ & 15\\
		3.274124 & 0.350644 & 0.646271 & 1.542 & 2.92956 & $(+/-)$ $\varphi = 0.030$ \& $(-/-)$ $\lambda = 1.231$ & 15\\
		3.274118 & 0.350344 & 0.647179 & 1.544 & 2.92982 & $\lambda_1 = 1$ \& $(-/-)$ $\lambda_2 = 1.231$ & $14 \shortto 15$\\
		3.274123 & 0.350042 & 0.648097 & 1.545 & 2.92956 & $(+/-)$ $\varphi = 0.029$ \& $(-/-)$ $\lambda = 1.230$ & 15\\
		3.690961 & 0.238777 & 1.376348 & 1.720 & 2.69315 & $(+/-)$ $\varphi = 2.849$ \& $(-/-)$ $\lambda = 1.755$ & 15\\
		3.917774 & 0.199644 & 1.850406 & 1.672 & 2.58622 & $(-/-)$ $\lambda_1 = -1.592$ \& $(-/-)$ $\lambda_2 = 1.389$ & 15\\
		4.308371 & 0.137956 & 3.113046 & 0.744 & 2.43178 & $(+/-)$ $\varphi = 3.145$ \& $(-/-)$ $\lambda = 1.016$ & 15\\
		4.347515 & 0.132083 & 3.292440 & 0.080 & 2.41809 & $(+/-)$ $\varphi = 3.265$ \& $(-/-)$ $\lambda = 1.053$ & 15\\
		4.347942 & 0.132020 & 3.294475 & 0 & 2.41792 & $(-/+)$ $\varphi_p^3 = 3.260$ \& $\varphi_s^3 =0$ & $7+(7 \shortto 9) = 14 \shortto 16$
	\end{tabular}
	\caption{From the 3rd cover of $g'$ to the index jump $(14 \shortto 15)$ in Table \ref{table_12} (family \textcolor{red}{$f_{g'}^{(2,3)}$})}
	\label{table_14}
\end{table}

The orbits of the family \textcolor{grgr}{$f_g^{(2cut,3)}$} are doubly-symmetric with respect to $\rho_1$ and $\rho_2$.\ Some of these orbits are plotted in Figure \ref{figure_spatial_g'_g_2}.\ Its symmetric family is obtained by using $\sigma$ (see the last row in Figure \ref{figure_spatial_g'_g_2}).\ The data for the orbits are given in Table \ref{table_13}.\

\begin{figure}[H]
	\centering
	\includegraphics[scale=0.59]{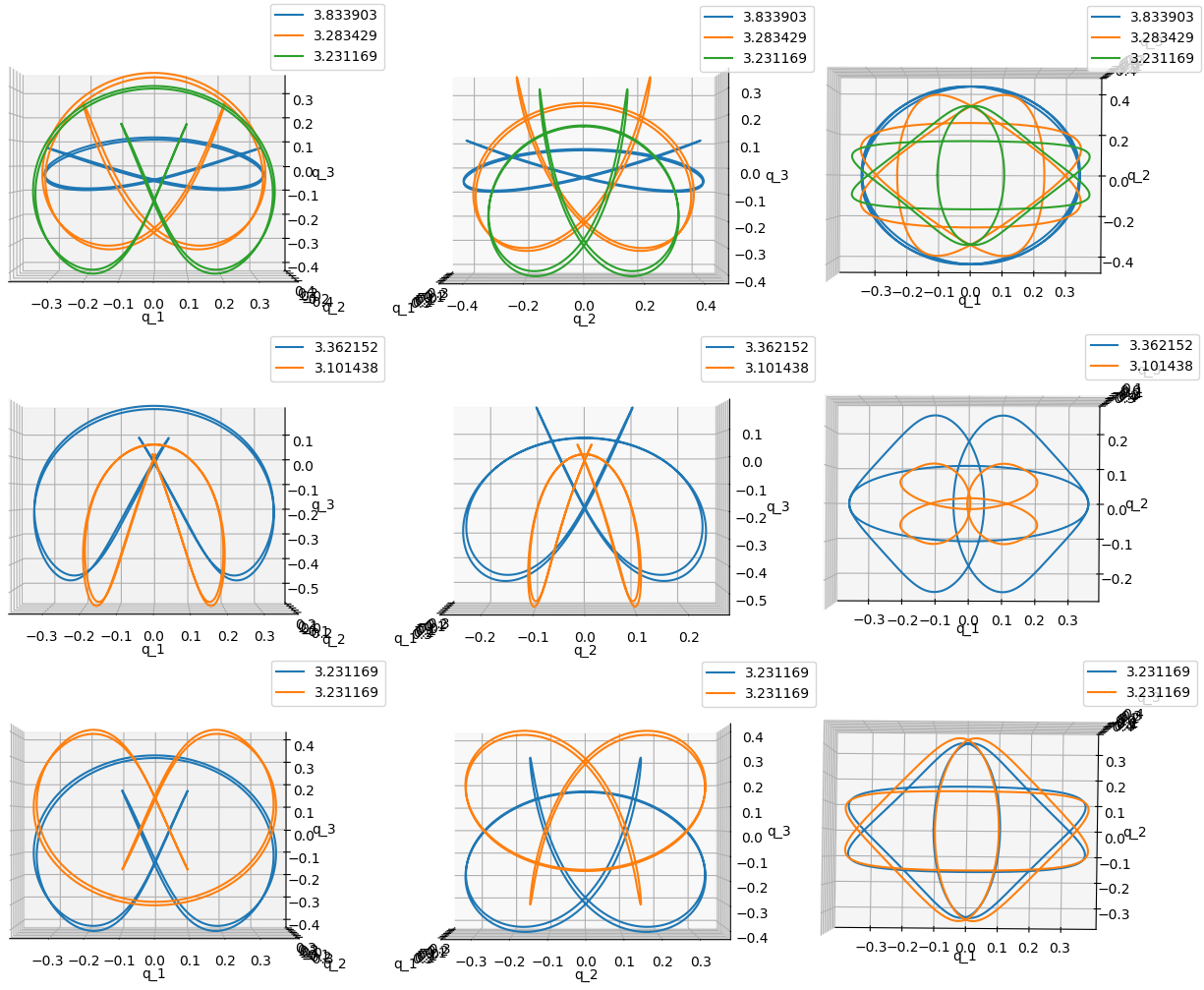}
	\caption{From the 3rd cover of $g$ to collision (family \textcolor{grgr}{$f_g^{(2cut,3)}$})}
	\label{figure_spatial_g'_g_2}
\end{figure}

\begin{table}[H]\tiny \centering
	\begin{tabular}{ccccccc}
		$\Gamma$ & $q_1(0)$ & $q_3(0)$ & $\dot{q}_2(0)$ & $T_q / 2$ & $\text{sign}_{C/B}$ and Floquet multipliers & $\mu_{CZ}$\\
		\hline 3.876616 & $-$0.327645 & 0 & $-$1.596 & 2.82652 & $(-/-)$ $\lambda_p^3 = 619.96$ \& $\varphi_s^3 =0$ & $6+(7 \shortto 9) = 13 \shortto 15$\\
		3.833903 & $-$0.320106 & 0.072500 & $-$1.600 & 2.83301 & $(-/-)$ $\lambda = 505.63$ \& $(-/+)$ $\varphi = 6.278$ & 13\\
		3.283429 & $-$0.212345 & 0.250499 & $-$1.696 & 2.92604 & $(-/-)$ $\lambda = 1.725$ \& $(-/+)$ $\varphi = 6.048$ & 13\\
		3.280180 & $-$0.211571 & 0.250950 & $-$1.698 & 2.92662 & $\lambda = 1$ \& $(-/+)$ $\varphi = 6.081$ & $13 \shortto 14$\\
		3.279799 & $-$0.211477 & 0.250999 & $-$1.698 & 2.92669 & $(+/-)$ $\varphi_1 = 0.186$ \& $(-/+)$ $\varphi_2 = 6.080$ & 14\\
		3.189269 & $-$0.186389 & 0.258719 & $-$1.766 & 2.94124 & $(+/-)$ $\varphi_1 = 3.129$ \& $(-/+)$ $\varphi_2 = 5.968$ & 14\\
		3.136701 & $-$0.150862 & 0.240737 & $-$1.978 & 2.92889 & $(-/+)$ $\varphi_1 = 6.277$ \& $(-/+)$ $\varphi_2 = 4.689$ & 14\\
		3.136701 & $-$0.150811 & 0.24068 & $-$1.978 & 2.92882 & $\lambda = 1$ \& $(-/+)$ $\varphi = 4.691$ & birth-death\\
		3.231169 & $-$0.097655 & 0.166387 & $-$2.671 & 2.77297 & $(+/+)$ $\lambda = 3.712$ \& $(-/+)$ $\varphi = 4.346$ & 15\\
		3.362152 & $-$0.042376 & 0.077137 & $-$4.400 & 2.43258 & $\lambda = 1$ \& $(-/+)$ $\varphi = 3.498$ & birth-death\\
		3.362152 & $-$0.042344 & 0.077087 & $-$4.401 & 2.43234 & $(-/+)$ $\varphi_1 = 6.249$ \& $(-/+)$ $\varphi_2 = 3.497$ & 14\\
		3.329430 & $-$0.024918 & 0.050537 & $-$5.671 & 2.29164 & $(-/+)$ $\varphi_1 = 5.662$ \& $(-/+)$ $\varphi_2 = 3.143$ & 14\\
		3.101438 & $-$0.004205 & 0.014387 & $-$11.41 & 2.06718 & $(-/+)$ $\varphi_1 = 6.012$ \& $(-/+)$ $\varphi_2 = 2.426$ & 14
	\end{tabular}
	\caption{From the 3rd cover of $g$ to collision (family \textcolor{grgr}{$f_g^{(2cut,3)}$})}
	\label{table_13}
\end{table}

The orbits of the family \textcolor{magenta}{$f_{g'}^{(2cut,3)}$} are simply-symmetric with respect to $\rho_1$.\ Some of these orbits are plotted in Figure \ref{figure_spatial_g'_g_3}.\ The 3rd row in Figure \ref{figure_spatial_g'_g_3} shows the symmetric orbits obtained by using $\rho_2$, which bifurcate from the planar orbit which is symmetric to $g'$.\ The symmetry $\sigma$ yields the symmetric family bifurcation from the same planar orbit of $g'$ (see the last row in Figure \ref{figure_spatial_g'_g_3}).\ The data for the orbits are given in Table \ref{table_15}.\

\begin{figure}[H]
	\centering
	\includegraphics[scale=0.54]{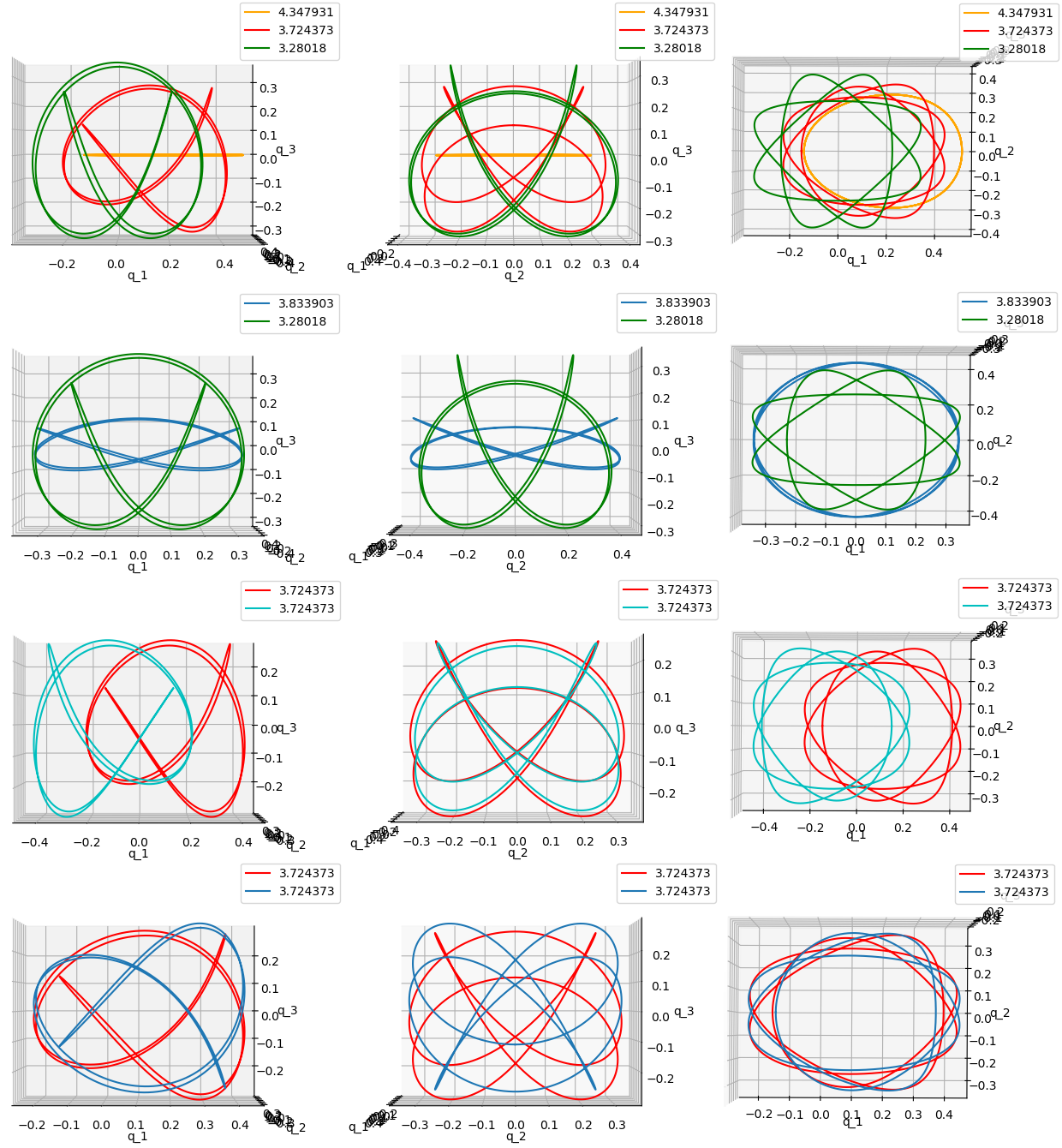}
	\caption{From the 3rd cover of $g'$ to the index jump $(13 \shortto 14)$ in Table \ref{table_13} (family \textcolor{magenta}{$f_{g'}^{(2cut,3)}$})}
	\label{figure_spatial_g'_g_3}
\end{figure}

\begin{table}[H]\tiny \centering
	\begin{tabular}{ccccccc}
		$\Gamma$ & $q_1(0)$ & $q_3(0)$ & $\dot{q}_2(0)$ & $T_q / 2$ & $\text{sign}_{C/B}$ and Floquet multipliers & $\mu_{CZ}$\\
		\hline 4.347942 & $-$0.132020 & 0 & $-$3.294 & 2.41792 & $(-/+)$ $\varphi_p^3 = 3.260$ \& $\varphi_s^3 =0$ & $7+(7 \shortto 9) = 14 \shortto 16$\\
		4.347931 & $-$0.132019 & 0.000400 & $-$3.294 & 2.41795 & $(-/+)$ $\varphi_1 = 3.261$ \& $(-/+)$ $\varphi_2 = 6.275$ & 14\\
		4.307008 & $-$0.131939 & 0.024999 & $-$3.261 & 2.43226 & $(-/-)$ $\lambda = -1.020$ \& $(-/+)$ $\varphi = 5.858$ & 14\\
		3.724373 & $-$0.138067 & 0.120000 & $-$2.692 & 2.67594 & $(+/-)$ $\varphi_1 = 3.051$ \& $(-/+)$ $\varphi_2 = 5.741$ & 14\\
		3.381250 & $-$0.165321 & 0.192269 & $-$2.133 & 2.86377 & $(+/-)$ $\varphi_1 = 1.013$ \& $(-/+)$ $\varphi_2 = 5.311$ & 14\\
		3.381167 & $-$0.165336 & 0.192296 & $-$2.133 & 2.86382 & $0.551 \pm 0.816 \text{i}$ \& $0.541 \pm 0.858 \text{i}$ & 14\\
		3.329148 & $-$0.177357 & 0.211699 & $-$1.990 & 2.89575 & $0.973 \pm 0.919 \text{i}$ \& $0.543 \pm 0.511 \text{i}$ & 14\\
		3.280274 & $-$0.209888 & 0.249451 & $-$1.710 & 2.92656 & $1.039 \pm 0.161 \text{i}$ \& $0.939 \pm 0.145 \text{i}$ & 14\\
		3.280237 & $-$0.210257 & 0.249782 & $-$1.707 & 2.92659 & $(+/-)$ $\varphi_1 = 0.131$ \& $(-/+)$ $\varphi_2 = 6.127$ & 14\\
		3.280180 & $-$0.211571 & 0.250950 & $-$1.698 & 2.92662 & $\lambda = 1$ \& $(-/+)$ $\varphi = 6.081$ & $13 \shortto 14$\\
		3.280236 & $-$0.212874 & 0.252084 & $-$1.689 & 2.92659 & $(+/-)$ $\varphi_1 = 0.122$ \& $(-/+)$ $\varphi_2 = 6.121$ & 14\\
		3.280241 & $-$0.212932 & 0.252134 & $-$1.688 & 2.92658 & $1.002 \pm 0.146 \text{i}$ \& $0.976 \pm 0.142 \text{i}$ & 14\\
		3.357252 & $-$0.266603 & 0.281661 & $-$1.390 & 2.87838 & $0.773 \pm 0.967 \text{i}$ \& $0.504 \pm 0.630 \text{i}$ & 14\\
		3.380289 & $-$0.275442 & 0.283788 & $-$1.350 & 2.86435 & $0.571 \pm 0.871 \text{i}$ \& $0.527 \pm 0.801 \text{i}$ & 14\\
		3.381154 & $-$0.275758 & 0.283852 & $-$1.349 & 2.86383 & $(+/-)$ $\varphi_1 = 1.006$ \& $(-/+)$ $\varphi_2 = 5.304$ & 14\\
		3.454933 & $-$0.300084 & 0.286253 & $-$1.247 & 2.82013 & $(+/-)$ $\varphi_1 = 1.663$ \& $(-/+)$ $\varphi_2 = 5.497$ & 14\\
		3.727521 & $-$0.370162 & 0.266224 & $-$0.999 & 2.67439 & $(+/+)$ $\lambda = -1.025$ \& $(-/+)$ $\varphi_2 = 5.744$ & 14\\
		4.347932 & $-$0.491441 & 0.001213 & $-$0.668 & 2.41795 & $(-/+)$ $\varphi_1 = 3.261$ \& $(-/+)$ $\varphi_2 = 6.279$ & 14\\
		4.347942 & $-$0.491443 & 0 & $-$0.668 & 2.41792 & $(-/+)$ $\varphi_p^3 = 3.260$ \& $\varphi_s^3 =0$ & $7+(7 \shortto 9) = 14 \shortto 16$
	\end{tabular}
	\caption{From the 3rd cover of $g'$ to the index jump $(13 \shortto 14)$ in Table \ref{table_13} (family \textcolor{magenta}{$f_{g'}^{(2cut,3)}$})}
	\label{table_15}
\end{table}

\subsubsection{The 4th cover of $g$, the 6th cover of $f$ and the 4th cover of $g'$}

At the value $\Gamma = 4.435711$ the spatial index of the 4th cover of the orbit of the family $g'$ jumps by +2 and its index jumps from 18 to 20 (see Table \ref{table_6}).\ Moreover, at the value $\Gamma = 4.278924$ the spatial index of the 4th cover of $g$ jumps from 17 to 19 (see Table \ref{table_3}).\ The bifurcation graph for this case is illustrated in Figure \ref{overview_fourth_cover}.\

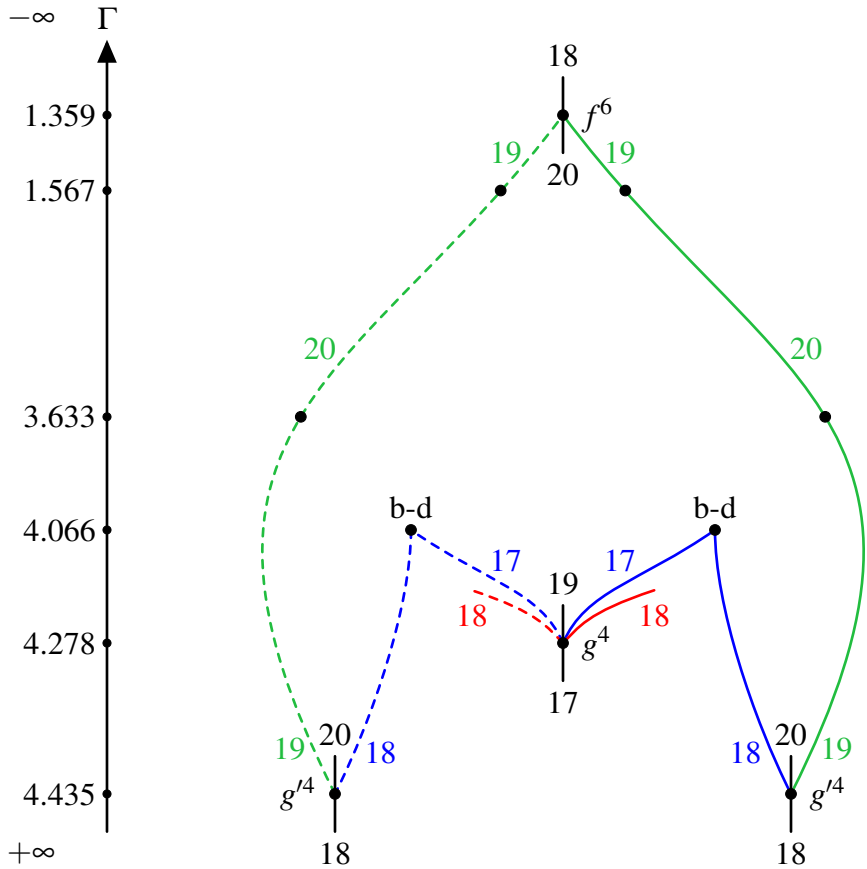
\begin{figure}[H]
	\centering
	\definecolor{grgr}{RGB}{33,189,63}
	\begin{tikzpicture}[line cap=round,line join=round,>=triangle 45,x=1.0cm,y=1.0cm]
	\clip(-7.5,-1) rectangle (6,10.5);
	
	\draw [->, line width=1pt] (-3-3,-0.5) -- (-3-3,10);
	\draw (-3-3,10) node[anchor=south] {$\Gamma$};
	\draw (-4-3,10) node[anchor=south] {$-\infty$};
	\draw (-4-3,-0.5) node[anchor=north] {$+\infty$};
	\draw[fill] (-3-3,0) circle (1.5pt);
	\draw (-3-3,0) node[anchor=east] {4.435};
	\draw[fill] (-3-3,2) circle (1.5pt);
	\draw (-3-3,2) node[anchor=east] {4.278};
	\draw[fill] (-3-3,3.5) circle (1.5pt);
	\draw (-3-3,3.5) node[anchor=east] {4.066};
	\draw[fill] (-3-3,5) circle (1.5pt);
	\draw (-3-3,5) node[anchor=east] {3.633};
	\draw[fill] (-3-3,8) circle (1.5pt);
	\draw (-3-3,8) node[anchor=east] {1.567};
	\draw[fill] (-3-3,9) circle (1.5pt);
	\draw (-3-3,9) node[anchor=east] {1.359};

	\draw [line width=1pt,color=blue] (0,2) .. controls (0.3,2.7) and (1,2.8) .. (2,3.5);
	
	\draw [dashed,line width=1pt,color=blue] (0,2) .. controls (-0.3,2.7) and (-1,2.8) .. (-2,3.5);
	
	\draw [color=blue] (0.75,2.8) node[anchor=south] {$17$};
	\draw [color=blue] (-0.75,2.8) node[anchor=south] {$17$};
	
	\draw [line width=1pt,color=red] (0,2) .. controls (0.2,2.2) and (0.3,2.4) .. (1.2,2.7);
	
	\draw [dashed,line width=1pt,color=red] (0,2) .. controls (-0.2,2.2) and (-0.3,2.4) .. (-1.2,2.7);
	
	\draw [color=red] (1.2,2.65) node[anchor=north] {$18$};
	\draw [color=red] (-1.2,2.65) node[anchor=north] {$18$};

	\draw [line width=1pt,color=blue] (2,3.5) .. controls (2,2.5) and (2.5,1) .. (3,0);
	
	\draw [dashed,line width=1pt,color=blue] (-2,3.5) .. controls (-2,2.5) and (-2.5,1) .. (-3,0);
	
	\draw [color=blue] (2.4,0.25) node[anchor=south] {$18$};
	\draw [color=blue] (-2.4,0.25) node[anchor=south] {$18$};

	\draw [line width=1pt,color=grgr] (3,0) .. controls (5.5,5) and (2.6,5.5) .. (0,9);
	
	\draw [dashed,line width=1pt,color=grgr] (-3,0) .. controls (-5.5,5) and (-2.6,5.5) .. (0,9);
	
	\draw [color=grgr] (3.6,0.3) node[anchor=south] {$19$};
	\draw [color=grgr] (-3.6,0.3) node[anchor=south] {$19$};
	
	\draw [color=grgr] (3.2,6.2) node[anchor=north] {$20$};
	\draw [color=grgr] (-3.2,6.2) node[anchor=north] {$20$};
	
	\draw [color=grgr] (0.75,8.8) node[anchor=north] {$19$};
	\draw [color=grgr] (-0.75,8.8) node[anchor=north] {$19$};
	
	\draw [line width=1pt] (3,-0.5) -- (3,0.5);
	\draw (3,-0.5) node[anchor=north] {$18$};
	\draw (3,0.5) node[anchor=south] {$20$};
	
	\draw[fill] (3,0) circle (2pt);
	\draw (3.1,0) node[anchor=west] {$g'^4$};
	
	\draw [line width=1pt] (-3,-0.5) -- (-3,0.5);
	\draw (-3,-0.5) node[anchor=north] {$18$};
	\draw (-3,0.5) node[anchor=south] {$20$};
	
	\draw[fill] (-3,0) circle (2pt);
	\draw (-3.1,0) node[anchor=east] {$g'^4$};
	
	\draw [line width=1pt] (0,1.5) -- (0,2.5);
	\draw (0,1.5) node[anchor=north] {$17$};
	\draw (0,2.5) node[anchor=south] {$19$};
	
	\draw[fill] (0,2) circle (2pt);
	\draw (0.1,2) node[anchor=west] {$g^4$};
	
	\draw [line width=1pt] (0,8.5) -- (0,9.5);
	\draw (0,8.5) node[anchor=north] {$20$};
	\draw (0,9.5) node[anchor=south] {$18$};
	
	\draw[fill] (0,9) circle (2pt);
	\draw (0.1,9) node[anchor=west] {$f^6$};

	\draw[fill] (-2,3.5) circle (2pt);
	\draw (-2,3.5) node[anchor=south] {b-d};
	
	\draw[fill] (2,3.5) circle (2pt);
	\draw (2,3.5) node[anchor=south] {b-d};
	
	\draw[fill] (0.82,8) circle (2pt);
	\draw[fill] (-0.82,8) circle (2pt);
	\draw[fill] (3.45,5) circle (2pt);
	\draw[fill] (-3.45,5) circle (2pt);

	\end{tikzpicture}
	\caption{The bifurcation graph between the 4th cover of $g'$, the 4th cover of $g$ and the 6th cover of $f$ with the families \textcolor{blue}{$f_{g'}^{(1,4)}$}, \textcolor{grgr}{$f_{g'}^{(1cut,4)}$} and \textcolor{red}{$f_{g}^{(1,4)}$}}
	\label{overview_fourth_cover}
\end{figure}
\noindent
The three families \textcolor{blue}{$f_{g'}^{(1,4)}$}, \textcolor{grgr}{$f_{g'}^{(1cut,4)}$} and \textcolor{red}{$f_{g}^{(1,4)}$} were found by Kalantonis \cite{kalantonis}, who provided on personal request the initial data.\ All orbits are doubly-symmetric with respect to $\overline{\rho_1}$ and $\rho_1$, hence they are invariant under $\sigma$.\ Each symmetric family (dashed) is obtained by using the symmetry $-\sigma$.\ Note that the orbits of each symmetric family are doubly-symmetric with respect to $\overline{\rho_2}$ and $\rho_2$.\

The family \textcolor{blue}{$f_{g'}^{(1,4)}$} consists of two branches bifurcating respectively from the 4th cover of $g'$ at the value $\Gamma = 4.435$ and from the 4th cover of $g$ at the value $\Gamma = 4.278$.\ The two branches meet at the value $\Gamma = 4.066$ at a degenerate orbit of birth-death type.\ Some orbits of the family \textcolor{blue}{$f_{g'}^{(1,4)}$} are plotted in Figure \ref{figure_spatial_g'_f_1}, where the last row shows a symmetric orbit.\ The data for the orbits are collected in Table \ref{table_17}.\

The family \textcolor{grgr}{$f_{g'}^{(1cut,4)}$} bifurcating from the 4th cover of $g'$ at the value $\Gamma = 4.435$ ends planar at the 6th cover of the retrograde orbit at the value $\Gamma = 1.359$.\ Note that inbetween there are two index jumps.\ In view of Table \ref{table_9}, the index of $f^6$ at the value $\Gamma = 1.359$ jumps from 20 to 18.\ At this transition, the Euler characteristics show that there are still undiscovered families branching out from $f^6$.\ Some of the orbits of the family \textcolor{grgr}{$f_{g'}^{(1cut,4)}$} are plotted in Figure \ref{figure_spatial_g'_f}, where the last row shows a symmetric orbit, and the data are collected in Table \ref{table_16}.\

We have not studied the family \textcolor{red}{$f_{g}^{(1,4)}$} further, but since at the value $\Gamma = 4.278$ the index of the 4th cover of $g$ jumps from 17 to 19 and the family \textcolor{blue}{$f_{g'}^{(1,4)}$} and its symmetric family start with index 17, the family \textcolor{red}{$f_{g}^{(1,4)}$} and its symmetric family have to start with index 18.\

\begin{figure}[H]
	\centering
	\includegraphics[scale=0.56]{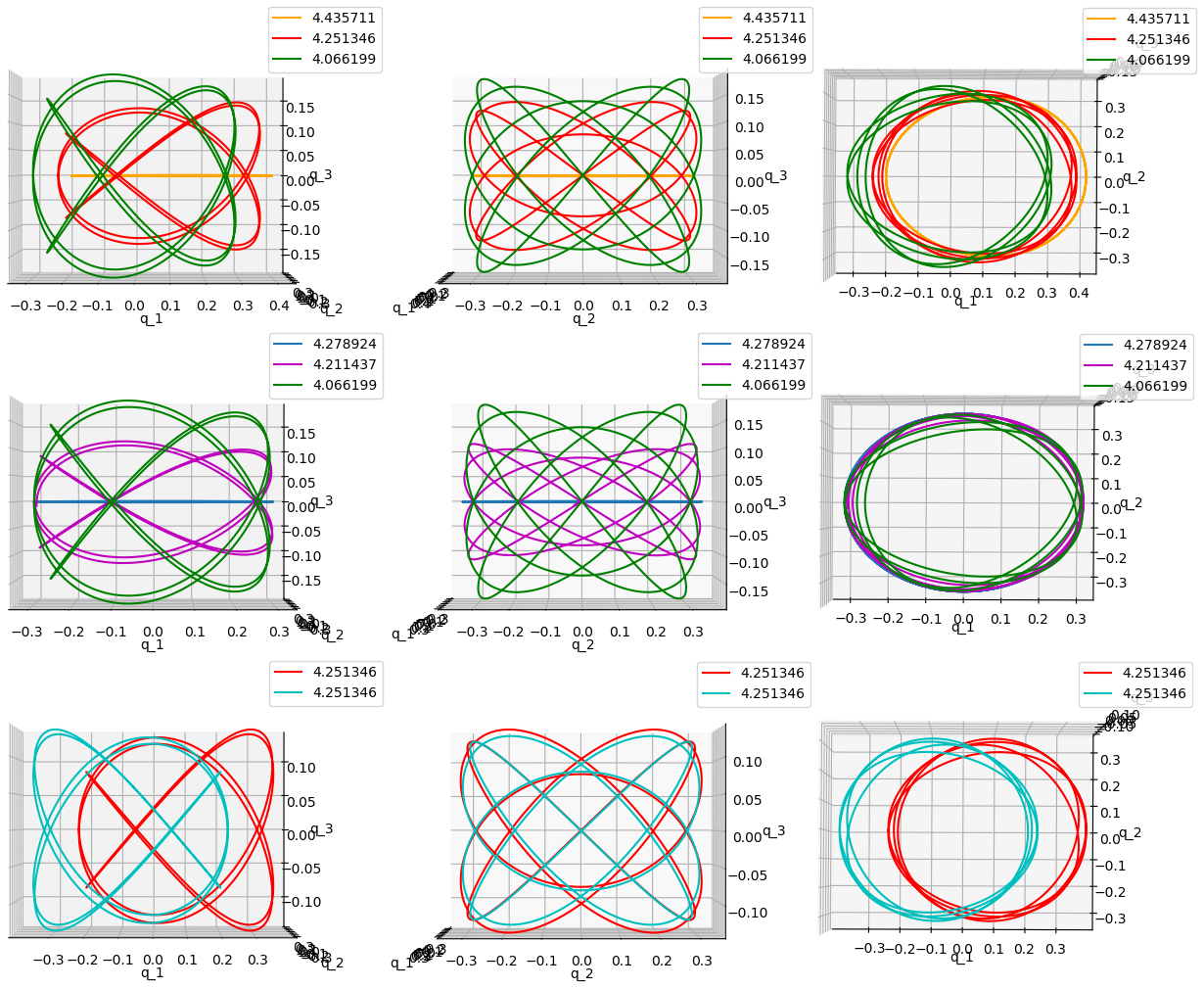}
	\caption{From the 4th cover of $g'$ to the 4th cover of $g$ (family \textcolor{blue}{$f_{g'}^{(1,4)}$})}
	\label{figure_spatial_g'_f_1}
\end{figure}

\begin{table}[H]\scriptsize \centering
	\begin{tabular}{ccccccc}
		$\Gamma$ & $q_1(0)$ &  $\dot{q}_2(0)$ & $\dot{q}_3(0)$ & $T_q / 4$ & $\text{sign}_{C/B}$ and Floquet multipliers & $\mu_{CZ}$\\
		\hline 4.435711 & $-$0.188043 & $-$2.511218 & 0 & 1.34304 & $(+/-)$ $\varphi_p^4 = 2.224$ \& $\varphi_s^4 =0$ & $9+(9 \shortto 11) = 18 \shortto 20$\\
		4.435711 & $-$0.188043 & $-$2.511218 & $-$0.0004 & 1.34305 & $(+/-)$ $\varphi_1 = 2.232$ \& $(-/+)$ $\varphi_2 = 6.261$ & 18\\
		4.384342 & $-$0.198011 & $-$2.339590 & $-$0.6000 & 1.35458 & $(+/-)$ $\varphi_1 = 2.079$ \& $(-/+)$ $\varphi_2 = 6.210$ & 18\\
		4.251346 & $-$0.226833 & $-$1.933856 & $-$0.9900 & 1.38612 & $(+/-)$ $\varphi_1 = 1.616$ \& $(-/+)$ $\varphi_2 = 6.196$ & 18\\
		4.068814 & $-$0.294711 & $-$1.340590 & $-$1.0866 & 1.43386 & $(+/-)$ $\varphi_1 = 0.143$ \& $(-/+)$ $\varphi_2 = 6.195$ & 18\\
		4.068801 & $-$0.294728 & $-$1.340490 & $-$1.0866 & 1.43386 & $1.000 \pm 0.087\text{i}$ \& $0.986 \pm 0.141 \text{i}$ & 18\\
		4.067084 & $-$0.297534 & $-$1.324490 & $-$1.0798 & 1.43439 & $1.094 \pm 0.018\text{i}$ \& $0.913 \pm 0.016 \text{i}$ & 18\\
		4.066595 & $-$0.298720 & $-$1.317990 & $-$1.0766 & 1.43456 & $(-/-)$ $\lambda_1 = 1.119$ \& $(+/+)$ $\lambda_2 = 1.064$ & 18\\
		4.066199 & $-$0.300790 & $-$1.307190 & $-$1.0703 & 1.43477 & $(-/-)$ $\lambda_1 = 1.321$ \& $(+/+)$ $\lambda_2 = 1.001$ & 18\\
		4.066199 & $-$0.300810 & $-$1.307090 & $-$1.0702 & 1.43477 & $(-/-)$ $\lambda_1 = 1.322$ \& $\lambda_2 = 1$ & birth-death\\
		4.066199 & $-$0.300851 & $-$1.306890 & $-$1.0701 & 1.43478 & $(-/-)$ $\lambda = 1.325$ \& $\varphi = 6.276$ & 17\\
		4.083662 & $-$0.304840 & $-$1.312317 & $-$1.0167 & 1.43352 & $(-/-)$ $\lambda = 2.779$ \& $\varphi = 6.275$ & 17\\
		4.211437 & $-$0.302530 & $-$1.514041 & $-$0.6178 & 1.42338 & $(-/-)$ $\lambda = 15.30$ \& $\varphi = 6.275$ & 17\\
		4.278924 & $-$0.301158 & $-$1.623019 & $-$0.0003 & 1.41824 & $(-/-)$ $\lambda = 26.32$ \& $\varphi = 6.276$ & 17\\
		4.278924 & $-$0.301158 & $-$1.623018 & 0 & 1.41824 & $\lambda_p^4 = 26.26$ \& $\varphi_s^4 = 0$ & $8+(9 \shortto 11) = 17 \shortto 19$
	\end{tabular}
	\caption{From the 4th cover of $g'$ to the 4th cover of $g$ (family \textcolor{blue}{$f_{g'}^{(1,4)}$})}
	\label{table_17}
\end{table}

\begin{figure}[H]
	\centering
	\includegraphics[scale=0.59]{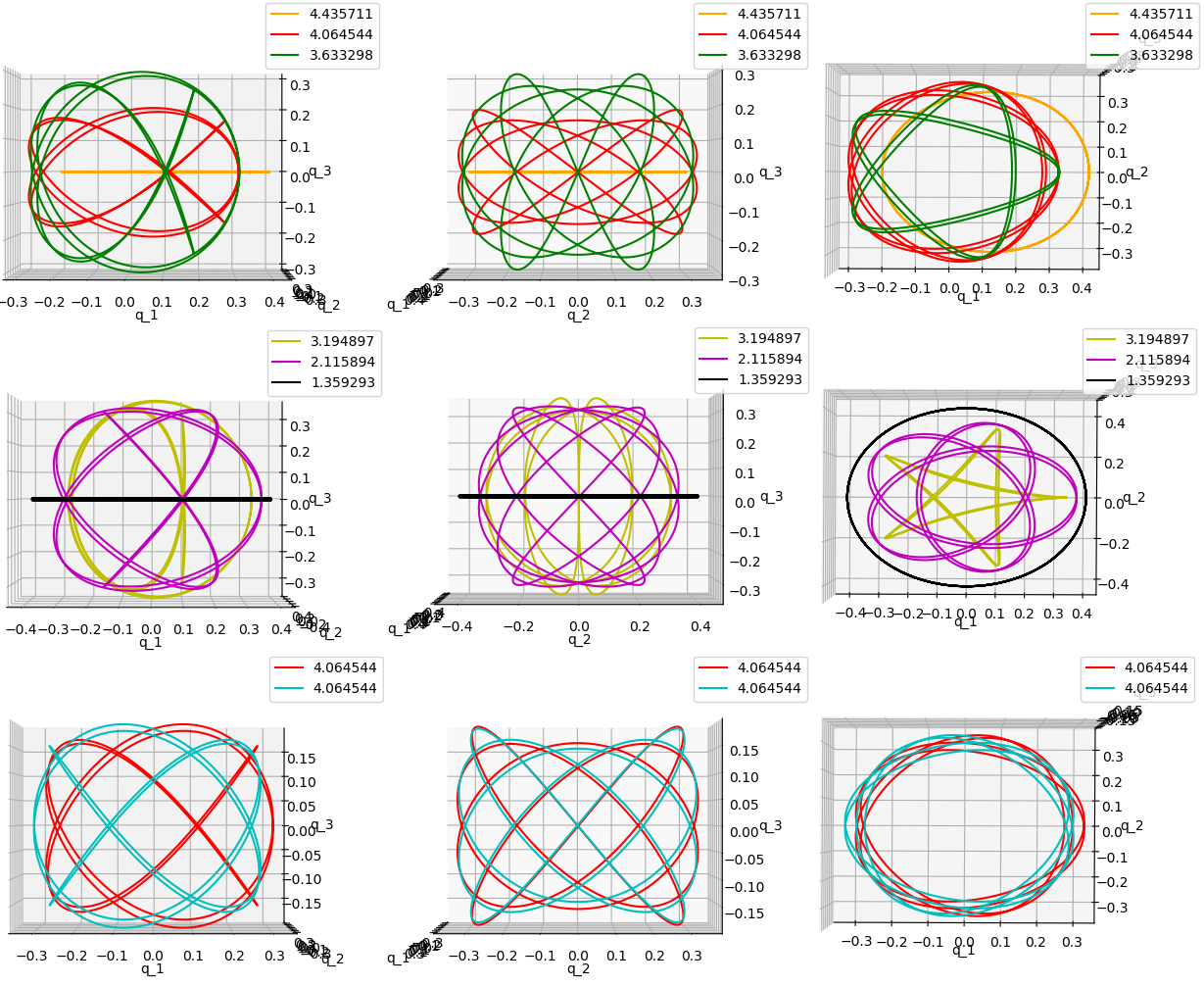}
	\caption{From the 4th cover of $g'$ to the 6th cover of $f$ (family \textcolor{grgr}{$f_{g'}^{(1cut,4)}$})}
	\label{figure_spatial_g'_f}
\end{figure}

\begin{table}[H]\scriptsize \centering
	\begin{tabular}{ccccccc}
		$\Gamma$ & $q_1(0)$ &  $\dot{q}_2(0)$ & $\dot{q}_3(0)$ & $T_q / 4$ & $\text{sign}_{C/B}$ and Floquet multipliers & $\mu_{CZ}$\\
		\hline 4.435711 & 0.399433 & 1.024708 & 0 & 1.34304 & $(+/-)$ $\varphi_p^4 = 2.224$ \& $\varphi_s^4 =0$ & $9+(9 \shortto 11) = 18 \shortto 20$\\
		4.435711 & 0.399433 & 1.024704 & 0.001 & 1.34305 & $(+/-)$ $\varphi = 2.225$ \& $(-/-)$ $\lambda = 1.009$ & 19\\
		4.358078 & 0.388745 & 1.036921 & 0.405 & 1.36065 & $(+/-)$ $\varphi = 1.998$ \& $(-/-)$ $\lambda = 1.045$ & 19\\
		4.064544 & 0.313078 & 1.240223 & 1.039 & 1.43467 & $(+/-)$ $\varphi = 0.298$ \& $(-/-)$ $\lambda = 1.035$ & 19\\
		3.888069 & 0.309540 & 1.005463 & 1.359 & 1.45012 & $(+/-)$ $\varphi = 3.139$ \& $(-/-)$ $\lambda = 1.006$ & 19\\
		3.721469 & 0.313247 & 0.753339 & 1.546 & 1.46539 & $(-/+)$ $\varphi = 4.711$ \& $(-/-)$ $\lambda = 1.007$ & 19\\
		3.634638 & 0.315238 & 0.624474 & 1.617 & 1.47381 & $(-/+)$ $\varphi = 6.123$ \& $(-/-)$ $\lambda = 1.010$ & 19\\
		3.633298 & 0.315269 & 0.622499 & 1.618 & 1.47394 & $(-/-)$ $\lambda_1 = 1.088$ \& $(-/-)$ $\lambda_2 = 1.010$ & 20\\
		3.194897 & 0.325968 & $-$0.000610 & 1.805 & 1.52212 & $(-/-)$ $\lambda_1 = 11.37$ \& $(-/-)$ $\lambda_2 = 1.009$ & 20\\
		3.039053 & 0.330063 & $-$0.210854 & 1.817 & 1.54192 & $(-/-)$ $\lambda_1 = 13.01$ \& $(-/-)$ $\lambda_2 = 1.009$ & 20\\
		2.115894 & 0.358524 & $-$1.326885 & 1.444 & 1.70332 & $(-/-)$ $\lambda_1 = 7.647$ \& $(-/-)$ $\lambda_2 = 1.006$ & 20\\
		1.567391 & 0.380122 & $-$1.874481 & 0.783 & 1.86052 & $(-/-)$ $\lambda_1 = 1.026$ \& $(-/-)$ $\lambda_2 = 1.006$ & 20\\
		1.567110 & 0.380135 & $-$1.874738 & 0.782 & 1.86062 & $(-/-)$ $\lambda = 1.007$ \& $(-/+)$ $\varphi = 6.240$ & 19\\
		1.507647 & 0.382751 & $-$1.928321 & 0.662 & 1.88232 & $(-/-)$ $\lambda = 1.007$ \& $(-/+)$ $\varphi = 5.551$ & 19\\
		1.411644 & 0.387099 & $-$2.012299 & 0.393 & 1.91992 & $(-/-)$ $\lambda = 1.007$ \& $(-/+)$ $\varphi = 5.085$ & 19\\
		1.359329 & 0.389535 & $-$2.056719 & 0.010 & 1.94188 & $(-/-)$ $\lambda = 1.014$ \& $(-/+)$ $\varphi = 4.888$ & 19\\
		1.359293 & 0.389537 & $-$2.05674 & 0 & 1.94179 & $\varphi_s^6 = 0$ \& $(-/+)$ $\varphi_p^6 = 4.886$ & $9+(11 \shortto 9)= 20 \shortto 18$
	\end{tabular}
	\caption{From the 4th cover of $g'$ to the 6th cover of $f$ (family \textcolor{grgr}{$f_{g'}^{(1cut,4)}$})}
	\label{table_16}
\end{table}

\begin{appendix}
	\setcounter{secnumdepth}{0}
	\section{Appendix - Python Codes}
	\label{sec:python_codes}
	
	In this appendix we collect our four Python codes, which we use to compute the data given in Section \ref{sec:8}.\ Our numerical method is the fourth-order Runge-Kutta method, which can be found for instance in \cite[pp.\ 51--53]{sewell}.\
	
	The first and second code are used for planar symmetric periodic orbits.\ From given initial data, the first code gives the starting velocity $\dot{q}_2(0)$, the first return time $T_q$ and the synodic period $T_s$.\ It also plots planar symmetric periodic orbits in $q_1q_2$-coordinates starting at $\text{Fix}(\rho_1) = \{ (q_1,0,0,p_2) \}$.\ The second code follows the description of Subsection \ref{sec:6.5.1}, thus it linearizes along a planar symmetric periodic orbit and calculates the planar reduced monodromy $\overline{A}_p \in \text{Sp}^{\rho_1}(1)$ and $A_s \in \text{Sp}^{\rho_1}(1)$.\ Moreover, it gives the determinant, the trace, the elliptic or hyperbolic behaviour and the Floquet multipliers.\ In elliptic cases, it computes $T_a$ and $T_d$ as well.\ Note that this code is written for the case that $\mu_{CZ}^p=\mu_{CZ}^s=3$ and for the variational orbit of our moon from Section \ref{sec:our moon}.\ 
	
	The third code plots spatial symmetric orbits in $q_1q_2q_3$-coordinates starting at $\text{Fix}(\overline{\rho_1}) = \{ (q_1,0,0,0,p_2,p_3) \}$.\ The fourth code, as described in Subsection \ref{sec:6.5.2}, linearizes along a spatial symmetric periodic orbit starting at $\text{Fix}(\overline{\rho_1})$ and computes the spatial reduced monodromy, which is a matrix in $\text{Sp}^{\rho_1}(2)$, its Floquet multipliers and signatures $\text{sign}_B(\lambda)$ and $\text{sign}_C(\lambda)$.\ Both codes are written for the orbits of Subsection \ref{sec:9.4.1}.\
	
	For any other symmetric periodic orbit one only needs to adjust the respective code with its initial data.\
	\begin{align} \label{python1}
	\text{	\textbf{1st Python code:}}
	\end{align}
	
	\tiny{
		\lstinputlisting[language=Python,breaklines=true]{0_python_code_1.py}}
	
	\normalsize{
		\begin{gather} \label{python2}
		\text{	\textbf{2nd Python code:}}
		\end{gather}}
	
	\tiny{
		\lstinputlisting[language=Python,breaklines=true]{0_python_code_2.py}}
	
	\normalsize{
		\begin{gather*} \label{python3}
		\text{	\textbf{3rd Python code:}}
		\end{gather*}}
	
	\tiny{
		\lstinputlisting[language=Python,breaklines=true]{0_python_code_3.py}}
	
	\normalsize{
		\begin{gather*} \label{python4}
		\text{	\textbf{4th Python code:}}
		\end{gather*}}
	
	\tiny{
		\lstinputlisting[language=Python,breaklines=true]{0_python_code_4_reduced.py}}
	
\end{appendix}

\addcontentsline{toc}{section}{References}

(C. Aydin) \textsc{Institut de mathématiques, Université de Neuchâtel, Rue Emile-Argand 11, CH-2000 Neuchâtel - Switzerland}\newline
\textit{E-mail address:} \textbf{cengiz.aydin@unine.ch}

\end{document}